\newtheorem{thm}{Theorem}[section]
\newtheorem{prop}[thm]{Proposition}
\newtheorem{cor}[thm]{Corollary}
\newtheorem{lem}[thm]{Lemma}
\theoremstyle{definition}
\newtheorem{defn}[thm]{Definition}
\newtheorem{exmp}[thm]{Example}
\newtheorem{rmk}[thm]{Remark}
\theoremstyle{remark}
\newtheorem{question}[thm]{Question}
\newcommand{\Aug}{\mathrm{Aug}}
\newcommand{\wAug}{\widetilde{\mathrm{Aug}}}
\newcommand{\J}{\mathcal{J}}
\newcommand{\F}{\mathbb{F}}
\newcommand{\Z}{\mathbb{Z}}
\newcommand{\R}{\mathbb{R}}
\newcommand{\SA}{\mathcal{A}}
\newcommand{\e}{\varepsilon}
\newcommand{\seed}{\mathbf{s}}
\newcommand{\rank}{\mathrm{rank}}
\newcommand{\bF}{\mathbb{F}}
\newcommand{\bfA}{\mathbf{A}}
\newcommand{\Spec}{\mathrm{Spec}}
\newcommand{\Gr}{\mathrm{Gr}}
\newcommand{\SL}{\mathrm{SL}}
\newcommand{\st}{\text{st}}
\renewcommand{\tt}{\mathfrak{t}}
\newcommand{\tb}{\mathrm{tb}}
\newcommand{\La}{\Lambda}
\title{Augmentations, Fillings, and Clusters for 2-Bridge Links}
\author{Orsola Capovilla-Searle}
	\address{University of California Davis, Dept. of Mathematics, Shields Avenue, Davis, CA 95616, USA}
	\email{ocapovillasearle@ucdavis.edu}
\author{James Hughes}
	\address{Duke University, Dept. of Mathematics, Durham, NC 27708, USA}
	\email{jhughes@math.duke.edu}
	\author{Daping Weng}
	\address{University of North Carolina, Dept. of Mathematics, Chapel Hill, NC 27599, USA}
	\email{dweng@unc.edu}
\subjclass[2020]{53D12, 57K10, 57K33, 13F60}
\begin{document}

\maketitle
\begin{abstract}
    We produce the first examples relating non-orientable exact Lagrangian fillings of Legendrian links to cluster theory, showing that the ungraded augmentation variety of certain max-tb representatives of Legendrian $2$-bridge links is isomorphic to a product of $A_n$-type cluster varieties. As part of this construction, we describe a surjective map from the set of (possibly non-orientable) exact Lagrangian fillings to cluster seeds, producing a product of Catalan numbers of distinct fillings. We also relate the ruling stratification of the ungraded augmentation variety to Lam and Speyer's anticlique stratification of acyclic cluster varieties, showing that the two coincide in this context. As a corollary, we apply a result of Rutherford to show that the cluster-theoretic stratification encodes the information of the lowest $a$-degree term of the Kauffman polynomial of the smooth isotopy class of the $2$-bridge links we study.
\end{abstract}

\setcounter{tocdepth}{1}

\tableofcontents

\section{Introduction}

In this article, we initiate the use of cluster-algebraic methods to study of non-orientable exact Lagrangian fillings of Legendrian links in the standard contact Darboux ball $\R^3_{\st}$. Exact Lagrangian fillings of Legendrian links are central objects in the study of low-dimensional contact and symplectic topology~\cite{ekholm_etnyre_sabloff_2009, sabloff_traynor_2013, hayden_sabloff_2014, bourgeois_sabloff_traynor_2015}. A series of recent results~\cite{CASG, Casals_zaslow,  Gao_shen_weng_2, CasalsNg, Hughes2021, ABL22, Casals_weng, casals2023lagrangian} building off of previous work from the past decade~\cite{ehk, Pan, STWZ, TreumannZaslow} comprise a concerted effort towards a systematic means of producing and distinguishing orientable exact Lagrangian fillings. 
Many of the most recent advances rely heavily on cluster theory, an algebraic tool that connects combinatorics and geometry. These in turn have led to structural results regarding the homology of the augmentation variety~\cite{CGGLSS}, as well as a conjectural classification of orientable exact Lagrangian fillings of Legendrian positive braid closures~\cite[Conjecture 5.1]{CasalsLagSkel}.  

Given the importance of cluster algebras to the classification of exact Lagrangian fillings, an essential question is to identify which Legendrian links admit a cluster structure on their sheaf moduli or augmentation variety. In collaboration with Casals, the third author previously gave a sufficient condition for identifying such Legendrians in terms of the combinatorics of plabic graphs; see~\cite[Section 4.8]{Casals_weng}. Another general class of Legendrians whose augmentation varieties admit a cluster structure are $(-1)$-closures of positive braids of the form $\beta=\beta'\delta(\beta')$ where $\delta(\beta')$ denotes the Demazure product of the positive braid $\beta'$~\cite{CGGLSS}.   

While cluster-algebraic methods have greatly advanced our understanding of orientable exact Lagrangian fillings, relatively little is known about non-orientable exact Lagrangian fillings. 
In contrast with orientable fillings, whose Euler characteristic is known to be fixed by work of Chantraine~\cite[Theorem 1.3]{chantraine}, a Legendrian may admit (finitely many) distinct homeomorphism classes of decomposable non-orientable exact Lagrangian fillings~\cite[Proposition 1.2]{CCPRSY}. In addition, some knots may admit both orientable and non-orientable fillings~\cite[Example 2.4]{CCPRSY}. Recent work of the first author with Casals provides new techniques for distinguishing exact Lagrangian fillings via Newton polytopes. They then apply these techniques to show that there are Legendrian knots with infinitely many non-orientable exact Lagrangian fillings~\cite{orsola_roger}.

 \subsection{Main results}
The main goal of this work is to extend cluster-algebraic techniques for distinguishing exact Lagrangian fillings to the non-orientable setting. We restrict our focus to specific Legendrian $2$-bridge links where our computations are aided by the relative simplicity of the Legendrian contact dg-algebra.
Given a sequence $n_1, \dots, n_k$ with $n_1\geq 1$, $n_k\geq 1$, and $n_i\geq 2$ for all $1<i<k$, we associate to it the max-tb Legendrian $2$-bridge link $\La[n_1, \dots, n_k]\subseteq \R^3_{\st}$ given by the front projection shown in Figure \ref{fig:front}.\footnote{In general, $2$-bridge links are not Legendrian simple; see~\cite{Foldvari_2019} for additional discussion, including a sufficient condition for Legendrian simplicity. Our choice of front projection identifies a particular choice of max-tb representative.} Such Legendrians are referred to as being in Legendrian rational form; see Definition \ref{def: Legendrian rational form} for details.

\begin{figure}[!htb]
	\centering
	\begin{tikzpicture}[scale=1]
		\node[inner sep=0] at (0,0)
		{\includegraphics[width=8.3cm]{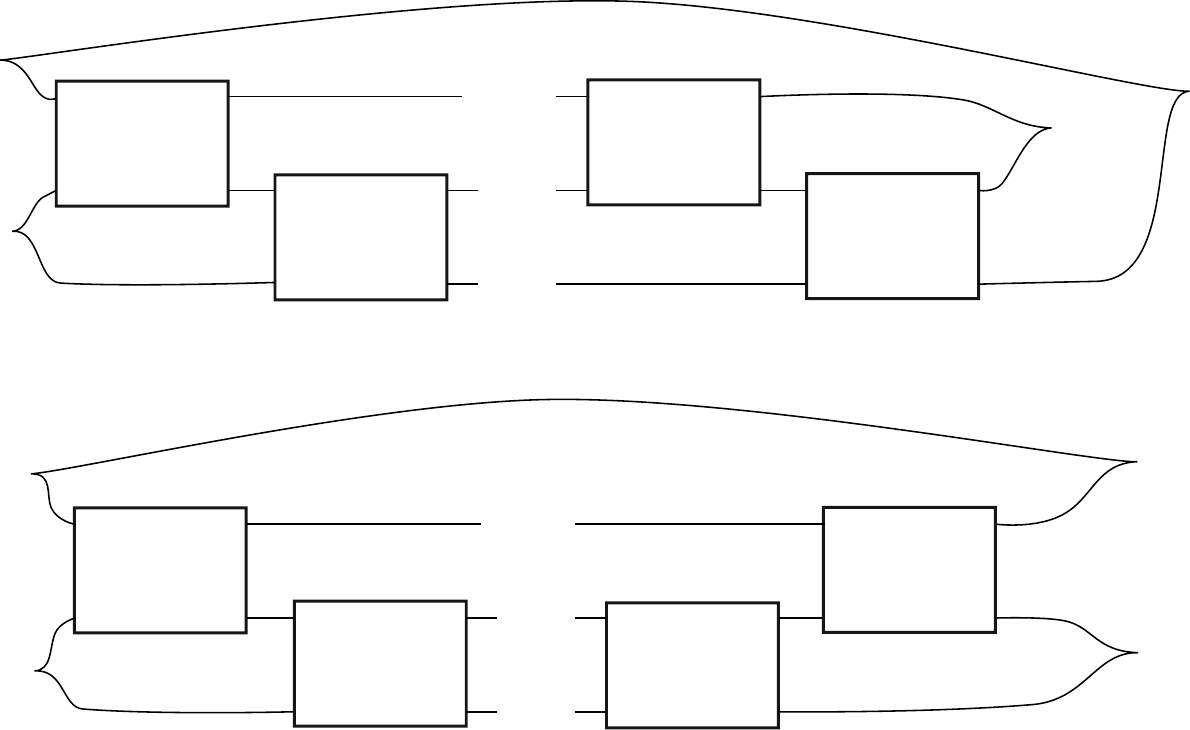}};
	\node at (-3.1,1.5){$n_1$};
	\node at (-1.7,0.9){$n_2$};
	\node at (-0.5,1.5){$\ldots$};
	\node at (-0.5,0.9){$\ldots$};
	\node at (0.6,1.5){$n_{2l-1}$};
	\node at (2.05,0.9){$n_{2l}$};
	\node at (-0.5,-1.5){$\ldots$};
	\node at (-0.5,-2){$\ldots$};
	\node at (-3.1,-1.5){$n_1$};
	\node at (-1.5,-2.1){$n_2$};
	\node at (0.7,-2.1){$n_{2l}$};
	\node at (2.2,-1.5){$n_{2l+1}$};
	\end{tikzpicture}\caption{Front projections of $2$-bridge links $\Lambda[n_1, \ldots, n_{2l}]$ and $\Lambda[n_1, \ldots, n_{2l+1}]$, where each box labelled $n_i$ contains $n_i$ crossings.}\label{fig:front}
	\end{figure}

When the rotation number of $\Lambda$ is 0 and all Reeb chords are non-negatively graded, the augmentation variety is often defined as $\Spec( H_0(\SA(\Lambda),\partial))$. However, there are two primary technical obstacles to naively extending the construction of cluster structures from the orientable setting to the case of Legendrian $2$-bridge links $\Lambda=\La[n_1, \dots, n_k]$ when $k\geq 2$: first, the Legendrian dg-algebra $\SA(\Lambda)$ may lack a $\Z$-grading when the rotation number is nonzero; second, even if $\SA(\Lambda)$ admits a $\Z$-grading, $\SA(\Lambda)$ is in general not concentrated in non-negative degrees. We circumvent these obstacles by studying the ungraded augmentation variety $\Aug_u(\La[n_1, \dots, n_k])$ given by 
\[
\Aug_u(\La[n_1, \dots, n_k])=\{\e~|~\e~\text{is an ungraded $\F$-valued augmentation of}~\SA(\Lambda[n_1,\dots, n_k])\}/\sim
\]
where two augmentations are considered to be distinct up to dg-algebra homotopy and $\F$ is an algebraically  closed field of characteristic 2. See Section \ref{sec:aug} for a more detailed description.

The set of (decomposable) exact Lagrangian fillings of $\La[n_1, \ldots, n_k]$ for $n_1\geq 1,~n_k\geq 1$, and $n_i\geq 2$ for $2\leq i\leq k-1$ that we consider are non-orientable except for a few cases. The appearance of non-orientable fillings also requires us to consider a particular enhancement of the Legendrian dg-algebra with group ring coefficients. In the orientable setting, Poincar\'e duality allows for the computation of the dg-algebra with $R[H_1(L; \Z)]$  coefficients for a given ground ring $R$. However, for non-orientable fillings $L$, we lack Poincar\'e duality over $\Z$. Therefore, we consider the group ring $R[H_1(L\backslash \tt, \La \backslash \tt; \Z)]$ where $\tt$ denotes the set of base points on $\La$.

Other than these technical details, our approach mirrors that of~\cite{Gao_shen_weng_2}. Computing the Legendrian dg-algebra of $\La[n_1, \dots, n_k]$ reveals an algebraic isomorphism between the coordinate ring of $\Aug_u(\La[n_1, \dots, n_k])$ and a well-known family of cluster algebras. The cluster variables we obtain are polynomials in the Reeb chords of $\La[n_1, \dots, n_k]$, thought of as regular functions on $\Aug_u(\La[n_1, \dots, n_k])$. We then show that the dg-algebra map induced by (possibly non-orientable) pinching cobordisms agree with cluster localization. As a result, the toric chart induced by an exact Lagrangian filling is a cluster seed. In the case of the $\La[n_1, \dots, n_k]$, the cluster algebras we obtain are particularly well-behaved. 
After a detailed computation of the Legendrian dg-algebra and its dg-algebra homotopies over an algebraically closed field $\mathbb{F}$ of characteristic 2, we show the following:

\begin{thm}[Theorems \ref{thm: cluster structure on augmentation variety} and \ref{thm: cluster torus chart from pinching sequence}]\label{thm: main}
    The coordinate ring $\F[\Aug_u(\La[n_1, \dots, n_k])]$ is isomorphic to a cluster algebra of type $A_{n_1-2}\times A_{n_2-3}\times \dots \times A_{n_{(k-1)}-3}\times A_{n_{k}-2}$. Any exact Lagrangian filling of $\La[n_1, \dots, n_k]$ constructed via an admissible pinching sequence induces an embedding of a cluster chart in $\Aug_u(\La[n_1, \dots, n_k])$. In particular, Hamiltonian isotopic fillings from admissible pinching sequences induce the same cluster chart.
\end{thm}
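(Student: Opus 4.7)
The plan is to prove the two halves of the theorem separately, though both rely on an explicit computation of the Legendrian contact dg-algebra $\SA(\La[n_1,\dots,n_k];\F)$ from the front projection in Figure~\ref{fig:front}. For the first half, I would begin by reading off the Reeb chord generators and their differentials from the diagram via the standard combinatorial procedure. The generators naturally group into $k$ blocks, one per twist region, and I expect the differentials within each block to have a tridiagonal shape with only a few couplings between adjacent blocks. Since we work over a field of characteristic $2$ and pass to the ungraded augmentation variety, signs disappear and degree obstructions drop out, so that $\F[\Aug_u(\La[n_1,\dots,n_k])]$ is realized as the quotient of a polynomial algebra in the Reeb chords by the ideal generated by the relations $\partial a = 0$.

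Next, I would produce an explicit isomorphism between this coordinate ring and the claimed product of $A$-type cluster algebras. Since every cluster algebra of type $A_n$ admits a presentation via triangulations of an $(n+3)$-gon, it suffices to exhibit, for each twist region, a family of Reeb chord polynomials indexed by diagonals of an appropriate polygon whose Ptolemy-style exchange relations match the relations from $\partial a=0$. The counts $n_1-2$, $n_i-3$ (for interior $i$), and $n_k-2$ should emerge from the number of Reeb chords in each twist region after discarding the boundary generators that become frozen variables or are eliminated by relations coming from neighboring blocks. The decoupling between blocks in the cluster structure should reflect the local nature of the differential.

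For the second half, I would analyze the dg-algebra map induced by a pinching cobordism. The standard formula of Ekholm-Honda-K\'alm\'an, extended to the non-orientable setting, describes this map explicitly on generators. Iterating along an admissible pinching sequence produces a composition of such maps, and hence a Lagrangian-induced augmentation whose coordinates on $\Aug_u$ are polynomials in the surviving generators; these should coincide with the cluster variables of a seed of the cluster algebra from the first half. Hamiltonian invariance would then follow from the general principle that Hamiltonian isotopic fillings induce dg-algebra maps that agree up to dg-algebra homotopy, and thus define the same coordinate chart on $\Aug_u$.

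I expect the main technical obstacle to lie in the non-orientable bookkeeping: the absence of Poincar\'e duality forces the use of the relative group ring $\F[H_1(L\setminus\tt,\La\setminus\tt;\Z)]$, and matching it with the Laurent polynomial ring of a cluster torus requires a careful identification of relative homology classes with products of Reeb chords along the filling. A second subtlety is to confirm that the exchange matrices coming from the differential produce genuine $A$-type quivers rather than a skew-symmetrizable variant, which hinges on the precise shape of $\partial$ in each twist region.
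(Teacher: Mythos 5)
Your overall strategy—compute the dg-algebra from the front, carve out the augmentation variety from the relations, relate each twist region to a type-$A$ cluster algebra via triangulated polygons, and match the EHK pinching maps with cluster localizations—is exactly the paper's, and the two subtleties you flag at the end (the relative homology coefficients in the non-orientable setting, and confirming a skew-symmetric rather than merely skew-symmetrizable quiver) are both real and are both handled by the paper's identification with $\Gr_{2,n}^\circ$.

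However, there is one genuine gap. You write that $\F[\Aug_u(\La[n_1,\dots,n_k])]$ is the quotient of the polynomial algebra in the Reeb chords by the ideal $(\partial a)$. That object is the coordinate ring of $\wAug_u$, not of $\Aug_u$: the ungraded augmentation variety is $\wAug_u$ modulo dg-algebra homotopy, and this quotient is not free for you. In the present setting the relations $\partial a_{m_i+1}=0$ and $\partial b_j=0$ involve $a_{m_i+1}$, $b_1$, $b_2$ only through their differentials, so these coordinates remain unconstrained on $\wAug_u$, and $\wAug_u \cong \Aug_u \times \A^{k+1}$. The paper must construct explicit dg-algebra homotopies (Proposition~\ref{prop:dgahtpy}) to show that the quotient collapses exactly these affine factors and that the result is still an affine variety; without this step the cluster description is off by affine-space factors and the dimension count is wrong. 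Relatedly, one must verify that the equation $\partial b_2 = 0$ is implied by the others (Proposition~\ref{prop: partial b_2 redundant}); your proposal implicitly treats all the $\partial a = 0$ relations as independent.

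A smaller imprecision: the ``Ptolemy-style exchange relations'' do not come from $\partial a = 0$. After simplification the relations $\partial a_{m_i+1}=0$, $\partial b_1 = 0$ reduce to single continuant equations (e.g.\ $K_{n_1}(a_1,\dots,a_{n_1})=0$) cutting out the variety; these are not three-term exchange relations. The cluster structure instead arises by identifying each such locus with a subvariety of $\Gr_{2,n}^\circ$ after setting boundary Pl\"ucker coordinates to $1$ (Propositions~\ref{prop: affine variety} and~\ref{prop: alternative affine variety}); the Pl\"ucker/Ptolemy relations are then inherited automatically, and the cluster variables turn out to be the intermediate continuants $K_j(a_1,\dots,a_j)$ via Lemma~\ref{lem: braid matrix prod}. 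Also note the differentials are not tridiagonal—$\partial(a_{m_i+1})$ is a product of continuants from all preceding blocks—though after using the determinant identity of Lemma~\ref{lem: Braid Matrix Det} the defining equations do decouple block by block, which is the structural fact you were anticipating.
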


See Section \ref{sec:cobordism} for more on the specific construction of these decomposable exact Lagrangian fillings and the definition of admissible pinching sequences. In the case of $k=1$, we obtain Legendrian $(2, n)$ torus links, for which the statement is known to hold 
~\cite{Gao_shen_weng_2, Hughes2023}. For $k>1$, work of Lipman and Sabloff establishes an explicit criterion for determining the orientability of fillings of $\La[n_1,\dots, n_k]$ \cite[Theorem 1.1]{LipmanSabloff}. In order to state their result, we first note that every one of the $k$ blocks of crossings in the front projection contains either all negative crossings or all positive crossings so that we may consistently refer to such blocks as positive or negative. Any Legendrian 2-bridge link $\La[n_1, \dots, n_k]$ in Legendrian rational form is then orientably fillable if and only if any negative block contains at most two crossings. All of the exact Lagrangian fillings of links not fulfilling this criterion that we construct are non-orientable.

Following Pan's work in the $k=1$ case~\cite{Pan}, we can index our admissible pinching sequences by a collection of permutations, one for each of block. In each block, we obtain a Catalan number $C_n=\frac{1}{n+1}\binom{2n}{n}$ of admissible pinching sequences inducing distinct cluster charts. Applying~\cite[Theorem 1.1]{Pan} to our setting, we obtain a method for indexing distinct fillings corresponding to different collections of admissible pinching sequences. As a result, every cluster seed in $\Aug_u(\La[n_1, \dots, n_k])$ is induced by an admissible pinching sequence, yielding the following:

\begin{thm}[Propositions \ref{prop:admissiblefilling} and \ref{prop: every cluster chart is achieved}]\label{cor: distinct fillings}
There are at least $C_{n_1-1}C_{n_2-2}\dots C_{n_{(k-1)}-2}C_{n_k-1}$ distinct Hamiltonian isotopy classes of 
exact Lagrangian fillings of $\La[n_1, \dots, n_k]$ constructed from admissible pinching sequences, all of which belong to the same smooth isotopy class.
\end{thm}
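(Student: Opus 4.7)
The plan is to reduce the lower-bound statement to a count of cluster seeds via Theorem~\ref{thm: main}. By that theorem, every admissible pinching sequence induces an exact Lagrangian filling whose toric chart is a cluster seed of $\Aug_u(\La[n_1,\dots,n_k])$, and Hamiltonian isotopic fillings induce identical cluster seeds. Consequently, if every cluster seed is realized by some admissible pinching sequence, then the number of Hamiltonian isotopy classes of fillings of $\La[n_1,\dots,n_k]$ built from admissible pinching sequences is at least the total number of cluster seeds.

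By Theorem~\ref{thm: main}, the cluster structure on $\Aug_u(\La[n_1,\dots,n_k])$ is a product $A_{n_1-2}\times A_{n_2-3}\times\cdots\times A_{n_k-2}$, and the number of seeds in the $A_m$ cluster algebra equals the Catalan number $C_{m+1}$ (it is the number of triangulations of a convex $(m+3)$-gon). Multiplying across the factors yields
\[
C_{n_1-1}\cdot C_{n_2-2}\cdots C_{n_{k-1}-2}\cdot C_{n_k-1}
\]
cluster seeds, matching the claimed bound. What remains is the surjectivity statement: every cluster seed of $\Aug_u(\La[n_1,\dots,n_k])$ is induced by some admissible pinching sequence. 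I would establish this block-by-block on the front of Figure~\ref{fig:front}. Within each block $i$, I introduce a local notion of admissible pinching of the Reeb chords in that block, and check that the induced moves generate the full mutation graph of the corresponding $A$-type factor. For $k=1$ this is precisely Pan's result~\cite[Theorem 1.1]{Pan}, parametrizing fillings of $(2,n)$ torus links by triangulations of a polygon. For general $k$, the product structure of the cluster algebra allows any tuple of local seeds to be assembled into a global admissible pinching sequence realizing the corresponding global seed.

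The smooth isotopy claim follows from the general fact that decomposable Lagrangian fillings of a fixed Legendrian, assembled by pinches of crossings in a common front diagram together with cappings at minima, are smoothly isotopic in the four-ball. The main obstacle is the block-wise surjectivity: I must verify that pinching a Reeb chord in block $i$ corresponds to a mutation in the $i$-th $A$-type factor only, and that local admissible pinching sequences in distinct blocks may be performed independently without interference. This will most likely follow by explicitly matching the Reeb chord arcs in each block with the diagonals of the polygon whose triangulations label the seeds of the corresponding $A$-type cluster algebra, and invoking the compatibility of pinch cobordisms with cluster localization established in the proof of Theorem~\ref{thm: main}.
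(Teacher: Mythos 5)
Your proposal follows essentially the same route as the paper: combine the fact (from Theorem~\ref{thm: cluster torus chart from pinching sequence}) that the assignment from Hamiltonian isotopy classes of admissible-pinching-sequence fillings to cluster charts is well defined, with the surjectivity of this assignment (Proposition~\ref{prop: every cluster chart is achieved}), and then count seeds in the product of type-$A$ cluster algebras via Catalan numbers. This is exactly what the paper does, and your block-by-block, triangulations-of-a-polygon description of the surjectivity argument matches the proof of Proposition~\ref{prop: every cluster chart is achieved}, as does the use of Lemma~\ref{lem: product of cluster varieties} to handle the product structure.

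The one place where your argument is looser than the paper's is the smooth-isotopy clause. You cite as a ``general fact'' that decomposable fillings built by pinching crossings of a common front and capping minima are smoothly isotopic in the four-ball. Stated at that level of generality this is not true: different pinching sets on the same front can produce fillings of different Euler characteristic or of different orientability (orientable versus non-orientable), which are not even homeomorphic. What the paper actually proves in Proposition~\ref{prop:admissiblefilling} is more constrained: all admissible pinching sequences for $\La[n_1,\dots,n_k]$ pinch the same number of Reeb chords (so the Euler characteristics agree), they all have the same normal Euler number by \cite[Proposition 1.1]{CCPRSY}, reordering the same pinch set changes nothing smoothly, and a $1$-handle attached at a crossing $a_j$ inside a block can be smoothly slid to resolve a different crossing $a_k$ in that same block. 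You should replace the appeal to a general fact by this block-local $1$-handle isotopy argument; as written, the smooth-isotopy step is a genuine gap even though the conclusion is correct in this setting.
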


Moreover, we show that for any maximal fixed set of Reeb chords $\mathcal{R}$ that form an admissible pinching sequence, 
   there are precisely $C_{n_1-1}C_{n_2-2}\dots C_{n_{(k-1)}-2}C_{n_k-1}$ many exact Lagrangian fillings constructed from pinching sequences of the set $\mathcal{R}$ (Proposition \ref{prop: product of Catalan many fillings}).
We conjecture -- in the spirit of~\cite[Conjecture 5.1]{CasalsLagSkel} -- that the fillings we construct comprise an exhaustive collection of exact Lagrangian fillings of $\La[n_1, \dots, n_k]$; see also Remark 1.4 of loc. cit.

As an additional application of Theorem \ref{thm: main}, we compare the ruling stratification on $\Aug_u(\La[n_1, \dots, n_k])$ defined by Henry and Rutherford~\cite{henry_rutherford} to the anticlique stratification for acyclic cluster varieties described by Lam and Speyer~\cite[Section 3]{LS}. Tracing through the construction of Henry and Rutherford, we show that 
the ruling stratification coincides with Lam and Speyer's cluster-theoretic stratification.  
As part of the construction given in Theorem \ref{thm: main}, we produce a map between mutable quiver vertices and adjacent pairs of crossings of $\La[n_1, \dots, n_k]$, sending anticliques $I$ to normal rulings $R_I$.

\begin{thm}[Theorem 
 \ref{thm: stratification coincide}]\label{thm: stratifications}

Given an anticlique $I$ of quiver vertices of the initial seed $\seed$, the corresponding ruling stratum $W_{R_I}$ equals the anticlique stratum $\mathcal{O}_I$ in $\Aug_u(\La[n_1,\dots, n_k])$.
  
\end{thm}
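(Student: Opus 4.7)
The plan is to leverage the explicit isomorphism from Theorem \ref{thm: main} to translate between the augmentation-theoretic language of the ruling stratification and the cluster-algebraic language of the anticlique stratification. Both sides are indexed by the same combinatorial data, namely anticliques $I$ in the initial quiver and the associated normal rulings $R_I$ produced by the map described after Theorem \ref{thm: main}. So it suffices to show, for a fixed anticlique $I$, that the two subvarieties $W_{R_I}$ and $\mathcal{O}_I$ cut out the same locus in $\Aug_u(\La[n_1,\dots,n_k])$.

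First, I would recall the precise definitions. Lam and Speyer's stratum $\mathcal{O}_I$ in an acyclic cluster variety is the locally closed subset where the initial cluster variables $x_i$ vanish for $i \in I$ and do not vanish for $i \notin I$ (in the $A$-type factors this matches the usual Bruhat/open positroid-style decomposition). Henry and Rutherford's stratum $W_{R_I}$ is the locus of augmentations $\e$ whose associated $A$-form is compatible with the normal ruling $R_I$, equivalently, the locus on which the Reeb chords at switches behave in a prescribed way relative to those at returns and departures.

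Next comes the main step: using the explicit description of the initial cluster variables constructed in the proof of Theorem \ref{thm: main} as polynomials in the Reeb chord coordinates on $\Aug_u(\La[n_1,\dots,n_k])$, I would show that the initial cluster variable $x_v$ at a mutable vertex $v$ vanishes on $\e$ precisely when the ruling associated to $\e$ by the Henry--Rutherford algorithm has a switch at the pair of adjacent crossings indexed by $v$. Concretely, by our construction each such $x_v$ is (up to units in the frozen variables) the Reeb chord polynomial that the $A$-form algorithm tests against at the corresponding crossing pair, so $\{x_v = 0\}$ and ``$R(\e)$ switches at $v$'' describe the same closed condition. Anticliqueness of $I$ corresponds exactly to the requirement that the set of switched crossing pairs be compatible with a normal ruling in the sense of Henry--Rutherford, so $I\mapsto R_I$ and the ruling-to-switch correspondence agree.

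With the vanishing conditions matched on a vertex-by-vertex basis, the equality $W_{R_I}=\mathcal{O}_I$ would follow by combining the closed conditions ($x_i=0$ for $i\in I$, which produces switches at exactly the prescribed crossings) with the open conditions ($x_j\ne 0$ for $j\notin I$, which prevents additional switches). The main obstacle I expect is the bookkeeping in this last step: verifying that the Henry--Rutherford algorithm, when fed an augmentation satisfying the cluster-theoretic open/closed conditions, produces the ruling $R_I$ and no other. This requires unwinding the algorithm at each of the crossing pairs corresponding to $I^c$ and using the explicit form of the initial cluster variables from the proof of Theorem \ref{thm: main} to rule out alternative switch patterns; the acyclicity of the initial quiver should ensure this check can be carried out local-to-global along the block structure of $\La[n_1,\dots,n_k]$.
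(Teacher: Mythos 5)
Your overall strategy---match anticliques to rulings via the paper's combinatorial map, compare the vanishing/non-vanishing of the initial cluster variables against the ruling structure vertex by vertex, and conclude equality---is the same route the paper takes. However, there are two concrete problems.

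First, the switch/return correspondence is reversed. In the map $I\mapsto R_I$ defined in the paper, the mutable vertices that \emph{belong to} the anticlique $I$ (those on which the cluster variable vanishes on $\mathcal{O}_I$) are the ones assigned a \emph{departure--return pair} of $R_I$, while the vertices \emph{outside} $I$ (cluster variable nonzero) are assigned \emph{switches}. You assert the opposite: ``$x_v$ vanishes precisely when the ruling has a switch at the pair indexed by $v$.'' Carried through, this would make the two stratifications disagree rather than coincide, so the argument as written cannot close.

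Second, and more substantively, the sentence ``by our construction each such $x_v$ is (up to units in the frozen variables) the Reeb chord polynomial that the $A$-form algorithm tests against'' is where all the mathematical content lives, and it is not true as stated; the initial cluster variable at vertex $i-1$ is the continuant $K_{i-1}(a_1,\dots,a_{i-1})$, which is \emph{not} directly one of the quantities Henry--Rutherford attaches to a crossing. What actually does the work is an induction on $i$ using the continuant recursion $A_{i-1}=a_{i-1}A_{i-2}+A_{i-3}$ (valid over characteristic $2$) together with the Henry--Rutherford parametrization of $\e(a_i)$ by the invertible parameters $u_j$ at switches and the free parameters $z_r$ at returns. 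This yields: if $a_i$ is a return then $A_{i-1}=0$, and if not then $A_{i-1}$ is a product of the $u_j$'s and hence nonzero. Crucially, this proves only the inclusion $W_{R_I}\subseteq\mathcal{O}_I$; since the $W_{R}$ and the $\mathcal{O}_I$ are each stratifications of $\Aug_u(\Lambda)$ over the same index set (by Proposition~\ref{prop: bijection}), the inclusion upgrades automatically to equality. Your plan instead attempts to verify a genuine ``iff'' at each vertex and to re-run the Henry--Rutherford algorithm on an arbitrary point of $\mathcal{O}_I$---that direction is harder and, once one notices the partition structure, unnecessary.
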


In earlier work, Rutherford uses ungraded normal rulings to show that the augmentation variety encodes the lowest a-degree term of the Kauffman polynomial~\cite[Theorem 3.1]{Rutherford06}. As a corollary of Theorem \ref{thm: stratifications}, we recover the Kauffman polynomial term of the smooth $2$-bridge link corresponding to $\La[n_1, \dots, n_k]$ from the finite field point count of cluster varieties. 
The appearance of the Kauffman polynomial in this setting can be viewed as an ungraded analogue of the relationship between the point counts of braid varieties and the HOMFLY-PT polynomial obtained in \cite{ShenWeng,CGGS1}; see also \cite{Rutherford06} for a similar statement involving the HOMFLY-PT polynomial coming from normal rulings.

The work of Rutherford cited above can be considered in the context of a somewhat longstanding conjecture regarding topological invariance of the ungraded augmentation variety. Specifically, the ungraded augmentation variety $\Aug_u(\La)$ of a Legendrian $\La$ is conjecturally determined only by $\tb(\La)$ and its topological knot type \cite[Conjecture 4.14]{NgRutherford13}; see also Conjecture \cite[Conjecture 3.14]{Ng01}. Gao and Rutherford also present a similar conjecture for exact Lagrangian fillings \cite[Question 4.13]{GaoRutherford}. While we do not answer either of these questions, Corollary \ref{cor: topological invariant} gives some evidence towards the former conjecture in our setting of 2-bridge links. Moreover, we believe that our main result gives a possible blueprint for tying together these two conjectures by realizing (possibly non-orientable) exact Lagrangian fillings as cluster charts in the ungraded augmentation variety.

\subsection{Further questions}
We collect here a number of questions pertaining to both contact topology and cluster algebras raised by our work. 
From the perspective of contact topology, the isomorphism we construct in Theorem \ref{thm: main} gives a recipe for understanding the connection between cluster charts and fillings algebraically. However, an intrinsically contact-geometric definition of a cluster structure -- in the sense of~\cite[Theorem 1.1]{Casals_weng} -- coming from non-orientable exact Lagrangian fillings lies beyond the scope of this work.
In particular, the lack of an integer-valued Poincar\'e duality in the non-orientable setting presents a significant challenge for geometrically realizing mutation and a cluster ensemble structure.

\begin{question}
    Give a contact-geometric description of cluster variables, quivers, and mutation for the cluster structure on $\Aug_u(\La[n_1, \dots, n_k])$.
\end{question}

Parallel to this question is the problem of understanding which Legendrian links admit cluster structures on their (possibly ungraded) augmentation varieties. 
\begin{question}
    For which Legendrian links $\La$, does $\Aug_u(\La)$ admit a cluster structure with cluster charts induced by (possibly non-orientable) exact Lagrangian fillings of $\La$?
\end{question}
\noindent This problem is thoroughly explored for Legendrian $(-1)$-closures of positive braids by~\cite{CGGLSS} but our work leaves room for a more general collection of links admitting possibly non-orientable exact Lagrangian fillings. 

One can also consider the topology of non-orientable exact Lagrangian fillings. Smoothly, any non-orientable surface is isotopic to an orientable surface with a single non-orientable 1-handle attachment. This prompts the following question,

\begin{question}\label{ques:non-orientable} Is every non-orientable exact Lagrangian filling Hamiltonian isotopic to an exact Lagrangian filling constructed via a single non-orientable 1-handle attachment?
\end{question}

See Example~\ref{ex:[n,2]} for decomposable fillings where this is true and the Hamiltonian isotopy is realized by swapping the order of pinch moves. See Example~\ref{ex:[4,4]} for decomposable fillings where there is no admissible pinching sequence with only one non-orientable pinch move. 

\bigskip

Our approach of computing the ungraded augmentation variety arose partly as a result of the relative complexity of computing the moduli of microlocal rank-one sheaves of Legendrians with nonzero rotation number or negatively graded Reeb chords; see e.g.~\cite[Remark 5.3]{STZ_ConstrSheaves} for a suggested approach in this setting.

\begin{question}
 Formulate a sheaf-theoretic analog to the cluster structure on $\Aug_u(\La[n_1, \dots, n_k])$ constructed in Theorem \ref{thm: main}.
\end{question}
\noindent An adaptation of the Legendrian weave calculus of \cite{Casals_zaslow} could yield a combinatorial tool to aid in this computation.

\bigskip
 
Smooth $2$-bridge links first appeared in a cluster-algebraic context in work of Lee and Schiffler~\cite{LeeSchiffler}, and were later considered in~\cite{NagaiTerashima, yacavone2019cluster, baziermatte2021knot}. However, their approach is significantly different from ours: 
they construct one single cluster algebra for a family of $2$-bridge links and show that the Jones polynomial of each $2$-bridge link in the family is equal to a specialization of an $F$-polynomial of the cluster algebra. Nevertheless, the cluster algebras they obtain resemble the cluster algebras we describe here.
On the other hand, the Kauffman polynomial is also known to specialize to the Jones polynomial. Thus, we raise the following question:
\begin{question}\label{question: cluster Kauffman}
Does the cluster algebra $\Aug_u(\La[n_1, \dots, n_k])$ encode the entire Kauffman polynomial of the topological link type of $\La[n_1, \dots, n_k]$?   
\end{question}

\noindent Further exploration in this vein might also involve relating explicit exact Lagrangian fillings to the snake graphs of Lee and Schiffler in order to provide a contact-geometric interpretation of the $F$-polynomial they obtain. Their work also prompts the question of whether, for other augmentation varieties $\Aug(\La)$ admitting a cluster structure, there exists an $F$-polynomial that specializes to the Jones polynomial of the underlying topological link type of $\Lambda$. See also \cite{baziermatte2021knot} for more constructions of cluster algebras related to topological links, as well as \cite{ShenWeng, galashin2022plabic} for additional cluster theoretic applications to polynomial invariants of knots.


\subsection*{Acknowledgements} 
O.~Capovilla-Searle is supported by the NSF Postdoctoral Research Fellowship DMS-2103188. J. Hughes was partially supported by the NSF grant DMS-1942363. We thank Lenhard Ng for providing key insight into how to define the graded dg-algebra map induced by a non-orientable pinch move and for suggesting the use of co-oriented base points. Thanks also to Eugene Gorsky, Wenyuan Li, Linhui Shen, and Eric Zaslow for useful discussions. Finally, special thanks to Roger Casals, Honghao Gao, Dan Rutherford, and Joshua Sabloff for helpful comments on a draft of this manuscript. \hfill$\Box$\\

\section{Background on Legendrian Links}

In this section we give the necessary background on Legendrian links, their exact Lagrangian fillings, and related invariants. We start by reviewing the Legendrian dg-algebra and then describe the ungraded augmentation variety. The final subsection contains information on exact Lagrangian fillings and their induced dg-algebra maps.
Let $\R^3_{\st}$ denote the standard contact Darboux ball $(\R^3, \xi=ker(dz-ydx))$. A link $\Lambda\subset \R^3_{\st}$ is \emph{Legendrian} if $T_p\Lambda\subset \xi_p$ for all $p\in \Lambda.$ There are two useful projections of a Legendrian in $\R^3_{\st}$: the \emph{front projection} given by $\Pi_{xz}(x,y,z)=(x,z)$, and the \emph{Lagrangian projection} $\Pi_{xy}(x,y,z)=(x,y)$.

\subsection{The Legendrian dg-algebra} Here we give the key background on the Legendrian differential graded algebra, also known as the Legendrian contact homology dg-algebra or the Chekanov-Eliashberg dg-algebra. For more details see~\cite{Etnyre_ng} for a recent survey.

\medskip

\textbf{The Generators of $\SA(\Lambda;R).$} 
Let $\Lambda$ be a oriented Legendrian link in $\R^3_{\st}$ and let $R$ be an algebra over a field of characteristic $2$. We decorate $\Lambda$ with a generic set of \emph{co-oriented} base points $\tt$, whose orientation can either agree or disagree with the orientation of $\Lambda$. A \emph{Reeb chord} $x$ of $\Lambda\subset \R^3_{\st}$ is a trajectory of the Reeb vector field $\partial_z$ of $\R^3_{\st}$ that begins and ends on $\Lambda.$ By genericity, we assume the base points do not intersect any of the Reeb chords. Let $\mathcal{R}(\Lambda)$ denote the set of Reeb chords of $\Lambda.$ As an algebra, the Legendrian dg-algebra $\SA(\Lambda;R)$ is a commutative\footnote{Typically, the Legendrian dg-algebra is defined as a non-commutative algebra, but for the purposes of obtaining a cluster algebra structure on the augmentation variety, we allow all generators to commute with each other.} algebra over $R$ with the generating set $\mathcal{R}(\Lambda)\cup \{s_i,s_i^{-1}\mid s_i\in \tt\}$, modulo the relation $s_is_i^{-1}=s_i^{-1}s_i=1$ for all $s_i\in \tt$. When $R=\mathbb{Z}_2$, we denote the Legendrian dg-algebra by $\SA(\Lambda)$.

\begin{rmk} As we will be working with the ungraded augmentation variety, we ignore the grading of elements in $(\SA(\Lambda;R),\partial)$ throughout this article. In particular, all augmentations, dg-algebra homotopies, normal rulings, and Morse complex sequences are ungraded. 
\end{rmk}

\begin{rmk} Sometimes it is convenient to think of a base point $s\in \tt$ not as an invertible generator for $\SA(\Lambda;R)$ but as an invertible element in $R$. Note that these two interpretations bear no difference algebraically.
\end{rmk}

\textbf{The Differential $\partial.$} The differential $\partial$ counts rigid $\mathcal{J}$-holomorphic disks in the symplectization $(\R_t\times \R^3, d(e^t\alpha))$ with boundary on $\R \times \Lambda$ and punctures that are asymptotic to Reeb chords of $\Lambda$. See~\cite[Section 3.5]{Etnyre_ng} for more details. We will now briefly describe how to combinatorially compute the differential of the Legendrian dg-algebra from a Lagrangian projection of $\Lambda$.

Observe first that every crossing of the Lagrangian projection $\Pi_{xy}(\Lambda)$ corresponds to a Reeb chord of $\Lambda.$ The four quadrants of every crossing in $\Pi_{xy}(\Lambda)$ are decorated with a Reeb sign. The two quadrants where if one traverses the boundary of a quadrant near the crossing in the counterclockwise direction one moves from an understrand to an overstrand are decorated with a positive Reeb sign. The other two quadrants are decorated with a negative Reeb sign. Each rigid $\J$-holomorphic disk with $m+1$ boundary punctures labeled by $p, q_1, \dots, q_m$
is projected to an immersed disk in $\R^2$, 
$$\Delta: (\mathbf{D}_{m+1}, \partial \mathbf{D}_{m+1}) \rightarrow (\R^2, \Pi_{xy}(\Lambda))$$
such that
\begin{itemize}
	\item a neighborhood of each puncture is mapped to one of the four quadrants of a crossing of $\Pi_{xy}(\Lambda)$;
	\item $\Delta(p)=a$ and the neighborhood of $a$ is mapped to a quadrant of the crossing $a$ decorated with a positive Reeb sign;
	\item for $i=1,\ldots, m$, $\Delta$ sends $q_i$ to $b_i$ and a neighborhood of $q_i$ is mapped to a quadrant of the crossing $b_i$ decorated with a negative Reeb sign.
\end{itemize}

 Let $\Delta(a)$ denote the moduli space of such rigid holomorphic disks $\Delta$. Define $w(\Delta)$ to be the product of Reeb chords at the negative Reeb sign corners and base points on the boundary of the disk as we go around the disk in the counterclockwise direction starting and ending at the corner at $a$. A co-oriented basepoint $ s_{i}$ contributes $s_{i}$ if the orientation on the boundary of the disk agrees with the co-orientation of $s_i$ and $s_{i}^{-1}$ otherwise.

For each Reeb chord $a$ of $\Lambda$ given by a crossing of $\Pi_{xy}(\Lambda)$, the differential is given by:
$$\partial(a)= \sum_{\Delta\in \Delta(a)}w(\Delta).$$

The Legendrian dg-algebra $(\SA(\Lambda),\partial)$ is well-known to be a Legendrian invariant in the following sense.

\begin{thm}[{\cite[Theorem 3.3]{Chekanov}}]\label{thm: stable tame isomorphism} If $\Lambda$ and $\Lambda'$ are Legendrian isotopic, then their Legendrian dg-algebras $(\SA(\Lambda),\partial)$ and $(\SA(\Lambda'),\partial')$ are stable tame isomorphic.
\end{thm}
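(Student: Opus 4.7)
The plan is to reduce the statement to a verification on a finite list of local moves, following the standard Reidemeister-move strategy. A generic one-parameter family of Legendrian links $\{\Lambda_t\}_{t \in [0,1]}$ projects to a one-parameter family of immersed curves $\Pi_{xy}(\Lambda_t)$ in $\R^2$, all of whose singularities are transverse double points except at finitely many times where one of two local degenerations occurs: a triple point (the Lagrangian-projection analogue of Reidemeister~III) or a tangential self-intersection at which a pair of crossings is born or dies (the analogue of Reidemeister~II). Reidemeister~I cannot appear because the Lagrangian projection of a Legendrian link has no cusps, and more fundamentally because the area bounded by the projection of any closed loop is zero. It therefore suffices to exhibit a stable tame isomorphism of dg-algebras across each of these two moves, together with planar isotopies that preserve the crossing set.

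A planar isotopy that keeps the crossing set in bijection yields the identity map on generators, which is a tame isomorphism since all moduli spaces of immersed disks are in canonical bijection before and after. For a triple-point move, three crossings $a_1, a_2, a_3$ collide and the local picture is reflected. Outside a neighborhood of the triple point, the Reeb chords and their disk counts are unchanged; inside, the newly-created (or destroyed) thin disks contribute additional terms to $\partial a_i$ for exactly one of the three chords. The change of generators sending that distinguished $a_i$ to $a_i + w$, where $w$ is a word in the other Reeb chords read off from the thin disks, and fixing all other generators, is then an elementary tame automorphism, and one checks directly that it intertwines the two differentials.

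For a Reidemeister~II-type move, two new crossings $e_1, e_2$ appear joined by an embedded bigon in the Lagrangian projection, so the new differential satisfies $\partial e_1 = e_2 + u$ for some word $u$ in the older generators (and $\partial e_2$ gains no new terms up to a similar bigon correction). A sequence of elementary tame automorphisms, triangularly changing coordinates $e_1 \mapsto e_1$, $e_2 \mapsto e_2 + u$, decouples the pair $(e_1, e_2)$ from the rest of the algebra and reduces the new differential to the trivial stabilization $\partial e_1 = e_2$, $\partial e_2 = 0$. Quotienting out this acyclic summand recovers the original dg-algebra, which is exactly the content of stable tame isomorphism.

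The main obstacle is the combinatorial control of immersed disks across the triple-point move: one must prove that every disk whose boundary interacts with the moving strands is either unchanged or corresponds to one of the thin disks read off from the local model, so that the proposed elementary automorphism is the correct one. Chekanov handles this via a disk-shrinking argument that bounds the complexity of any disk meeting the triple-point region, and the careful enumeration of the finitely many resulting cases is where the bulk of the technical work lies. Once this local analysis is in place, the inductive reduction along the generic path $\{\Lambda_t\}$ concatenates the individual stable tame isomorphisms into a global one between $(\SA(\Lambda),\partial)$ and $(\SA(\Lambda'),\partial')$.
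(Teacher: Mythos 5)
The paper does not give a proof of this statement; it is cited directly as Chekanov's theorem and used as a black box throughout. Your sketch correctly reproduces Chekanov's original strategy: a generic Legendrian isotopy is decomposed into planar isotopies, triple-point moves, and birth/death moves in the Lagrangian projection (with no kink moves, by the zero-enclosed-area constraint), and a tame isomorphism --- together with a stabilization in the birth/death case --- is exhibited across each elementary piece, the technical core being the combinatorial control of immersed disks near the degeneration.
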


In this theorem, ``stable'' means a composition of adding two free generators $e_1$ and $e_2$ and a differential relation $\partial e_1=e_2$; ``tame'' means a composition of elementary isomorphisms that send one of the free generators $a_i$ to $a_i+w$ where $w$ does not involve $a_i$ or to $ua_iv$ where $u,v$ are invertible elements in $\SA(\Lambda)$, while sending all other free generators to themselves.

\medskip

Let $I$ be the two-sided ideal in $\SA(\Lambda)$ generated by $\mathrm{Im} (\partial)$. The quotient $\mathcal{C}(\Lambda):=\SA(\Lambda)/I$ is called the \emph{characteristic algebra} of $\Lambda$ \cite[Definition 3.1]{ng_2003}. Following Theorem \ref{thm: stable tame isomorphism}, Ng proves the following:

\begin{thm}[{\cite[Theorem 3.4]{ng_2003}}]\label{thm: characteristic algebra isomorphism} If $\Lambda$ and $\Lambda'$ are Legendrian isotopic, then $\mathcal{C}(\Lambda)$ and $\mathcal{C}(\Lambda')$ are tame isomorphic after possibly adding free generators.
\end{thm}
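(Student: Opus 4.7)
My plan is to reduce the statement to Theorem \ref{thm: stable tame isomorphism} (Chekanov's invariance theorem) and then show that the two operations involved in stable tame isomorphism—tame isomorphisms of dg-algebras and stabilizations—each have a well-understood effect on the characteristic algebra. Once this is established, the theorem follows by induction on the length of a stable tame isomorphism connecting $\SA(\Lambda)$ to $\SA(\Lambda')$.

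The first step is to handle tame isomorphisms. Suppose $\phi: \SA(\Lambda) \to \SA(\Lambda')$ is a tame dg-algebra isomorphism, i.e.\ a composition of elementary isomorphisms of the form $a_i \mapsto a_i + w$ with $w$ not involving $a_i$, all of which intertwine $\partial$ and $\partial'$. Because $\phi$ is a chain map, $\phi(\mathrm{Im}(\partial)) = \mathrm{Im}(\partial')$, so $\phi$ descends to an isomorphism of the quotient algebras $\mathcal{C}(\Lambda) \to \mathcal{C}(\Lambda')$. Moreover, each elementary factor $a_i \mapsto a_i + w$ descends to an elementary isomorphism of the quotients with the same defining formula, so the induced map is tame in the sense of the characteristic algebra.

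The second step is to analyze stabilization. Adding free generators $e_1, e_2$ with $\partial e_1 = e_2$ (and $\partial e_2 = 0$) enlarges $\mathrm{Im}(\partial)$ to include $e_2$ together with every element of the form $e_2 u + e_1 \partial u$ for monomials $u$ in the original generators (using the Leibniz rule in characteristic 2). Passing to the quotient, $e_2$ becomes zero, and since the relations $e_2 u + e_1 \partial u = 0$ reduce to $e_1 \partial u = 0$ which is automatic as $\partial u = 0$ already in $\mathcal{C}(\Lambda)$, the generator $e_1$ survives as a new free generator subject to no relations. Thus stabilization of the dg-algebra corresponds precisely to adjoining one new free generator to the characteristic algebra, as required by the statement.

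Combining these two observations, any stable tame isomorphism $(\SA(\Lambda), \partial) \rightsquigarrow (\SA(\Lambda'), \partial')$ guaranteed by Theorem \ref{thm: stable tame isomorphism} descends to a tame isomorphism between $\mathcal{C}(\Lambda)$ and $\mathcal{C}(\Lambda')$ after adjoining finitely many free generators to each side to match stabilizations performed along the way. The main technical subtlety, and the step I would write out most carefully, is verifying that stabilization does not introduce hidden relations on $e_1$ in the characteristic algebra; this requires carefully tracking the Leibniz expansion of $\partial(e_1 u)$ for arbitrary monomials $u$ and confirming that every such expansion becomes trivial modulo $\mathrm{Im}(\partial)$ in the unstabilized algebra. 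Everything else is a routine bookkeeping argument once Chekanov's theorem is invoked.
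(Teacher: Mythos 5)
Your proposal is essentially correct, and since the paper does not reprove this statement but simply cites Ng (the only commentary being the remark that the adjoined free generators are precisely the $e_1$'s from the stabilizations), there is no internal proof to compare against; your argument matches the approach Ng takes in the cited reference. Both steps are sound: a tame isomorphism intertwines the differentials, hence carries $\mathrm{Im}(\partial)$ to $\mathrm{Im}(\partial')$ and descends to a tame isomorphism of the quotient algebras with the same elementary factors; and a stabilization adjoins one free generator to the characteristic algebra.

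One streamlining suggestion on the stabilization step, which eliminates the ``hidden relations'' worry you flag as the main technical subtlety. By the Leibniz rule, the two-sided ideal generated by $\mathrm{Im}(\partial')$ equals the two-sided ideal generated by $\{\partial'(g)\}$ over the algebra generators $g$ alone: for any monomial $g_1\cdots g_n$, $\partial'(g_1\cdots g_n)=\sum_i g_1\cdots(\partial' g_i)\cdots g_n$ already lies in the ideal generated by the $\partial'(g_i)$. Applied to the stabilization $\SA(\Lambda)\langle e_1,e_2\rangle$ with $\partial' e_1=e_2$, $\partial' e_2=0$, this gives that the new ideal is generated by $\mathrm{Im}(\partial)\cup\{e_2\}$, so the quotient is $\bigl(\SA(\Lambda)\langle e_1,e_2\rangle\bigr)/(I,e_2)\cong \mathcal{C}(\Lambda)\langle e_1\rangle$ on the nose, with no case analysis of terms like $e_2u+e_1\partial u$ needed. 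This is not a correction — your reduction arrives at the same conclusion — but it packages the bookkeeping into a single identity.
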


Note that the free generators are precisely the $e_1$'s in the stable part of Theorem \ref{thm: stable tame isomorphism}.

\subsection{Ungraded augmentations of the Legendrian dg-algebra}\label{sec:aug}

As it can be difficult to compute and compare stable tame isomorphism classes of Legendrian dg-algebras it is often easier to work with Legendrian invariants arising from augmentations.

\begin{defn}\label{defn:augmentation}
Let $\bF$ be an algebraically closed field of characteristic $2$. An ($\bF$-valued) \emph{ungraded augmentation} of $\Lambda$ is a $\mathbb{Z}_2$-differential algebra homomorphism $\e:(\SA(\Lambda;R),\partial)\longrightarrow (\bF,0)$. 
Let $\wAug_u(\Lambda)$ denote the set of ungraded augmentations of $\Lambda.$ 
\end{defn}

 The spectrum of the abelianization of the characteristic algebra $\mathcal{C}(\Lambda)$ is an affine scheme $X(\Lambda)$ over $\Z_2$. The set $\wAug_u(\Lambda)$ can then be understood algebraically as $X(\Lambda)(\bF)$, the set of $\bF$-points in $X(\Lambda)$. Since $\bF$ is assumed to be algebraically closed, $\wAug_u(\Lambda)=X(\Lambda)(\bF)$ is automatically an affine variety. The following is a direct consequence of Theorem \ref{thm: characteristic algebra isomorphism}.

\begin{cor}[{\cite[Corollary 3.15]{ng_2003}}]\label{cor: isomorphism before dgahtpy} If $\Lambda$ and $\Lambda'$ are Legendrian isotopic, then $\wAug(\Lambda)$ is isomorphic to $\wAug(\Lambda')$ up to multiplying by affine space factors.
\end{cor}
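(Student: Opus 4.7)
The plan is to deduce this corollary directly from Theorem \ref{thm: characteristic algebra isomorphism} by passing from characteristic algebras to their spectra. The first step is to verify the identification $\wAug_u(\Lambda) = X(\Lambda)(\bF)$ that is asserted in the paragraph preceding the corollary. This proceeds by observing that any ungraded augmentation $\e : \SA(\Lambda) \to \bF$ satisfies $\e \circ \partial = 0$, since the target differential is zero; consequently $\e$ vanishes on the two-sided ideal $I$ generated by $\mathrm{Im}(\partial)$ and descends to a homomorphism $\mathcal{C}(\Lambda) \to \bF$. Because $\bF$ is commutative, $\e$ further factors through the abelianization $\mathcal{C}(\Lambda)^{ab}$, giving the desired bijection with $X(\Lambda)(\bF) = \Spec(\mathcal{C}(\Lambda)^{ab})(\bF)$.

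Next, I would translate Theorem \ref{thm: characteristic algebra isomorphism} into a scheme-level statement. Legendrian isotopy of $\Lambda$ and $\Lambda'$ yields a tame isomorphism between $\mathcal{C}(\Lambda)$ and $\mathcal{C}(\Lambda')$ after adjoining finitely many free generators $e_1^{(1)}, \dots, e_n^{(1)}$ to the former and $e_1^{(2)}, \dots, e_m^{(2)}$ to the latter. Tame isomorphisms are compositions of elementary substitutions $a_i \mapsto a_i + w$ with $w$ not involving $a_i$, and such substitutions abelianize to honest isomorphisms of commutative rings. Since abelianizing a free adjunction of generators produces an ordinary polynomial extension, we obtain a commutative-ring isomorphism
\[
\mathcal{C}(\Lambda)^{ab}[e_1^{(1)}, \dots, e_n^{(1)}] \;\cong\; \mathcal{C}(\Lambda')^{ab}[e_1^{(2)}, \dots, e_m^{(2)}].
\]
Applying $\Spec(-)(\bF)$ and invoking the identification from the first step yields $\wAug_u(\Lambda) \times \bA^n \cong \wAug_u(\Lambda') \times \bA^m$, which is precisely the statement of the corollary.

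The main subtlety to address is verifying that the added generators contribute pure affine space factors rather than something more exotic such as a torus or a constrained subvariety. This is where the parenthetical remark following Theorem \ref{thm: characteristic algebra isomorphism} becomes crucial: the stabilization of Theorem \ref{thm: stable tame isomorphism} adjoins pairs $(e_1, e_2)$ together with the relation $\partial e_1 = e_2$, so the companion generator $e_2 = \partial e_1$ lies in $I$ and is killed in the characteristic algebra, leaving only the unconstrained $e_1$. Since $e_1$ is subject to no further relations in $\mathcal{C}(\Lambda)$, it is a genuinely free polynomial variable after abelianization and contributes exactly one $\bA^1$ factor to $\wAug_u(\Lambda)$ per stabilization. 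No deeper obstacles arise, and the argument is essentially a careful bookkeeping translation of Theorem \ref{thm: characteristic algebra isomorphism} into the language of affine varieties.
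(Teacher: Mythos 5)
Your proof is correct, and since the paper merely cites Ng's Corollary~3.15 and states that the result is ``a direct consequence of Theorem~\ref{thm: characteristic algebra isomorphism}'' without providing an argument, your unwinding is exactly the deduction the paper leaves implicit. In particular, you correctly identify the two points that actually need verifying: that $\wAug_u(\Lambda)$ coincides with the $\bF$-points of $\Spec(\mathcal{C}(\Lambda)^{ab})$ (because augmentations kill $\mathrm{Im}(\partial)$ and factor through abelianization), and that the free generators adjoined in Theorem~\ref{thm: characteristic algebra isomorphism} contribute genuine polynomial variables after abelianization because each stabilization $(e_1,e_2)$ with $\partial e_1 = e_2$ kills $e_2$ in $\mathcal{C}(\Lambda)$ and leaves $e_1$ subject to no relations, matching the parenthetical remark following the theorem.
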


\begin{defn}\label{dg-algebra-homotopy}
	Let $\Lambda$ be a Legendrian link in $\R^3_{\st}$. Two ungraded augmentations $\e_1, \e_2:(\mathcal{A}(\Lambda;R), \partial)\rightarrow(\bF,0)$  are \emph{dg-algebra homotopic} if there exists a $\mathbb{Z}_2$-linear map
	$h:\mathcal{A}(\Lambda;R)\rightarrow  \bF$ 
 such that
	\begin{enumerate}
		\item  $h(x\cdot y)=h(x)\cdot \e_2(y)+\e_1(x)\cdot h(y)$ for all $x,y\in \mathcal{A}(\Lambda;R)$; 
		\item For any Reeb chord $a$ of $\Lambda$, $\e_1(a)-\e_2(a)=h\circ\partial(a).$
	\end{enumerate}	
\end{defn}

\begin{defn}\label{defn:augvariety}
Let $\Lambda\subset \R^3_{\st}$. The \emph{ungraded augmentation moduli space (variety)} of $\Lambda$, $\Aug_{u}(\Lambda)$, is the quotient space $\wAug_{u}(\Lambda)/\sim$, where two augmentations are considered to be equivalent if they are dg-algebra homotopic.
\end{defn}

\begin{rmk} Note that by Definitions \ref{defn:augmentation} and \ref{defn:augvariety}, $\wAug_u(\Lambda)$ and $\Aug_u(\Lambda)$ only depend on $(\SA(\Lambda),\partial)$ as a differential algebra. Changing the choice of generating set of $\SA(\Lambda)$ does not change $\wAug_u(\Lambda)$ or $\Aug_u(\Lambda)$; thus, $\wAug_u(\Lambda)$ and $\Aug_u(\Lambda)$ are automatically invariant under tame isomorphisms.
\end{rmk}

Although $\wAug_u(\Lambda)$ is always an affine variety, it is not clear in general that $\Aug_u(\Lambda)$ is still an affine variety after quotienting by dg-algebra homotopies. Nevertheless, the following proposition remains true.

\begin{prop}\label{prop: set bijection} If $\Lambda$ and $\Lambda'$ are Legendrian isotopic, then the isomorphism in Corollary \ref{cor: isomorphism before dgahtpy} descends to a natural bijection between $\Aug_u(\Lambda)$ and $\Aug_u(\Lambda')$ as sets.
\end{prop}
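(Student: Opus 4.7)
The plan is to lift the isomorphism from Corollary \ref{cor: isomorphism before dgahtpy} to a bijection of quotient sets by handling its two constituent operations -- tame isomorphism and stabilization -- separately, and verifying that each respects the equivalence relation of dg-algebra homotopy. By Theorems \ref{thm: stable tame isomorphism} and \ref{thm: characteristic algebra isomorphism}, any Legendrian isotopy $\Lambda \leadsto \Lambda'$ is witnessed by a finite chain of such elementary moves between $\SA(\Lambda)$ and $\SA(\Lambda')$, so it suffices to check that each move descends to a bijection at the level of $\Aug_u$.

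For a tame isomorphism $\phi : (\SA_1, \partial_1) \to (\SA_2, \partial_2)$, precomposition $\e \mapsto \e \circ \phi$ already gives a bijection $\wAug_u(\SA_2) \to \wAug_u(\SA_1)$. I would then check that if $h$ is a dg-algebra homotopy between $\e_1$ and $\e_2$ in $\SA_2$ in the sense of Definition \ref{dg-algebra-homotopy}, then $h \circ \phi$ is a dg-algebra homotopy between $\e_1 \circ \phi$ and $\e_2 \circ \phi$: the Leibniz rule transfers because $\phi$ is an algebra map, and the chain relation $(\e_1 + \e_2) \circ \phi = h \circ \partial_2 \circ \phi = h \circ \phi \circ \partial_1$ transfers because $\phi$ intertwines the differentials. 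Hence $\phi^{*}$ descends to a bijection of augmentation varieties.

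For a stabilization adjoining free generators $e_1, e_2$ with $\partial e_1 = e_2$, the restriction map $\wAug_u(\SA^{\mathrm{stab}}) \to \wAug_u(\SA)$ is surjective with fiber $\A^1$ parametrized by $\e(e_1)$, since $\e \circ \partial = 0$ forces $\e(e_2) = \e(\partial e_1) = 0$ while $\e(e_1)$ is unconstrained; this is the extra affine factor recorded in Corollary \ref{cor: isomorphism before dgahtpy}. I claim each such fiber collapses to a single point in $\Aug_u$. Given $\e, \e'$ in a common fiber, define $h$ on generators by $h(e_2) = \e(e_1) + \e'(e_1)$ and $h = 0$ on all other generators (including $e_1$ and the invertible base-point generators $s_i^{\pm 1}$), then extend to $\SA^{\mathrm{stab}}$ via the Leibniz rule of Definition \ref{dg-algebra-homotopy}(1). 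The chain relation needs to be verified only on generators: on any original Reeb chord $a$, the word $\partial a$ lies in the subalgebra generated by the original generators (which the stabilization leaves untouched), so $h(\partial a) = 0 = \e(a) + \e'(a)$; on $e_1$ both sides equal $\e(e_1) + \e'(e_1)$; and on $e_2$ both sides vanish.

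The main obstacle is to ensure that the Leibniz-extension of $h$ is globally consistent with the algebra relations, most notably $s_i s_i^{-1} = 1$. Applying Leibniz to this relation forces $h(s_i^{-1})$ to be determined by $h(s_i)$, and since $h(s_i) = 0$ by construction we get $h(s_i^{-1}) = 0$, consistently. Composing the bijections produced by the tame-isomorphism and stabilization steps along the stable tame isomorphism of Theorem \ref{thm: stable tame isomorphism} then yields a bijection $\Aug_u(\Lambda) \to \Aug_u(\Lambda')$, with naturality following from the functoriality of each elementary step in its input.
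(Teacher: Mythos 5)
Your proof is correct and takes essentially the same approach as the paper: the paper's one-line argument for Proposition \ref{prop: set bijection} is precisely your stabilization step, defining $h(e_2) := \e_1(e_1) - \e_2(e_1)$ and $h = 0$ on other generators to collapse the affine factors, while the tame-isomorphism invariance you verify explicitly is handled in the paper by the remark preceding the proposition (that $\Aug_u$ depends only on $\SA(\Lambda)$ as a differential algebra, not on a choice of generators).
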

\begin{proof} Note that the extra affine space factors in Corollary \ref{cor: isomorphism before dgahtpy} correspond to the free generators $e_1$'s in the stable part of Theorem \ref{thm: stable tame isomorphism}. If two augmentations $\e_1$ and $\e_2$ differ only at such a free generator $e_1$, then we can define $h(e_2):=\e_1(e_1)-\e_2(e_1)$ and deduce that $\e_1$ and $\e_2$ are dg-algebra homotopic. This shows that the affine space factors in $\wAug_u(\Lambda)$ and $\wAug_u(\Lambda')$ do get quotiented out under dg-algebra homotopies.
\end{proof}

We will prove in Proposition \ref{prop:dgahtpy} that $\Aug_u(\Lambda[n_1,\dots, n_k])$ is an affine variety for any Legendrian rational form $\Lambda[n_1,\dots, n_k]$ (see Definition~\ref{def: Legendrian rational form}). Thus, we can use this Legendrian rational form and Proposition \ref{prop: set bijection} to endow $\Aug_u(\Lambda)$ with an affine variety structure for any $\Lambda$ that is Legendrian isotopic to $\Lambda[n_1,\dots, n_k]$. Thus, we can call $\Aug_u(\Lambda)$ the ungraded augmentation variety with no ambiguity for the set of links we consider in this article. Moreover, we will show in Corollary \ref{cor: topological invariant} that $\Aug_u(\Lambda)$ of Legendrian $2$-bridge links $\Lambda$ that admit Legendrian rational forms do not depend on the choice of Legendrian rational forms of $\Lambda$.

\begin{rmk}\label{rmk:splitdgahmtpy}
By~\cite[Proposition 5.19]{aug_sheaves}, augmentations up to dg-algebra homotopy are in bijection with objects of the augmentation category $\Aug_+$ for Legendrian knots. For links, one should consider split-dg-algebra homotopies (see~\cite[Definition 5.3]{wig}) instead of dg-algebra homotopies to obtain an equivalence between the set of augmentations of $\Lambda$ up to split dg-algebra homotopy with $\Aug_+(\Lambda)$~\cite[Proposition 5.5]{wig}. As split dg-algebra homotopies are a refinement of dg-algebra homotopies that do not greatly affect our results for $\Lambda[n_1, \ldots, n_k]$ we omit them. Note that these equivalences are stated for graded augmentations but carry over for ungraded augmentations. It is also worth noting that for Legendrian knots (and graded augmentations) there is an $A_{\infty}$ equivalence between the augmentation category $\Aug_+(\Lambda)$ and the category of microlocal rank-one sheaves with singular support on $\Lambda$~\cite[Theorem 7.1]{aug_sheaves}. This result is expected to extend for links.  It is still not known if there is a similar equivalence between an $\Aug_+$ category constructed from ungraded augmentations and a ``suitable" moduli space of sheaves.
\end{rmk}

\subsection{Exact Lagrangian fillings}\label{sec:cobordism}

\begin{defn}\label{defn:cobord} 
	Let $\Lambda_{+}$ and $\Lambda_{-}$ be Legendrian links in $\R^{3}_{\st}$.  
	An  \emph{exact Lagrangian cobordism} $L$ from $\Lambda_{-}$ to $\Lambda_{+}$ is an embedded Lagrangian surface in the symplectization $(\R_t\times \R^{3}, d(e^t(dz-y dx)))$
	   that  has  cylindrical ends  and is exact in the following sense:  
for some $N>0$, 
	\begin{enumerate}
		\item  $L \cap ([-N,N]\times Y)$ is compact,
		\item  $L \cap ([N,\infty)\times Y)=[N,\infty)\times \Lambda_{+}$, 
		\item  $L\cap ((-\infty,-N]\times Y)=(-\infty,-N]\times \Lambda_{-}$, and 
	\item there exists  a function $f: L \rightarrow \R$  and constants $\mathfrak c_\pm$ such that 
		$e^t\alpha|_{L} = df$, where $f|_{(-\infty, -N] \times \Lambda_{-}} = \mathfrak c_{-}$, and $f|_{[N, \infty) \times \Lambda_{+}} = \mathfrak c_{+}$.
	\end{enumerate}
	An  \emph{exact Lagrangian filling}  $L$ of a Legendrian link $\Lambda$ is an exact Lagrangian cobordism from $\emptyset$ to $\Lambda$. 
\end{defn}

Exact Lagrangian fillings give rise to $k$-systems of augmentations of the Legendrian dg-algebra.

\begin{defn}\label{defn:systemsaug}\cite[Definition 3.9]{CasalsNg}
A \emph{$k$-system of ungraded augmentations} of a Legendrian link $\Lambda$ is a $\mathbb{Z}_2$-differential algebra homomorphism
$$\e: (\mathcal{A}(\Lambda; R), \partial) \longrightarrow (\bF[s_1^{\pm1}, \ldots, s_k^{\pm1}],0).$$
Two $k$-systems of ungraded augmentations 
$$ \e: \mathcal{A}(\Lambda;R) \longrightarrow \bF[s_1^{\pm1}, \ldots, s_k^{\pm1}]~\text{and}~ \e ': \mathcal{A}(\Lambda;R) \longrightarrow \bF[s_1'^{\pm1}, \ldots, s_k'^{\pm1}]$$
are equivalent if there exists an $\bF$-algebra automorphism $$\phi: \bF[s_1^{\pm1}, \ldots, s_k^{\pm1}] \rightarrow \bF[s_1'^{\pm1}, \ldots, s_k'^{\pm1}]$$ such that $\e '=\phi \circ \e$.
\end{defn}

A $k$-system of ungraded augmentations $\e: (\mathcal{A}(\Lambda; R), \partial) \longrightarrow (R[s_1^{\pm1}, \ldots, s_k^{\pm1}],0),$ can be thought of as a family of augmentations. By post-composing with any $\bF$-algebra homomorphism $\eta:\bF[s_1^{\pm, 1} \ldots, s_k^{\pm 1} ]\longrightarrow \bF$, we can obtain an ungraded augmentation 
$$\eta\circ \e: (\mathcal{A}(\Lambda; R), \partial_{\Lambda}) \xrightarrow{\e} 
(R[s_1^{\pm1}, \ldots, s_k^{\pm1}],0)\xrightarrow{\eta} (\F, 0).$$

\begin{thm}[\cite{ehk}]~\label{thm:induceaug}
Let $\Lambda$ be a Legendrian link in $\R^3_{\st}$ with a set of base points $\tt$ (with at least one base point per link component). Suppose that $L$ is an embedded, Maslov-$1$ exact Lagrangian filling of the Legendrian link $\Lambda\subset \R^3_{\st}$. Then $L$ induces a $k$-system of ungraded augmentations
$$\e_{L}: (\SA(\Lambda;  \bF[H_1(L\backslash \tt, \Lambda\backslash \tt;\Z)]), \partial)\rightarrow (\bF[H_1(L\backslash \tt, \Lambda\backslash \tt;\Z)], 0)$$
where 
$k=\rank(H_1(L\backslash \tt,\Lambda\backslash \tt;\Z))$. If $L$ and $L'$ are exact Lagrangian fillings of $\Lambda$ such that there exists a Hamiltonian isotopy from $L$ to $L'$ through exact Lagrangian fillings of $\Lambda$ fixing the boundary, there exists an invertible map $\phi: H_1(L'\backslash \mathfrak{t}, \Lambda\backslash \mathfrak{t};\mathbb{Z})\rightarrow H_1(L\backslash \mathfrak{t}, \Lambda\backslash \tt;\Z)$ such that $\eta\circ\e_{L}$ and $\eta \circ \phi\circ \e_{L'}$ are dg-algebra homotopic for any $\eta:\mathbb{F}[H_1(L\backslash \mathfrak{t}, \Lambda\backslash \mathfrak{t};\mathbb{Z})]\longrightarrow \mathbb{F}$."
\end{thm}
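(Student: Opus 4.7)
The plan is to reproduce the EHK-style construction of \cite{ehk} with two modest adaptations: replacing $H_1(L;\Z)$ by the relative homology $H_1(L\setminus\tt,\Lambda\setminus\tt;\Z)$ (which carries the base point data needed when $L$ is non-orientable), and working throughout in characteristic $2$ so that no spin/orientation data is required to sign the disks. First I would complete $L$ to a Lagrangian $\overline{L}\subset\R\times\R^3$ by attaching its positive cylindrical end $[N,\infty)\times\Lambda$, choose a generic cylindrical almost complex structure $\mathcal{J}$, and for each Reeb chord $a\in\mathcal{R}(\Lambda)$ consider the moduli space $\mathcal{M}_L(a)$ of $\mathcal{J}$-holomorphic disks with one positive puncture asymptotic to $a$ and boundary on $\overline{L}$. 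Under the Maslov-$1$ hypothesis the virtual dimension has a well-defined parity, and transversality gives a finite count for the zero-dimensional part. I would then define
\[
\e_L(a)=\sum_{u\in\mathcal{M}_L(a)_0} w(u)\cdot[\partial u],
\]
where $[\partial u]\in H_1(L\setminus\tt,\Lambda\setminus\tt;\Z)$ and $w(u)$ is the base point monomial swept by $\partial u$, and extend $\e_L$ to $\SA(\Lambda;R)$ as an algebra map sending each $s_i\in\tt$ to its image under $\Lambda\setminus\tt\hookrightarrow L\setminus\tt$.

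Next I would verify that $\e_L$ is a chain map, i.e. that $\e_L\circ\partial=0$. This follows from SFT compactness applied to the $1$-dimensional component of $\mathcal{M}_L(a)$: its ends are precisely two-level broken buildings in which a symplectization disk with positive puncture $a$ and negative punctures $b_1,\ldots,b_m$ (a term of $\partial a$) is glued to $\mathcal{M}_L(b_i)_0$ disks on $\overline{L}$ at each $b_i$. Because relative homology classes of disk boundaries concatenate correctly across the node and because we work in $\bF$ of characteristic $2$, summing over ends yields exactly $\e_L(\partial a)=0$. This in particular shows $\e_L$ lands in $(\bF[H_1(L\setminus\tt,\Lambda\setminus\tt;\Z)],0)$, and the rank count $k=\mathrm{rank}\,H_1(L\setminus\tt,\Lambda\setminus\tt;\Z)$ is purely topological.

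For the Hamiltonian invariance, I would pick a smooth one-parameter family $\{L_s\}_{s\in[0,1]}$ of exact Lagrangian fillings realizing the isotopy with $L_0=L$, $L_1=L'$, together with a generic family $\mathcal{J}_s$. Consider the parametric moduli $\mathcal{M}^{\mathrm{par}}(a)=\bigsqcup_s\mathcal{M}_{L_s}(a)$. Generically its virtual dimension $-1$ stratum is empty except at finitely many bifurcation times $s_1<\cdots<s_N$, where handle slides, births/deaths, or boundary-degeneration phenomena occur. The concatenation of the $L_s$ along the isotopy gives a canonical isomorphism of relative $H_1$, hence the $\bF$-algebra automorphism $\phi$ matching the two coefficient rings. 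The dg-algebra homotopy $h$ for $\eta\circ\e_L$ and $\eta\circ\phi\circ\e_{L'}$ is then defined by counting parametric disks of virtual dimension zero (i.e. the $s$-dependent contributions). The boundary of the $1$-dimensional piece of $\mathcal{M}^{\mathrm{par}}(a)$ splits into endpoint terms giving $\eta\circ\e_L(a)-\eta\circ\phi\circ\e_{L'}(a)$ and broken configurations at interior bifurcations giving $h\circ\partial(a)$, yielding the required identity in Definition \ref{dg-algebra-homotopy}.

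The main obstacle is the bifurcation bookkeeping at non-orientable isotopy moments. In the orientable EHK setting, signs are tracked by spin structures and the coefficient ring is the genuine $H_1(L;\Z)$; here, because the filling may be non-orientable, there is no global orientation with which to consistently sign disk boundaries, so passing to $\mathbb{F}$ of characteristic $2$ is unavoidable, and the base point decorations on $\tt$ are what make the $H_1$ classes well-defined across the isotopy. The delicate point is to verify that relative homology classes add coherently across broken buildings and across bifurcation times when the disk boundary crosses base points on the cylindrical ends; once this is established, standard SFT gluing and transversality upgrade the count to the chain map $\e_L$ and produce the homotopy $h$.
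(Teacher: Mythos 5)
Your proposal reproduces the standard moduli-theoretic construction of Ekholm--Honda--K\'alm\'an, with exactly the two adaptations the paper invokes (working over $\bF$ of characteristic $2$ to dispense with orientation/spin data, and replacing $H_1(L;\Z)$ by $H_1(L\setminus\tt,\Lambda\setminus\tt;\Z)$ so that disk boundary classes are recorded via oriented base points); this is precisely the paper's approach, which simply cites \cite{ehk} and remarks that the graded, orientable proof carries over. One cosmetic point: in your formula $\e_L(a)=\sum_{u} w(u)\cdot[\partial u]$, the base-point monomial $w(u)$ and the relative class $[\partial u]$ are not independent data --- the monomial in $\bF[H_1(L\setminus\tt,\Lambda\setminus\tt;\Z)]$ \emph{is} the encoding of $[\partial u]$ --- so the count should just read $\e_L(a)=\sum_{u}[\partial u]$ in the group ring.
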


The proof of Theorem~\ref{thm:induceaug} is stated for graded augmentations and orientable fillings but holds in general.

\begin{rmk} Since $L$ is a surface with boundary, as a CW-complex, it deformation retracts onto its 1-skeleton. Thus, $H_1(L\backslash \tt,\Lambda\backslash \tt;\Z)$ has no torsion, which implies that $\bF[H_1(L\backslash \tt, \Lambda\backslash \tt;\Z)]$ is a Laurent polynomial ring, as required by Definition \ref{defn:systemsaug}.
\end{rmk}

\begin{rmk} In \cite{Pan}, Pan applied Poincar\'{e} duality and used $H_1(L;\Z)$ as coefficients in her study of orientable exact Lagrangian fillings. However, in the non-orientable setting, we do not have Poincar\'{e} duality at our disposal, so we have to go with the -- in a sense -- more direct choice $H_1(L\backslash \tt, \Lambda\backslash \tt;\Z)$. This choice of homology coefficients was also used in the construction of a cluster structure on decorated sheaf moduli spaces~\cite{Casals_weng}. 
\end{rmk}

An exact Lagrangian filling is called \emph{decomposable} if it is constructed with the following elementary cobordisms~\cite{ehk}.

\begin{itemize}
    \item \textbf{Cobordisms induced by traces of Legendrian isotopies}.  If $\Lambda_-$ is related to $\Lambda_+$ by a Legendrian isotopy, then there exists a cylindrical exact Lagrangian from $\Lambda_-$ to $\Lambda_+$ which is $\epsilon$ close (in the $C^0$ metric) to the trace of the Legendrian isotopy. For Legendrian links in $\R^3_{\st}$ there exists a set of Legendrian Reidemeister moves in both the Lagrangian projection and the front projection.
    \item \textbf{Minimum cobordism}. A minimum cobordism is the unique exact Lagrangian disk filling of the max-tb unknot $U$ up to Hamiltonian isotopy~\cite{eliashberg_polterovich}. Consider the disjoint union of a Legendrian $\Lambda_-$ and the max-tb unknot $U$. There exists an exact Lagrangian cobordism from $\Lambda_-$ to $\Lambda_-\sqcup U$ given by the union of the exact Lagrangian disk filling of $U$ and the cylinder $\R\times \Lambda_-.$
    \item \textbf{Saddle cobordisms}. Suppose that $a$ is a Reeb chord of $\Lambda_+$ that is both contractible and proper. A Reeb chord is contractible if there is a Legendrian isotopy of $\Lambda_+$ inducing a planar isotopy of $\Pi_{xy}(\Lambda_+)$ and ending in a
Legendrian where the height of the Reeb chord is arbitrarily small. See~\cite[Definition 4.3]{CasalsNg} for a definition of a proper Reeb chord. We know that $a$ corresponds to a crossing in $\Pi_{xy}(\Lambda_+).$ Replace said crossing with its $0$-resolution to obtain $\Pi_{xy}(\Lambda_-).$ Then, there exists an exact Lagrangian saddle cobordism $L_a$ from $\Lambda_-$ to $\Lambda_+.$ Such a procedure is referred to as a \emph{pinch move}. 
Topologically, a pinch move corresponds to adding a $1$-handle.
\end{itemize}

\begin{figure}[h]
	\centering
	\begin{tikzpicture}[scale=1.5]
		\node[inner sep=0] at (0,0) {\includegraphics[width=5 cm]{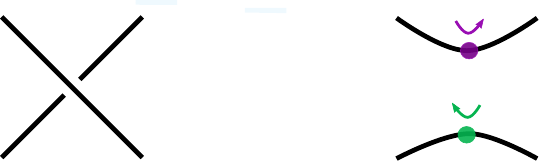}};
		\node at (-1.2,0.3){$a$};
		\node at (1.2,0.035){$s$};
		\node at (1.2,-0.5){$s$};
		\node at (0,0){$\boldsymbol{\rightarrow}$};
  \end{tikzpicture}
	\caption{A pinch move at a Reeb chord $a$, where the crossing $a$ in the Lagrangian projection is replaced with its $0$ resolution and two co-oriented base points $s$.} 
	\label{fig:pinch}
\end{figure}

We refer to a pinch move as  \emph{orientable} or \emph{non-orientable} according to the orientability of its saddle surface. Note that if $\Lambda_-$ is obtained from an orientable pinch move on an oriented Legendrian $\Lambda_+$ then $\Lambda_-$ has an induced orientation. In contrast, if $\Lambda_-$ is obtained from a non-orientable pinch move then $\Lambda_-$ has no canonical induced orientation. We now provide a streamlined combinatorial description of the dg-algebra map induced by a saddle cobordism using co-oriented base points. Under minor modification, this combinatorial treatment recovers previous descriptions such as \cite[Section 4]{CasalsNg} in the orientable case and \cite[Section 4.2]{orsola_roger} in the non-orientable case.

The co-orientation at a base point $s$ gives a direction perpendicular to the trace of the base point $s$ on the cobordism. Moreover, each base point
$s$ coming from a pinch move at a Reeb chord $a$,
 in fact, describes a relative homology class in $H_1(L\backslash \tt, \Lambda\backslash \tt;\Z)$. To see this, note that the trace of the Reeb chord $a$ gives a $\J$-holomorphic disk $D_a$ in the complement of $L_a$, and its boundary $\partial D_a$ can be viewed as a $1$-cycle on $L_a$ relative to the boundary $\partial L_a$. Note that the underlying manifold of $\partial D_a$ is the ``unstable submanifold'' of the saddle cobordism. On the other hand, the trace of the newly created pair of base points is the ``stable submanifold'' $S$ of the saddle cobordism (see Figure \ref{fig: saddle cobordism}). For each point $s\in S$ that is not the saddle critical point, we can homotope the relative $1$-cycle $\partial D_a$ along $S$ so that $\partial D_a$ crosses $S$ in exactly the direction given by the orientation of the base point $s$. We consider $s$ as representing this homotoped relative 1-cycle.

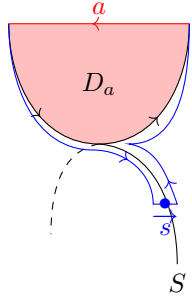
\begin{figure}[H]
    \centering
    \begin{tikzpicture}[scale=0.8]
        \draw [pink, fill=pink] (-1.5,2) to [out=-90,in=180] (0,0) to [out=0,in=-90] (1.5,2)--cycle;
        \node at (0,1) [] {$D_a$};
        \draw [decoration={markings,mark=at position 0.3 with {\arrow{>}}, mark=at position 0.7 with {\arrow{>}}},postaction={decorate}] (-1.5,2) to [out=-90,in=180] (0,0) to [out=0,in=-90] (1.5,2);
        \draw (0,0) to [out=-5,in=90] (1.3,-2) node [below] {$S$};
        \draw [dashed] (0,0)  to [out=-170,in=90] (-0.8,-1.5);
        \draw [red, decoration={markings,mark=at position 0.5 with {\arrow{<}}},postaction={decorate}] (-1.5,2) -- node [above] {$a$} (1.5,2);
        \draw [blue, decoration={markings,mark=at position 0.4 with {\arrow{>}},mark=at position 0.6 with {\arrow{>}}},postaction={decorate}] (-1.5,2) to [out=-90,in=180] (-0.1,-0.1) to [out=0,in=90] (0.9,-1)to [out=0,in=180] (1.3,-1) to [out=110,in=-10] (0.5,0) to [out=0,in=-90] (1.5,2);
        \node [blue]  at (1.1,-1) [] {$\bullet$};
        \node [blue]  at (1.1,-1) [below] {$\overrightarrow{s}$};
    \end{tikzpicture}
    \caption{Topological interpretation of co-oriented base points as relative cycles in a saddle cobordism.}
    \label{fig: saddle cobordism}
\end{figure}

Suppose that $\Lambda_-$ is obtained from $\Lambda_+$ by a single pinch move at a contractible and proper Reeb chord $a$ of $\Lambda_+$ and let $L_a:\Lambda_-\longrightarrow \Lambda_+$ denote the corresponding Lagrangian cobordism. Then, $\Lambda_-$ has two additional \textbf{co-oriented} base points $s$ as shown in Figure~\ref{fig:pinch}. The dg-algebra homomorphism $\Phi_{L_a}:\SA(\Lambda_+;R)\longrightarrow \SA(\Lambda_-;R[s^{\pm 1}])$ induced by the saddle cobordism $L_a$ is defined as follows. Let $\Delta_a(a_i),$ denote the set of immersed disks for $\Lambda_+$ where all corners are convex and there are only two positive corners: one at $a$ and one at $a_i$. For $\Delta\in \Delta_a(a_i)$, let $w_1(\Delta)$ denote the product of the negative corners and base points that we encounter as we traverse $\partial \Delta$ counterclockwise from $a_i$ to $a$. Let $w_2(\Delta)$ denote the product of the negative corners and base points that we encounter as we traverse $\partial \Delta$ counterclockwise from $a$ to $a_i$. Then, 
\[
\Phi_{L_a}(a)=s, \quad \quad \Phi_{L_a}(t)=t \quad \text{for any co-oriented base point $t$, and} 
\]
\[
\Phi_{L_a}(b)=b+\sum_{\Delta\in \Delta_a(b)} \Phi_{L_a}(w_1(\Delta))s^{-1}w_2(\Delta) \quad \text{for any Reeb chord $b\neq a$}.
\]
Note that $\Phi_{L_a}$ is well defined: if we order the Reeb chords $a_1, \ldots, a_r$ in increasing order of height then any disk $\Delta\in \Delta_a(a_j)$ can only have negative punctures at $a_1, \ldots, a_{j-1}.$ Observe also that in comparison to the combinatorial pinch map given in~\cite[Definition 4.6]{CasalsNg} we do not keep track of the orientation of the disks $\Delta$ relative to $\Lambda_+$ or signs as we consider both $\Lambda_+$ and $\Lambda_-$ as unoriented links, and work over a field of characteristic 2.

\begin{exmp}\label{ex:pinch} Consider the Lagrangian projection of a Legendrian tangle shown in Figure~\ref{fig:pinching ex} with four crossings corresponding to Reeb chords $a_1, a_2,a_3,$ and $a_4$ from left to right. If we pinch at the Reeb chords $a_2$ and $a_3$ in order, then the combinatorial dg-algebra map is the identity for any Reeb chord of $\Lambda$ not shown in this tangle and on the Reeb chords $a_1, \ldots, a_4$ it is given by:
\begin{align*}
\Phi_{L_{a_3}}\circ \Phi_{L_{a_2}}(a_1)&=\Phi_{L_{a_3}}(a_1+s_1^{-1})=a_1+s_1^{-1}+s_1^{-2}s_2^{-1},\\
\Phi_{L_{a_3}}\circ \Phi_{L_{a_2}}(a_2)&=\Phi_{L_{a_3}}(s_1)=s_1,\\
\Phi_{L_{a_3}}\circ\Phi_{L_{a_2}}(a_3)&=\Phi_{L_{a_3}}(a_3+s_1^{-1})=s_2+s_1^{-1},~\text{and}\\
\Phi_{L_{a_3}}\circ\Phi_{L_{a_2}}(a_4)&=\Phi_{L_{a_3}}(a_4)=a_4+s_2^{-1}.
\end{align*}
The disk $\Delta\in \Delta_{a_3}(a_1)$ that contributes the term $s_1^{-2}s_2^{-1}$ has $w_1(\Delta)=w_2(\Delta)=s_1^{-1}.$ 
\end{exmp}

\begin{figure}[H]
	\centering
	\begin{tikzpicture}[scale=1.5]
		\node[inner sep=0] at (0,0) {\includegraphics[width=9 cm]{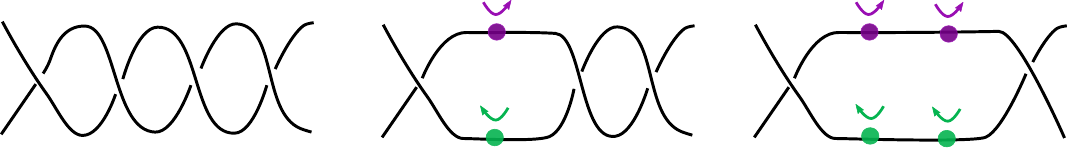}};
		\node at (-2.75,-0.4){$a_1$};
  	\node at (-2.3,-0.4){$a_2$};
   	\node at (-1.8,-0.4){$a_3$};
    	\node at (-1.4,-0.4){$a_4$};
        \node at (-0.6,-0.4){$a_1$};
  	\node at (-0.15,-0.5){$s_1$};
        \node at (-0.15,0){$s_1$};
   	\node at (0.25,-0.4){$a_3$};
    	\node at (0.7,-0.4){$a_4$};
             \node at (1.5,-0.4){$a_1$};
  	\node at (1.9,-0.5){$s_1$};
        \node at (1.9,0.05){$s_1$};
   	\node at (2.3,-0.5){$s_2$};
        \node at (2.3,0.05){$s_2$};
    	\node at (2.75,-0.4){$a_4$};
  \end{tikzpicture}
	\caption{A pinching sequence: pinch $a_2$ and then $a_3.$} 
	\label{fig:pinching ex}
\end{figure}

\section{\texorpdfstring{The Ungraded Augmentation Variety $\Aug_u(\La[n_1, \dots, n_k])$}{}}

In this section, we compute the Legendrian dg-algebra $\SA(\La[n_1,\dots, n_k])$ of the max-tb Legendrian $2$-bridge link $\La[n_1, \dots, n_k]$ in Legendrian rational form and use these computations to describe its ungraded augmentation variety $\Aug_u(\La[n_1, \dots, n_k])$. We first compute the differential of $\SA(\La[n_1, \dots, n_k])$ via a family of recursively defined polynomials known as continuants. We then describe explicit continuant polynomials that appear in the equations cutting out $\Aug_u(\La[n_1, \dots, n_k])$. Finally, we compute all possible dg-algebra homotopies of the ungraded augmentations of $\mathcal{A}(\La[n_1, \dots, n_k])$. We begin by defining continuants, a family of polynomials that greatly simplify our computations. The appearance of continuants in the computation of the Legendrian dg-algebra of $2$-bridge links below can be thought of as a reflection of the $A_n$-type cluster algebras we produce. Similar matricial computations also appear in defining the augmentation variety of positive braid closures; see for example~\cite{Kalman-braid, Gao_shen_weng_2, CGGS1}. We end by showing in Corollary~\ref{cor: topological invariant} that the ungraded augmentation variety of $2$-bridge links is a topological invariant.

\subsection{Continuants and Legendrian rational forms}\label{sub: continuants}Smoothly, a $2$-bridge link is a nontrivial link that admits a projection with exactly four vertical tangencies, two minima, and two maxima. They are known to be classified by equivalence classes of rational numbers. We begin by reviewing continuants, a family of recursively defined polynomials that were introduced by Euler in his study of continued fractions. We then review some basic properties of $2$-bridge links, and their relation to continuants which will allow us to prove Corollary~\ref{cor: topological invariant}. We end by defining the Legendrian representatives of $2$-bridge links that we consider in this article. In this subsection, we temporarily allow \textbf{any characteristic} for the coefficient field and review some basic properties of continuants. In the following section, we will return to characteristic $2$ to compute the differentials for Legendrian dg-algebras for Legendrian $2$-bridge links.  

\begin{defn}\label{def of K_n} \emph{Continuants} are polynomials defined recursively by
\[
 K_n(x_1, \dots x_n)=x_1K_{n-1}(x_2, \dots x_n)- K_{n-2}(x_3, \dots x_n) \quad \text{with} \quad K_0()=1,~\text{and}~K_1(x_1)=x_1.
\]
\end{defn}

\begin{rmk} The formulation of continuants differs from an alternative formulation in some literature which has a $+$ sign instead of a $-$ sign in front of $K_{n-2}(x_3,\dots, x_n)$ (e.g. \cite[Section 1]{Frame49}).
\end{rmk}

\begin{lem}\label{lem: braid matrix prod}Let $B(x)=\begin{pmatrix}
  x & -1\\ 1 & 0  
\end{pmatrix}$. Then 
\[
\begin{pmatrix}K_{n}(x_1, \dots, x_{n})& -K_{n-1}(x_1, \dots, x_{n-1})\\
K_{n-1}(x_2, \dots x_{n}) & -K_{n-2}(x_2, \dots, x_{n-1}) \end{pmatrix}=B(x_1)\dots B(x_n).
\]
\end{lem}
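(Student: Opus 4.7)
The plan is to prove this by a straightforward induction on $n$, with the key tactical choice being to extend the product by left-multiplication rather than right-multiplication so that the recursion given in Definition~\ref{def of K_n} applies directly to the entries.

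First I would dispose of the base cases. For $n=1$ (adopting the convention $K_{-1}() = 0$ so that the $(2,2)$ entry reads correctly), the matrix $B(x_1)$ matches the claimed formula on the nose. For $n=2$, one checks directly that
\[
B(x_1)B(x_2) = \begin{pmatrix} x_1 x_2 - 1 & -x_1 \\ x_2 & -1 \end{pmatrix} = \begin{pmatrix} K_2(x_1,x_2) & -K_1(x_1) \\ K_1(x_2) & -K_0() \end{pmatrix},
\]
where $K_2(x_1,x_2) = x_1 K_1(x_2) - K_0()$ by definition.

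For the inductive step, rather than multiplying on the right by $B(x_{n+1})$, I would write $B(x_1)B(x_2)\cdots B(x_n) = B(x_1)\cdot\bigl[B(x_2)\cdots B(x_n)\bigr]$ and apply the inductive hypothesis to the bracketed product. This yields
\[
B(x_1) \begin{pmatrix} K_{n-1}(x_2,\dots,x_n) & -K_{n-2}(x_2,\dots,x_{n-1}) \\ K_{n-2}(x_3,\dots,x_n) & -K_{n-3}(x_3,\dots,x_{n-1}) \end{pmatrix}.
\]
Carrying out the multiplication, the $(1,1)$ entry becomes $x_1 K_{n-1}(x_2,\dots,x_n) - K_{n-2}(x_3,\dots,x_n)$, which is precisely $K_n(x_1,\dots,x_n)$ by Definition~\ref{def of K_n}. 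Similarly, the $(1,2)$ entry becomes $-x_1 K_{n-2}(x_2,\dots,x_{n-1}) + K_{n-3}(x_3,\dots,x_{n-1}) = -K_{n-1}(x_1,\dots,x_{n-1})$, again by Definition~\ref{def of K_n}. The bottom row matches the claim trivially, as it is inherited unchanged from the inductive hypothesis.

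There is no real obstacle here; the only subtlety worth flagging is the choice of left-multiplication. Had I chosen right-multiplication by $B(x_{n+1})$, the $(1,1)$ entry would read $x_{n+1} K_n(x_1,\dots,x_n) - K_{n-1}(x_1,\dots,x_{n-1})$, forcing me to invoke the symmetric ``right-sided'' recursion for continuants, which is a true but nontrivial property not stated in the definition. Extending on the left sidesteps this and keeps the proof a one-line unwinding of Definition~\ref{def of K_n}.
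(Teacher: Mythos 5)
Your proof is correct and follows essentially the same route as the paper: induct on $n$, peel off $B(x_1)$ on the left, apply the inductive hypothesis to $B(x_2)\cdots B(x_n)$, and unwind the recursion in Definition~\ref{def of K_n}. Your remark about left- versus right-multiplication is well taken --- the right-sided recursion is in fact proved later in the paper (Lemma~\ref{lem: alternative recursion}) as a consequence of this lemma, so invoking it here would be circular.
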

\begin{proof}This follows inductively from applying the recursion relation in computing the matrix product
\[
\begin{pmatrix} x_1 & -1 \\ 1 & 0\end{pmatrix}\begin{pmatrix}K_{n-1}(x_2, \dots, x_{n})& -K_{n-2}(x_2, \dots, x_{n-1})\\
K_{n-2}(x_3, \dots x_{n}) & -K_{n-3}(x_3, \dots, x_{n-1}) \end{pmatrix}=\begin{pmatrix}K_{n}(x_1, \dots, x_{n})& -K_{n-1}(x_1, \dots, x_{n-1})\\
K_{n-1}(x_2, \dots x_{n}) & -K_{n-2}(x_2, \dots, x_{n-1}) \end{pmatrix}.\qedhere\]
\end{proof}

\begin{lem}\label{lem: alternative recursion} The continuants also satisfy the following recursion relation:
\[
K_n(x_1,\dots, x_n)=K_{n-1}(x_1,\dots, x_{n-1})x_n-K_{n-2}(x_1,\dots, x_{n-2}).
\]
\end{lem}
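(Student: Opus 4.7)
The plan is to derive this right-sided recursion directly from Lemma \ref{lem: braid matrix prod}, which already packages the left-sided recursion into a matrix product. Specifically, I would consider the product $B(x_1)B(x_2)\cdots B(x_n)$ and compute it in two different ways by associating the factors differently.

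First, applying Lemma \ref{lem: braid matrix prod} to the full product gives the matrix whose $(1,1)$-entry is $K_n(x_1,\dots,x_n)$. Next, I would regroup the product as $\bigl[B(x_1)\cdots B(x_{n-1})\bigr]\cdot B(x_n)$, apply Lemma \ref{lem: braid matrix prod} to the first factor so that its top row reads $\bigl(K_{n-1}(x_1,\dots,x_{n-1}),\; -K_{n-2}(x_1,\dots,x_{n-2})\bigr)$, and then multiply on the right by $B(x_n)=\left(\begin{smallmatrix}x_n & -1 \\ 1 & 0\end{smallmatrix}\right)$. The $(1,1)$-entry of this latter computation is exactly $K_{n-1}(x_1,\dots,x_{n-1})\, x_n - K_{n-2}(x_1,\dots,x_{n-2})$. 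Equating the two expressions for the $(1,1)$-entry yields the claimed identity.

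As an alternative (and essentially equivalent) route, one could prove the identity by induction on $n$ using the original left-sided recursion of Definition \ref{def of K_n}, with base cases $n=1,2$ verified by hand; the inductive step would expand $K_n(x_1,\dots,x_n) = x_1 K_{n-1}(x_2,\dots,x_n) - K_{n-2}(x_3,\dots,x_n)$, apply the inductive hypothesis to $K_{n-1}(x_2,\dots,x_n)$ to expose an $x_n$ on the right, and then fold the remaining pieces back up using the left-sided recursion. I expect this route to be routine but notationally heavier, whereas the matrix-product approach is essentially a one-line computation, so there is no real obstacle to overcome — the key observation is simply that associativity of the matrix product $B(x_1)\cdots B(x_n)$ translates directly into the symmetry between the left and right recursions.
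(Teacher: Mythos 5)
Your matrix-product argument is exactly the paper's proof: apply Lemma \ref{lem: braid matrix prod} to $B(x_1)\cdots B(x_n)$ and to the regrouped product $[B(x_1)\cdots B(x_{n-1})]B(x_n)$, then read off the $(1,1)$-entry. Correct, and same approach.
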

\begin{proof} By Lemma \ref{lem: braid matrix prod}, we know that 
\begin{align*}
\begin{pmatrix}K_{n}(x_1, \dots, x_{n})& -K_{n-1}(x_1, \dots, x_{n-1})\\
K_{n-1}(x_2, \dots x_{n}) & -K_{n-2}(x_2, \dots, x_{n-1}) \end{pmatrix}=&B(x_1)\dots B(x_n)\\
=& \begin{pmatrix}K_{n-1}(x_1, \dots, x_{n-1})& -K_{n-2}(x_1, \dots, x_{n-2})\\
K_{n-2}(x_2, \dots x_{n-1}) & -K_{n-3}(x_2, \dots, x_{n-2}) \end{pmatrix}\begin{pmatrix} x_n & -1 \\ 1 & 0 \end{pmatrix}.
\end{align*}
The upper left corner entry of the above matrix equation proves the claim.
\end{proof}

\begin{lem}\label{lem: palindromic} The continuants $K_n$ are symmetric under the action of $x_i\mapsto x_{n-i+1}$, i.e., $K_n(x_1,\dots, x_n)=K_n(x_n,\dots, x_1)$.
\end{lem}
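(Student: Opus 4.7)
The plan is to prove $K_n(x_1,\dots,x_n) = K_n(x_n,\dots,x_1)$ by induction on $n$, exploiting the symmetry between the two recursion relations already established in the excerpt. The recursion of Definition~\ref{def of K_n} peels a variable off the \emph{left} end of the tuple, while Lemma~\ref{lem: alternative recursion} peels a variable off the \emph{right} end. This pair of twin recursions is precisely what is needed to convert a left-recursion applied to the reversed tuple into a right-recursion on the original tuple.

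The base cases $n=0$ and $n=1$ are immediate from the initial conditions, and $K_2(x_1,x_2)=x_1 x_2-1$ is manifestly symmetric. Assuming the result for all indices smaller than $n$, I would apply Definition~\ref{def of K_n} to the reversed tuple $(x_n,\dots,x_1)$ to obtain
\[
K_n(x_n,\dots,x_1) = x_n\, K_{n-1}(x_{n-1},\dots,x_1) - K_{n-2}(x_{n-2},\dots,x_1).
\]
By the inductive hypothesis applied to $K_{n-1}$ and $K_{n-2}$, the right-hand side equals $x_n\, K_{n-1}(x_1,\dots,x_{n-1}) - K_{n-2}(x_1,\dots,x_{n-2})$. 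Lemma~\ref{lem: alternative recursion} then identifies this expression with $K_n(x_1,\dots,x_n)$, closing the induction. The computation is essentially two symbol-shuffles bracketing one invocation of each recursion.

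A conceptually cleaner but less elementary alternative would be to use the matrix identity of Lemma~\ref{lem: braid matrix prod} directly: one checks that $B(x) = S\,B(x)^{T}\,S$ where $S = \mathrm{diag}(1,-1)$, so that $B(x_1)\cdots B(x_n) = S\,(B(x_n)\cdots B(x_1))^{T}\,S$. Since both conjugation by $S$ and transposition preserve the $(1,1)$-entry, the upper-left entries of $B(x_1)\cdots B(x_n)$ and $B(x_n)\cdots B(x_1)$ agree, which is exactly the claim.

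I do not anticipate any real obstacle, since the two recursion lemmas that precede this statement were clearly set up with exactly this symmetry in mind; the only mild subtlety is recognizing that one must apply the left-recursion to the reversed string and the right-recursion to the original string (rather than the same recursion to both), and this is resolved by simply writing the two sides out carefully.
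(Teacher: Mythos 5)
Your primary inductive argument is exactly the paper's proof, merely read in the opposite direction: the paper starts from $K_n(x_1,\dots,x_n)$, applies Lemma \ref{lem: alternative recursion}, invokes the inductive hypothesis, and recognizes Definition \ref{def of K_n} on the reversed tuple, while you start from the reversed tuple and run the same three steps backward. Your alternative matrix argument via $B(x)=S\,B(x)^{T}\,S$ with $S=\mathrm{diag}(1,-1)$ is also correct and is a slightly slicker route than the one the paper takes, though it relies on the same Lemma \ref{lem: braid matrix prod} that underlies Lemma \ref{lem: alternative recursion} anyway.
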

\begin{proof} We can do an induction on $n$. The claim is trivial for the base case $n=0,1$. Inductively, by Lemma \ref{lem: alternative recursion}, we have
\begin{align*}
K_n(x_1,\dots, x_n)=&K_{n-1}(x_1,\dots, x_{n-1})x_n-K_{n-2}(x_1,\dots, x_{n-2})\\
=& x_nK_{n-1}(x_{n-1},\dots, x_1)-K_{n-2}(x_{n-2},\dots, x_1)\\
=&K_n(x_n,\dots, x_1). \qedhere
\end{align*}
\end{proof}

\begin{lem}\label{lem: Braid Matrix Det} The continuants satisfy $K_{n-1}(x_1,\dots, x_{n-1})K_{n-1}(x_2,\dots, x_n)-K_n(x_1,\dots, x_n)K_{n-2}(x_2,\dots, x_{n-1})=1$.
 Consequently, if $x_1,\dots, x_n\in \F$ satisfy
\begin{enumerate}
    \item $K_n(x_1,\dots, x_n)=0$, then $K_{n-1}(x_1,\dots, x_{n-1})\neq 0$ and $K_{n-1}(x_2,\dots, x_n)\neq 0$;
    \item $K_{n-1}(x_1, \dots, x_{n-1})=0$ or $K_{n-1}(x_2, \dots, x_{n})=0$, then $K_{n-2}(x_2, \dots, x_{n-1})\neq 0$ and $K_{n}(x_1, \dots, x_n)\neq 0$.
\end{enumerate}
\end{lem}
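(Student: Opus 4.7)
The plan is to derive the main identity directly from the determinant of the matrix factorization established in Lemma \ref{lem: braid matrix prod}, and then to read off the two consequences as immediate corollaries.

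First I would observe that the matrix $B(x) = \begin{pmatrix} x & -1 \\ 1 & 0 \end{pmatrix}$ has $\det B(x) = 1$ for any $x$, so the product $B(x_1) \cdots B(x_n)$ has determinant $1$ regardless of the values $x_1, \ldots, x_n$. Applying Lemma \ref{lem: braid matrix prod}, this product equals
\[
\begin{pmatrix}K_{n}(x_1, \dots, x_{n})& -K_{n-1}(x_1, \dots, x_{n-1})\\
K_{n-1}(x_2, \dots, x_{n}) & -K_{n-2}(x_2, \dots, x_{n-1}) \end{pmatrix},
\]
and expanding its determinant yields exactly $K_{n-1}(x_1,\dots,x_{n-1})K_{n-1}(x_2,\dots,x_n) - K_n(x_1,\dots,x_n)K_{n-2}(x_2,\dots,x_{n-1})$, which must therefore equal $1$. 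This establishes the main identity.

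For the two consequences, I would simply substitute the hypothesis into the identity just derived. If $K_n(x_1,\dots,x_n) = 0$, the identity collapses to $K_{n-1}(x_1,\dots,x_{n-1}) K_{n-1}(x_2,\dots,x_n) = 1$, forcing both factors to be nonzero, which gives (1). Similarly, if either $K_{n-1}(x_1,\dots,x_{n-1}) = 0$ or $K_{n-1}(x_2,\dots,x_n) = 0$, the identity reduces to $-K_n(x_1,\dots,x_n)K_{n-2}(x_2,\dots,x_{n-1}) = 1$, giving (2).

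There is no serious obstacle here: the lemma is a direct application of the multiplicativity of determinants combined with Lemma \ref{lem: braid matrix prod}, and the consequences are purely algebraic. The only minor care needed is to make sure the sign in the expansion of the determinant is tracked correctly, since the matrix in Lemma \ref{lem: braid matrix prod} carries minus signs in its second column; this does not cause trouble as the identity is stated over any field and the minus signs land consistently.
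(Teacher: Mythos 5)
Your proof is correct and takes essentially the same approach as the paper: derive the identity from $\det(B(x_1)\cdots B(x_n))=1$ via Lemma~\ref{lem: braid matrix prod}, then read off (1) and (2) by substitution. The paper's proof is nearly word-for-word the same argument for the main identity and leaves the two consequences implicit.
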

\begin{proof} Note that $\det B(x_i)=1$ for all $i$. Therefore the product $B(x_1)\cdots B(x_n)$ also has determinant $1$. According to Lemma \ref{lem: braid matrix prod}, this determinant is precisely 
\[
K_{n-1}(x_1,\dots, x_{n-1})K_{n-1}(x_2,\dots, x_n)-K_n(x_1,\dots, x_n)K_{n-2}(x_2,\dots, x_{n-1}).\qedhere\]
\end{proof}

\begin{cor}\label{cor: relatively prime} For any $x_1,\dots, x_n\in \Z$, 
\[
\gcd(K_n(x_1,\dots, x_n),K_{n-1}(x_1,\dots, x_{n-1}))=\gcd(K_n(x_1,\dots, x_n),K_{n-1}(x_2,\dots, x_{n}))=1.
\]
\end{cor}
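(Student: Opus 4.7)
The plan is to derive the corollary immediately from Lemma \ref{lem: Braid Matrix Det}, which provides a Bezout-type identity relating the four consecutive continuants. Specifically, the identity
\[
K_{n-1}(x_1,\dots, x_{n-1})K_{n-1}(x_2,\dots, x_n) - K_n(x_1,\dots, x_n)K_{n-2}(x_2,\dots, x_{n-1}) = 1
\]
already exhibits $1$ as an integer linear combination of $K_{n-1}(x_1,\dots,x_{n-1})$ and $K_n(x_1,\dots,x_n)$ (treating $K_{n-1}(x_2,\dots,x_n)$ and $K_{n-2}(x_2,\dots,x_{n-1})$ as the integer coefficients). Any common integer divisor of $K_n(x_1,\dots,x_n)$ and $K_{n-1}(x_1,\dots,x_{n-1})$ must therefore divide $1$, yielding the first gcd equality.

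For the second equality, I would observe symmetrically that the same identity also exhibits $1$ as a $\mathbb{Z}$-linear combination of $K_{n-1}(x_2,\dots,x_n)$ and $K_n(x_1,\dots,x_n)$, so a common divisor of these two must again divide $1$. Alternatively, Lemma \ref{lem: palindromic} allows one to apply the first case to the reversed tuple $(x_n,\dots,x_1)$, since $K_n$ is invariant under reversal and $K_{n-1}(x_1,\dots,x_{n-1})$ becomes $K_{n-1}(x_{n-1},\dots,x_1) = K_{n-1}(x_1,\dots,x_{n-1})$ under the palindromic symmetry relabeled appropriately -- either argument suffices.

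There is essentially no main obstacle: the entire content of the corollary is a direct application of the determinant identity already proved, and no additional ingredient is required. The only sanity check worth making is that the identity in Lemma \ref{lem: Braid Matrix Det} holds with integer coefficients (rather than merely in characteristic $2$), which is immediate from the fact that $\det B(x_i) = 1$ holds over $\mathbb{Z}$.
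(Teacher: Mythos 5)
Your proof is correct and takes exactly the same route as the paper: read off the Bezout identity from Lemma \ref{lem: Braid Matrix Det} to conclude that any common divisor of the relevant pair of continuants divides $1$. (Your alternative phrasing of the second equality via Lemma \ref{lem: palindromic} is slightly garbled -- reversing the tuple sends $K_{n-1}(x_n,\dots,x_2)$ to $K_{n-1}(x_2,\dots,x_n)$, not $K_{n-1}(x_1,\dots,x_{n-1})$ to itself -- but your primary argument, reading both gcd's directly off the identity, is exactly what the paper intends.)
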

\begin{proof} Note that Lemma \ref{lem: Braid Matrix Det} gives a way to write $1$ as a linear combination of continuants.
\end{proof}

We now review some basic properties of $2$-bridge links and the Legendrian representatives we consider in this article.

\begin{thm}[\cite{Schubert,Conway}, see also {\cite[Theorem 9.3.3]{Murasugi}}]\label{thm: classification of rational links} Each $2$-bridge link can be represented by a rational number $p/q$ with $p>0$ and $\gcd(p,q)=1$. Two such rational numbers $p/q$ and $p'/q'$ represent the same unoriented $2$-bridge link if and only if $p=p'$ and $q\equiv (q')^{\pm 1} \mod p$.
\end{thm}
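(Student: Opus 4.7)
The plan is to establish the two assertions separately: first, existence of a representation by some reduced $p/q$ with $p > 0$, and second, that the stated equivalence captures precisely when two such rationals define the same unoriented link.

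For existence, I would start from a bridge presentation. Any 2-bridge link can be isotoped into a standard plat form realizing it as the numerator closure of a rational tangle. Following Conway, rational tangles are built inductively by horizontal and vertical twists, and each one is parameterized by a continued fraction $[a_1, \dots, a_n]$ that evaluates to a reduced rational $p/q$. The matrix formalism of Lemma \ref{lem: braid matrix prod} encodes exactly this: each $B(a_i)$ represents a single twist, and the continuants $K_n$ furnish the numerators and denominators of the successive convergents. So existence reduces to putting the link in this standard plat form and reading off $p/q$ from the continued fraction.

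For the equivalence relation, the cleanest route goes through the double cover of $S^3$ branched along the link. I would verify, directly from the plat presentation by building a genus-one Heegaard splitting using the bridge sphere, that the double branched cover of the 2-bridge link with parameter $p/q$ is the lens space $L(p,q)$, with matching integers. The classification of 2-bridge links then reduces to Reidemeister's classification of lens spaces: $L(p,q) \cong L(p',q')$ as unoriented manifolds iff $p = p'$ and $q' \equiv \pm q^{\pm 1} \pmod{p}$. On the link side, the sign flip $q \mapsto -q$ corresponds to reflection, so after reconciling a link with its mirror image one absorbs the outer $\pm$ into the exponent, yielding the stated condition $q \equiv (q')^{\pm 1} \pmod{p}$.

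The main obstacle will be matching the double branched cover to $L(p,q)$ with precisely these parameters — this requires carefully tracking how the continued fraction expansion translates into a Heegaard diagram or equivalent surgery data, and Lemma \ref{lem: Braid Matrix Det} (which gives $\gcd(p,q) = 1$) is an essential consistency check. A secondary task is the ``if'' direction: producing explicit smooth isotopies realizing $q \mapsto q^{-1} \pmod{p}$ (for example, a $180^\circ$ rotation of the bridge presentation interchanging numerator and denominator closures) rather than invoking only the abstract homeomorphism of covers. Conway's ``flype''-style moves on rational tangles provide the combinatorial toolkit for this direction and make the arithmetic transformations on $q$ geometrically transparent.
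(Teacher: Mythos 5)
The paper does not supply a proof of this theorem; it is quoted as a classical result of Schubert and Conway (see also Murasugi, Theorem 9.3.3), so there is no internal argument to compare against. Your sketch follows the standard textbook route used in those references, and your structural observations are sound: existence via plat form and rational tangles, with Lemma~\ref{lem: braid matrix prod} encoding the twists and Corollary~\ref{cor: relatively prime} supplying $\gcd(p,q)=1$; the ``if'' direction by explicit isotopies and flypes; and the ``only if'' direction via double branched covers and the classification of lens spaces.

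There is, however, a genuine slip in the lens space step. You invoke the \emph{unoriented} classification, $L(p,q)\cong L(p',q')$ iff $p=p'$ and $q'\equiv\pm q^{\pm 1}\pmod p$, and then propose to ``absorb the outer $\pm$ into the exponent'' by ``reconciling a link with its mirror image.'' That move is not available: the theorem concerns unoriented links in $S^3$, not links modulo mirror symmetry, and $b(p,q)$, $b(p,-q)$ are a mirror pair that is generically \emph{distinct}---for instance the trefoil $b(3,1)$ versus its mirror $b(3,2)$. So the extra $\pm$ cannot be absorbed; as written your argument would wrongly identify chiral pairs. The correct fix is that the double branched cover of a link in the oriented $S^3$ is a canonically \emph{oriented} $3$-manifold, so the relevant statement is the \emph{oriented} lens space classification: $L(p,q)\cong L(p',q')$ orientation-preservingly iff $p=p'$ and $q'\equiv q^{\pm1}\pmod p$. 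This gives the stated arithmetic condition exactly, with no spurious outer sign. With that correction, and with care in the ``if'' direction to produce isotopies in $S^3$ itself rather than merely homeomorphisms of covers, your outline becomes a valid proof along the lines of the cited sources.
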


To specify a projection of a $2$-bridge link associated with a rational number, we introduce the following notation of continued fractions (assuming $n_i>0$ for all $i$):

\begin{equation}\label{eq: cont fraction}
    [n_1, \dots, n_k]:=n_1-\cfrac{1}{n_2-\cfrac{1}{\ddots-\cfrac{\ddots}{n_{(k-1)}- \cfrac{1}{n_k}}}}.
\end{equation}

The $2$-bridge link associated with the rational number $p/q=[n_1,\dots, n_k]$ is then the underlying smooth link of the Legendrian link depicted in Figure \ref{fig:front}\footnote{We can temporarily drop the $n_i\geq 2$ for all $1<i<k$ condition when drawing smooth $2$-bridge links; however, we will need this condition for our discussion on Legendrian $2$-bridge links in the rest of the article.}.

\begin{lem}[{\cite[Lemma 3]{Ng01}}] Any $2$-bridge link can be represented by $[n_1,\dots,n_k]$ with $n_i>0$ for all $i$.
\end{lem}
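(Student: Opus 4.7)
The plan is to combine the classification theorem with a terminating Euclidean-type algorithm adapted to the sign convention in \eqref{eq: cont fraction}. By Theorem~\ref{thm: classification of rational links}, every $2$-bridge link is represented by a rational number $p/q$ with $p>0$ and $\gcd(p,q)=1$, modulo the equivalence $q\sim q'$ iff $q\equiv (q')^{\pm1}\pmod p$. Since $q\equiv q+mp\pmod p$ for every $m\in\Z$, I may replace $q$ within its equivalence class by its unique representative in $[0,p)$; if $p>1$, coprimality forces $0<q<p$, and the degenerate case $p=1$ corresponds to an unknotted link which admits the trivial expansion $[1]$. It therefore suffices to exhibit positive integers $n_1,\dots,n_k$ with $[n_1,\dots,n_k]=p/q$ whenever $p>q>0$ and $\gcd(p,q)=1$.

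Next, I would run the \emph{negative Euclidean algorithm}: set $n_1:=\lceil p/q\rceil$, so that $p=n_1q-r$ with $0\leq r<q$, and rewrite $p/q=n_1-r/q=n_1-1/(q/r)$. Since $p>q\geq1$, one has $n_1\geq 2>0$. If $r=0$, then $q=1$ by coprimality, the algorithm halts, and $p/q=[n_1]$. Otherwise $0<r<q<p$ with $\gcd(q,r)=1$, so I recurse on the pair $(q,r)$. The numerators form a strictly decreasing sequence of positive integers, so the procedure terminates after finitely many steps and produces a tuple of positive integers $n_1,\dots,n_k$.

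To verify that the output really equals $p/q$ as a continued fraction of the form \eqref{eq: cont fraction}, I would invoke Lemma~\ref{lem: braid matrix prod}: the matrix identity there gives $[n_1,\dots,n_k]=K_k(n_1,\dots,n_k)/K_{k-1}(n_2,\dots,n_k)$, and the Euclidean step $p=n_1q-r$ is precisely the continuant recursion $K_k(n_1,\dots,n_k)=n_1K_{k-1}(n_2,\dots,n_k)-K_{k-2}(n_3,\dots,n_k)$ applied to the numerator and denominator separately. A short induction on $k$ then matches the two.

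There is no serious obstacle here: the content of the lemma is essentially the standard Hirzebruch--Jung (negative-regular) continued fraction expansion. The only care that needs to be taken is in the initial normalization, where the $q\mapsto q\bmod p$ part of the equivalence from Theorem~\ref{thm: classification of rational links} is used to guarantee $p>q>0$; this in turn guarantees that the first partial quotient $n_1$, and inductively every subsequent one, is a positive integer.
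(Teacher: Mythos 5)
Your proof is correct and follows essentially the same route as the paper: reduce modulo the $q\mapsto q+mp$ ambiguity furnished by Theorem~\ref{thm: classification of rational links} to arrange $p>q>0$, and then read off the $n_i$ from a Euclidean-type division. The paper's one-sentence proof simply invokes an ``alternating Euclidean division algorithm'' without details; your ceiling-based (Hirzebruch--Jung) recursion $p = n_1 q - r$, $0 \le r < q$, is a concrete implementation of this, and the termination and positivity arguments you give are both sound. Two small remarks. First, the verification step is a bit of a detour: the identity $p/q=[n_1,\dots,n_k]$ follows directly from the telescoping $p/q = n_1 - 1/(q/r)$ together with the inductive hypothesis that $q/r=[n_2,\dots,n_k]$, with no need to pass through continuants. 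Second, if you do want the continuant route, the identity $[n_1,\dots,n_k]=K_k(n_1,\dots,n_k)/K_{k-1}(n_2,\dots,n_k)$ is Lemma~\ref{lem: continuant identity}(1), not Lemma~\ref{lem: braid matrix prod} (the latter is the matrix-product formula used to \emph{prove} the former). As a bonus, your algorithm actually delivers $n_i\ge 2$ for every $i$, which is strictly stronger than the statement's $n_i>0$ and also stronger than what Legendrian rational form requires at the endpoints.
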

\begin{proof} By adding sufficiently many multiples of $p$ to $q$, we can assume without loss of generality that $p/q>0$. Then the numbers $n_1,\dots, n_k$ can be uniquely determined by an alternating Euclidean division algorithm.
\end{proof}

Our next goal is to investigate how Theorem \ref{thm: classification of rational links} is reflected on the continued fraction representatives $[n_1,\dots, n_k]$. To do so, let us first relate continued fractions with continuants.

\begin{lem}\label{lem: continuant identity} For $n_i>0$ we have that \begin{enumerate} 
\item $[n_1, \dots, n_k]=\dfrac{K_k(n_1, \dots, n_k)}{K_{k-1}(n_2, \dots, n_k)}$, and
\item $[n_k, \dots, n_1]=\dfrac{K_k(n_1, \dots, n_k)}{K_{k-1}(n_1, \dots, n_{k-1})}$.
\end{enumerate}
\end{lem}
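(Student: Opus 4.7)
The plan is to prove both identities by induction on $k$, reducing the problem to the recursive definition of continuants. Part (2) will then follow from part (1) together with the palindromic symmetry established in Lemma \ref{lem: palindromic}.

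For part (1), the base cases $k = 1$ and $k = 2$ are immediate: $[n_1] = n_1 = K_1(n_1)/K_0()$ and $[n_1, n_2] = (n_1 n_2 - 1)/n_2 = K_2(n_1, n_2)/K_1(n_2)$. For the inductive step, I would use the fact that the continued fraction notation of \eqref{eq: cont fraction} satisfies the recursion
\[
[n_1, n_2, \dots, n_k] \;=\; n_1 - \frac{1}{[n_2, \dots, n_k]}.
\]
Applying the inductive hypothesis to $[n_2, \dots, n_k]$ yields
\[
[n_1, \dots, n_k] \;=\; n_1 - \frac{K_{k-2}(n_3, \dots, n_k)}{K_{k-1}(n_2, \dots, n_k)} \;=\; \frac{n_1 K_{k-1}(n_2, \dots, n_k) - K_{k-2}(n_3, \dots, n_k)}{K_{k-1}(n_2, \dots, n_k)},
\]
and the numerator collapses to $K_k(n_1, \dots, n_k)$ by the defining recursion in Definition \ref{def of K_n}.

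For part (2), I will apply part (1) with the sequence reversed: $[n_k, \dots, n_1] = K_k(n_k, \dots, n_1)/K_{k-1}(n_{k-1}, \dots, n_1)$. Then Lemma \ref{lem: palindromic} gives $K_k(n_k, \dots, n_1) = K_k(n_1, \dots, n_k)$ and $K_{k-1}(n_{k-1}, \dots, n_1) = K_{k-1}(n_1, \dots, n_{k-1})$, which yields the stated formula.

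No step here looks like a genuine obstacle: the argument is a short induction combined with the palindromic identity. The only mild care needed is choosing the right recursion for the continuant in the inductive step (the definition unfolds from the left, matching the way $n_1$ is peeled off in the continued fraction).
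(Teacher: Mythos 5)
Your proof is correct and essentially identical to the paper's: both prove part (1) by induction on $k$ using the continued-fraction peeling $[n_1,\dots,n_k]=n_1-1/[n_2,\dots,n_k]$ together with the defining recursion for $K_k$, and both derive part (2) from part (1) via the palindromic symmetry of Lemma \ref{lem: palindromic}. You simply spell out the base case $k=2$ and the part (2) substitution a bit more explicitly than the paper does.
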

\begin{proof} Note that (2) follows from (1) and Lemma \ref{lem: palindromic}. We will prove (1) by an induction on $k$. For the base case $k=1$, we have $[n_1]=\frac{n_1}{1}$. Inductively, we have
\begin{align*}
    [n_1,\dots, n_k]=& n_1-\frac{1}{[n_2,\dots, n_{k}]}=n_1-\frac{K_{k-2}(n_3,\dots,n_k)}{K_{k-1}(n_2,\dots, n_k)}\\
    =&\frac{n_1K_{k-1}(n_2,\dots, n_k)-K_{k-2}(n_3,\dots, n_k)}{K_{k-1}(n_2,\dots, n_k)}=\frac{K_k(n_1, \dots, n_k)}{K_{k-1}(n_2, \dots, n_k)}. \qedhere
\end{align*}
\end{proof}

\begin{prop}\label{prop: isotopic rational links} Two $2$-bridge links represented by $[n_1,\dots, n_k]$ and $[n'_1,\dots, n'_{k'}]$ are isotopic if and only if $[n'_1,\dots, n'_{k'}]$ can be obtained from $[n_1,\dots, n_k]$ via a sequence of the following moves:
\begin{enumerate}
    \item $[n_1,\dots, n_{k-1},n_k]\sim [n_1,\dots, n_{k-1},n_k+1,1]$;
    \item $[n_1,n_2,\dots, n_k]\sim [1,n_1+1,n_2,\dots, n_k]$;
    \item $[n_1,n_2,\dots, n_k]\sim [n_k,\dots, n_2,n_1]$.
\end{enumerate}
\end{prop}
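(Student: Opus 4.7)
The strategy is to combine the Schubert--Conway classification (Theorem~\ref{thm: classification of rational links}) with the continuant identities of Section~\ref{sub: continuants}, especially Lemma~\ref{lem: continuant identity}.

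For the forward direction, I will check that each move alters the reduced fraction $p/q$ only within the Schubert--Conway equivalence $q\mapsto q^{\pm 1}\pmod{p}$:
\begin{itemize}
    \item Move (1): Two applications of the alternate recursion in Lemma~\ref{lem: alternative recursion} give $K_{k+1}(n_1,\ldots,n_k+1,1)=K_k(n_1,\ldots,n_k)$ and $K_k(n_2,\ldots,n_k+1,1)=K_{k-1}(n_2,\ldots,n_k)$, so by Lemma~\ref{lem: continuant identity} both $p$ and $q$ are literally preserved.
    \item Move (2): A parallel calculation yields $p'=p$ and $q'=p+q$, whence $q'\equiv q\pmod{p}$.
    \item Move (3): Part (2) of Lemma~\ref{lem: continuant identity} shows the reversed sequence has the same numerator $p=K_k(n_1,\ldots,n_k)$, and Lemma~\ref{lem: Braid Matrix Det} rearranges to $q\cdot q'\equiv 1\pmod{p}$.
\end{itemize}
In every case Theorem~\ref{thm: classification of rational links} forces the underlying 2-bridge links to be isotopic.

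For the converse, suppose $[n_1,\ldots,n_k]$ and $[n_1',\ldots,n_{k'}']$ represent isotopic 2-bridge links. By Theorem~\ref{thm: classification of rational links} their reduced fractions $p/q$ and $p/q'$ share the same $p$ and satisfy $q'\equiv q^{\pm 1}\pmod{p}$. Applying move (3) to one of the sequences if necessary, I reduce to the case $q'\equiv q\pmod{p}$. Iterated uses of move (2) and its inverse, each shifting the denominator by $\pm p$, then let me assume $q=q'$ as integers, so both continued fractions represent the same rational number. It remains to prove that any two positive continued fraction expansions of a single reduced fraction $p/q$ are related by moves (1) and (2). I plan to establish this by induction on total length: repeatedly apply the inverse of move (1) to strip a trailing $1$ (decrementing the preceding entry) and the inverse of move (2) to strip a leading $1$, driving each expansion to the unique greedy expansion in which $n_i\ge 2$ for $i\ge 2$ and $n_k\ge 2$ when $k\ge 2$. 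Uniqueness of the greedy form then completes the proof.

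\textbf{Expected obstacle.} The forward direction is essentially mechanical once the continuant lemmas are in hand. The real work lies in the normalization step of the converse: one must handle degenerate short expansions (e.g.\ $k=1$, or subsequences of consecutive $1$'s at either end) and verify that each inverse move can actually be applied without producing a non-positive entry. Once these edge cases are dispatched, the decisive input is the uniqueness of the greedy continued-fraction expansion.
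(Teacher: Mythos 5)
Your proof follows the paper's strategy exactly: verify each move against the Schubert--Conway classification using continuant identities, then for the converse use move (3) to dispose of the inversion $q\mapsto q^{-1}\bmod p$ and move (2) to shift the denominator by multiples of $p$. A few remarks on the details. Your move (1) verification via two applications of Lemma~\ref{lem: alternative recursion} is correct but heavier than the paper's, which simply notes that $n_k+1-\frac{1}{1}=n_k$ makes the two continued fractions literally equal; your moves (2) and (3) computations coincide with the paper's. In the converse, both you and the paper implicitly need the statement that any two all-positive continued fraction expansions of the same reduced fraction are related by moves (1) and (2). You make this explicit (normalize to a greedy form and invoke uniqueness) whereas the paper compresses it into the phrase that move (2) ``takes care of'' $q\equiv q'\bmod p$; neither writes out the edge cases, which you candidly flag. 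One small inaccuracy in your target normal form: after using move (2) to normalize $0<q<p$, so that $p/q>1$, the unique greedy (Hirzebruch--Jung) expansion satisfies $n_i\geq 2$ for \emph{all} $i$, not only $i\geq 2$; with that correction, the normalization by stripping boundary $1$s via the inverses of moves (1) and (2) does terminate at this canonical form and the uniqueness finishes the argument.
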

\begin{proof} For the backwards direction, it suffices to show that the above three types of moves all satisfy the isotopy conditions stated in Theorem \ref{thm: classification of rational links}. Note that by Corollary \ref{cor: relatively prime}, the right hand sides of the identities in Lemma \ref{lem: continuant identity} are automatically reduced.
Move $(1)$ gives the same rational number $[n_1,\dots, n_{k-1},n_k]= [n_1,\dots, n_{k-1},n_k+1,1]$. For Move $(2)$, by Lemma \ref{lem: continuant identity}, we have
\begin{align*}
[n_1,n_2,\dots, n_k]=&\frac{K_k(n_1,\dots, n_k)}{K_{k-1}(n_2,\dots, n_k)},~\text{while}\\
[1,n_1+1,n_2,\dots, n_k]=&1-\cfrac{1}{n_1+1-\cfrac{1}{[n_2,\dots, n_k]}}=\cfrac{n_1-\cfrac{1}{[n_2,\dots, n_k]}}{n_1+1-\frac{1}{[n_2,\dots, n_k]}}=\cfrac{\cfrac{K_k(n_1,\dots, n_k)}{K_{k-1}(n_2,\dots, n_k)}}{\cfrac{K_k(n_1,\dots, n_k)}{K_{k-1}(n_2,\dots, n_k)}+1}\\
=&\frac{K_k(n_1,\dots, n_k)}{K_k(n_1,\dots, n_k)+K_{k-1}(n_2,\dots, n_k)}.
\end{align*}
Note that obviously we have $K_k(n_1,\dots, n_k)+K_{k-1}(n_2,\dots, n_k)\equiv K_{k-1}(n_2,\dots, n_k) \mod K_k(n_1,\dots, n_k)$. Lastly, for the third type, by Lemmas \ref{lem: Braid Matrix Det} and \ref{lem: continuant identity}, these two continued fractions have the same numerator, and their denominators satisfy
\[
K_{k-1}(n_1,\dots, n_{k-1})K_{k-1}(n_2,\dots, n_k) \equiv 1 \mod K_k(n_1,\dots, n_k).
\]
Thus, by Theorem \ref{thm: classification of rational links}, these three types of moves all induce isotopies on $2$-bridge links.

Conversely, we note that the second type of move allows us to change between $p/q$ and $p/(q+p)$. This takes care of the condition $q\equiv q'\mod p$. On the other hand, we already observed that $p/q=[n_1,\dots, n_k]$ and $p/q'=[n_k,\dots, n_1]$ satisfy $qq'\equiv 1\mod p$. Since any other $q''$ that satisfies $qq''\equiv 1 \mod p$ must also satisfy $q''\equiv q \mod p$, it follows that we can first use the second type of move to change $q''$ to $q'$, and then use the third type to change it to $q$.
\end{proof}

Let us now describe the Legendrian representatives of $2$-bridge links that we consider in this work. 

\begin{defn}\label{def: Legendrian rational form} Following the terminology of~\cite{Ng01}, when\footnote{Note that this definition differs slightly from the definition given in~\cite{Ng01} in that we allow $n_1=1$ and $n_k=1$.}
\begin{itemize}
    \item $n_1 \geq 1$,
    \item $n_i\geq 2$ for all $1<i <k$, and
    \item $n_k\geq 1$,
\end{itemize}
we say that the link with front projection given in Figure \ref{fig:front} is in \emph{Legendrian rational form} and denote it by $\La[n_1, \dots, n_k]$.
\end{defn}
The conditions above are necessary for exact Lagrangian fillability, as for any $1<i<n$, allowing $n_i=1$ yields a stabilized Legendrian link. Unless otherwise indicated, all the Legendrian $2$-bridge knots $\Lambda[n_1, \ldots, n_k]$ we consider are in Legendrian rational form. By applying Ng's resolution to the front projections in Figure \ref{fig:front}, we obtain the following Lagrangian projections of Legendrian $2$-bridge links $\Lambda[n_1,\dots, n_k]$. Furthermore, we decorate $\Lambda[n_1,\dots, n_k]$ with co-oriented base points $t_1$ and $t_2$ as given in Figure \ref{fig:2bridge}.

\begin{figure}[!htb]
	\centering
	\begin{tikzpicture}[scale=1]
		\node[inner sep=0] at (0,0)
		{\includegraphics[width=9cm]{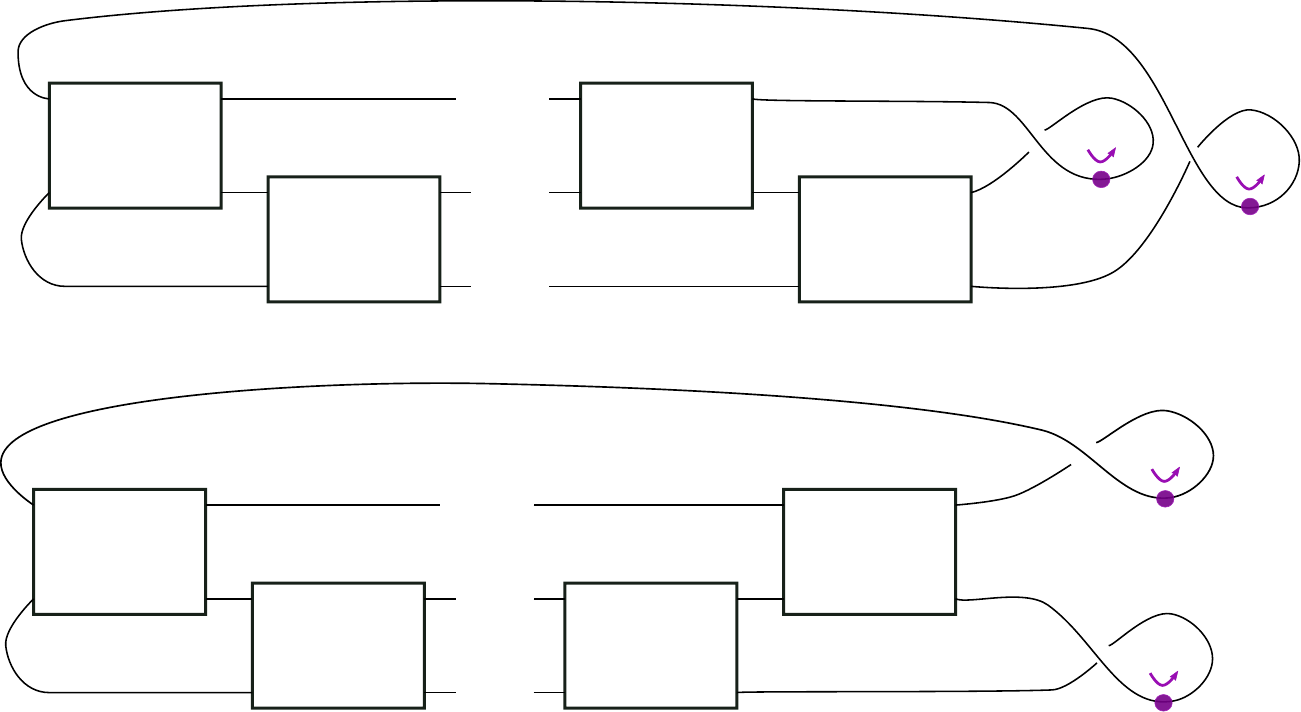}};
	\node at (-3.6,1.4){$n_1$};
	\node at (-2.1,0.8){$n_2$};
	\node at (-1,1.4){$\ldots$};
	\node at (-1,0.8){$\ldots$};
	\node at (0.1,1.4){$n_{2k-1}$};
	\node at (1.5,0.8){$n_{2k}$};
	\node at (-1,-1.4){$\ldots$};
	\node at (-1,-2){$\ldots$};
	\node at (-3.7,-1.4){$n_1$};
	\node at (-2.1,-2){$n_2$};
	\node at (0,-2){$n_{2k}$};
	\node at (1.5,-1.4){$n_{2k+1}$};
    \node at (3.1,0.9){$t_1$};
    \node at (4.25,0.65){$t_2$};
    \node at (3.6,-1.3){$t_2$};
    \node at (3.6,-2.75){$t_1$};
	\end{tikzpicture}\caption{Lagrangian projections of $2$-bridge knots $\Lambda[n_1, \ldots, n_{2k}]$ and $\Lambda[n_1, \ldots, n_{2k+1}]$, where each box labeled $n_i$ contains $n_i$ crossings where the strand with more negative slope is the overstrand. Both co-oriented base points $t_1$ and $t_2$ are oriented counterclockwise.}\label{fig:2bridge}
	\end{figure}

\begin{rmk} By a result of Ng \cite[Theorem 1]{Ng01}, any Legendrian link in Legendrian rational form is a max-tb representative of its respective smooth isotopy class. On the other hand, it is not hard to see that the rotation number of a link in Legendrian rational form is always $0$ or $\pm 1$.
\end{rmk}

\subsection{Computing the differential of $\SA(\La[n_1, \dots, n_k]; R)$}

In this subsection, we return to \textbf{characteristic $2$} and inductively compute the Legendrian dg-algebra of $\La[n_1, \dots, n_k]$ using specific matrices to keep track of the partial holomorphic disks involved in the differential. We first introduce some necessary notation: After applying Ng's resolution, we decorate $\La[n_1, \dots, n_k]$ with two co-oriented base points $t_1$ and $t_2$ as in Figure \ref{fig:2bridge}.  
We label the crossings originally appearing in the front projection from left to right by $a_i$. We label the two new crossings appearing in the resolution by $b_1$ and $b_2$ so that the indices agree with those of the base points $t_1$ and $t_2$. We label the strands from bottom to top with $1,2,3,4$. 

Throughout this article, we denote by $m_i$ the total number of crossings in the first $i$ blocks of $\La[n_1, \dots, n_k]$: 
\[
m_i:=\sum_{j=1}^i n_j.
\]
For $a_{m_i+1}$ and $b_1$, we will express the differential in terms of products of continuants. We condense our notation from Subsection \ref{sub: continuants} to streamline our computations. Denote by \begin{align*}
    K_{n_i}&=K_{n_i}(a_{m_{(i-1)}+1}, \dots, a_{m_{i}}) &
    K_{n_{i}-2}^M&=K_{n_i-2}(a_{m_{(i-1)}+2}, \dots, a_{m_{i}-1})\\
    K_{n_{i}-1}^L&=K_{n_i-1}(a_{m_{(i-1)}+1}, \dots, a_{m_{i}-1}) &
    K_{n_{i}-1}^R&=K_{n_i-1}(a_{m_{(i-1)}+2}, \dots, a_{m_{i}})
\end{align*}
where the $L$ and $R$ superscripts can be understood as denoting the leftmost or rightmost $n_{i}-1$ crossings of the $i$th block, respectively. Similarly, we use $M$ to denote the middle $n_{i}-2$ crossings of the $i$th block. 

\begin{figure}[H]
	\centering
	\begin{tikzpicture}[scale=1]
		\node[inner sep=0] at (0,0) {\includegraphics[width=15 cm]{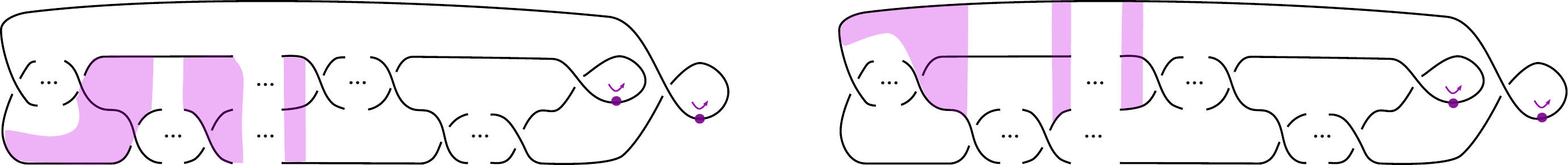}};
  	\node at (-7.2,-0.4){$a_1$};
        \node at (-6.6,-0.4){$a_{n_1}$};
        \node at (-6.2,-1){$a_{n_1+1}$};
        \node at (-5.3,-1){$a_{m_2}$};
        \node at (-3.8,-0.5){$a_{m_{k-1}}$};
        \node at (-3.2,-1){$a_{m_{k-1}+1}$};
        \node at (-2.2,-1){$a_{m_k}$};
        \node at (-1.0,-0.4){$t_2$};
        \node at (-1.7,-0.4){$t_1$};
        
        \node at (0.8,-0.4){$a_1$};
        \node at (1.4,-0.4){$a_{n_1}$};
        \node at (2,-1){$a_{n_1+1}$};
        \node at (2.8,-1){$a_{m_2}$};
        \node at (4.3, -0.5){$a_{m_{k-1}}$};
        \node at (4.8,-1){$a_{m_{k-1}+1}$};
        \node at (5.8,-1.0){$a_{m_k}$};
        \node at (7,-0.4){$t_2$};
        \node at (6.35,-0.4){$t_1$};
  \end{tikzpicture}
	\caption{Schematics of the disks with boundary  contributing to the polynomials $D_{13}(k-2)$ (left) and $D_{24}(k-2)$ (right) for $\Lambda[n_1, \ldots, n_{k}]$ with $k$ even. The boundary is depicted as the far right portion of both of the disks. } 
	\label{fig: partialdisks}
\end{figure}

The differential of $b_2$ is more complicated, as we must consider immersed disks when $k$ is even. 
To express $\partial(b_2)$, we will recursively define a collection of polynomials that count partial rigid holomorphic disks with negative punctures at Reeb chords. These disks start at the left of the Lagrangian projection $\Pi_{xy}(\La[n_1, \ldots, n_k])$ and have boundary (in addition to the usual punctures at Reeb chords) depicted as a vertical interval lying in a region between a pair of strands; see Figure \ref{fig: partialdisks} for schematic depictions of disks contributing to $D_{13}(k-2)$ and $D_{24}(k-2)$. For $i\in 2\Z$, we recursively define the polynomials $D_{ab}(i)$ as follows:  
\begin{align*}
    D_{13}(i)&=K_{n_i-2}^MK_{n_{(i-1)}-2}^MD_{13}(i-2)\\
    D_{14}(i)&=K_{n_i-1}^L\left(K_{n_{(i-1)}-1}^LD_{34}(i-2)+K_{n_{(i-1)}-2}^M D_{24}(i-2)\right)+K_{{n_i}-2}^M D_{14}(i-2)\\
    D_{23}(i)&=K_{{n_i}-1}^RK_{n_i-2}^M D_{13}(i-2)\\
    D_{24}(i)&=K_{n_{i}-1}^RD_{14}(i-2)+K_{n_i}\left(K_{n_{(i-1)}-1}^LD_{34}(i-2)+K_{n_{(i-1)}-2}^M D_{24}(i-2)\right)\\
    D_{34}(i)&=K_{n_{(i-1)}}D_{34}(i-2)+K_{n_{(i-1)}-1}^RD_{24}(i-2).
\end{align*}

For $i=2,$ we define 
\[
D_{13}(2)=K_{n_2-2}^MK_{n_{1}-1}^L, \quad \quad   D_{14}(2)=K_{n_2-1}^LK_{n_{1}-1}^L, \quad \quad
 D_{23}(2)=K_{{n_2}-1}^RK_{n_1-1}^L,
 \]
 \[
 \quad D_{24}(2)=K_{n_2}K_{n_{1}-1}^L,~\text{and}~ \quad \quad D_{34}(2)=K_{n_{1}}.
\]

\begin{lem}\label{lem: partial disk recursion}
   The polynomials $D_{ab}(i)$ for $(a,b)\in\{(1,3), (1,4), (2,3), (2,4), (3,4)\}$ and $1\leq i\leq k, i \in 2\Z$ recursively count partial rigid holomorphic disks including blocks 1 through $i$ without positive punctures and with boundary lying on 
   strands $a$ and $b$ of $\Pi_{xy}(\Lambda[n_1, \ldots, n_k])$.
\end{lem}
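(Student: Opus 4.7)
The plan is to proceed by induction on $i$ (taken to be even), where the inductive step relies on carefully decomposing a partial rigid holomorphic disk through blocks $1$ through $i$ according to where its boundary sits at the interface between blocks $i-2$ and $i-1$.

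For the base case $i=2$, the verification is a direct combinatorial inspection of the Lagrangian projection in Figure~\ref{fig:2bridge}. A partial disk with no positive punctures and boundary on strands $a$ and $b$ at the right of block $2$ must traverse the crossings of blocks $1$ and $2$ by taking negative Reeb corners at some subset of them. For each of the five pairs $(a,b)$, I would enumerate the possible sequences of negative corners. The key observation, made precise by Lemma~\ref{lem: braid matrix prod}, is that the number of ways the boundary of a partial disk traverses a block of $n_i$ crossings between a specified pair of entry strands and a specified pair of exit strands equals a specific matrix entry of the product $B(a_{m_{i-1}+1})\cdots B(a_{m_i})$, and hence equals one of the continuants $K_{n_i}$, $K_{n_i-1}^L$, $K_{n_i-1}^R$, or $K_{n_i-2}^M$, depending on whether the entering and exiting pairs each include the outermost strand of the block. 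Multiplying the contributions from blocks $1$ and $2$ yields the stated base-case formulas.

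For the inductive step, assume the claim holds for $D_{ab}(i-2)$. Given a partial disk $\Delta$ through blocks $1$ through $i$ with boundary on strands $a,b$ on the right, I would look at the interface between blocks $i-2$ and $i-1$ and record which pair of strands $(a',b')$ the boundary of $\Delta$ occupies there. The portion of $\Delta$ lying over blocks $1$ through $i-2$ is a partial disk with boundary on $(a',b')$ without positive punctures, contributing $D_{a'b'}(i-2)$ by the inductive hypothesis. The portion of $\Delta$ lying over blocks $i-1$ and $i$ contributes a product of two continuants, one for each block, determined by the intermediate strand pair $(a',b')$ and the target pair $(a,b)$ via the matrix identification above. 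Summing over all admissible intermediate pairs $(a',b')$ recovers each of the five recursion formulas.

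The main obstacle is the case analysis: for each of the five target pairs $(a,b)$ and each admissible intermediate pair $(a',b')$, one must identify which continuants ($K_{n_{i-1}},K_{n_{i-1}-1}^{L/R},K_{n_{i-1}-2}^M$ and similarly for block $i$) record the disk traversals, and verify that the admissible pairs are exactly those which give rise to the monomials appearing in the recursion. The geometric picture that makes this tractable is that blocks $i-1$ and $i$ meet at a pair of adjacent strands in the middle of the link, and the parity constraints on where partial-disk boundaries can appear at this interface cut the number of relevant intermediate pairs down to at most two in each case, matching term-for-term the summands in the definitions of $D_{13}(i),\ldots,D_{34}(i)$.
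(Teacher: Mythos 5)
Your proposal is correct and takes essentially the same approach as the paper's proof: verify the base case $D_{ab}(2)$ directly, then for the inductive step decompose a partial disk at block interfaces, express the contribution from blocks $i-1$ and $i$ as products of continuants via the braid-matrix correspondence, and splice these onto $D_{a'b'}(i-2)$ by the inductive hypothesis. The only point worth tightening is that Lemma~\ref{lem: braid matrix prod} by itself only identifies matrix entries with continuants; the fact that those entries enumerate rigid partial holomorphic disks through a block is a separate input, for which the paper invokes~\cite[Theorem 4.4]{Kalman-braid} (see also~\cite[Theorem 2.30]{CGGS1}).
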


\begin{proof}
Work of~\cite[Theorem 4.4]{Kalman-braid}, (see also~\cite[Theorem 2.30]{CGGS1}) together with Lemma \ref{lem: braid matrix prod} implies that the continuant polynomials $K_{j}(a_i, \dots, a_{i+j})$ count the partial rigid holomorphic disks starting at $a_i$ and ending at $a_{i+j}$ where $a_{i}$ and $a_{i+j}$ lie in the same block. 
The base case of $D_{ab}(2)$ is straightforward to verify by inspection. Therefore, it remains to verify that the recursive relations defined above capture all possible partial rigid holomorphic disks appearing between the indicated strands. We write these disks as products of continuants by studying how disks behave at the first and last crossings of previous blocks. We say that a disk passes through an overstrand (resp. understrand) at a crossing if the disk covers two consecutive quadrants of the crossing and the overstrand (resp. understand) at the crossing is transverse to the boundary of the disk.\footnote{We assume that we have perturbed the Legendrian as necessary to ensure that all crossings are transverse.} Fix $i\in 2\Z$ and consider $D_{ab}(i)$:
\begin{itemize}
    \item[$D_{13}(i)$:] All partial disks with boundary lying between strands 1 and 3 must pass through the understrand of the rightmost crossing and the overstrand of the leftmost crossing 
of block $i$. Continuing on to block $(i-1)$, the disks must pass through the understrand of the leftmost crossing and the overstrand of the rightmost crossing   
in order to avoid any positive punctures. By~\cite[Theorem 4.4]{Kalman-braid}, the corresponding terms are $K_{n_i-2}^M$ and $K_{n_{(i-1)}-2}^M$, respectively. After passing through these two blocks, the boundary of the disk still lies on strands 1 and 3 immediately to the right of block $(i-2)$, establishing the recursion in this case.
    \item[$D_{14}(i)$:] All partial disks with boundary lying between strands 1 and 4 must pass through the understrand of the rightmost crossing of block $i$, labeled $a_{m_i}$. A disk also passes through the overstrand of the leftmost crossing $a_{m_{(i-1)}+1}$, yielding the term $K_{n_i-2}^MD_{14}(i-2)$ as any possible negative punctures from block $(i-1)$ lie between strands 2 and 3. In the case where the disk either has a negative puncture at the leftmost crossing of block $i$ or passes through the understrand, the disk must also pass through the understrand of the rightmost crossing of block $(i-1)$, labeled $a_{m_{(i-1)}}$. This yields the terms $K_{n_i-1}^LK_{n_{(i-1)}-2}^MD_{24}(i-2)$ and  $K_{n_i-1}^LK_{n_{(i-1)}-1}^LD_{34}(i-2)$ depending on whether or not the disk passes through the overstrand of the leftmost crossing $a_{m_{i-3}+1}$. Adding each of these terms together establishes the desired recursion.
     \item[$D_{23}(i)$:] All partial disks with boundary lying between strands 2 and 3 must pass through the overstrand of the leftmost crossing 
of block $i$, as well as the overstrand of the rightmost crossing and understrand of the leftmost crossing of block $(i-1)$ in order to avoid any positive punctures. The corresponding terms are $K_{n_i-1}^R$ and $K_{n_{(i-1)}-2}^M$, respectively. After passing through these crossings, the boundary of the disk lies on strands 1 and 3, establishing the desired recursion.
  \item[$D_{24}(i)$:] If a partial disk with boundary lying between strands 2 and 4 passes through the overstrand of the leftmost crossing of block $i$, then it cannot pick up any additional negative punctures from block $(i-1)$. This disk then lies between strands 1 and 4, which yields the term $K_{n_{i}-1}^RD_{14}(i-2)$. If the disk has a negative puncture or passes through the understrand at the leftmost crossing of block $i$, then it must pass through the understrand at the rightmost crossing of block $(i-1).$ Similar to the $D_{14}$, case, this yields the terms $K_{n_i}K_{n_{(i-1)}-2}^MD_{24}(i-2)$ and  $K_{n_i}K_{n_{(i-1)}-1}^LD_{34}(i-2)$ depending on whether or not the disk passes through the overstrand of $a_{m_{i-3}+1}$.
  \item[$D_{34}(i)$:] Any partial disk with boundary lying between strands 3 and 4 cannot have any negative punctures at crossings in block $i$. Therefore, the two terms that appear in the recursion are $K_{n_{(i-1)}-1}^RD_{24}(i-2)$ for disks passing through the overstrand of the leftmost crossing of block $(i-1)$ and $K_{n_{(i-1)}}D_{34}(i-2)$ otherwise.
\end{itemize}
\end{proof}

The differential of $\mathcal{A}_\La[n_1, \dots, n_k]$ is then described in terms of these recursively defined polynomials by the following proposition.

\begin{prop}
    \label{prop: differential}
    The differential $\partial_{\La[n_1, \dots n_k]}$ (for $k\geq 2)$ is given by 
\begin{align*}
    \partial(a_{m_{i}+j})&=\begin{cases} 
    K_{n_1} & i=j=1\\ 
     K_{n_1-1}^LK^M_{n_2-2}\dots K_{n_{(i-1)}-2}^MK_{n_i-1}^R & 2\leq i \leq k-1, j=1\\ 
       0 & 2\leq j\leq n_i \end {cases}\\
 \partial(b_1)&= K_{n_1-1}^LK_{n_2-2}^M\dots K_{n_{(k-1)}-2}^MK_{n_k-1}^R
 +t_1\\ 
    \partial(b_2)&=\begin{cases} K_{n_k}D_{34}(k-1)+K_{n_k-1}^RD_{24}(k-1) + t_2 & k \text{ odd}\\
D_{14}(k)+D_{34}(k)b_1t_1^{-1}D_{13}(k)+D_{24}(k)t_1^{-1}D_{13}(k) + t_2  
& k \text{ even}\end{cases}
\end{align*}
\end{prop}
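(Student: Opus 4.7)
The plan is to invoke the combinatorial definition of the Legendrian dg-algebra differential, counting rigid immersed disks in the Lagrangian projection $\Pi_{xy}(\Lambda[n_1,\ldots, n_k])$ with convex corners, a single positive puncture at the designated Reeb chord, and arbitrarily many negative punctures. Two ingredients will be central: Lemma~\ref{lem: partial disk recursion}, which identifies $D_{ab}(i)$ with counts of partial disks through the first $i$ blocks, and Kalman's theorem (accessed via Lemma~\ref{lem: braid matrix prod}), which identifies continuant polynomials with counts of partial disks confined to a single block. The proof then proceeds case by case.

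For $\partial(a_{m_i+j})=0$ with $j\geq 2$, I would argue directly that any disk with positive puncture at a non-extremal crossing of a block would be obstructed by the neighboring crossings in the same block, requiring an additional positive corner to close up, and hence none exist. For $\partial(a_{m_i+1})$ with $1\leq i\leq k-1$, the positive puncture sits at the leftmost crossing of block $i+1$, forcing any rigid disk to propagate leftward into the earlier blocks; I would track the disk block-by-block, observing that within block $i$ it covers the rightmost $n_i-1$ crossings (contributing $K_{n_i-1}^R$), within each intermediate block $j$ it covers the middle $n_j-2$ crossings (contributing $K_{n_j-2}^M$), and within block $1$ it wraps around the leftmost cusp and covers the leftmost $n_1-1$ crossings (contributing $K_{n_1-1}^L$). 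Multiplying these contributions yields the stated formula. The calculation of $\partial(b_1)$ is entirely analogous, except that there is an additional disk that picks up the base point $t_1$ along its boundary, producing the $+t_1$ summand.

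The expression for $\partial(b_2)$ is the most intricate case and will be the main technical obstacle. For $k$ odd, I would split disks according to whether they enter block $k$ between strands $2$ and $4$ or between strands $3$ and $4$; these two options are captured by the $D_{24}(k-1)$ and $D_{34}(k-1)$ contributions multiplied by the appropriate within-block-$k$ continuants, together with the $+t_2$ summand from the disk through $t_2$. For $k$ even, a new phenomenon appears: a disk starting at $b_2$ may wrap entirely around the diagram, pick up $b_1$ as a negative puncture, and traverse the base point $t_1$ against its orientation to produce a factor of $t_1^{-1}$. I would enumerate such wrap-around disks by which pair of strands they occupy immediately before reaching $b_1$, producing the $D_{34}(k) b_1 t_1^{-1}D_{13}(k)$ and $D_{24}(k) t_1^{-1} D_{13}(k)$ summands, and combine these with the direct-passage term $D_{14}(k)$ and the base-point term $+t_2$.

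The hardest step will be the even-$k$ case of $\partial(b_2)$: one must match every possible geometric configuration of a wrap-around disk to a product of $D_{ab}(k)$ factors with the correct exponents of $t_1^{\pm 1}$ and the correct placement of $b_1$, without over- or under-counting. Careful use of the oriented base points and of the recursive structure of $D_{ab}$ from Lemma~\ref{lem: partial disk recursion} is essential to check that the sum over disk types decomposes exactly into the five terms appearing in the statement.
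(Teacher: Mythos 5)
Your proposal follows essentially the same approach as the paper's proof: both appeal to the combinatorial Chekanov--Eliashberg disk count, use Lemma~\ref{lem: partial disk recursion} together with the continuant identities (via Lemma~\ref{lem: braid matrix prod}, i.e., K\'alm\'an's theorem) to enumerate partial disks block-by-block, and for $\partial(b_2)$ with $k$ even split the disks into embedded (your ``direct-passage'' $D_{14}(k)$ term) and immersed (your ``wrap-around'' terms). The case analysis you sketch matches the paper's, including the correct observation that the $D_{34}(k)$ wrap-around disks pick up $b_1$ while the $D_{24}(k)$ ones do not, and that traversing $t_1$ against its orientation produces $t_1^{-1}$.
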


\begin{proof}
  By inspection, there are no rigid holomorphic disks with a single puncture at $a_{m_i+j}$ for $2\leq j\leq n_{i+1}$, implying that this term vanishes under the differential.

    \textbf{Differential of $a_{m_i+1}$:} 
By Lemma \ref{lem: partial disk recursion}, we can see that all rigid holomorphic disks contributing to $\partial(a_{m_i+1})$ are counted by the disks $D_{23}(i)$ when $(i+1)$ is odd or $K_{n_{i}-1}^RD_{13}(i-1)$ when $i+1$ is even.\footnote{Note that $a_{m_i +1}$ belongs to the $(i+1)$ block.} Upon inspection, the recursion relation for each of these partial disks yields the desired formula.

    \textbf{Differential of $b_1$:}  
    The $t_1$ term in the formula given for $\partial(b_1)$ comes from the rigid holomorphic disk corresponding to the small loop to the right of $b_1$. The remaining disks involved in the differential of $b_1$ all pass through alternating pairs of overstrands and understrands starting with the crossings $a_{m_{k-1}+1}$ and $a_{m_{k-1}}$, and ending with the crossings $a_{n_1+1}$ and $a_{n_1}$. By our argument for the $a_{m_i+1}$ case, these disks are precisely the ones encoded by the product of continuant polynomials appearing in the statement of the proposition.

     \textbf{Differential of $b_2$:}
Consider the crossing labeled $b_2$ and suppose that $k$ is odd. As before, the $t_2$ term comes from the small loop to the right of $b_2$. Following the inductive definition of the $D_{ab}(i)$ polynomials, we can see that $K_{n_k}D_{34}(k-1)$ captures all partial rigid holomorphic disks that have either a negative puncture or pass through the understrand at the leftmost crossing $a_{n_{(k-1)}+1}$ of the $k$th block, while the term $K_{n_k-1}^RD_{24}(k-1)$ captures all such disks that pass through the overstrand at $a_{n_{(k-1)}+1}$. Therefore, the sum of these three terms yields the desired expression for $\partial(b_2)$.

Now consider the case when $k$ is even. As before, the $t_2$ term comes for the small loop to the right of $b_2$. The remaining rigid holomorphic disks contributing to the differential are either embedded or immersed. The embedded disks are given by the $D_{14}(k)$ term, while the immersed disks are given by the terms $D_{34}(k)b_1t_1^{-1}D_{13}(k)$ and $D_{24}(k)t_1^{-1}D_{13}(k)$. A schematic depiction of these disks is given in Figure \ref{fig: b2disks}. 
\end{proof}

\begin{figure}[H]
	\centering
	\begin{tikzpicture}[scale=1]
		\node[inner sep=0] at (0,0) {\includegraphics[width=15 cm]{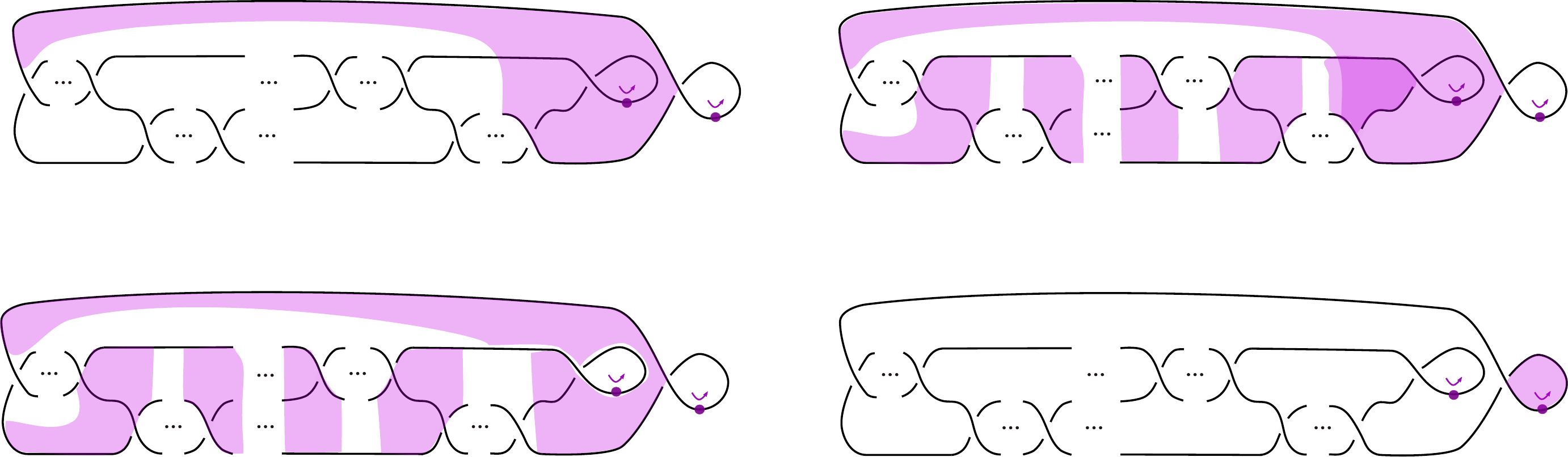}};
  	\node at (-7.1,0.9){$a_1$};
        \node at (-6.4,0.9){$a_{n_1}$};
        \node at (-6,0.4){$a_{n_1+1}$};
        \node at (-5.1,0.4){$a_{m_2}$};
        \node at (-3.6,0.9){$a_{m_{k-1}}$};
        \node at (-3,0.4){$a_{m_{k-1}+1}$};
        \node at (-2,0.4){$a_{m_k}$};
        \node at (-0.8,0.9){$t_2$};
        \node at (-1.7,0.9){$t_1$};
        \node at (-1,1.8){$b_2$};
        \node at (-7.2,-1.9){$a_1$};
        \node at (-6.5,-1.9){$a_{n_1}$};
        \node at (-6,-2.5){$a_{n_1+1}$};
        \node at (-5.1,-2.5){$a_{m_2}$};
        \node at (-3.7,-1.9){$a_{m_{k-1}}$};
        \node at (-3,-2.5){$a_{m_{k-1}+1}$};
        \node at (-2,-2.5){$a_{m_k}$};
        \node at (-1.7,-1.8){$t_1$};
        \node at (-0.9,-2){$t_2$};
        \node at (-1.9,-1){$b_1$};
        \node at (-1,-1){$b_2$};
        \node at (0.8,0.9){$a_1$};
        \node at (1.4,0.9){$a_{n_1}$};
        \node at (2,0.4){$a_{n_1+1}$};
        \node at (2.8,0.4){$a_{m_2}$};
        \node at (4.3,0.9){$a_{m_{k-1}}$};
        \node at (4.8,0.4){$a_{m_{k-1}+1}$};
        \node at (5.8,0.4){$a_{m_k}$};
        \node at (7.2,0.9){$t_2$};
        \node at (6.35,0.9){$t_1$};
        \node at (7,1.8){$b_2$};
        \node at (7.2,-2){$t_2$};
        \node at (6.35,-1.8){$t_1$};
  \end{tikzpicture}
	\caption{Schematics of the embedded and immersed disks contributing to the differential of $b_2$ for $\Lambda[n_1, \ldots, n_{k}]$ with $k$ even. The upper left picture shows the disks contributing the term $D_{14}(k)$, the upper right picture shows the disks contributing the term $D_{24}(k)t_1^{-1}D_{13}(k)$, the lower left picture shows the disks contributing the term $D_{34}(k)b_1t_1^{-1}D_{13}(k)$, and the lower right picture shows the disk contributing the term $t_2$. } 
	\label{fig: b2disks}
\end{figure}

\subsection{Equations describing the ungraded augmentation variety}\label{sub: equations}

We now describe a collection of equations cutting out $\Aug_u(\La[n_1, \dots, n_k])$ (which at first depend on our choice of Lagrangian projection but after considering dg-algebra homotopies determines a Legendrian invariant affine variety). We show that the equation $\partial (a_{m_i+1})=0$ can be given by the vanishing of a specific continuant polynomial, rather than the product of continuants given in Proposition \ref{prop: differential}. We prove a similar statement for the equation $\partial(b_1)=0$ and then show that the vanishing of the expression $\partial(b_2)$ is implied by the previous equations.

\begin{lem}\label{lem: Kni-1R=0}
 For all $i=1, \ldots, k-1$, the equations $\partial(a_{m_i+1})=0$ are simultaneously satisfied if and only if $K_{n_1}=0$, $K_{n_i-1}^R=0$ for all $2\leq i\leq k-1$. The additional equation $\partial(b_1)=0$ is satisfied if and only if $K_{n_k-1}^R\neq 0$ as well.
\end{lem}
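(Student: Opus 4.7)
The plan is to characterize the vanishing of each $\partial(a_{m_i+1})$ and of $\partial(b_1)$ using the factored formulas from Proposition \ref{prop: differential}, combined with the determinant identity from Lemma \ref{lem: Braid Matrix Det}. The guiding principle is that vanishing of one continuant in a chain forces nonvanishing of its immediate neighbors, enabling an inductive extraction of the stated conditions one block at a time.

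For the first claim I will argue both directions, with the main work in the $\Rightarrow$ direction done by induction on $i$ from $1$ to $k-1$. The sufficiency ($\Leftarrow$) is immediate from the factored form: once $K_{n_1} = 0$ and $K_{n_i-1}^R = 0$ for $2 \leq i \leq k-1$, each $\partial(a_{m_i+1})$ with $i \geq 2$ contains the vanishing factor $K_{n_i-1}^R$, while the equation at $i=1$ reduces to the condition $K_{n_1} = 0$ after specializing the product (whose middle factors are empty). For necessity, the base case $i=1$ extracts $K_{n_1} = 0$ directly from $\partial(a_{m_1+1}) = 0$ via the specialization of the formula in Proposition \ref{prop: differential} together with the characteristic-$2$ identity $K_{n_1-1}^L K_{n_1-1}^R = 1 + K_{n_1} K_{n_1-2}^M$ supplied by Lemma \ref{lem: Braid Matrix Det}. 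For the inductive step, with $K_{n_1} = 0$ and $K_{n_j-1}^R = 0$ for $2 \leq j < i$ in hand, Lemma \ref{lem: Braid Matrix Det}(1) gives $K_{n_1-1}^L \neq 0$ and Lemma \ref{lem: Braid Matrix Det}(2) gives $K_{n_j-2}^M \neq 0$ for each such $j$. Consequently, every factor of
\[
\partial(a_{m_i+1}) \;=\; K_{n_1-1}^L\, K_{n_2-2}^M \cdots K_{n_{i-1}-2}^M\, K_{n_i-1}^R
\]
preceding the last one is nonzero, so the equation $\partial(a_{m_i+1}) = 0$ forces $K_{n_i-1}^R = 0$, completing the induction.

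For the additional claim concerning $\partial(b_1) = 0$, I will observe that once the conditions from the first part are imposed, the expression
\[
\partial(b_1) \;=\; K_{n_1-1}^L\, K_{n_2-2}^M \cdots K_{n_{k-1}-2}^M\, K_{n_k-1}^R + t_1
\]
has all of its first $k-1$ continuant factors nonzero by Lemma \ref{lem: Braid Matrix Det}, exactly as above. Since $t_1$ is an invertible base point generator, the equation $\partial(b_1) = 0$ requires the leading product to equal the nonzero element $t_1$, which happens precisely when $K_{n_k-1}^R \neq 0$; conversely, when $K_{n_k-1}^R \neq 0$ the value of $t_1$ is simply determined by the product.

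The main obstacle is the base case of the induction. The general product formula degenerates at $i=1$ (empty middle product), so the identification of the vanishing of $\partial(a_{m_1+1})$ with the single condition $K_{n_1} = 0$ must be handled with care, passing through the char-$2$ determinant identity of Lemma \ref{lem: Braid Matrix Det}. Once this base case is secured, the inductive step and the $\partial(b_1)$ analysis are clean consequences of the same complementary-nonvanishing mechanism for continuants.
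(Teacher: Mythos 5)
Your inductive step and your treatment of $\partial(b_1)$ are correct and follow the same path as the paper. The gap is in the base case $i=1$, and it is a real one. You treat the specialization of Proposition~\ref{prop: differential} at $i=1$ as the product $K_{n_1-1}^L K_{n_1-1}^R$ (the middle factors being empty), and then invoke the characteristic-$2$ identity $K_{n_1-1}^L K_{n_1-1}^R = 1 + K_{n_1} K_{n_1-2}^M$ to ``extract'' $K_{n_1}=0$. But that identity drives the implication the \emph{wrong} way: if $K_{n_1-1}^L K_{n_1-1}^R = 0$, then $K_{n_1}K_{n_1-2}^M = 1$, which forces $K_{n_1}\neq 0$ --- the opposite of what the lemma asserts. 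Conversely $K_{n_1}=0$ forces $K_{n_1-1}^LK_{n_1-1}^R = 1 \neq 0$, so the degenerate product formula cannot be the correct expression for $\partial(a_{m_1+1})$ at all.

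The resolution is that $\partial(a_{m_1+1}) = K_{n_1}(a_1,\dots,a_{n_1})$ is the full continuant of the first block, not the product of the two length-$(n_1-1)$ continuants; this is what the paper reads off directly (and uses by name, $\partial(a_{n_1+1})=K_{n_1}$). The displayed formula in Proposition~\ref{prop: differential} simply does not specialize naively at $i=1$, and no amount of manipulation via Lemma~\ref{lem: Braid Matrix Det} turns the product into the full continuant. Once you take $\partial(a_{m_1+1})=K_{n_1}$ as the base case, the determinant identity plays its intended role: $K_{n_1}=0$ implies $K_{n_1-1}^L\neq 0$ and $K_{n_1-1}^R\neq 0$ by Lemma~\ref{lem: Braid Matrix Det}(1), which is precisely the nonvanishing you need to start the induction. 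After that correction your argument aligns with the paper's.
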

\begin{proof}
    To prove the first part of Lemma \ref{lem: Kni-1R=0}, consider $\partial(a_{m_i+1})= K_{n_1-1}^LK_{n_2-2}^M\dots K_{n_{(i-1)}-2}^MK_{n_{i}-1}^R $. By Lemma \ref{lem: Braid Matrix Det}, the vanishing of $\partial(a_{{n_1}+1})=K_{n_1}$ implies that $K_{n_1-1}^L$ and $K_{n_1-1}^R$ are both nonzero. Similarly, the vanishing of $K_{{n_i}-1}^R$ implies that $K_{n_i}$ and $K_{n_i-2}^M$ are both nonzero. It follows that every factor in $\partial(a_{m_i+1})=K_{n_1-1}^LK^M_{n_2-2}\dots K_{n_{(i-1)}-2}^M K_{n_i-1}^R$ except for the final factor $K_{n_i-1}^R$ is nonzero. Thus, the vanishing of $\partial(a_{m_i+1})$ is equivalent to the vanishing of $K_{n_i-1}^R$. The non-vanishing of $t_1$ together with the above argument immediately implies that $K_{n_k-1}^R\neq 0$ if and only if $\partial(b_1)=0$ in addition to $\partial(a_{m_i}+1)=0$.
\end{proof}

In the following subsection, we show that Lemma \ref{lem: Kni-1R=0} gives the equations and inequalities needed to describe $\Aug_u(\La[n_1, \dots, n_k])$ as an affine variety and that the equation $\partial(b_2)=0$ is redundant. To verify this, it suffices to show that the equations and the inequality in Lemma \ref{lem: Kni-1R=0} imply that $\partial(b_2)+t_2$ is always non-zero. 
We require the following lemma in our proof of this fact.

\begin{lem}\label{lem: Aug Vty Recursion}
    As regular functions on the ungraded augmentation variety $\Aug_u(\La[n_1, \dots, n_k])$, the polynomials $D_{ab}(i)$ satisfy
    \begin{itemize}
        \item $D_{34}(i)=0$ for all $2\leq i\leq k,~i\in 2\Z,$
        \item $D_{13}(i)\neq 0,~\text{and}~ D_{24}(i)\neq 0$ for all $2\leq i \leq k-2,~i\in 2\Z.$  
    \end{itemize}
\end{lem}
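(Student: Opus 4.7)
The plan is to proceed by induction on even $i$ from the base case $i=2$ up to the appropriate endpoint (namely $k$ for $D_{34}$, and $k-2$ for $D_{13}$ and $D_{24}$). At each stage, I would read off the recursions term-by-term and identify which factors are forced to vanish or are invertible on $\Aug_u(\La[n_1,\dots, n_k])$ by combining Lemma \ref{lem: Kni-1R=0} with Lemma \ref{lem: Braid Matrix Det}.

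For the base case $i=2$, the identity $D_{34}(2) = K_{n_1}$ vanishes immediately by Lemma \ref{lem: Kni-1R=0}. When $k \geq 4$, so that the claims for $D_{13}(2)$ and $D_{24}(2)$ are not vacuous, I would apply Lemma \ref{lem: Braid Matrix Det} to the vanishing $K_{n_1}=0$ to deduce $K_{n_1-1}^L \neq 0$, and to the vanishing $K_{n_2-1}^R=0$ (provided by Lemma \ref{lem: Kni-1R=0} since $2 \leq 2 \leq k-1$) to deduce that $K_{n_2-2}^M$ and $K_{n_2}$ are nonzero. The formulas $D_{13}(2)=K_{n_2-2}^MK_{n_1-1}^L$ and $D_{24}(2)=K_{n_2}K_{n_1-1}^L$ then give the required non-vanishing.

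For the inductive step at even $i \geq 4$: The recursion $D_{34}(i)=K_{n_{i-1}}D_{34}(i-2)+K_{n_{i-1}-1}^RD_{24}(i-2)$ has its first summand killed by the inductive hypothesis and its second summand killed because $i-1$ lies in the range $[3,k-1]$, which forces $K_{n_{i-1}-1}^R=0$ by Lemma \ref{lem: Kni-1R=0}. For $D_{13}(i)$ and $D_{24}(i)$ with $i \leq k-2$, both $i$ and $i-1$ land in the middle range $[2,k-1]$, so Lemma \ref{lem: Kni-1R=0} together with Lemma \ref{lem: Braid Matrix Det} simultaneously renders $K_{n_i}, K_{n_i-2}^M, K_{n_{i-1}}, K_{n_{i-1}-2}^M$ all nonzero. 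Then $D_{13}(i)$ is a product of three nonzero factors. In the recursion for $D_{24}(i)$, the $K_{n_i-1}^RD_{14}(i-2)$ term vanishes by Lemma \ref{lem: Kni-1R=0}, the $K_{n_i}K_{n_{i-1}-1}^LD_{34}(i-2)$ subterm vanishes by the inductive hypothesis, and the remaining term $K_{n_i}K_{n_{i-1}-2}^MD_{24}(i-2)$ is a product of three nonzero factors.

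The main obstacle I anticipate is purely bookkeeping, namely ensuring that the indices appearing in each recursion actually satisfy the range hypotheses of Lemma \ref{lem: Kni-1R=0}. This is precisely why the non-vanishing statements for $D_{13}$ and $D_{24}$ must be restricted to $i \leq k-2$: the $D_{13}$ recursion invokes the continuant $K_{n_i-2}^M$, whose non-vanishing requires $K_{n_i-1}^R=0$, which in turn requires $i \leq k-1$, and the inductive structure then forces us to shrink this by one further. Once this index tracking is made precise, each case reduces to matching the appropriate term of the recursion with the appropriate clause of Lemma \ref{lem: Braid Matrix Det}.
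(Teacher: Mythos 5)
Your proof is correct and takes essentially the same approach as the paper's: induction on even $i$ using the recursions, with vanishing factors supplied by Lemma \ref{lem: Kni-1R=0} and non-vanishing factors by Lemma \ref{lem: Braid Matrix Det}. You are slightly more explicit than the paper about which factors in the $D_{13}$ and $D_{24}$ recursions are invertible and why (the paper leaves the non-vanishing of $K_{n_i}$ and $K_{n_{(i-1)}-2}^M$ implicit, as it was noted in the proof of Lemma \ref{lem: Kni-1R=0}), but the structure of the argument is identical.
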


\begin{proof}
    Expand the expression for $D_{34}(i)$ via the defining recursion to get \begin{equation*}
        D_{34}(i)= K_{n_{(i-1)}}D_{34}(i-2)+K_{n_{(i-1)}-1}^RD_{24}(i-2).
    \end{equation*}

By Lemma \ref{lem: Kni-1R=0}, $K_{n_{(i-1)}-1}^R=0$ so the second term in this expansion vanishes. Moreover, $K_{n_1}=0$, inductively yielding the vanishing of the $K_{n_{(i-1)}}D_{34}(i-2)$ term. Thus $D_{34}(i)=0$ for all $2\leq i \leq k$.

     For $D_{24}(i)$, we expand via the recursion and simplify using the fact that $D_{34}(i-2)=0$ to get
     \begin{equation*}
         D_{24}(i)=K_{n_{i}-1}^RD_{14}(i-2)+K_{n_i}K_{n_{(i-1)}-2}^M D_{24}(i-2).
     \end{equation*}
   By Lemma \ref{lem: Kni-1R=0}, we can eliminate the $K^R_{n_{i}-1}$ term and are left with $D_{24}(i)=K_{n_i}K_{n_{(i-1)}-2}^M D_{24}(i-2)$. Since $D_{24}(2)=K_{n_2}K_{n_{1}-1}^L\neq 0$, we conclude that $D_{24}(i)\neq 0$ inductively.  

     Finally, the non-vanishing of $D_{13}(i)$ follows from an analogous inductive argument. To verify the base case, observe that $D_{13}(2)\neq 0$ follows from the fact that $K_{n_1}=0$ and $K_{n_2-1}^R=0$ by Lemma~\ref{lem: Kni-1R=0}. Applying Lemma~\ref{lem: Braid Matrix Det}, then yields that $K_{n_1-1}^L\neq 0$ and $K^M_{n_2-2}\neq 0$, so that their product $K_{n_1-1}^LK^M_{n_2-2}=D_{13}(2)\neq 0$ as well. 
\end{proof}

\begin{prop}\label{prop: partial b_2 redundant}
Suppose that $\partial(b_1)=\partial(a_{m_1+1})=\cdots=\partial(a_{m_{k-1}+1})=0$. Then $\partial(b_2)+t_2\neq 0$.
\end{prop}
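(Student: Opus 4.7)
The plan is to split into cases based on the parity of $k$ and, in each case, reduce $\partial(b_2)+t_2$ to a product that is manifestly nonzero, using the constraints from Lemma~\ref{lem: Kni-1R=0}, the vanishing/nonvanishing results of Lemma~\ref{lem: Aug Vty Recursion}, and the Bezout-type identity of Lemma~\ref{lem: Braid Matrix Det}, working throughout in characteristic $2$.

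For $k$ odd, I would observe that $k-1\geq 2$ is even, so Lemma~\ref{lem: Aug Vty Recursion} gives $D_{34}(k-1)=0$, leaving $\partial(b_2)+t_2 = K_{n_k-1}^R\, D_{24}(k-1)$. The factor $K_{n_k-1}^R$ is nonzero by Lemma~\ref{lem: Kni-1R=0} applied to $\partial(b_1)=0$ together with the invertibility of $t_1$. The factor $D_{24}(k-1)$ lies one step beyond the range covered by Lemma~\ref{lem: Aug Vty Recursion}, so I would extend that argument by one further application of the recursion: $K_{n_{k-1}-1}^R=0$ kills the first term, and $D_{34}(k-3)=0$ (for $k\geq 5$) reduces the remaining expression to $K_{n_{k-1}} K_{n_{k-2}-2}^M D_{24}(k-3)$, each factor of which is nonzero by Lemmas~\ref{lem: Braid Matrix Det} and~\ref{lem: Aug Vty Recursion}; the base case $k=3$ reduces to $D_{24}(2)=K_{n_2} K_{n_1-1}^L$, which is directly nonzero.

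For $k$ even, Lemma~\ref{lem: Aug Vty Recursion} yields $D_{34}(k)=0$, so $\partial(b_2)+t_2 = D_{14}(k)+D_{24}(k)\,t_1^{-1} D_{13}(k)$. The key auxiliary identity is $K_{n_k-1}^R D_{13}(k)=t_1 K_{n_k-2}^M$, obtained by unfolding the recursion to $D_{13}(k)=K_{n_1-1}^L K_{n_2-2}^M\cdots K_{n_{k-1}-2}^M K_{n_k-2}^M$ and using $\partial(b_1)=0$. For $k\geq 4$ I would expand $D_{14}(k)$ and $D_{24}(k)$ via their recursions (simplified by $D_{34}(k-2)=0$), substitute, and note that in characteristic $2$ the two resulting copies of $K_{n_k-2}^M D_{14}(k-2)$ cancel, producing
\[
\partial(b_2)+t_2 \;=\; K_{n_{k-1}-2}^M\, D_{24}(k-2)\,\bigl(K_{n_k-1}^L + K_{n_k}\,t_1^{-1} D_{13}(k)\bigr).
\]
Multiplying the parenthesized factor by $t_1$ and applying Lemma~\ref{lem: Braid Matrix Det} in the form $K_{n_k-1}^L K_{n_k-1}^R + K_{n_k} K_{n_k-2}^M = 1$ collapses it to $P := K_{n_1-1}^L K_{n_2-2}^M\cdots K_{n_{k-1}-2}^M$, so $\partial(b_2)+t_2 = K_{n_{k-1}-2}^M D_{24}(k-2)\, P/t_1$, a product of manifestly nonzero terms. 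The base case $k=2$ is handled by the parallel direct manipulation, factoring $K_{n_1-1}^L$ out and invoking the same Bezout identity.

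The main obstacle will be the algebraic bookkeeping in the even $k$ case, where one must track carefully which continuant identity (the Bezout-type of Lemma~\ref{lem: Braid Matrix Det}, the vanishings of Lemma~\ref{lem: Kni-1R=0}, or the recursion-level results of Lemma~\ref{lem: Aug Vty Recursion}) applies at each step and verify that the characteristic-$2$ cancellations conspire to produce the clean factored form. A secondary subtlety is that Lemma~\ref{lem: Aug Vty Recursion} only asserts $D_{24}(i)\neq 0$ for $i\leq k-2$, which forces the one-step extension in the odd $k$ argument.
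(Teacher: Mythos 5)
Your proposal follows the same approach as the paper's proof: split on the parity of $k$, use Lemma~\ref{lem: Aug Vty Recursion} to kill the $D_{34}$ contributions, reduce to a product of manifestly nonzero continuant factors in the odd case, and carry out the characteristic-$2$ cancellation driven by the Bezout identity of Lemma~\ref{lem: Braid Matrix Det} in the even case (your step of clearing $t_1$ first instead of substituting $t_1^{-1}D_{13}(k)=K_{n_k-2}^M/K_{n_k-1}^R$ is cosmetic and lands on the same final expression $K_{n_{k-1}-2}^M D_{24}(k-2)/K_{n_k-1}^R$). You are, however, more careful than the paper at two points: for odd $k$ you correctly note that the stated range $2\leq i\leq k-2$ in Lemma~\ref{lem: Aug Vty Recursion} falls one index short of covering $D_{24}(k-1)$, and you supply the missing recursive step (the lemma's proof does extend, but its statement as written does not); and for even $k$ you flag that the paper's manipulations implicitly assume $k\geq 4$, since $D_{13}(k-2)$ and $D_{24}(k-2)$ are undefined when $k=2$, and you handle $k=2$ by the parallel direct computation factoring out $K_{n_1-1}^L$. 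Both observations identify genuine, if minor, gaps in the paper's write-up, and your fixes are correct.
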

\begin{proof}
Assume that $\partial(a_{m_i+1})=\partial(b_1)=0$ for $1\leq i\leq k-1$. First, consider the case that $k$ is odd. 
By Proposition \ref{prop: differential} and Lemma \ref{lem: Aug Vty Recursion}, the vanishing of $\partial(b_2)$ is equivalent to the vanishing of $K_{n_k-1}^RD_{24}(k-1)+t_2$. Moreover, by Lemma \ref{lem: Kni-1R=0}, we know that $\partial(b_1)=0$ implies that $K_{n_{k}-1}^R\neq 0$. Finally, Lemma \ref{lem: Aug Vty Recursion} implies that $D_{24}(k-1)$ is necessarily nonzero, so that the product $K_{n_{k}-1}^RD_{24}(k-1)$ is nonzero as well.

In the case that $k$ is even, we again apply Lemma \ref{lem: Aug Vty Recursion} to conclude that the term $D_{34}(k)b_1t_1^{-1}D_{13}(k)$ of $\partial(b_2)$ vanishes. 
All that remains to be shown is that $D_{14}(k)+D_{24}(k)t_1^{-1}D_{13}(k)\neq 0$. First observe that by Proposition \ref{prop: differential} and Lemma \ref{lem: partial disk recursion}, the equation $\partial(b_1)=0$ is equivalent to 
\begin{equation*}
   t_1= K_{n_k-1}^R K_{n_{(k-1)}-2}^MD_{13}(k-2).
\end{equation*}
After also expanding $D_{13}(k)$ by the recursion of Lemma~\ref{lem: partial disk recursion}, we have that

\begin{align*}t_1^{-1}D_{13}(k)&= \frac{K_{n_k-2}^M K_{n_{(k-1)}-2}^M D_{13}(k-2)}{K_{n_k-1}^R K_{n_{(k-1)}-2}^M D_{13}(k-2)}\\
&=\frac{K_{n_k-2}^M}{K_{n_k-1}^R}.\end{align*}
We return to $D_{14}(k)+D_{24}(k)t_1^{-1}D_{13}(k)=D_{14}(k)+D_{24}(k)\frac{K_{n_k-2}^M}{K_{n_k-1}^R}$, apply the recursions from  Lemma~\ref{lem: partial disk recursion} to $D_{14}(k)$ and $D_{24}(k)$ and use the fact that $D_{34}(i)=0$ for all $i\in 2\Z$ by Lemma~\ref{lem: Aug Vty Recursion} to obtain the following equivalences:
\begin{align*}
 D_{14}(k)+D_{24}(k)\frac{K_{n_k-2}^M}{K_{n_k-1}^R}
    &=K_{n_k-1}^LK_{n_{(k-1)}-2}^MD_{24}(k-2)+K^M_{n_k-2}D_{14}(k-2)\\&+(K_{n_k-1}^RD_{14}(k-2)+K_{n_k}K_{n_{(k-1)}-2}^MD_{24}(k-2))\frac{K_{n_k-2}^M}{K_{n_k-1}^R}\\
    &=K_{n_{(k-1)}-2}^MD_{24}(k-2)[K_{n_k-1}^L+\frac{K_{n_k}K_{n_k-2}^M}{K_{n_k-1}^R}].\\
\end{align*}
 We then apply the identity $K_{n_k}K_{n_k-2}^M=K_{n_k-1}^RK_{n_k-1}^L+1$ to the last term and obtain

\begin{align*}
    K_{n_{(k-1)}-2}^MD_{24}(k-2)[K_{n_k-1}^L+\frac{K_{n_k}K_{n_k-2}^M}{K_{n_k-1}^R}]&= K_{n_{(k-1)}-2}^MD_{24}(k-2)[K_{n_k-1}^L+\frac{K_{n_k-1}^RK_{n_k-1}^L+1}{K_{n_k-1}^R}]
    \\
    &=\frac{K_{n_{(k-1)}-2}^M}{K_{n_k-1}^R}D_{24}(k-2).
\end{align*}
Observe that $D_{24}(k-2)\neq 0$ by Lemma~\ref{lem: Aug Vty Recursion}, and that by Lemma~\ref{lem: Kni-1R=0}, $K_{n_{(k-1)}-1}^R=0$ and $K_{n_k-1}^R\neq 0$. Finally, $K_{n_{(k-1)}-1}^R=0$ implies that $K_{n_{(k-1)}-2}^M\neq 0$ by Lemma~\ref{lem: Braid Matrix Det}. Thus, we can conclude that $D_{14}(k)+D_{24}(k)t_1^{-1}D_{13}(k)=\frac{K_{n_{(k-1)}-2}^M}{K_{n_k-1}^R}D_{24}(k-2)$ is nonzero and the claim follows.
\end{proof}

\begin{prop}\label{prop:dgahtpy}
The ungraded augmentation $\Aug_u(\Lambda[n_1, \ldots, n_k])$ can be identified with the affine variety given by
\begin{align*}
    \left\{\left(\begin{array}{l}a_1,a_2,\dots, a_{m_1},\\
    a_{m_{(i-1)}+2},\dots, a_{m_i} \\
    \text{for $1<i<k$,} \\
    a_{m_{k-1}+2},\dots, a_{m_k} \end{array}\right)\in \F^{m_k-k+1}~\middle|~\begin{array}{l} K_{n_1}(a_1,\ldots, a_{m_1})=0 \\
    K_{n_i-1}(a_{m_{(i-1)}+2}, \ldots, a_{m_i})=0 \quad \text{for $1<i<k$},\\
    K_{n_k-1}(a_{m_{k-1}+2}, \ldots, a_{m_k})\neq 0
    \end{array} \right\}.
\end{align*}
\end{prop}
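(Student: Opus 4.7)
The plan is to identify $\Aug_u(\La[n_1, \dots, n_k])$ in two stages: first describe the pre-quotient augmentation set $\wAug_u(\La[n_1, \dots, n_k])$ as an affine variety using the explicit differentials of Proposition \ref{prop: differential}, then show that the quotient by dg-algebra homotopy collapses the extraneous coordinates to yield the stated variety.

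I would begin by listing the generators of $\SA(\La[n_1, \dots, n_k])$, namely the Reeb chords $a_1, \ldots, a_{m_k}, b_1, b_2$ together with the invertible base points $t_1^{\pm 1}, t_2^{\pm 1}$, and record the conditions $\e \circ \partial = 0$ on each. The relations $\partial(a_{m_i+j}) = 0$ for $j \geq 2$ are automatically satisfied. By Lemma \ref{lem: Kni-1R=0}, the relations $\partial(a_{m_i+1}) = 0$ for $i = 1, \ldots, k-1$ together with $\partial(b_1) = 0$ are equivalent to the continuant equations $K_{n_1} = 0$, $K_{n_i-1}^R = 0$ for $1 < i < k$, the inequality $K_{n_k-1}^R \neq 0$ (which enforces $t_1 \neq 0$), and a formula determining $t_1$ as a continuant product in the $a_j$'s. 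By Proposition \ref{prop: partial b_2 redundant}, the relation $\partial(b_2) = 0$ uniquely determines $t_2$ as a nonzero element without imposing any further constraint on the $a_j$'s. Consequently, $\wAug_u(\La[n_1, \dots, n_k])$ is isomorphic as an affine variety to $X \times \mathbb{A}^{k-1} \times \mathbb{A}^2$, where $X$ is the target variety from the statement, the factor $\mathbb{A}^{k-1}$ records the unconstrained values of $a_{m_1+1}, \ldots, a_{m_{k-1}+1}$ (which appear in no defining equation of $X$), and the factor $\mathbb{A}^2$ records the free values of $b_1$ and $b_2$.

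To pass from $\wAug_u$ to $\Aug_u$, I would construct explicit dg-algebra homotopies collapsing the affine factors $\mathbb{A}^{k-1} \times \mathbb{A}^2$. Following the logic of Proposition \ref{prop: set bijection}, each pair $(b_l, t_l)$ behaves as a stabilization pair modulo $\mathrm{Im}(\partial)$, since $\partial(b_l)$ has the form (element of this ideal) $+ \, t_l$; differences in $\e(b_l)$ between two augmentations can then be absorbed by a homotopy $h$ supported on $t_l$ and vanishing on all Reeb chords. For the free coordinates $a_{m_i+1}$, I would exploit that $\partial(a_{m_i+1})$ is a product of continuants with a distinguished factor that vanishes on $\wAug_u$ ($K_{n_i-1}^R$ for $i > 1$, or $K_{n_1}$ for $i = 1$): the Leibniz expansion of $h(\partial(a_{m_i+1}))$ collapses to a single surviving term coming from the vanishing factor, and this term can be tuned to an arbitrary value by setting $h$ nonzero on a judiciously chosen variable inside that factor.

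The main technical obstacle will be ensuring that these homotopies simultaneously satisfy all the required equations $h \circ \partial(\cdot) = \e_1(\cdot) - \e_2(\cdot)$. Adjusting $h$ on a variable inside one vanishing factor may in principle contribute to $h \circ \partial(a_{m_j+1})$ for other $j$, or to $h \circ \partial(b_1)$ and $h \circ \partial(b_2)$. The vanishing results of Lemma \ref{lem: Aug Vty Recursion} (particularly $D_{34}(i) = 0$ for even $i$), together with the explicit product structure of the remaining differentials, cause most such cross-contributions to drop out in the Leibniz expansions; the residual terms can then be absorbed by a final adjustment of $h(t_2)$, completing the verification that $\wAug_u \to X$ descends to an isomorphism of affine varieties $\Aug_u(\La[n_1,\dots,n_k]) \cong X$.
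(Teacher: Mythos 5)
Your proposal is correct and follows essentially the same strategy as the paper's proof: describe $\wAug_u$ via Lemma \ref{lem: Kni-1R=0} and Proposition \ref{prop: partial b_2 redundant}, then construct an explicit $(\e_1,\e_2)$-derivation $h$ supported on one Reeb chord per vanishing continuant factor (the paper's Leibniz computation effectively uses $a_{m_i}$, which makes the cross-contributions to $\partial(a_{m_{i'}+1})$ and $\partial(b_1)$ vanish) and on the base points $t_1, t_2$. Your organizing observation that the unconstrained coordinates form affine factors $\mathbb{A}^{k-1}\times\mathbb{A}^2$ and your identification of where to place $h$ reproduce the content of the paper's argument.
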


\begin{proof}
 Any ungraded augmentation of $\Lambda[n_1, \ldots, n_k]$ is given by $\e\circ \partial a_{m_i+1}=0,$ and $\e\circ \partial b_j=0$ for $i=1, \ldots,k-1$ and $j=1,2.$ In Lemma~\ref{lem: Kni-1R=0}, we obtain the equations $\e(K_{n_1}(a_1, \ldots, a_{n_1}))=0$ and $\e(K^R_{n_{(i-1)}}(a_{m_{(i-1)}+2}, \ldots, a_{m_{i}}))=0$ for $1< i<k$, as well as an inequality $\e(K_{n_k-1}^R(a_{m_{k-1}+2}, \ldots, a_{m_k}))\neq0$, which uniquely determines the value of $\e(t_1)$. Proposition \ref{prop: partial b_2 redundant} then shows that the value of $\e(t_2)$ is also uniquely determined by the equations and inequality above. Thus, the only remaining degrees of freedom in defining an augmentation lie in $\e(a_{m_i+1})$ for $i=1, \ldots, k-1$. 

Recall that two ungraded augmentations are considered to be the same point in the ungraded augmentation vareity if they are dg-algebra homotopic.
Let $B$ denote a subset of the Reeb chords of $\Lambda[n_1, \ldots, n_k]$, and let $\e_1^B$ and $\e_2^B$ be two ungraded augmentations of $\Lambda[n_1, \ldots, n_k]$ such that $\e_1^B$ and $\e_2^B$ are equal except for possibly when evaluated on the Reeb chords in the set $B.$ If $a$ is a Reeb chord such that $\partial(a)=0$, then $\e^{\{a\}}_1(a)\neq \e^{\{a\}}_2(a)$ implies that $\e^{\{a\}}_1$ and $\e^{\{a\}}_2$ are not dg-algebra homotopic and therefore not in the same equivalence class. As a result, we consider all Reeb chords $a$ of $\Lambda[n_1, \ldots, n_k]$ such that $\partial(a)\neq 0$.

 Let $B:=\{a_{m_1+1},\ldots, a_{m_{k-1}+1},b_1,b_2\}$ denote the collection of such Reeb chords. We will now show that $\e^B_1$ and $\e_2^B$ are dg-algebra homotopic. Let $h:\mathcal{A}(\Lambda[n_1, \ldots, n_k])\longrightarrow \F$ be the chain operator given by $h(a_j)=0$ if $j\neq m_i$ for $i=1, \ldots, k-1$. 

Then, let 

\begin{align*}
    h(a_{m_1})&=(\e^B_1(K_{n_1-1}^L))^{-1}(\e^B_1(a_{n_1+1})-\e^B_2(a_{n_1+1}))\\
    h(a_{m_i})&=(\e^B_1(K_{n_1-1}^LK_{n_2-2}^M\cdots K_{n_{(i-2)}-2}^MK_{n_{(i-1)}-2}^M)\e^B_1(K_{n_i-2}^M))^{-1}(\e^B_1(a_{m_i+1})-\e^B_2(a_{m_i+1}))\\
    h(t_1)&= \e^B_1(b_1)+\e^B_2(b_1)+h(K_{n_1-1}^LK_{n_2-2}^M\dots K_{n_{(k-1)}-2}K_{n_k-1}^R)\\
    h(t_2)&=\begin{cases} \e^B_1(b_1)+\e^B_2(b_1)+h( K_{n_k}D_{34}(k-1)+K_{n_k-1}^RD_{24}(k-1))  & k \text{ odd}\\
\e^B_1(b_2)+\e^B_2(b_2)+h(D_{14}(k)+D_{34}(k)b_1t_1D_{13}(k)+D_{24}(k)t_1D_{13}(k))  
& k \text{ even}\end{cases}
\end{align*}

From the continuant recursion relation $K_{n+1}=K_nx_{n+1}+K_{n-1}$, the Leibniz relation from Definition ~\ref{dg-algebra-homotopy}, and the definition of $h$, we have that
\begin{align*}
    h(K_{n_{i}-1}^R)&=h(K^M_{n_{i}-2}a_{m_i}+K_{n_i-3}(a_{m_{(i-1)}+2}, \dots,  a_{m_{i-2}}))\\
    &=h(K^M_{n_{i}-2})\e_2^B(a_{m_i})+h(a_{m_i})\e_1^B(K^M_{n_{i}-2})\\
    &=h(a_{m_i})\e_1^B(K^M_{n_{i}-2}).
\end{align*}

Recall that $\partial(a_{m_1+1})=K_{n_1}$, and $\partial(a_{m_i+1})=K_{n_{1-1}}^LK_{n_2-2}^M\cdots K_{n_{(i-1)}-2}^M K_{n_{i}-1}^R$ for $1< i<k$. As argued in the proof of Lemma~\ref{lem: Kni-1R=0}, $\e(K_{n_1})=0$ and $\e(K_{n_i-1}^R)=0$ for any ungraded augmentation $\e$ of $\SA(\Lambda[n_1, \ldots, n_k])$. So, 
\begin{align*}
\e^B_1(a_{m_i+1})-\e^B_2(a_{m_i+1})&= h(a_{m_i})\e_1^B(K^M_{n_{i}-2})\e^B_1(K_{n_{1-1}}^LK_{n_2-2}^M\cdots K_{n_{(i-1)}-2}^M)\\
&=
h(K_{n_{i}-1}^R)\e^B_1(K_{n_{1-1}}^LK_{n_2-2}^M\cdots K_{n_{(i-1)}-2}^M)\\
&=h(K_{n_{i}-1}^R)\e^B_1(K_{n_{1-1}}^LK_{n_2-2}^M\cdots K_{n_{(i-1)}-2}^M)+\e^B_2(K_{n_{i}-1}^R)h(K_{n_1-1}^LK_{n_2-2}^M\cdots K_{n_{(i-1)}-2}^M)\\
&=h(K_{n_1-1}^LK_{n_2-2}^M\cdots K_{n_{(i-1)}-2}^M K_{n_{i}-1}^R)\\
&=h(\partial(a_{m_i+1})).
\end{align*}

Analogously, one can show that the chain map $h$ we have defined satisfies
$\e^B_1(a_{n_1+1})-\e^B_2(a_{n_1+1})=h(\partial (a_{n_1+1}))$. It is straightforward to check that $\e^B_1(b_i)-\e^B_2(b_i)=h(\partial (b_i))$ for $i=1,2$. Thus, $h$ is a chain homotopy, and the augmentations $\e_1^B$ and $\e_2^B$ are dg-algebra homotopic and therefore in the same equivalence class. 
\end{proof}

\begin{cor}\label{cor: dgahtpy}The ungraded augmentation $\Aug_u(\Lambda[n_1, \ldots, n_k])$ can be identified with the affine variety given by
\begin{align*}
    \left\{\left(\begin{array}{l}a_1,a_2,\dots, a_{m_1},\\
    a_{m_{(i-1)}+2},\dots, a_{m_i} \\
    \text{for $1<i<k$,} \\
    a_{m_{k-1}+1},\dots, a_{m_k} \end{array}\right)\in \F^{m_k-k+2}~\middle|~\begin{array}{l} K_{n_1}(a_1,\ldots, a_{m_1})=0 \\
    K_{n_i-1}(a_{m_{(i-1)}+2}, \ldots, a_{m_i})=0 \quad \text{for $1<i<k$},\\
    K_{n_k}(a_{m_{k-1}+1}, \ldots, a_{m_k})= 0
    \end{array} \right\}.
\end{align*}
\end{cor}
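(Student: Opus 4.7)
The plan is to derive the Corollary from Proposition \ref{prop:dgahtpy} by constructing an explicit isomorphism of affine varieties between the two descriptions. The two descriptions agree on the first $k-1$ blocks; the only discrepancy is in the last block, where Proposition \ref{prop:dgahtpy} omits the coordinate $a_{m_{k-1}+1}$ and requires $K_{n_k-1}(a_{m_{k-1}+2},\ldots,a_{m_k}) \neq 0$, while the Corollary reintroduces $a_{m_{k-1}+1}$ and replaces that inequality with the equation $K_{n_k}(a_{m_{k-1}+1},\ldots,a_{m_k}) = 0$. My strategy is to construct mutually inverse regular morphisms between the two varieties and then invoke Proposition \ref{prop:dgahtpy}.

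The key observation is that, by the defining recursion in Definition \ref{def of K_n},
$$K_{n_k}(a_{m_{k-1}+1},\ldots,a_{m_k}) = a_{m_{k-1}+1}\,K_{n_k-1}^R - K_{n_k-2}(a_{m_{k-1}+3},\ldots,a_{m_k}),$$
where $K_{n_k-1}^R := K_{n_k-1}(a_{m_{k-1}+2},\ldots,a_{m_k})$. Hence $K_{n_k}$ is affine linear in $a_{m_{k-1}+1}$ with leading coefficient $K_{n_k-1}^R$. Whenever $K_{n_k-1}^R \neq 0$, the equation $K_{n_k} = 0$ solves uniquely for $a_{m_{k-1}+1}$ as a regular function of the remaining coordinates. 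Conversely, Lemma \ref{lem: Braid Matrix Det} forces any point satisfying $K_{n_k} = 0$ to have $K_{n_k-1}^L K_{n_k-1}^R = 1$, so in particular $K_{n_k-1}^R \neq 0$ automatically.

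Combining these two observations yields mutually inverse regular morphisms: the projection $\pi$ from the Corollary's variety to the Proposition's variety that forgets $a_{m_{k-1}+1}$, and the section $\sigma$ that adjoins the unique solution of $K_{n_k}(a_{m_{k-1}+1},\ldots,a_{m_k}) = 0$ for $a_{m_{k-1}+1}$. Both maps are regular on their respective domains (since $K_{n_k-1}^R$ is nonvanishing in each case), and $\pi \circ \sigma$ and $\sigma \circ \pi$ are the identity by construction. Composing this isomorphism with the identification from Proposition \ref{prop:dgahtpy} then yields the claim. No step presents a serious obstacle; the only ingredient beyond direct manipulation of continuants is the automatic non-vanishing of $K_{n_k-1}^R$ on the zero locus of $K_{n_k}$, which is handled by Lemma \ref{lem: Braid Matrix Det}.
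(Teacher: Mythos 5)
Your proof is correct and follows essentially the same route as the paper: reduce to Proposition \ref{prop:dgahtpy}, use the continuant recursion (Definition \ref{def of K_n}) to solve $K_{n_k}=0$ uniquely for $a_{m_{k-1}+1}$ when $K_{n_k-1}^R\neq 0$, and invoke Lemma \ref{lem: Braid Matrix Det} for the converse direction. The only cosmetic difference is that you package the equivalence as explicit mutually inverse morphisms $\pi$ and $\sigma$, whereas the paper states the bijection more informally.
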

\begin{proof} Note that the only difference of this statement from Proposition \ref{prop:dgahtpy} is the last block $n_k$, where we introduce one extra variable $a_{m_{k-1}+1}$ and change the inequality $K_{n_k-1}(a_{m_{k-1}+2},\dots, a_{m_k})\neq 0$ to an equation $K_{n_k}(a_{m_{k-1}+1},\dots, a_{m_k})=0$. To see that they are equivalent, we first observe that if $K_{n_k-1}(a_{m_{k-1}+2},\dots, a_{m_k})\neq 0$, then there is a unique value for $a_{m_{k-1}+1}$ so that the equation
\[
0=K_{n_k}(a_{m_{k-1}+1},\dots, a_{m_k})=a_{m_{k-1}+1}K_{n_k-1}(a_{m_{k-1}+2},\dots, a_{m_k})+K_{n_k-2}(a_{m_{k-1}+3},\dots, a_{m_k})
\]
holds (Definition \ref{def of K_n}). Conversely, if $K_{n_k}(a_{m_{k-1}+1},\dots, a_{m_k})=0$, then by Lemma \ref{lem: Braid Matrix Det}, we must have $K_{n_k-1}(a_{m_{k-1}+2},\dots, a_{m_k})\neq 0$. This shows that the claimed affine variety is isomorphic to that in Proposition \ref{prop:dgahtpy}
\end{proof}

\begin{cor}\label{cor: topological invariant} If $\La$ and $\La'$ admit Legendrian rational forms and are smoothly isotopic as unoriented links, then $\Aug_u(\Lambda)\cong \Aug_u(\Lambda')$ as affine varieties.
\end{cor}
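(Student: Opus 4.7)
By Proposition~\ref{prop: isotopic rational links}, two $2$-bridge links admitting Legendrian rational forms $\Lambda[n_1,\dots,n_k]$ and $\Lambda[n'_1,\dots,n'_{k'}]$ are smoothly isotopic if and only if $[n'_1,\dots,n'_{k'}]$ can be obtained from $[n_1,\dots,n_k]$ by a finite sequence of moves (1), (2), (3) listed there. Since move (2) is the composition
\[
[n_1,\dots,n_k] \overset{(3)}{\sim} [n_k,\dots,n_1] \overset{(1)}{\sim} [n_k,\dots,n_2,n_1+1,1] \overset{(3)}{\sim} [1,n_1+1,n_2,\dots,n_k],
\]
it suffices to show that the ungraded augmentation variety is preserved under moves (1) and (3). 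Each of these moves also preserves the Legendrian rational form condition by a direct check, so the statement is well-posed.

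The key observation is that the description in Corollary~\ref{cor: dgahtpy} exhibits $\Aug_u(\Lambda[n_1,\dots,n_k])$ as a \emph{product} of block-level subvarieties of affine space, since each defining equation involves variables from only one block. Concretely, for $n \geq 1$ define
\[
V_{\mathrm{end}}(n)=\{(x_1,\dots,x_n)\in \F^n : K_n(x_1,\dots,x_n)=0\}, \qquad V_{\mathrm{mid}}(n)=\{(x_1,\dots,x_{n-1})\in \F^{n-1} : K_{n-1}(x_1,\dots,x_{n-1})=0\},
\]
so that
\[
\Aug_u(\Lambda[n_1,\dots,n_k])\ \cong\ V_{\mathrm{end}}(n_1)\times V_{\mathrm{mid}}(n_2)\times\cdots\times V_{\mathrm{mid}}(n_{k-1})\times V_{\mathrm{end}}(n_k).
\]
Under this identification, moves (1) and (3) become transparent statements about this product.

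For move (3), one simply reverses the order of the factors, which produces an isomorphic variety since the Cartesian product is commutative. For move (1), the variety $\Aug_u(\Lambda[n_1,\dots,n_k+1,1])$ factors as
\[
V_{\mathrm{end}}(n_1)\times V_{\mathrm{mid}}(n_2)\times\cdots\times V_{\mathrm{mid}}(n_{k-1})\times V_{\mathrm{mid}}(n_k+1)\times V_{\mathrm{end}}(1).
\]
Here $V_{\mathrm{end}}(1)=\{x\in\F:K_1(x)=x=0\}$ is a single reduced point, and $V_{\mathrm{mid}}(n_k+1)=\{K_{n_k}=0\}\subset\F^{n_k}$ is literally the same variety (up to relabeling coordinates) as the old final block $V_{\mathrm{end}}(n_k)=\{K_{n_k}=0\}\subset\F^{n_k}$. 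Thus the two products agree up to a trivial point factor, and we obtain the required isomorphism of affine varieties.

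The only conceptual step is recognizing that the equations of Corollary~\ref{cor: dgahtpy} separate by block; the remaining work is just matching coordinates, using the palindromic symmetry of continuants (Lemma~\ref{lem: palindromic}) to make the block-level identifications canonical if one prefers an explicit map rather than an abstract product isomorphism.
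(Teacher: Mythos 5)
Your proof is correct and follows the same strategy as the paper's: reduce via Proposition~\ref{prop: isotopic rational links} to the three moves and verify invariance using the block-level description of Corollary~\ref{cor: dgahtpy}. Your extra observations---that the variety factors as a Cartesian product of block subvarieties, that move~(2) is a composite of moves (3), (1), (3), and that Lemma~\ref{lem: palindromic} is only needed if one wants an explicit coordinate-reversing isomorphism for move~(3) rather than the abstract isomorphism supplied by product commutativity---are all accurate and simply flesh out the paper's terse argument.
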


\begin{proof} By Proposition \ref{prop: isotopic rational links}, the Legendrian rational forms of $\Lambda$ and $\Lambda'$ are related by compositions of three types of moves: 
\begin{enumerate}
    \item $[n_1,\dots, n_{k-1},n_k]\sim [n_1,\dots, n_{k-1},n_k+1,1]$,
    \item $[n_1,n_2,\dots, n_k]\sim [1,n_1+1,n_2,\dots, n_k]$, and
    \item $[n_1,n_2,\dots, n_k]\sim [n_k,\dots, n_2,n_1]$.
\end{enumerate}
The statement then follows from the fact that the affine varieties described in Corollary \ref{cor: dgahtpy} are isomorphic under these changes. Note that Move $(3)$ uses Lemma \ref{lem: palindromic}.
\end{proof}

\begin{rmk} 

Among the three moves listed in Proposition \ref{prop: isotopic rational links} and Corollary \ref{cor: topological invariant}, the first can be realized by a single Reidemeister II move, while the other two moves are generally not Legendrian isotopies.

\end{rmk}

\section{Background on Cluster Varieties}\label{sec:clusterbackground}

In this section, we review the basics of cluster varieties and their cluster localizations. We also review the example of the Grassmannian $\Gr_{2,n}^{\circ}$ and its cluster localizations, which we will use in Section~\ref{sec:cluster} to define the cluster structure on $\Aug_u(\Lambda[n_1, \ldots, n_k])$. Within this section, we assume $\bF$ to be an algebraically closed field of \textbf{any characteristic}.

\subsection{Cluster Varieties and Cluster Localization}

\begin{defn} A \emph{cluster seed} $\mathbf{s}$ is a split algebraic torus $T_\seed\cong (\bF^\times)^n$ together with a system of coordinates $\bfA_\seed=\{A_1,A_2,\dots, A_n\}$ called \emph{cluster variables} and a quiver $Q_\seed$ whose vertices are in bijection with the cluster variables. The quiver $Q_\mathbf{s}$ is required to not contain any 1-cycles or 2-cycles. We sometimes select a subset of the vertices of $Q_\seed$ and declare them to be \emph{frozen}, and any vertex that is not frozen is said to be \emph{mutable}. A cluster variable is said to be \emph{frozen} (resp. \emph{mutable}) if its corresponding quiver vertex is frozen (resp. mutable).
\end{defn}

\begin{defn}\label{defn: quiver mutation} The data of a quiver $Q=Q_\seed$ without 1-cycles and 2-cycles can be encoded by a skew-symmetric matrix $\epsilon$ called the \emph{exchange matrix} of $Q$. Its entries are given by
\[
\epsilon_{ij}:=\#(i\rightarrow j)-\#(j\rightarrow i).
\]
Let $k$ be a mutable vertex of $Q$. The \emph{quiver mutation} of $Q$ in the direction of $k$ produces a new quiver $Q'$ whose exchange matrix $\epsilon'$ is
\[
\epsilon'_{ij}=\left\{\begin{array}{ll}-\epsilon_{ij} & \text{if $k\in \{i,j\}$},\\
\epsilon_{ij}+[\epsilon_{ik}]_+[\epsilon_{kj}]_+-[-\epsilon_{ik}]_+[-\epsilon_{kj}]_+ & \text{if $k\notin\{i,j\}$},
\end{array}\right.
\]
where $[x]_+:=\max\{x,0\}$. Note that quiver mutation is an involution.
\end{defn}

\begin{defn}\label{defn: cluster mutation} Let $\seed$ be a cluster seed and let $k$ be a mutable vertex in $Q_\seed$. The \emph{cluster mutation} of $\seed$ in the direction of $k$ produces a new cluster seed $\seed'=(T_{\seed'},\{A'_1,A'_2,\dots, A'_n\}, Q_{\seed'})$ together with a birational map $\mu_k:T_\seed\dashrightarrow T_{\seed'}$ such that
\begin{itemize}
    \item $Q_{\seed'}$ is obtained from $Q_\seed$ via a quiver mutation in the direction of $k$;
    \item the pull-back map $\mu_k^*$ is 
    \[
    \mu_k^*(A'_i)=\left\{\begin{array}{ll}
        A^{-1}_k\left(\prod_{j}A^{[\epsilon_{kj}]_+}_j+\prod_{j}A^{[-\epsilon_{kj}]_+}_j\right) & \text{if $i=k$}, \\
        A_i & \text{if $i\neq k$}.\end{array}\right.
    \]
\end{itemize}
Note that cluster mutation is also an involution, and frozen cluster variables never change under cluster mutation. Two cluster seeds are said to be \emph{mutation equivalent} to each other if one can be obtained from the other via a sequence of cluster mutations.
\end{defn}

\begin{defn}\label{defn: full-rank} Let $\epsilon$ be the exchange matrix for the quiver $Q_\seed$. The seed $\seed$ is said to be \emph{full-rank} if the submatrix formed by the mutable rows in $\epsilon$ is full-rank. The seed $\seed$ is said to be \emph{really full-rank} if the column vectors of the submatrix formed by the mutable rows in $\epsilon$ span $\mathbb{Z}^{\# \text{mutable vertices}}$. Note that full-rank-ness and really full-rank-ness are invariant under cluster mutations.
\end{defn} 
The following theorem is a fundamental result in cluster theory.

\begin{thm}[Laurent phenomenon {\cite[Theorem 3.1]{FZI}}]\label{thm: LP} Suppose $\seed$ and $\seed'$ are mutation equivalent. Then under the composition of the cluster mutation maps, any cluster variable in $\seed'$ can be expressed as a Laurent polynomial in terms of the cluster variables in $\seed$.
\end{thm}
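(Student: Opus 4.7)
The plan is to prove the theorem by induction on the length $\ell$ of a mutation sequence $\seed = \seed_0 \to \seed_1 \to \cdots \to \seed_\ell = \seed'$ connecting the two seeds. The base case $\ell = 0$ is trivial, and the case $\ell = 1$ is immediate from the exchange formula in Definition~\ref{defn: cluster mutation}: the one new cluster variable is expressed by construction as a binomial Laurent expression in the original cluster, and the remaining $n-1$ variables are unchanged.

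For the inductive step, consider any cluster variable $A''$ of $\seed' = \seed_\ell$. By the inductive hypothesis applied to the truncated sequence $\seed_1 \to \cdots \to \seed_\ell$, we can write $A'' = P(B_1, \ldots, B_n)$, where $P$ is a Laurent polynomial and $\{B_1, \ldots, B_n\}$ is the cluster of $\seed_1$. Since $\seed_1 = \mu_k(\seed_0)$ for some vertex $k$, each $B_j$ is itself a Laurent polynomial in the initial cluster $\{A_1, \ldots, A_n\}$: specifically, $B_j = A_j$ for $j \neq k$, while $B_k = A_k^{-1}(M_+ + M_-)$ for monomials $M_\pm$ in $\{A_j : j \neq k\}$. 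Substituting produces a rational expression for $A''$ in $\{A_j\}$, but it is not manifestly Laurent, because $P$ may involve negative powers of $B_k$, contributing a power of the binomial $M_+ + M_-$ to the denominator after substitution.

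The crux, and the main obstacle, is to show that these apparent denominators always cancel. The standard route, following Fomin--Zelevinsky, is the \emph{caterpillar lemma} \cite[Lemma 3.2]{FZI}: one applies the inductive hypothesis along a second mutation path that revisits the vertex $k$ (going $\seed_0 \to \seed_1 \to \mu_k(\seed_1) \to \cdots$), obtaining an alternative Laurent expression for $A''$ in a cluster related to $\seed_0$ by a single mutation at $k$. Comparing the two Laurent expansions through the exchange relation at $k$ forces the binomial $M_+ + M_-$ to divide the numerator to the required order, which produces Laurent-ness in the cluster of $\seed_0$ and closes the induction. The technical heart of the proof is thus the combinatorial verification underlying the caterpillar lemma, which requires careful tracking of exchange monomials under repeated mutation at a common vertex; this is the step where the genuine work lies, and the part I would be most careful to cite rather than redo.
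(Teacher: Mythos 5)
The paper does not prove this statement: it is cited as background (Theorem~3.1 of Fomin--Zelevinsky's \emph{Cluster algebras I}) with no proof given, so there is no ``paper's own proof'' to compare against. Your sketch is a reasonable high-level account of the standard Fomin--Zelevinsky argument --- induction on the length of a mutation path, the observation that naive substitution leaves unresolved denominators of the form $M_+ + M_-$, and the caterpillar lemma together with a coprimality argument to force the cancellation. You are also right to defer the combinatorial heart (the caterpillar lemma itself) to the citation rather than reprove it; that is exactly what the paper does by citing the result wholesale.

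One small imprecision in your description of the auxiliary path: you write $\seed_0 \to \seed_1 \to \mu_k(\seed_1) \to \cdots$, but since mutation is an involution, $\mu_k(\seed_1) = \seed_0$, so this path just backtracks. The actual caterpillar configuration mutates at a \emph{different} vertex $j \neq k$ after reaching $\seed_1$, and only later returns to direction $k$; the spine of the caterpillar alternates $\mu_k, \mu_j, \mu_k$, and the coprimality of the two exchange binomials encountered at consecutive spine vertices is what yields the required divisibility. This is a detail rather than a gap --- your plan correctly identifies where the work lies and that it should be cited --- but if you were to spell it out, the second path is $\seed_0 \to \mu_j(\seed_0) \to \cdots$ (or the appropriate caterpillar vertices), not a path revisiting $k$ immediately.
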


Cluster varieties are introduced by Fock and Goncharov~\cite{FockGoncharov_ensemble} as a geometric enhancement of Fomin and Zelevinsky's cluster algebras. Below is a definition of cluster varieties\footnote{Following the convention of Fock and Goncharov, there are two versions of cluster varieties: cluster $K_2$ varieties and cluster Poisson varieties. In this article, we only deal with the former.}.

\begin{defn}\label{defn: cluster variety} A \emph{cluster variety} $\mathscr{A}$ is an affine variety together with a collection of cluster seeds $\mathbf{S}=\{\seed=(T_\seed,\bfA_\seed,Q_\seed)\}$, satisfying
\begin{itemize}
    \item each cluster seed $\seed$ is equipped with and inclusion $T_\seed\hookrightarrow \mathscr{A}$ such that $\bigcup_{\seed\in \mathbf{S}}T_\seed$ covers $\mathscr{A}$ up to codimension 2; each $T_\seed$ is called a \emph{cluster torus chart} on $\mathscr{A}$.
     \item for any cluster seed $\seed\in \mathbf{S}$ and any mutable vertex $k$ in $Q_\seed$, the cluster seed produced by cluster mutation of $\seed$ in the direction of $k$ is also in $\mathbf{S}$;
    \item any two cluster seeds $\seed, \seed'\in\mathbf{S}$ are mutation equivalent to each other, such that the transition map between $T_\seed$ and $T_{\seed'}$ is birationally equivalent to the composition of the cluster mutation maps.
\end{itemize} 
The coordinate ring $\mathcal{O}(\mathscr{A})$ is called an \emph{upper cluster algebra}. The subalgebra in $\mathcal{O}(\mathscr{A})$ generated by cluster variables and the inverses of the frozen cluster variables is called a \emph{cluster algebra}.
\end{defn}

 From Definition \ref{defn: cluster variety} we see that in order to describe the cluster structure on a cluster variety, we only need to specify a single cluster seed $\seed$. We often call such a cluster seed an \emph{initial cluster seed} and its cluster variable the \emph{initial cluster variables}. As a consequence of the Laurent phenomenon (Theorem \ref{thm: LP}), every cluster variable in every cluster seed is a global function on the whole cluster variety $\mathscr{A}$. If $A_i$ is a frozen cluster variable, then it is a $\bF^\times$-valued function on $\mathscr{A}$ and hence a unit in $\mathcal{O}(\mathscr{A})$. On the other hand, if $A_i$ is a mutable cluster variable on $\mathscr{A}$, the \emph{cluster localization} of $\mathscr{A}$ at $A_i$ is the non-vanishing locus $\{A_i\neq 0\}$ in $\mathscr{A}$. The coordinate ring of a cluster localization is equal to the localization of $\mathcal{O}(\mathscr{A})$ at $A_i$ as algebras.

Below are some well-known results on cluster varieties.

\begin{lem}\label{lem: product of cluster varieties} If $\mathscr{A}$ and $\mathscr{A}'$ are both cluster varieties with initial cluster seeds $\seed$ and $\seed'$, then $\mathscr{A}\times \mathscr{A}'$ is also a cluster variety, with an initial cluster seed $\seed\times \seed':=(T_\seed\times T_{\seed'}, \bfA_\seed\sqcup \bfA_{\seed'}, Q_\seed \sqcup Q_{\seed'})$. Moreover, cluster seeds in $\mathscr{A}\times \mathscr{A}'$ are naturally products of cluster seeds in $\mathscr{A}$ and $\mathscr{A}'$. If $\mathscr{A}$ and $\mathscr{A}'$ are both full-rank (resp. really full-rank), then so is $\mathscr{A}\times \mathscr{A}'$.
\end{lem}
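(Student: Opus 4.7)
The plan is to verify directly that the proposed initial seed $\seed \times \seed' = (T_\seed \times T_{\seed'}, \bfA_\seed \sqcup \bfA_{\seed'}, Q_\seed \sqcup Q_{\seed'})$ satisfies each bullet point in Definition~\ref{defn: cluster variety}, and then check the statements about mutations and rank as corollaries of this verification. The central observation that makes everything work is that the exchange matrix of the disjoint union $Q_\seed \sqcup Q_{\seed'}$ is block diagonal: its entries $\epsilon_{ij}$ vanish whenever $i$ and $j$ belong to different factors.

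First I would note that the inclusion $T_\seed \times T_{\seed'} \hookrightarrow \mathscr{A} \times \mathscr{A}'$ comes for free from the inclusions on each factor, and the covering statement follows from a standard codimension computation: if $Z := \mathscr{A} \setminus \bigcup_\seed T_\seed$ and $Z' := \mathscr{A}' \setminus \bigcup_{\seed'} T_{\seed'}$ both have codimension at least $2$, then the complement of $\bigcup_{\seed,\seed'}(T_\seed \times T_{\seed'})$ in $\mathscr{A} \times \mathscr{A}'$ equals $(Z \times \mathscr{A}') \cup (\mathscr{A} \times Z')$, which also has codimension at least $2$.

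Next I would verify that mutation at a vertex $k$ of $Q_\seed \sqcup Q_{\seed'}$ produces a seed of the same product form. Because $\epsilon_{kj} = 0$ whenever $k$ lies in one factor and $j$ in the other, the quiver mutation rule in Definition~\ref{defn: quiver mutation} leaves all arrows in the opposite factor untouched, and the exchange relation in Definition~\ref{defn: cluster mutation} involves only cluster variables from the factor containing $k$. Hence mutation at a vertex of $Q_\seed$ acts as the identity on the $\seed'$ part, and the resulting seed is $(\mu_k \seed) \times \seed'$. By an induction on the length of a mutation sequence, every seed obtained from $\seed \times \seed'$ is of the form $\tilde\seed \times \tilde\seed'$ with $\tilde\seed$ mutation equivalent to $\seed$ and $\tilde\seed'$ to $\seed'$, and the transition maps are products of the transition maps on each factor; in particular, mutation equivalence and closure under mutation both hold.

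Finally, I would read off full-rank and really full-rank from the block-diagonal structure. Ordering the vertices so that mutable vertices of $Q_\seed$ come before those of $Q_{\seed'}$, the submatrix of $\epsilon_{\seed \times \seed'}$ formed by mutable rows is block diagonal with blocks equal to the corresponding submatrices of $\epsilon_\seed$ and $\epsilon_{\seed'}$; thus the rank of the whole is the sum of the ranks of the blocks, and the integer column span of the whole is the direct sum of the two column spans. I do not anticipate a real obstacle here; the only mild subtlety is the codimension-$2$ cover argument in the first step, which must be phrased for possibly singular affine varieties but follows from the product formula for codimension of a union of closed subsets.
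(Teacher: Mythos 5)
Your proposal is correct and takes essentially the same approach as the paper, which simply states that the result follows from Definition~\ref{defn: cluster variety}; your write-up fills in exactly the routine verifications (block-diagonal exchange matrix, mutations acting factorwise, codimension-$2$ cover of a product, block-diagonal rank computation) that the paper leaves implicit.
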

\begin{proof} This follows from the definition of cluster varieties (Definition \ref{defn: cluster variety}).
\end{proof}

\begin{lem}\label{lem: deleting frozen vertex} Let $\mathscr{A}$ be a cluster variety and let $A_i$ be a frozen cluster variable. Let $\seed$ be a cluster seed for $\mathscr{A}$, which must contain $A_i$ as a cluster variable since it is frozen (c.f. Definition \ref{defn: cluster mutation}). Then the locus $\{A_i=1\}$ is also a cluster variety, with an initial seed $\seed'$ obtained from $\seed$ by setting $T_{\seed'}:=\{A_i=1\}\subset T_\seed$ and $\bfA_{\seed'}:=\bfA_\seed\backslash \{A_i\}$, and deleting the vertex $i$ from $Q_\seed$ to obtain $Q_{\seed'}$. 
\end{lem}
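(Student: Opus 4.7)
The plan is to check that each of the three conditions of Definition~\ref{defn: cluster variety} transfers from $\mathscr{A}$ to the locus $\{A_i=1\}$ after the frozen vertex $i$ is deleted from every quiver. The starting observation is that since $i$ is frozen, every cluster mutation leaves both the coordinate $A_i$ and the vertex $i$ untouched by Definitions~\ref{defn: quiver mutation} and~\ref{defn: cluster mutation}. Consequently $A_i$ is a globally defined unit on $\mathscr{A}$, and the condition $A_i=1$ pulls back coherently to a codimension-$1$ subtorus of every cluster torus $T_{\seed''}$ in the mutation class of $\seed$.

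The next step is to verify mutation compatibility. For a mutable vertex $k$, the exchange relation in Definition~\ref{defn: cluster mutation} specializes at $A_i=1$ to the same expression with the factor at $j=i$ suppressed, which agrees exactly with the exchange relation determined by the quiver with vertex $i$ removed. On the quiver side, inspecting Definition~\ref{defn: quiver mutation} shows that for $j,l \neq i$ the entry $\epsilon'_{jl}$ depends only on $\epsilon_{jl}$, $\epsilon_{jk}$, $\epsilon_{kl}$, so deleting vertex $i$ commutes with mutating at $k$. Iterating these two observations, the entire collection of seeds for $\mathscr{A}$ restricts to a mutation-equivalent family of seeds on $\{A_i=1\}$ with $\seed'$ as the initial seed, and the birational transition maps on $\mathscr{A}$ restrict to the expected birational transition maps among the restricted tori.

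Finally, one must verify the codimension-$2$ covering, namely that $\bigcup_{\seed''}\bigl(T_{\seed''}\cap\{A_i=1\}\bigr)$ covers $\{A_i=1\}$ up to codimension $2$. Since $A_i$ is a unit on $\mathscr{A}$, no irreducible component of $\mathscr{A}\setminus T_{\seed''} = \bigcup_j \{A_j=0\}$ is contained in $\{A_i = 1\}$, so the intersection with $\{A_i=1\}$ produces a proper hypersurface configuration inside $\{A_i=1\}$. Adding an adjacent chart obtained by mutating $\seed''$ at a mutable vertex $k$ removes the component $\{A_k=0\}$ outside the codimension-$2$ locus $\{A_k = A_k' = 0\}$; the exchange relation $A_k A_k' = \prod_j A_j^{[\epsilon_{kj}]_+}+\prod_j A_j^{[-\epsilon_{kj}]_+}$ specializes faithfully under $A_i=1$, so this codimension survives restriction.

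The main obstacle, as just described, is the covering condition: a naive argument based only on the codimension of $\mathscr{A}\setminus\bigcup_{\seed''} T_{\seed''}$ fails, because slicing by a hypersurface can lower codimension by one. One must instead use that adjacent pairs of cluster charts already cover up to codimension $2$ via the exchange relation, and then verify that this mutation-theoretic covering persists under the specialization $A_i=1$, which is ensured because $A_i$ is a unit not identically equal to $1$ on any codimension-$1$ component of the uncovered locus.
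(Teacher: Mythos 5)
Your first two paragraphs match the paper's own brief proof, which states only that setting a frozen variable $A_i=1$ commutes with quiver and cluster mutations; your spelled-out verification of these commutation facts is correct. Where you diverge is the third paragraph, on the codimension-$2$ covering axiom of Definition~\ref{defn: cluster variety}, which the paper does not explicitly address. You are right that restricting a codimension-$\geq 2$ subset to a hypersurface can produce something of codimension $1$, so this point genuinely needs an argument that the paper glosses over.

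That argument, however, has a gap at the very end. The concluding assertion --- that ``$A_i$ is a unit not identically equal to $1$ on any codimension-$1$ component of the uncovered locus'' --- conflates two different things: being a unit rules out $A_i=0$, not $A_i=1$. What you actually need is that no codimension-$2$ irreducible component of $\mathscr{A}\setminus\bigcup_{\seed''} T_{\seed''}$ lies entirely inside $\{A_i=1\}$, and nothing in what you or the paper establish forces this. Your intermediate identification $\mathscr{A}\setminus T_{\seed''}=\bigcup_j\{A_j=0\}$ is also not part of Definition~\ref{defn: cluster variety}, which only supplies an inclusion $T_{\seed''}\hookrightarrow\mathscr{A}$ rather than equating $T_{\seed''}$ with the non-vanishing locus of its cluster variables. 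To close the gap one would need either an extra structural hypothesis (for instance, a scaling action in the $A_i$-direction extending to $\mathscr{A}$ and preserving the uncovered locus) or a direct check in the specific acyclic $\Gr_{2,n}^\circ$-type cases to which the paper actually applies the lemma. Your instinct to single out the covering condition as the nontrivial part of the lemma is sound; the paper's one-line proof elides it, and your attempted patch is the right place to focus but is not yet airtight.
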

\begin{proof} This follows from the fact that setting a frozen cluster variable $A_i=1$ commutes with quiver mutations (Definition \ref{defn: quiver mutation}) and cluster mutations (Definition \ref{defn: cluster mutation}).
\end{proof}

\begin{defn} A cluster seed $\seed$ is said to be \emph{acyclic} if the quiver $Q_\seed$ does not contain any oriented cycles with only mutable vertices. A cluster variety $\mathscr{A}$ is said to be \emph{acyclic} if it has an acyclic cluster seed $\seed$.
\end{defn}

There is a broader family of well-behaving cluster varieties/algebras called ``locally acyclic cluster varieties/algebras'', introduced by Muller in~\cite{Muller}. We do not need such generality in this article, but we would like to point out that acyclic cluster varieties are locally acyclic, and therefore the following result of Muller applies.

\begin{thm} [{\cite[Lemma 3.4, Theorem 4.1]{Muller}}]\label{thm: cluster localization} Suppose $\mathscr{A}$ is locally acyclic and $A_i$ is a cluster variable in a seed $\seed$ for $\mathscr{A}$. Then the cluster localization of $\mathscr{A}$ at $A_i$ is also a locally acyclic cluster variety, with an initial cluster seed $\seed'$ obtained from $\seed$ by freezing the vertex $i$ in the quiver $Q_\seed$.
\end{thm}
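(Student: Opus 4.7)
The plan is to construct the cluster structure on the localization $U := \{A_i \neq 0\}\subseteq \mathscr{A}$ explicitly from the seed $\seed$, verify the axioms of Definition \ref{defn: cluster variety} for this candidate structure, and then argue that local acyclicity transfers.

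First, I would define the candidate initial seed $\seed'$ by taking the same torus $T_\seed$, the same cluster variables $\bfA_\seed$, and the same underlying quiver, but now declaring vertex $i$ frozen. The embedding $T_\seed \hookrightarrow \mathscr{A}$ restricts to $T_\seed \hookrightarrow U$ because $A_i$ is a torus coordinate and hence invertible on $T_\seed$. For any mutable vertex $j \neq i$ of $Q_{\seed'}$, the mutation formula of Definition \ref{defn: cluster mutation} is the same whether we view the mutation in $\seed$ or in $\seed'$, since the formula only depends on the exchange matrix entries $\epsilon_{jl}$ and the cluster variables, none of which are altered by the freezing of vertex $i$. Inductively, the mutation class of $\seed'$ is in bijection with the sub-mutation-class of $\seed$ consisting of seeds reachable without mutating at vertex $i$, and each such cluster torus embeds into $U$.

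The main obstacle is to verify that these tori cover $U$ up to codimension $2$ and that $\mathcal{O}(U)$ coincides with the upper cluster algebra of $\seed'$. For the coverage, the tori in the full $\seed$-mutation class cover $\mathscr{A}$ up to codimension $2$; a seed $\sigma$ of $\mathscr{A}$ reached by a sequence including a mutation at vertex $i$ produces a torus whose boundary in $\mathscr{A}$ lies in $\{A_i = 0\}$ together with the vanishing loci of the cluster variables of $\sigma$. Upon intersecting with $U$, the $\{A_i=0\}$ piece disappears, so $T_\sigma \cap U$ is already covered, up to codimension $2$, by tori from seeds avoiding mutation at $i$. For the identification of coordinate rings, the Laurent phenomenon (Theorem \ref{thm: LP}) immediately gives $\mathcal{O}(U) \supseteq$ (upper cluster algebra of $\seed'$); the reverse inclusion is the hardest step and is exactly where local acyclicity is used, via Muller's compatibility of the ``upper = coordinate ring'' property with freezing the vertex at which one localizes. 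This is the technical heart of the theorem, and I would handle it by reducing to a cover of $\mathscr{A}$ by acyclic localizations (see below) on each of which the upper cluster algebra is already known to equal the coordinate ring by the acyclic case of Berenstein--Fomin--Zelevinsky, and then gluing.

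Finally, for local acyclicity of $U$: in the acyclic case, freezing a vertex cannot create a new oriented cycle in the mutable subquiver, so $\seed'$ is acyclic and $U$ is acyclic. In the general locally acyclic case, $\mathscr{A}$ admits a finite cover by cluster localizations $\mathscr{A}[A_{j_1}^{-1}], \dots, \mathscr{A}[A_{j_r}^{-1}]$ that are each (recursively) locally acyclic. Intersecting this cover with $U$ produces a cover of $U$ by the double localizations $\mathscr{A}[A_i^{-1}, A_{j_l}^{-1}]$, each of which is locally acyclic by induction on the complexity of the acyclic cover. Hence $U$ is locally acyclic, completing the verification.
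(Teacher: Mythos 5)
The paper cites this result from Muller without proving it, so there is no in-paper proof to compare against; what follows is an assessment of the sketch on its own terms.

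Your overall architecture is right: define $\seed'$ by freezing vertex $i$, observe that mutations away from $i$ commute with the freezing, and then verify the axioms of Definition~\ref{defn: cluster variety} for the principal open $U=\{A_i\neq 0\}$. However, the two central steps contain problems. First, the direction of the easy inclusion of coordinate rings is reversed. The inclusion $\mathcal{O}(U)\subseteq (\text{upper cluster algebra of }\seed')$ is the immediate one: any regular function on $U$ restricts to a regular function on each torus $T_\sigma$ with $\sigma$ in the $\seed'$-mutation class, hence to a Laurent polynomial in each such cluster, hence lies in the intersection defining the upper cluster algebra. The reverse inclusion --- that an element which is Laurent in every $\seed'$-cluster is actually regular on all of $U$ --- is the hard step, and the Laurent phenomenon alone does not give it, because a priori such an element is only defined on the union of $\seed'$-tori, not on $U$.

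Second, the coverage argument has a genuine gap. You assert that for a seed $\sigma$ obtained by a path through a mutation at $i$, the complement of $T_\sigma$ in $\mathscr{A}$ lies in $\{A_i=0\}$ together with the vanishing loci of $\sigma$'s cluster variables, so that intersecting with $U$ disposes of $T_\sigma$. But after mutating at $i$, the original $A_i$ is no longer a coordinate of $T_\sigma$; on $T_\sigma$ one has $A_iA_i'=\prod_j A_j^{[\epsilon_{ij}]_+}+\prod_j A_j^{[-\epsilon_{ij}]_+}$, and the right-hand side can certainly vanish on the interior of $T_\sigma$. Hence $T_\sigma\cap U$ is a proper (and nontrivial) open subset of $T_\sigma$, and showing that its complement in $U$ is already covered, up to codimension two, by the tori of the $\seed'$-class is exactly where the work lies. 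This cannot be settled by torus-by-torus bookkeeping: local acyclicity is not merely a convenience but an essential hypothesis (there are cluster varieties where the corresponding statement fails), and the real content is Muller's inductive argument via covering by acyclic localizations, which you invoke but do not carry out. Your closing paragraph on transferring local acyclicity to $U$ via a finite cover by double localizations is, by contrast, accurate and does match Muller's inductive strategy.
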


\subsection{\texorpdfstring{Example: Grassmannian $\Gr_{2,n}^\circ$ and its Cluster Localization}{}} \label{sub: Gr2n}

In this subsection, we review a famous example of an acyclic cluster variety, which is a subset of the affine cone of Grassmannian $(2,n)$. We then consider a particular family of cluster localizations of this cluster variety. We will see later that this cluster localization is closely related to the dg-algebra homomorphism induced by a saddle cobordism.

\begin{defn} We define $\Gr_{2,n}^\circ$ to be the quotient space 
\[
\SL_2\backslash\{M\in \mathrm{Mat}_{2\times n}\mid \text{$\Delta_{i,i+1}(M)\neq 0$ for all $1\leq i<n$ and $\Delta_{1,n}(M)\neq 0$}\},
\]
where $\Delta_{i,j}$ denotes the $(i,j)$th Pl\"{u}cker coordinate, which is the determinant of the submatrix formed by the $i$th and $j$th columns of the matrix $M$. As an affine variety, the coordinate ring of $\Gr_{2,n}^\circ$ is
\[
\mathcal{O}(\Gr_{2,n}^\circ)=\frac{\bF[\Delta_{i,j} \text{ for } 1\leq i<j\leq n, \  \Delta_{i,i+1}^{-1} \text{ for } 1\leq i<n, \text{ and } \Delta_{1,n}^{-1}]}{(\Delta_{i,j}\Delta_{k,l}+\Delta_{j,k}\Delta_{i,l}=\Delta_{i,k}\Delta_{j,l} \ \text{for $1\leq i<j<k<l\leq n$})}.
\]
\end{defn}

In~\cite{FZII}, Fomin and Zelevinsky describe the cluster structure on $\Gr_{2,n}^\circ$ combinatorially using triangulations of a labeled $n$-gon. Let us review the cluster structure in detail below. We label the vertices of an $n$-gon clockwise by $1,2,\dots, n$. Then each diagonal/side $e$ of the $n$-gon connects two distinct vertices $i,j$, to which we associate the Pl\"{u}cker coordinate $\Delta_e:=\Delta_{i,j}$ (in the order $i<j$). Given a triangulation $\tau$ of the $n$-gon, we cut out an open subset $U_\tau$ of $\Gr_{2,n}^\circ$ by imposing the conditions $\Delta_e\neq 0$ for all diagonals $e$ present in the triangulation $\tau$ (note that for each side $e$, $\Delta_e\neq 0$ is already satisfied by the definition of $\Gr_{2,n}^\circ$). Each $U_\tau$ is a cluster torus chart on $\Gr_{2,n}^\circ$, with the cluster
\[
\{\Delta_e\mid \text{$e$ is a side or a diagonal present in $\tau$}\}.
\]
The quiver $Q_\tau$ associated with a triangulation $\tau$ is constructed by placing a mutable quiver vertex at each diagonal in $\tau$, a frozen quiver vertex at each side of the $n$-gon, and then drawing a counterclockwise $3$-cycle inside each triangle in $\tau$ to link these quiver vertices together. Under this construction, a seed mutation corresponds to a flip of diagonal, which changes one triangulation $\tau$ to another triangulation $\tau'$. Note that the two triangulations only differ by one diagonal, and the corresponding quivers are related precisely by a quiver mutation at the differing diagonal. Moreover, the Pl\"{u}cker relation on $\Gr_{2,n}^\circ$ can be rearranged into the cluster mutation formula, showing that these seeds are indeed related by a cluster mutation.

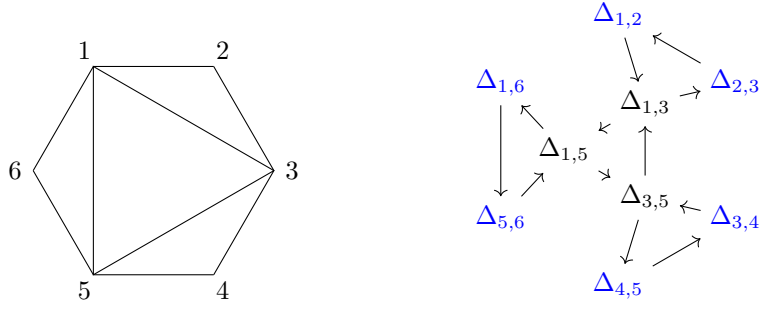
\begin{figure}[H]
    \centering
    \begin{tikzpicture}[scale=0.8]
    \foreach \i in {1,...,6}
    {
    \draw (\i*60:2) -- (\i*60+60:2);
    \node at (180-60*\i:2.3) [] {$\i$};
    }
    \draw (120:2) -- (-120:2) -- (2,0) -- cycle;
    \end{tikzpicture}\hspace{2cm}
    \begin{tikzpicture}[yscale=0.9, xscale=1.1]
        \node [blue] (12) at (0,2) [] {$\Delta_{1,2}$};
        \node [blue] (23) at (30:2) [] {$\Delta_{2,3}$};
        \node [blue] (34) at (-30:2) [] {$\Delta_{3,4}$};
        \node [blue] (45) at (0,-2) [] {$\Delta_{4,5}$};
        \node [blue] (56) at (-150:2) [] {$\Delta_{5,6}$};
        \node [blue] (16) at (150:2) [] {$\Delta_{1,6}$};
        \node (13) at (60:0.8) [] {$\Delta_{1,3}$};
        \node (35) at (-60:0.8) [] {$\Delta_{3,5}$};
        \node (15) at (-0.8,0) [] {$\Delta_{1,5}$};
        \draw [->] (12) -- (13);
        \draw [->] (13) -- (23);
        \draw [->] (23) -- (12);
        \draw [->] (34) -- (35);
        \draw [->] (35) -- (45);
        \draw [->] (45) -- (34);
        \draw [->] (15) -- (16);
        \draw [->] (16) -- (56);
        \draw [->] (56) -- (15);
        \draw [->] (15) -- (35);
        \draw [->] (35) -- (13);
        \draw [->] (13) -- (15);
    \end{tikzpicture}
    \caption{An example of a triangulation and its corresponding cluster seed in $\Gr_{2,6}^\circ$.}
\end{figure}

\begin{lem}\label{lem: Gr(2,n) is acyclic} $\Gr_{2,n}^\circ$ is an acyclic cluster variety.
\end{lem}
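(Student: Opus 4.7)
My plan is to exhibit a single acyclic cluster seed for $\Gr_{2,n}^\circ$; by definition of an acyclic cluster variety, this suffices. Since the cluster structure on $\Gr_{2,n}^\circ$ is in bijection with triangulations of the labeled $n$-gon, with quivers obtained by placing a counterclockwise $3$-cycle inside each triangle, the task reduces to finding a triangulation whose associated quiver has no oriented cycle supported on diagonals (i.e., mutable vertices) alone.

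The candidate I would use is the \emph{fan triangulation} based at vertex $1$, namely the triangulation $\tau_{\mathrm{fan}}$ consisting of the diagonals $\{1\text{-}i : 3 \leq i \leq n-1\}$. The triangles of $\tau_{\mathrm{fan}}$ are precisely $(1,i,i+1)$ for $2 \leq i \leq n-1$. In each such triangle, exactly one edge is a side of the $n$-gon (namely $i\text{-}(i+1)$, and also $1\text{-}2$ when $i=2$ or $1\text{-}n$ when $i=n-1$), so this edge corresponds to a frozen vertex in the quiver. The other two edges are the diagonals $1\text{-}i$ and $1\text{-}(i+1)$ (or, in the boundary triangles, one diagonal and one side).

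The key observation is then the following: inside each triangle, the two diagonal vertices (when both exist) are joined by exactly one arrow of the counterclockwise $3$-cycle, with the third arrow of that $3$-cycle incident to the frozen side. Consequently, the subquiver induced by the mutable vertices $\{1\text{-}3, 1\text{-}4, \ldots, 1\text{-}(n-1)\}$ is a linear chain, with exactly one arrow between $1\text{-}i$ and $1\text{-}(i+1)$ for each $3 \leq i \leq n-2$, and no other arrows. A path quiver has no oriented cycles at all, let alone among its mutable vertices, so $\tau_{\mathrm{fan}}$ yields an acyclic seed and the lemma follows.

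The only step requiring a bit of care is the local quiver computation in each triangle: one must verify the orientation conventions to confirm that consecutive diagonals are joined by a single arrow (as opposed to the two arrows of a $2$-cycle, which would cancel). Since all $3$-cycles are oriented counterclockwise and each triangle contributes one arrow to each of its three edges, no cancellation occurs along the diagonals of $\tau_{\mathrm{fan}}$. This is a short case analysis rather than a genuine obstacle, so I do not anticipate any real difficulty in carrying out the argument.
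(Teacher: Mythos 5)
Your proof uses exactly the same fan triangulation (diagonals from vertex $1$ to vertices $3,\dots,n-1$) as the paper's one-line proof, and your more detailed verification that the mutable subquiver is a linear chain is a correct elaboration of what the paper leaves implicit. This is essentially the same argument as the paper.
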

\begin{proof} Note that the diagonals connecting $1$ with $i=3,\dots, n-1$ form a triangulation of the $n$-gon, whose quiver has an acyclic mutable part.
\end{proof}

\begin{rmk} Because the underlying undirected graph of the mutable part of the acyclic quiver defined in the proof of Lemma \ref{lem: Gr(2,n) is acyclic} is isomorphic to a type $\mathrm{A}_{n-3}$ Dynkin diagram, we call $\Gr_{2,n}^\circ$ a cluster variety of type $\mathrm{A}_{n-3}$.
\end{rmk}

Next, we would like to give two particular parametrizations of $\Gr_{2,n}^\circ$ as affine varieties that will be closely related to the computation of augmentation varieties of $2$-bridge links. Let us first introduce two special matrices:
\[
B(x):=\begin{pmatrix} x & -1 \\ 1 & 0\end{pmatrix} \quad \text{and} \quad D(x):=\begin{pmatrix} x & 0 \\ 0 & x^{-1}\end{pmatrix}.
\]
For a matrix $M$, we denote the $(i,j)$th entry of $M$ by $(M)_{ij}$.

\begin{prop}\label{prop: affine variety} $\Gr_{2,n}^\circ$ is an affine subvariety in $\bF^{n-1}_{a_1,\dots, a_{n-1}}\times (\bF^\times)^{n-1}_{p_1,\dots, p_{n-1}}$ cut out by
\begin{equation}\label{eq: defining equation for Gr_2,n}
\left(B(a_1)D(p_1)B(a_2)D(p_2)\cdots B(a_{n-1})D(p_{n-1})\right)_{11}=0,
\end{equation}
\end{prop}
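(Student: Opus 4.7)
The plan is to construct an explicit pair of mutually inverse algebraic maps between the cut-out locus $V := \{(a_i,p_i) \in \bF^{n-1} \times (\bF^\times)^{n-1} : (M_{n-1})_{11} = 0\}$ and $\Gr_{2,n}^\circ$, where $M_i := B(a_1)D(p_1)\cdots B(a_i)D(p_i)$ and $M_0 := I$. Define the forward map $\Phi$ by sending $(a_1, p_1, \ldots, a_{n-1}, p_{n-1})$ to the $\SL_2$-orbit of the $2 \times n$ matrix $[v_1 \mid v_2 \mid \cdots \mid v_n]$ with $v_i := M_{i-1}e_1$.

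To verify $\Phi$ lands in $\Gr_{2,n}^\circ$, I would use the auxiliary vectors $w_i := M_{i-1}e_2$ and compute directly from $B(a_i)D(p_i) = \bigl(\begin{smallmatrix} a_ip_i & -p_i^{-1} \\ p_i & 0 \end{smallmatrix}\bigr)$ that $v_{i+1} = a_ip_iv_i + p_iw_i$ and $w_{i+1} = -p_i^{-1}v_i$. Together these give $v_{i+1} = a_ip_iv_i - (p_i/p_{i-1})v_{i-1}$ for $i \geq 2$, and an induction based on $\Delta_{1,2} = p_1$ yields $\Delta_{i,i+1} = p_i \neq 0$ for every $i$. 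Since each $B(a_i), D(p_i) \in \SL_2$ we have $\det M_{n-1} = 1$, so the constraint $(M_{n-1})_{11} = 0$ forces $(M_{n-1})_{12}(M_{n-1})_{21} = -1$, whence $\Delta_{1,n} = (M_{n-1})_{21} \neq 0$.

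The inverse $\Psi$ comes from a canonical-form construction. Given any representative $[v_1 \mid \cdots \mid v_n]$, use the $\SL_2$-action to normalize $v_1 = e_1$ (possible since $v_1 \neq 0$ by $\Delta_{1,2} \neq 0$), leaving the upper unipotent $U = \bigl\{\bigl(\begin{smallmatrix}1 & t \\ 0 & 1\end{smallmatrix}\bigr)\bigr\}$ as residual gauge. Because $(v_n)_2 = \Delta_{1,n} \neq 0$, there is a unique $t \in \bF$ for which the shifted column satisfies $(v_n)_1 = 0$; this fixes a canonical representative. From this canonical form, set $p_i := \Delta_{i,i+1}$ and, for $i \geq 2$, $a_i := \Delta_{i-1,i+1}/(\Delta_{i-1,i}\Delta_{i,i+1})$, which are precisely the entries read off from $M_{i-1}^{-1}M_i = B(a_i)D(p_i)$. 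For $a_1$, the identity $\Delta_{2,n} = (v_2)_1\Delta_{1,n} - p_1(v_n)_1$, combined with $(v_n)_1 = 0$ in the canonical form, gives $(v_2)_1 = \Delta_{2,n}/\Delta_{1,n}$, hence $a_1 = \Delta_{2,n}/(\Delta_{1,2}\Delta_{1,n})$. All $(a_i, p_i)$ are thus rational functions of Plücker coordinates with poles only along the codimension-one loci where the frozen Plücker variables vanish, so $\Psi$ is a morphism $\Gr_{2,n}^\circ \to V$.

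The composition $\Psi \circ \Phi$ is the identity by direct inspection, since $\Phi$ produces a matrix already in canonical form, while $\Phi \circ \Psi$ is the identity by uniqueness of the canonical representative together with the matching entries recorded above. The main technical obstacle I anticipate is the bookkeeping required to ensure that the compatibility between the relations defining $V$ (the single equation $(M_{n-1})_{11} = 0$) and the Plücker relations on $\Gr_{2,n}^\circ$ works out cleanly at the level of coordinate rings, especially in the placement of the variable $a_1$ — this is handled by expanding $(M_{n-1})_{11} = a_1 p_1 N_{11} - p_1^{-1}N_{21}$ for $N := B(a_2)D(p_2)\cdots B(a_{n-1})D(p_{n-1})$ and tracking how the recursive formulas for $\Delta_{i,i+1}$ and $\Delta_{i-1,i+1}$ propagate through.
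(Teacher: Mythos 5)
Your proof is correct and takes essentially the same route as the paper's: both normalize a matrix representative so that $v_1 = e_1$ and $v_n$ is a multiple of $e_2$, then parametrize the intermediate columns via the product $B(a_1)D(p_1)\cdots B(a_j)D(p_j)$. The only difference is one of exposition: you make the inverse map $\Psi$ fully explicit, including the canonical-form argument pinning down the residual unipotent gauge and the Pl\"ucker formulas $p_i = \Delta_{i,i+1}$ and $a_i = \Delta_{i-1,i+1}/(\Delta_{i-1,i}\Delta_{i,i+1})$, and verify both compositions, whereas the paper gives only the forward parametrization and observes informally that each step is a bijection.
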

\begin{proof} Let $M$ be a matrix representative of a point in $\Gr_{2,n}^\circ$ and let $v_1,v_2,\dots, v_n$ be the column vectors of $M$. By using the $\SL_2$ action, we can choose $M$ so that the submatrix $(v_1,v_n)$ is of the form $\begin{pmatrix}
    1 & 0 \\ 0 & p_n
\end{pmatrix}$ for some $p_n\in \bF^\times$. 

Note that for $a_1\in \bF$ and $p_1\in \bF^\times$, the product $B(a_1)D(p_1)=\begin{pmatrix} a_1p_1 & p_1^{-1} \\ p_1 & 0 \end{pmatrix}$, whose first column vector is linearly independent from $v_1=\begin{pmatrix} 1 \\ 0 \end{pmatrix}$; on the other hand, any vector linearly independent from $v_1$ can be obtained as the first column vector of $B(a_1)D(p_1)$ for some $a_1\in \bF$ and $p_1\in \bF^\times$. Since $\Delta_{1,2}$ is required to be non-zero, the vector $v_2$ can be parametrized as $B(a_1)D(p_1)v_1$. Inductively, we see that there exist parameters $a_i\in \bF$ and $p_i\in \bF^\times$ for $1\leq i<n$ such that
\begin{equation}\label{eq: v_{j+1}}
v_{j+1}=B(a_1)D(p_1)B(a_2)D(p_2)\cdots B(a_j)D(p_j)v_1 \quad \text{for all $2\leq j<n$.}
\end{equation}
Note that in the end, we need $v_n=\begin{pmatrix} 0 \\ p_n\end{pmatrix}$ with $p_n\neq 0$, so we require the parameters $a_i$'s and $p_i$'s to satisfy Equation \ref{eq: defining equation for Gr_2,n}. Note that since $\det B(x)=\det D(x)=1$ for any $x$, Equation \eqref{eq: defining equation for Gr_2,n} automatically implies that 
\begin{equation}\label{eq: p_n}
p_n=\left(B(a_1)D(p_1)B(a_2)D(p_2)\cdots B(a_{n-1})D(p_{n-1})\right)_{21}\neq 0.
\end{equation}
This finishes the proof of the proposition.
\end{proof}

\begin{prop}\label{prop: alternative affine variety} $\Gr_{2,n}^\circ$ is also an open affine subvariety in $\bF^{n-2}_{a_1,\dots, a_{n-2}}\times (\bF^\times)^{n-1}_{p_1,\dots, p_{n-2},p_n}$ cut out by the inequality
\begin{equation}\label{eq: defining inequality for Gr(2,n)}
\left(B(a_1)D(p_1)B(a_2)D(p_2)\cdots B(a_{n-2})D(p_{n-2})\right)_{11}\neq 0.
\end{equation}
\end{prop}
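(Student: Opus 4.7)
My plan is to leverage Proposition \ref{prop: affine variety} directly and show that its defining equation lets us solve explicitly for the last pair of variables $a_{n-1}$ and $p_{n-1}$, thereby reducing to the smaller parameter set of Proposition \ref{prop: alternative affine variety}. The heuristic is that Proposition \ref{prop: affine variety} presents $\Gr_{2,n}^\circ$ as a hypersurface in dimension $2(n-1)$, so we should be able to eliminate one parameter and re-introduce the already-present quantity $p_n$ (which from Equation \eqref{eq: p_n} is determined by the others) as an independent coordinate subject only to an open condition.

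Concretely, I would write $M' := B(a_1)D(p_1)\cdots B(a_{n-2})D(p_{n-2})$ and compute
\[
B(a_{n-1})D(p_{n-1}) = \begin{pmatrix} a_{n-1}p_{n-1} & -p_{n-1}^{-1} \\ p_{n-1} & 0 \end{pmatrix},
\]
so that if $M'=\bigl(\begin{smallmatrix}\alpha & \beta \\ \gamma & \delta\end{smallmatrix}\bigr)$, then the full product $M = M' B(a_{n-1})D(p_{n-1})$ satisfies $M_{11}=p_{n-1}(\alpha a_{n-1}+\beta)$ and $M_{21}=p_{n-1}(\gamma a_{n-1}+\delta)$. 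The defining equation \eqref{eq: defining equation for Gr_2,n} is $M_{11}=0$, and since $p_{n-1}\neq 0$ this forces $\alpha a_{n-1}+\beta=0$. I would then observe that the case $\alpha=0$ is impossible: it would force $\beta=0$ as well, but then $\det M' = \alpha\delta-\beta\gamma=0$, contradicting $\det M'=1$ (from $\det B(x)=\det D(x)=1$). Hence $\alpha\neq 0$ holds automatically, and we may solve $a_{n-1}=-\beta/\alpha$.

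Next I would compute $p_n = M_{21}$ using the relation just obtained, yielding
\[
p_n = p_{n-1}\bigl(\gamma a_{n-1}+\delta\bigr) = p_{n-1}\cdot \frac{\alpha\delta-\beta\gamma}{\alpha} = \frac{p_{n-1}}{\alpha}.
\]
This lets me trade $p_{n-1}=\alpha p_n$ for the coordinate $p_n$, and since $\alpha\neq 0$ and $p_n\neq 0$ we automatically have $p_{n-1}\in\bF^\times$ as required. Assembling these observations gives an explicit algebraic isomorphism from the affine subvariety of Proposition \ref{prop: affine variety} to the locus in $\bF^{n-2}\times(\bF^\times)^{n-1}$ cut out by the inequality $\alpha=M'_{11}\neq 0$, which is precisely Equation \eqref{eq: defining inequality for Gr(2,n)}. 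The inverse map sends $(a_1,\dots,a_{n-2},p_1,\dots,p_{n-2},p_n)$ to $(a_1,\dots,a_{n-2},-\beta/\alpha,p_1,\dots,p_{n-2},\alpha p_n)$, and both maps are regular, proving the isomorphism.

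I do not expect any serious obstacle here; the only subtle point is verifying that $\alpha \neq 0$ is both automatic in the Proposition \ref{prop: affine variety} parametrization and sufficient (together with $p_n\in\bF^\times$) in the new one, so no hidden component of the variety is missed. This just amounts to the determinant computation above.
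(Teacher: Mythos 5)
Your proof is correct, but it takes a different route from the paper. You derive Proposition \ref{prop: alternative affine variety} from Proposition \ref{prop: affine variety} by explicitly eliminating the pair $(a_{n-1}, p_{n-1})$ and re-introducing $p_n$ via the determinant identity $\alpha\delta - \beta\gamma = 1$, exhibiting a regular isomorphism with regular inverse between the two presentations. The paper instead repeats the direct geometric parametrization used in Proposition \ref{prop: affine variety}: fix $v_1 = \begin{pmatrix}1\\0\end{pmatrix}$, $v_n=\begin{pmatrix}0\\p_n\end{pmatrix}$ with $p_n\in\bF^\times$ free, parametrize $v_2,\dots,v_{n-1}$ by $v_{j+1}=B(a_1)D(p_1)\cdots B(a_j)D(p_j)v_1$, and observe that the only remaining condition defining $\Gr_{2,n}^\circ$ is $\Delta_{n-1,n}=\det(v_{n-1},v_n)\neq 0$, which because $v_n$ is the second standard basis vector scaled by $p_n$ is exactly the stated inequality on $(M')_{11}$. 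The paper's argument is shorter and directly identifies the geometric content of each coordinate (with $p_n$ free from the outset); your approach has the advantage of making the coordinate change between the two affine presentations completely explicit, which is useful if one later wants to translate formulas between them. Both are sound, and your determinant argument showing $\alpha\neq 0$ is automatic (hence no component is lost) correctly handles the only delicate point.
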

\begin{proof} We set up the matrix representative $M$ of a point in $\Gr_{2,n}^\circ$ the same way as in the proof of Proposition \ref{prop: affine variety}, i.e., such that the submatrix $(v_1,v_n)$ is of the form $\begin{pmatrix} 1 & 0 \\ 0 & p_n\end{pmatrix}$ for some $p_n\in \bF^\times$. Again, we obtain \eqref{eq: v_{j+1}} by the same argument as in that proof.

Now instead of looking at $v_n$, we consider $v_{n-1}$; in order to define a point in $\Gr_{2,n}^\circ$, we just need $\Delta_{n-1,n}=\det(v_{n-1},v_n)\neq 0$. Since $v_n=\begin{pmatrix} 0 \\ p_n\end{pmatrix}$ and $p_n\neq 0$, it follows that all we need is the inequality \eqref{eq: defining inequality for Gr(2,n)}. Note that in this setup, the parameter $p_n$ is a free parameter.
\end{proof}

The next proposition describes the relation between the parameters in Proposition \ref{prop: affine variety} and Pl\"{u}cker coordinates.

\begin{prop}\label{prop: a_i and p_i in terms of Pluckers} Let $a_1, \ldots, a_{n-1}$ and $p_1, \ldots, p_n$ be the parameters appearing in Propositions \ref{prop: affine variety} and \ref{prop: alternative affine variety}. Then 
\begin{enumerate}[label=$(\roman*)$]
    \item $\Delta_{j,j+1}=p_j$ for $1\leq j< n$ and $\Delta_{1,n}=p_n$;  
    \item $\Delta_{j-1,j+1}=a_jp_{j-1}p_j$ for $1<j< n$ and $\Delta_{2,n}=a_1p_1p_n$.
    \item Let $i,j$ be indices such that $1\leq i<j\leq n$. If $p_k=1$ for all $i\leq k<j$, then 
    \[
    \Delta_{i,j}=K_{j-i-1}(a_{i+1},a_{i+2},\dots, a_{j-1}),
    \]
    where $K_{j-i-1}$ is the continuant polynomial defined in Definition \ref{def of K_n}.
\end{enumerate}
Note that $p_n$ can be expressed in terms of $a_i$'s and $p_i$'s as in Equation \eqref{eq: p_n}.
\end{prop}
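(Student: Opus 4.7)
The plan is to evaluate $\Delta_{i,j}=\det(v_i,v_j)$ directly using the parametrization given in the proof of Proposition~\ref{prop: affine variety}. The key observation is that $v_1=\begin{pmatrix}1\\0\end{pmatrix}$, $v_n=\begin{pmatrix}0\\p_n\end{pmatrix}$, and for $2\leq j<n$ the vector $v_j=B(a_1)D(p_1)\cdots B(a_{j-1})D(p_{j-1})v_1$, and also $v_n=B(a_1)D(p_1)\cdots B(a_{n-1})D(p_{n-1})v_1$ as a consequence of Equation~\eqref{eq: defining equation for Gr_2,n} together with Equation~\eqref{eq: p_n}. Since $\det B(x)=\det D(x)=1$, for any $1\leq i<j\leq n$ we can ``factor out'' the common prefix:
\[
\Delta_{i,j}=\det(v_i,v_j)=\det\!\bigl(v_1,\,B(a_i)D(p_i)B(a_{i+1})D(p_{i+1})\cdots B(a_{j-1})D(p_{j-1})\,v_1\bigr).
\]
This identity reduces every $\Delta_{i,j}$ to the first row's second entry of an explicit matrix product applied to $v_1$.

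For part $(i)$, I plug in $j-i=1$: the factor on the right is just $B(a_i)D(p_i)v_1=\begin{pmatrix}a_ip_i\\p_i\end{pmatrix}$, so the determinant with $v_1$ is $p_i$. The case $\Delta_{1,n}=p_n$ is immediate from $v_1,v_n$. For part $(ii)$, I compute $B(a_{j-1})D(p_{j-1})B(a_j)D(p_j)v_1$ for the case $\Delta_{j-1,j+1}$ by a direct two-step matrix multiplication, obtaining second entry $p_{j-1}a_jp_j$; for $\Delta_{2,n}$ one just notes $v_2=B(a_1)D(p_1)v_1$ and pairs with $v_n=(0,p_n)^T$.

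For part $(iii)$, the assumption $p_k=1$ for $i\leq k<j$ means $D(p_k)$ is the identity in the relevant range, so
\[
\Delta_{i,j}=\det\!\bigl(v_1,\,B(a_i)B(a_{i+1})\cdots B(a_{j-1})\,v_1\bigr).
\]
Now I invoke Lemma~\ref{lem: braid matrix prod}, which identifies $B(a_i)\cdots B(a_{j-1})$ with the continuant matrix; applying this to $v_1$ reads off its first column, whose second entry is $K_{j-i-1}(a_{i+1},\dots,a_{j-1})$. This gives the claim. The only care needed is at the endpoints $i=1$ (where $v_i=v_1$ must be handled since we cannot strip a prefix, but the computation still yields the second entry of $B(a_1)\cdots B(a_{j-1})v_1$, matching the formula) and $j=n$ (where the identity $v_n=B(a_1)D(p_1)\cdots B(a_{n-1})D(p_{n-1})v_1$ is used, which is exactly what the cutting equation of Proposition~\ref{prop: affine variety} provides).

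No step is genuinely hard — the whole argument is bookkeeping with $2\times 2$ matrices — but the small subtlety worth flagging is consistency at the boundary cases $i=1$ and $j=n$, and making sure that the convention $K_0()=1$ produces the correct base cases ($\Delta_{i,i+1}=p_i$ corresponds to $K_{-1}$... which doesn't literally appear, because for $j=i+1$ part $(iii)$'s hypothesis on $p_i$ trivially gives the value already computed in part $(i)$, namely $p_i=1$; this is consistent with the parenthetical remark that $\Delta_{i,i+1}=p_i$).
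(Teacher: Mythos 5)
Your proof is correct and follows essentially the same route as the paper: both normalize by the $\mathrm{SL}_2$-action (your ``factoring out the common prefix'' using $\det B(x)=\det D(x)=1$ is exactly the paper's step of acting by $(B(a_1)D(p_1)\cdots B(a_{i-1})D(p_{i-1}))^{-1}$), and both finish part $(iii)$ by reading the lower-left entry from Lemma~\ref{lem: braid matrix prod}. The only cosmetic difference is that you write the reduction as a single determinant identity rather than as a matrix action on $M$, and your aside about $K_{-1}$ is a little muddled (the boundary case $j=i+1$ gives $K_0()=1$, matching $p_i=1$ from the hypothesis), but the argument itself is sound.
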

\begin{proof} For (i), we first note that $\Delta_{1,n}=\det\begin{pmatrix} 1 & 0 \\0 & p_n\end{pmatrix}=p_n$. On the other hand, for any $1\leq j<n$, we can act on $M$ by $\left(B(a_1)D(p_1)\cdots B(a_{j-1}D(p_{j-1})\right)^{-1}$ so that $v_j$ becomes $\begin{pmatrix} 1 \\ 0 \end{pmatrix}$. Then $v_{j+1}=B(a_j)D(p_j)v_j=\begin{pmatrix} a_jp_j \\ p_j\end{pmatrix}$ and hence 
\[
\Delta_{j,j+1}=\det\begin{pmatrix} 1 & a_jp_j \\ 0 & p_j\end{pmatrix}=p_j.
\]

For (ii), we note that if $j<n-1$, we have $v_{j+2}=B(a_j)D(p_j)B(a_{j+1})D(p_{j+1})\begin{pmatrix} 1 \\ 0 \end{pmatrix}=\begin{pmatrix} a_ja_{j+1}p_jp_{j+1}-p_j^{-1}p_{j+1}\\ a_{j+1}p_jp_{j+1}\end{pmatrix}$, which implies that
\[
\Delta_{j,j+2}=\det\begin{pmatrix} 1 & a_ja_{j+1}p_jp_{j+1}-p_j^{-1}p_{j+1} \\ 0 & a_{j+1}p_jp_{j+1}\end{pmatrix}=a_{j+1}p_jp_{j+1}.
\]
This proves (ii) except for the case where $j=1$. When $j=1$, $v_2=\begin{pmatrix} a_1p_1 \\ p_1\end{pmatrix}$ and hence
\[
\Delta_{2,n}=\det\begin{pmatrix} a_1p_1 & 0 \\ p_1 & p_n\end{pmatrix}=a_1p_1p_n.
\]

For (iii), similar to (i), we can act by an $\mathrm{SL}_2$-matrix on the left and assume without loss of generality that $i=1$. Then by \eqref{eq: v_{j+1}}, we see that
\[
\Delta_{1,j}=\det(v_1,v_j)=(B(a_1)B(a_2)\cdots B(a_{j-1}))_{21}=K_{j-2}(a_2,\dots, a_{j-1}),
\]
where the last equality follows from Lemma \ref{lem: braid matrix prod}.
\end{proof}

Proposition \ref{prop: a_i and p_i in terms of Pluckers} leads us to give the following alternative definition for the parameters $a_i$'s and $p_i$'s.

\begin{defn} We define two families of regular functions on $\Gr_{2,n}^\circ$, both indexed by $1\leq i\leq n$:
\[
a_1:=\frac{\Delta_{2,n}}{\Delta_{1,n}\Delta_{1,2}}, \quad  a_n:=\frac{\Delta_{1,n-1}}{\Delta_{n-1,n}\Delta_{1,n}},\quad \text{and} \quad a_i:=\frac{\Delta_{i-1,i+1}}{\Delta_{i-1,i}\Delta_{i,i+1}}\quad  \text{ for $1<i<n$},
\]
and
\[
p_n:= \Delta_{1,n} \quad \text{and} \quad p_i:=\Delta_{i,i+1} \quad \text{for $1\leq i<n$}.
\]
We associate the function $a_i$ with the vertex $i$ of the labeled $n$-gon, and the function $p_i$ with the edge between $i$ and $i+1$ of the labeled $n$-gon (in particular, $p_n$ is associated with the edge between $1$ and $n$).
\end{defn}

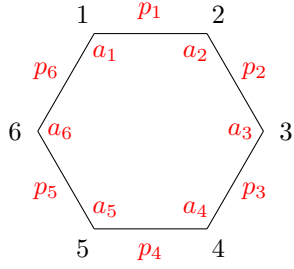
\begin{figure}[H]
    \centering
    \begin{tikzpicture}
    \foreach \i in {1,...,6}
    {
    \draw (\i*60:1.5) -- (\i*60+60:1.5);
    \node at (180-60*\i:1.8) [] {$\i$};
    \node [red] at (180-60*\i:1.2) [] {$a_\i$};
    \node [red] at (150-60*\i:1.6) [] {$p_\i$};
    }
    \end{tikzpicture}
    \caption{Example of the association of the functions $a_i$'s and $p_i$'s.}
\end{figure}

\begin{rmk} The function $a_n$ is not among the parameters in Equation \eqref{eq: defining equation for Gr_2,n}. Similar to $p_n$ in Equation \eqref{eq: p_n}, $a_n$ can be expressed in terms of $a_i$'s and $p_i$'s for $1\leq i<n$. However, we do not make use of $a_n$ in any significant way.
\end{rmk}

\begin{rmk} By connecting $\Gr_{2,n}^\circ$ to the decorated Teichm\"{u}ller space on an $n$-gon~\cite{FT}, the functions $a_i$ can be interpreted geometrically as lengths of horocycles. The functions $a_i$ can also be used as \emph{potential functions} in constructing Landau-Ginzburg models in mirror symmetry~\cite{GS}.
\end{rmk}

Let $\tau$ be a triangulation and let $T_{abc}$ be the triangle inside a triangulation $\tau$ with sides $a,b,c$. Let $\alpha$, $\beta$, $\gamma$ be the angles in $T_{abc}$ opposite to the sides $a,b,c$ respectively. We define a function $W_\alpha$ for the angle $\alpha$ by
\[
W_\alpha:=\frac{\Delta_a}{\Delta_b\Delta_c},
\]
and define functions $W_\beta$ and $W_\gamma$ in an analogous fashion. The following identity relating the functions $a_i$'s and $W_\alpha$'s is well-known; we include a short proof below for completeness.

\begin{lem}\label{lem: potential function} If $\alpha_1,\dots, \alpha_k$ are all the angles in triangles in $\tau$ at the vertex $i$, then
\[
a_i=\sum_{j=1}^k W_{\alpha_j}.
\]
\end{lem}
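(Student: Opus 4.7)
The plan is to use the Pl\"{u}cker relation to convert the claim into a telescoping sum. Order the triangles at vertex $i$ cyclically as $T_1, \ldots, T_k$, with $T_j = (i, v_{j-1}, v_j)$, so that $v_0$ and $v_k$ are the two vertices of the $n$-gon adjacent to $i$ along the boundary (i.e., $\{v_0, v_k\}=\{i-1,i+1\}$ for $1<i<n$, with the analogous conventions at $i=1$ and $i=n$). The angle $\alpha_j$ at $i$ inside $T_j$ is opposite the edge $(v_{j-1}, v_j)$, so $W_{\alpha_j} = \Delta_{v_{j-1}, v_j}/(\Delta_{i, v_{j-1}}\Delta_{i, v_j})$. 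For brevity I set $f(u, w) := \Delta_{u, w}/(\Delta_{i, u}\Delta_{i, w})$ for vertices $u, w\neq i$, so that $W_{\alpha_j} = f(v_{j-1}, v_j)$.

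The key step is the addition law $f(v_{j-1}, v_j) + f(v_j, v_{j+1}) = f(v_{j-1}, v_{j+1})$. To prove it, I would apply the Pl\"{u}cker relation on $\Gr_{2,n}^\circ$ to the four indices $\{i, v_{j-1}, v_j, v_{j+1}\}$: these form a convex quadrilateral on the $n$-gon with crossing diagonals $(i, v_j)$ and $(v_{j-1}, v_{j+1})$, because the labeling $v_0, v_1, \ldots, v_k$ is induced by going around $i$ in one direction in the triangulation. The Pl\"{u}cker relation then gives
\[
\Delta_{i, v_j}\Delta_{v_{j-1}, v_{j+1}} = \Delta_{i, v_{j-1}}\Delta_{v_j, v_{j+1}} + \Delta_{v_{j-1}, v_j}\Delta_{i, v_{j+1}},
\]
and dividing through by $\Delta_{i, v_{j-1}}\Delta_{i, v_j}\Delta_{i, v_{j+1}}$ delivers the stated addition law.

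Iterating this addition law telescopes the sum: $\sum_{j=1}^{k} W_{\alpha_j} = f(v_0, v_k)$. Since $\{v_0, v_k\}$ are the polygon-neighbors of $i$, comparing $f(v_0, v_k) = \Delta_{v_0, v_k}/(\Delta_{i, v_0}\Delta_{i, v_k})$ directly with the definition of $a_i$ in each of the three cases $i=1$, $1<i<n$, and $i=n$ completes the proof.

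The main technical point is verifying that for any configuration of four indices $\{i, v_{j-1}, v_j, v_{j+1}\}$, the Pl\"{u}cker relation in the paper's convention (stated for sorted indices $a<b<c<d$) produces precisely the displayed identity above. This reduces to the observation that the relation always pairs the two crossing diagonals of the corresponding quadrilateral together, which is preserved under the relabeling induced by sorting the indices in $\{1,\ldots, n\}$.
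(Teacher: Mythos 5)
Your proof is correct and takes essentially the same approach as the paper: the paper reduces to the case of two adjacent angles and merges them via the Plücker relation on $\{i-1,i,i+1,k\}$, which is precisely your addition law $f(v_{j-1},v_j)+f(v_j,v_{j+1})=f(v_{j-1},v_{j+1})$, with the general case following by iteration/telescoping. You are simply more explicit about the telescoping step and about the boundary cases $i=1$ and $i=n$, which the paper leaves implicit.
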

\begin{proof} It suffices to prove the case where there are only two angles $\alpha$ and $\beta$ at the vertex $i$. In this case, we have
\[
W_\alpha+W_\beta=\frac{\Delta_{i-1,k}}{\Delta_{i-1,i}\Delta_{i,k}}+\frac{\Delta_{i+1,k}}{\Delta_{i,k}\Delta_{i,i+1}}=\frac{\Delta_{i-1,k}\Delta_{i,i+1}+\Delta_{i+1,k}\Delta_{i-1,i}}{\Delta_{i-1,i}\Delta_{i,k}\Delta_{i,i+1}}=\frac{\Delta_{i-1,i+1}}{\Delta_{i-1,i}\Delta_{i,i+1}}=a_i. \qedhere
\]
\end{proof}
\begin{figure}[H]
    \centering
    \begin{tikzpicture}
        \draw (-1.5,0.5) node [above left] {$i-1$} -- (0,1) node [above] {$i$} -- (1.5,0.5) node [above right] {$i+1$};
        \draw (0,-1) node [below] {$k$};
        \draw (-1.5,0.5) -- (0,-1) -- (1.5,0.5);
        \draw [dashed] (-1.5,0.5) -- (1.5,0.5);
        \draw (0,-1) -- (0,1);
        \node at (-0.2,0.7) [] {$\alpha$};
        \node at (0.2,0.7) [] {$\beta$};
    \end{tikzpicture}
    \caption{Local Picture near the vertex $i$.}
    \label{fig:}
\end{figure}
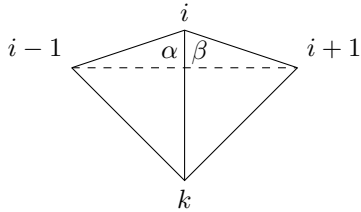

In the remaining portion of this subsection, we consider the cluster localization of $\Gr_{2,n}^\circ$ at the cluster variable $\Delta_{2,n}$ and $\Delta_{i-1,i+1}$ for $1< i<n$. Note that by Theorem \ref{thm: cluster localization} and Lemma \ref{lem: Gr(2,n) is acyclic}, we know that these cluster localizations of $\Gr_{2,n}^\circ$ are themselves cluster varieties.

\begin{prop}\label{prop: cluster localization} The cluster localization of $\Gr_{2,n}^\circ$ at $\Delta_{2,n}$ coincides with the non-vanishing locus $\{a_1\neq 0\}$, and for $1< i<n$, the cluster localization of $\Gr_{2,n}^\circ$ at $\Delta_{i-1,i+1}$ coincides with the non-vanishing locus $\{a_i\neq 0\}$.
\end{prop}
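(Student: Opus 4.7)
The plan is to argue directly from the definitions, observing that the functions $a_1$ and $a_i$ are expressed as ratios of Plücker coordinates whose denominators are precisely the ``boundary'' Plücker coordinates $\Delta_{j,j+1}$ (for $1\le j<n$) and $\Delta_{1,n}$, which are non-vanishing everywhere on $\Gr_{2,n}^\circ$ by the very definition of $\Gr_{2,n}^\circ$. Consequently, in the coordinate ring $\mathcal{O}(\Gr_{2,n}^\circ)$ these denominators are units, and so the vanishing loci of $a_1$ and $\Delta_{2,n}$ agree, as do the vanishing loci of $a_i$ and $\Delta_{i-1,i+1}$ for $1<i<n$.

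More concretely, I would first verify that the cluster variables $\Delta_{2,n}$ and $\Delta_{i-1,i+1}$ occur as mutable cluster variables in some seed of $\Gr_{2,n}^\circ$, so that the notion of cluster localization at them is well-defined (this is immediate from the triangulation description: each of these Plücker coordinates corresponds to a diagonal in some triangulation of the labelled $n$-gon). Then, by the definition of cluster localization, the cluster localization of $\Gr_{2,n}^\circ$ at $\Delta_{2,n}$ is the open subvariety $\{\Delta_{2,n}\neq 0\}\subseteq \Gr_{2,n}^\circ$, and likewise at $\Delta_{i-1,i+1}$.

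Next, using the explicit formulas
\[
a_1=\frac{\Delta_{2,n}}{\Delta_{1,n}\Delta_{1,2}}, \qquad a_i=\frac{\Delta_{i-1,i+1}}{\Delta_{i-1,i}\Delta_{i,i+1}} \quad (1<i<n),
\]
together with the fact that $\Delta_{1,n}$, $\Delta_{1,2}$, $\Delta_{i-1,i}$, and $\Delta_{i,i+1}$ are units in $\mathcal{O}(\Gr_{2,n}^\circ)$, I would conclude that on $\Gr_{2,n}^\circ$ the equalities of subsets
\[
\{a_1\neq 0\}=\{\Delta_{2,n}\neq 0\} \quad \text{and} \quad \{a_i\neq 0\}=\{\Delta_{i-1,i+1}\neq 0\}
\]
hold, which yields exactly the claimed identifications. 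There is no real obstacle here; the statement is essentially a tautology once one unpacks the definition of $a_i$ and recalls which Plücker coordinates are invertible on $\Gr_{2,n}^\circ$. The only point worth flagging is the implicit use of Theorem~\ref{thm: cluster localization} to ensure that these non-vanishing loci inherit a cluster variety structure, so that calling them ``cluster localizations'' is consistent.
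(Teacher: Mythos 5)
Your proof is correct and matches the paper's approach: the paper's proof simply cites item $(ii)$ of Proposition~\ref{prop: a_i and p_i in terms of Pluckers}, which gives $\Delta_{i-1,i+1}=a_i p_{i-1}p_i$ (and $\Delta_{2,n}=a_1 p_1 p_n$), and since the $p_j=\Delta_{j,j+1}$ (and $p_n=\Delta_{1,n}$) are units on $\Gr_{2,n}^\circ$ by definition, the non-vanishing loci coincide. Your version spells out the same argument in slightly more detail, including the sanity check that these Pl\"{u}cker coordinates occur as mutable cluster variables in some triangulation so that ``cluster localization'' is a well-posed notion.
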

\begin{proof} This follows directly from (2) of Proposition \ref{prop: a_i and p_i in terms of Pluckers}.
\end{proof}

\begin{prop} Consider $\Gr_{2,n-1}^\circ$ as the vanishing locus of $\left(B(b_1)D(r_1)\cdots B(b_{n-2})D(r_{n-2})\right)_{11}$. Set $r_0:=r_{n-1}$, which is non-zero and can be determined from $b_1,r_1,\dots, b_{n-2},r_{n-2}$ in a manner similar to Equation \eqref{eq: p_n}. For each $1\leq i<n$, we define a map $\phi_i:\Gr_{2,n-1}^\circ\times (\bF^\times)^2_{u,v}\longrightarrow \Gr_{2,n}^\circ$ by
\[
\phi_i^*(a_j)=\left\{\begin{array}{ll}
    b_j & \text{if $j<i-1$,} \\
    b_{i-1}+\dfrac{v}{ur_{i-1}} & \text{if $i>1$ and $j=i-1$,} \\
    \dfrac{r_{i-1}}{uv} & \text{if $j=i$,} \\ 
    b_i+\dfrac{u}{vr_{i-1}} & \text{if $i<n-1$ and $j=i+1$,} \\
    b_{j-1} & \text{if $j>i+1$}.
\end{array}\right. \quad \quad \quad 
\phi_i^*(p_j)=\left\{\begin{array}{ll}
    r_j & \text{if $j<i-1$,} \\
    u & \text{if $j=i-1$,} \\
    v & \text{if $j=i$,} \\
    r_{j-1} & \text{if $j>i$.}
\end{array}\right.
\]
After adding a $3$-cycle $u\longrightarrow r_{i-1}\longrightarrow v\longrightarrow u$ of frozen vertices to each cluster seed of $\Gr_{2,n-1}^\circ$, $\phi_i$ becomes an isomorphism of cluster varieties onto its image, which is the cluster localization $\{a_i\neq 0\}$.
\end{prop}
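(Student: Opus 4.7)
The proof proceeds in three stages. First, to verify that $\phi_i$ is well-defined as a morphism into $\Gr_{2,n}^\circ$, I would establish the key local matrix identity
\begin{equation*}
B\!\left(b_{i-1}+\tfrac{v}{ur_{i-1}}\right) D(u)\, B\!\left(\tfrac{r_{i-1}}{uv}\right) D(v)\, B\!\left(b_i + \tfrac{u}{vr_{i-1}}\right) = B(b_{i-1})\, D(r_{i-1})\, B(b_i),
\end{equation*}
valid for $1 < i < n-1$, by a direct $2\times 2$ computation. Substituting the formulas for $\phi_i^*$ into the defining product $B(a_1)D(p_1) \cdots B(a_{n-1})D(p_{n-1})$ then telescopes it to precisely $B(b_1)D(r_1) \cdots B(b_{n-2})D(r_{n-2})$, whose $(1,1)$-entry vanishes by the defining equation of $\Gr_{2,n-1}^\circ$. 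For the boundary cases $i = 1$ and $i = n-1$, an analogous identity produces a residual unimodular factor; for instance, when $i = 1$ one verifies
\begin{equation*}
B\!\left(\tfrac{r_0}{uv}\right) D(v)\, B\!\left(b_1 + \tfrac{u}{vr_0}\right) D(r_1) = \begin{pmatrix} r_0/u & 0 \\ v & u/r_0 \end{pmatrix} B(b_1)\, D(r_1).
\end{equation*}
Because the old $(1,1)$-entry is zero, this residual $\SL_2$-factor does not affect the vanishing of the full product's $(1,1)$-entry; simultaneously it determines $\phi_1^*(p_n) = u$, and analogously $\phi_{n-1}^*(p_n) = r_{n-1}$.

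Second, I would show the image of $\phi_i$ equals the cluster localization $\{a_i \neq 0\}$. Containment is immediate from $\phi_i^*(a_i) = r_{i-1}/(uv) \neq 0$. For the reverse inclusion I construct an explicit inverse: given a point in $\{a_i \neq 0\}$, set $u := p_{i-1}$, $v := p_i$, and $r_{i-1} := \Delta_{i-1, i+1} = a_i p_{i-1} p_i$, which is nonzero by Proposition \ref{prop: a_i and p_i in terms of Pluckers}(ii); then solve the $\phi_i^*$-relations to obtain $b_{i-1} = a_{i-1} - v/(ur_{i-1})$, $b_i = a_{i+1} - u/(vr_{i-1})$, together with $b_j = a_j$, $r_j = p_j$ for $j < i-1$, and $b_j = a_{j+1}$, $r_j = p_{j+1}$ for $j \geq i$. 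Running the key identity in reverse shows this tuple satisfies the defining equation of $\Gr_{2,n-1}^\circ$, yielding a two-sided inverse to $\phi_i$. Combined with Proposition \ref{prop: cluster localization}, this identifies the image of $\phi_i$ with the cluster localization at $\Delta_{i-1, i+1}$ as affine varieties.

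Third, to upgrade this to an isomorphism of cluster varieties, I would invoke Theorem \ref{thm: cluster localization} and work with the triangulation description of seeds on $\Gr_{2,n}^\circ$. Choose as an initial seed for the localization a triangulation of the $n$-gon that contains the diagonal $(i-1, i+1)$, so that the triangle $\{i-1, i, i+1\}$ is cut off and the remaining sub-$(n-1)$-gon on vertices $\{1, \ldots, i-1, i+1, \ldots, n\}$ receives any compatible triangulation. Under the implicit vertex relabelling of $\phi_i$ (old vertex $j$ becomes new vertex $j$ for $j < i$ and new vertex $j+1$ for $j \geq i$), this sub-$(n-1)$-gon and its triangulation correspond verbatim to a triangulation of the $(n-1)$-gon underlying $\Gr_{2,n-1}^\circ$, so the mutable portion of the quiver and the initial cluster variables transport directly (the latter via Proposition \ref{prop: a_i and p_i in terms of Pluckers}). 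The only new frozen vertices are $u = p_{i-1}$ and $v = p_i$, the two new edges of the excised triangle; together with $\Delta_{i-1, i+1}$, identified with the frozen $r_{i-1}$ after localization, they form exactly the $3$-cycle $u \to r_{i-1} \to v \to u$ prescribed by the triangulation rule on the excised triangle. Because $\phi_i$ is a birational isomorphism matching initial seeds on both sides, compatibility with cluster mutations in the remaining directions is automatic.

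I anticipate the main technical obstacle to lie in cleanly handling the boundary cases $i = 1$ and $i = n-1$: one must track how the residual $\SL_2$-factor in the local matrix identity encodes the identification of the cyclic edge parameter $p_n$, and verify that the excised frozen triangle in the cluster structure on the localization is consistent with this identification.
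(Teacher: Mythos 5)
Your proof is correct and takes a more computational route than the paper for the core verification. The paper's actual proof is quite brief: it identifies the cluster localization $\{a_i\neq 0\}$ with $\Gr_{2,n-1}^\circ\times (\bF^\times)^2$ by observing that the diagonal $(i-1,i+1)$ cuts off a triangle and leaves an $(n-1)$-gon, and then verifies that $\phi_i^*$ is the correct isomorphism by appealing to the potential-function identity $a_i=\sum_j W_{\alpha_j}$ from Lemma~\ref{lem: potential function}, so that the additive formulas for $\phi_i^*(a_{i-1})$ and $\phi_i^*(a_{i+1})$ are read off from the local picture near the vertex~$i$. You instead verify directly that $\phi_i^*$ carries the defining matrix equation of $\Gr_{2,n}^\circ$ to that of $\Gr_{2,n-1}^\circ$ via the telescoping identity
\[
B\!\left(b_{i-1}+\tfrac{v}{ur_{i-1}}\right) D(u)\, B\!\left(\tfrac{r_{i-1}}{uv}\right) D(v)\, B\!\left(b_i + \tfrac{u}{vr_{i-1}}\right) = B(b_{i-1})\, D(r_{i-1})\, B(b_i),
\]
which I have checked is correct (as is your $i=1$ boundary identity; the residual lower-triangular factor indeed does not affect the vanishing of the $(1,1)$-entry, and an analogous upper-triangular factor appears on the right when $i=n-1$). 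Your third stage — the triangulation argument identifying the quiver and the frozen $3$-cycle $u\to r_{i-1}\to v\to u$ — is essentially the same as the paper's. Your route has the advantage of being self-contained and making well-definedness and invertibility of $\phi_i$ fully explicit, at the cost of length; the paper's appeal to Lemma~\ref{lem: potential function} is slicker but tacitly assumes the reader carries out the local verification. One small slip: in your explicit inverse, the formula $b_j=a_{j+1}$ should only apply for $j>i$ (you already handle $b_i$ separately with the extra $-u/(vr_{i-1})$ correction), so the range ``$j\geq i$'' should read ``$j>i$'' for the $b$'s while remaining ``$j\geq i$'' for the $r$'s.
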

\begin{proof} Recall that the cluster structure on $\Gr_{2,n}^\circ$ is captured by triangulations of a labeled $n$-gon. Following this combinatorial recipe, we see that the cluster localization $\{a_i\neq 0\}$ is the union of cluster charts corresponding to triangulations that contain the diagonal connecting vertices $i-1$ and $i+1$ in the $n$-gon (or the diagonal connecting $2$ and $n$ in the case when $i=1$). Such a diagonal cuts the $n$-gon into two parts, a triangle and an $(n-1)$-gon, and any triangulation of the $n$-gon containing this diagonal must induce a triangulation on the $(n-1)$-gon as well. This shows that the localization $\{a_i\neq 0\}$ is isomorphic, as a cluster variety, to a product of $\Gr_{2,n-1}^\circ$ (the $(n-1)$-gon part) and a $2$-dimensional frozen algebraic torus $(\bF^\times)^2_{p_{i-1},p_i}$. Now to see that $\phi_i$ is an isomorphism of cluster varieties onto its image, it suffices to consider the following local picture near the vertex $i$ (Figure \ref{fig: local picture for localization}) of the labeled $n$-gon, and observe that the pull-back map $\phi_i^*$ agrees with the identity in Lemma \ref{lem: potential function}.
\end{proof}

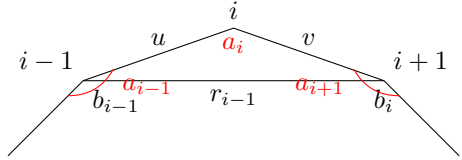
\begin{figure}[H]
    \centering
    \begin{tikzpicture}
        \draw (-3,0) -- (-2,1) -- node [above] {$u$} (0,1.7) node [below, red] {$a_i$} -- node [above] {$v$} (2,1) -- (3,0);
        \node at (-2,1) [above left] {$i-1$};
        \node at (0,1.7) [above] {$i$};
        \node at (2,1) [above right] {$i+1$};
        \draw (-2,1) node [below right] {$b_{i-1}$} -- node [below] {$r_{i-1}$} (2,1) node [below] {$b_i$};
        \draw [red] (-1.6,1.15) node [below right] {$a_{i-1}$} to [bend left] (-2.2,0.8);
        \draw [red] (1.6,1.15) node [below left] {$a_{i+1}$} to [bend right] (2.2,0.8);
    \end{tikzpicture}
    \caption{Local picture for the cluster localization $\{a_i\neq 0\}$.}
    \label{fig: local picture for localization}
\end{figure}

\section{Cluster Structure on the Ungraded Augmentation Variety \texorpdfstring{$\Aug_u(\Lambda[n_1,\dots, n_k])$}{}}\label{sec:cluster}

In this section we define a cluster structure on the ungraded augmentation variety $\Aug_u(\Lambda[n_1,\dots, n_k])$ and describe its relation to decomposable exact Lagrangian fillings of $\Lambda([n_1, \ldots, n_k])$. We first define the initial cluster seed and show that it is a really full-rank acyclic cluster variety. We then construct a set of exact Lagrangian fillings of $\Lambda([n_1, \ldots, n_k])$ so that each filling determines a cluster seed, and the dg-algebra map induced by said cobordisms agrees with cluster localization.

\subsection{Initial Cluster Seed}\label{sub:initial_seed}

\begin{thm}\label{thm: cluster structure on augmentation variety} The ungraded augmentation variety $\Aug_u(\Lambda[n_1,\dots, n_k])$ for $k\geq 2$ is a cluster variety with an initial cluster seed given by the disjoint union of the following quivers and initial cluster variables (the boxed cluster variables are frozen):
\[
K_1(a_1)\rightarrow K_2(a_1,a_2)\rightarrow K_3(a_1,a_2,a_3)\rightarrow \cdots \rightarrow K_{n_1-2}(a_1,\dots, a_{n_1-2})\rightarrow \boxed{K_{n_1-1}(a_1,\dots, a_{n_1-1})},
\]
\[
K_1(a_{m_{(i-1)}+2})\rightarrow K_2(a_{m_{(i-1)}+2},a_{m_{(i-1)}+3})\rightarrow  \cdots \rightarrow K_{n_i-3}(a_{m_{(i-1)}+2},\dots, a_{m_i-2})\rightarrow \boxed{K_{n_i-2}(a_{m_{(i-1)}+2},\dots, a_{m_i-1})}
\]
for each $1<i<k$, and
\[
K_1(a_{m_{k-1}+2})\rightarrow K_2(a_{m_{k-1}+2},a_{m_{k-1}+3})\rightarrow  \cdots \rightarrow K_{n_k-2}(a_{m_{k-1}+2},\dots, a_{m_k-1})\rightarrow \boxed{K_{n_k-1}(a_{m_{k-1}+2},\dots, a_{m_k})}.
\]
\end{thm}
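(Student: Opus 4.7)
The plan is to establish the cluster structure block by block, identifying each block of $\Aug_u(\La[n_1,\dots,n_k])$ with a slice of the Grassmannian $\Gr_{2,N}^\circ$ and then importing the cluster structure reviewed in Section~\ref{sec:clusterbackground}. By Corollary~\ref{cor: dgahtpy}, $\Aug_u(\La[n_1,\dots,n_k])$ decomposes as a product of $k$ affine block varieties, each cut out by a single continuant equation, so Lemma~\ref{lem: product of cluster varieties} reduces the theorem to producing the stated initial seed on each block.

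For each block I would use Proposition~\ref{prop: affine variety} with $N=n_1+1$, $N=n_i$, or $N=n_k+1$ (for the first, middle $i$-th, and last block, respectively) and set all short-side parameters $p_j=1$. The resulting slice of $\Gr_{2,N}^\circ$ is cut out by $(B(a_1)\cdots B(a_{N-1}))_{11}=K_{N-1}(a_1,\dots,a_{N-1})=0$, matching the block equation from Corollary~\ref{cor: dgahtpy} under the natural identification of $a$-variables (with a shift by one for the middle blocks). By Lemma~\ref{lem: deleting frozen vertex}, each slice inherits a cluster structure of type $A_{N-3}$, matching the asserted types $A_{n_1-2}$, $A_{n_i-3}$, and $A_{n_k-2}$.

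To match the initial seed explicitly, I would use the fan triangulation from vertex~$1$ of the $(n_k+1)$-gon for the last block and the fan from vertex~$N$ for the first and middle blocks. Proposition~\ref{prop: a_i and p_i in terms of Pluckers}(iii) then expresses each fan Pl\"ucker as a continuant in the $a$-variables. For the last block this directly yields the cluster variables $K_j(a_{m_{k-1}+2},\dots, a_{m_{k-1}+1+j})$ of the statement together with the frozen $\Delta_{1,n_k+1}=K_{n_k-1}(a_{m_{k-1}+2},\dots, a_{m_k})$. For the first and middle blocks, an explicit computation with the matrix product shows that the fan Pl\"ucker $\Delta_{j,N}$ factors as $K_{j-1}(a_\bullet,\dots,a_\bullet)\cdot \Delta_{1,N}$; dividing by the frozen $\Delta_{1,N}$ yields the stated continuants $K_j(a_1,\dots,a_j)$ or $K_j(a_{m_{(i-1)}+2},\dots, a_{m_{(i-1)}+1+j})$, with the corresponding frozen identified as $\Delta_{1,N}^{-1}=K_{N-2}(a_1,\dots,a_{N-2})$ via the relation $\Delta_{1,N}\cdot K_{N-2}(a_1,\dots,a_{N-2})=1$, which is a direct consequence of Lemma~\ref{lem: Braid Matrix Det} combined with the defining equation $K_{N-1}=0$ of the slice. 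The linear quiver in the statement is then read off from the $3$-cycle rule for fan triangulations (as illustrated in the hexagon example of Section~\ref{sec:clusterbackground}) after deleting the short-side frozen vertices.

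The main technical obstacle is verifying the precise linear quiver in the statement after the frozen rescaling performed for the first and middle blocks: dividing mutable cluster variables by a frozen alters the arrows incident to the frozen vertex, so one must confirm that the resulting arrows between the new $K_j$ variables and the new frozen still form the claimed linear chain terminating at the frozen. This amounts to careful application of the continuant identities of Lemmas~\ref{lem: alternative recursion}, \ref{lem: palindromic}, and \ref{lem: Braid Matrix Det}, combined with bookkeeping for the asymmetric convention (rescaling needed for first/middle blocks, no rescaling for the last block) that mirrors the asymmetric block equations in Corollary~\ref{cor: dgahtpy}.
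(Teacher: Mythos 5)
Your overall strategy — decomposing into blocks, identifying each block with a slice of $\Gr_{2,N}^\circ$, and reading off the seed from a fan triangulation — matches the paper's. However, your specific choices introduce a genuine gap that the paper's argument avoids.

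The key difference is the choice of slice. You set all short-side parameters $p_1,\dots,p_{N-1}=1$ for each block, which leaves $\Delta_{1,N}$ (the ``long side'') as the sole surviving frozen. This forces your fan Pl\"uckers to come out as $\Delta_{j,N}=K_{j-1}(a_1,\dots,a_{j-1})\cdot\Delta_{1,N}$ for the first and middle blocks, so you must then rescale by $\Delta_{1,N}^{-1}$ and invert the frozen. You correctly flag this as the weak point: rescaling by a frozen and inverting a frozen are quasi-cluster transformations that change the arrows at the frozen vertex, and you have not verified that the result is the stated linear chain. This check is not a formality: in your un-rescaled quiver the surviving frozen $\Delta_{1,N}$ sits in the triangle $\{N,1,2\}$ and hence attaches to the \emph{first} mutable $\Delta_{2,N}$, whereas the theorem's frozen attaches to the \emph{last} mutable. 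Working out the exchange relations (e.g.\ for $N=5$, the Pl\"ucker relations $\Delta_{2,5}\Delta_{1,3}=\Delta_{3,5}+\Delta_{1,5}$ and $\Delta_{3,5}\Delta_{2,4}=\Delta_{4,5}+\Delta_{2,5}$) shows that the rescaling does in fact cancel the frozen from the first mutation and make it reappear in the last, so your plan is salvageable, but this computation is precisely the content you left out.

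The paper sidesteps all of this by choosing the slice $p_1=\cdots=p_{N-2}=1$, $p_N=\Delta_{1,N}=1$ for the first and middle blocks (via Proposition~\ref{prop: affine variety}) and $p_1=\cdots=p_{n_k-1}=p_{n_k+1}=1$ for the last block (via Proposition~\ref{prop: alternative affine variety}, using the inequality form from Proposition~\ref{prop:dgahtpy} rather than Corollary~\ref{cor: dgahtpy}). This leaves $\Delta_{N-1,N}$, which sits in the triangle $\{N,N-2,N-1\}$ and attaches directly to the last mutable, as the surviving frozen. The fan Pl\"uckers $\Delta_{j,N}$ then come out as exactly $K_{j-1}(a_1,\dots,a_{j-1})$ with no rescaling, and the quiver is read off without any further bookkeeping. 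Note also that the paper's remark after Corollary~\ref{cor: augmentation variety is really full-rank} explicitly acknowledges that using Corollary~\ref{cor: dgahtpy} as you do gives a structure that is only quasi-cluster equivalent, not literally equal, to the stated one — the choice of Proposition~\ref{prop:dgahtpy} is made deliberately to match the ruling stratification later.
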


\begin{rmk}
    Note that in the $k=1$, case, the theorem is true, replacing $n_1-1$ index in the first quiver by $n_1$~\cite{Gao_shen_weng_2}.
\end{rmk}

\begin{proof} By Proposition \ref{prop:dgahtpy}, we see that $\Aug_u(\Lambda[n_1,\dots, n_k])$ is a product of affine varieties given by the equations $K_{n_1}(a_1,\dots, a_{n_1})=0$, $K_{n_i-1}(a_{m_{(i-1)}+2},\dots, a_{m_i})=0$ for each $1<i<k$, and the inequality $K_{n_k-1}(a_{m_{k-1}+2},\dots, a_{m_k})\neq 0$. Since a product of cluster varieties is also a cluster variety (Lemma \ref{lem: product of cluster varieties}), it suffices to show that each of the equations and the inequality above defines a cluster variety with the prescribed initial cluster seed.

Let us consider the equation $K_{n_1}(a_1,\dots, a_{n_1})=0$ first. Set $n=n_1+1=m_1+1$ and consider the subvariety $V\subset\Gr_{2,n_1+1}^\circ$ cut out by $p_1=p_2=\cdots =p_{n_1-1}=p_{n_1+1}=1$. By Proposition \ref{prop: a_i and p_i in terms of Pluckers} and Lemma \ref{lem: deleting frozen vertex} we know that this subvariety $V$ is also a cluster variety. We claim that $V$ is precisely given by the vanishing locus of $K_{n_1}(a_1,\dots, a_{n_1})=0$. To see this, recall from Proposition \ref{prop: affine variety} that $\Gr_{2,n_1+1}^\circ$ is defined by Equation \eqref{eq: defining equation for Gr_2,n}. After setting $p_i=1$ for all $i\neq n_1$, the equation becomes 
\[
\left(B(a_1)B(a_2)\cdots B(a_{n_1})D(p_{n_1})\right)_{11}=K_{n_1}(a_1,\dots, a_{n_1})p_{n_1}=0.
\] 
Since $p_{n_1}$ is non-zero, the above equation holds if and only if $K_{n_1}(a_1,\dots, a_{n_1})=0$, which proves our claim. Note that although we did not set $p_{n_1}$ to be $1$, it is not algebraically independent from the $a_i$'s because we it appears as a nonzero scalar so that $v_{n_1+1}=B(a_1)B(a_2)\cdots B(a_{n_1})D(p_{n_1})\begin{pmatrix} 1  \\ 0 \end{pmatrix}=\begin{pmatrix} 0 \\ 1 \end{pmatrix}$, which is required by $p_{n_1+1}=\Delta_{1,n_1+1}=1$. To obtain the prescribed cluster seed, we take the triangulation of the $n$-gon consisting of diagonals connecting the vertex $n_1+1$ with vertices $2,3,\dots, n_1-1$. By following the setup in the proof of Proposition \ref{prop: affine variety}, we see that the initial cluster variable associated with the diagonal connecting the vertex $n_1+1$ with vertex $j=2,3,\dots, n_1-1$ is (c.f. Lemma \ref{lem: braid matrix prod})
\[
\Delta_{j,n_1+1}=\det(v_j,v_{n_1+1})=\left(B(a_1)B(a_2)\cdots B(a_{j-1})\right)_{11}=K_{j-1}(a_1,\dots, a_{j-1}).
\]
There is one frozen cluster variable remaining in the initial cluster seed, namely
\[
p_{n_1}=\Delta_{n_1,n_1+1}=\left(B(a_1)B(a_2)\cdots B(a_{n_1-1})\right)_{11}=K_{n_1-1}(a_1,\dots, a_{n_1-1}).
\]

The equations $K_{n_i-1}(a_{m_{(i-1)}+2},\dots, a_{m_i})=0$ for $1<i<k$ can be dealt with in a similar manner by relating it to the Grassmannian $\Gr_{2,n_i}^\circ$, and we get the prescribed cluster seeds as stated in the proposition.

Lastly, for the inequality $K_{n_k-1}(a_{m_{k-1}+2},\dots, a_{m_k})\neq 0$, let us first shift the indices by $-(m_{k-1}+1)$ to make the notation simpler, which yields $K_{n_k-1}(a_1,\dots, a_{n_k-1})\neq 0$. We then apply Proposition \ref{prop: alternative affine variety} with $n=n_k+1$ and see that this non-vanishing locus is isomorphic to the subvariety of $\Gr_{2,n_k+1}^\circ$ cut out by $\{p_1=p_2=\cdots=p_{n_k-1}=p_{n_k+1}=1\}$. Thus, we may again conclude from Lemma \ref{lem: deleting frozen vertex} that this inequality defines a cluster variety. To get an initial cluster seed, we again consider the triangulation of the $(n_k+1)$-gon consisting of diagonals connecting the vertex $n_k+1$ with vertices $2,\dots, n_k-1$; similar to the $n_1$ case, we again obtain the initial mutable cluster variables
\[
\Delta_{j,n_k+1}=\det(v_j,v_{n_k+1})=\left(B(a_1)B(a_2)\cdots B(a_{j-1})\right)_{11}=K_{j-1}(a_1,\dots, a_{j-1})
\]
for $j=2,3,\dots, n_k-1$. There is again one frozen cluster variable remaining in the initial cluster seed, and it is again
\[
\Delta_{n_k,n_k+1}=\left(B(a_1)B(a_2)\cdots B(a_{n_k-1})\right)_{11}=K_{n_k-1}(a_1,\dots, a_{n_k-1}).
\]
The prescribed initial cluster seed for the $n_k$ block can then be recovered with a shift by $m_{k-1}+1$.
\end{proof}

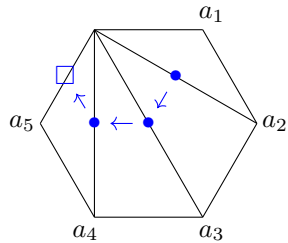
\begin{figure}[H]
    \centering
    \begin{tikzpicture}[scale=0.8]
    \foreach \i in {1,...,6}
    {
    \draw (\i*60:1.8) -- (\i*60+60:1.8);
    }
    \foreach \i in {1,...,5}
    {
    \node at (120-60*\i:2.1) [] {$a_\i$};
    }
    \node [blue] (0) at (150:1.6) [] {$\square$};
    \foreach \i in {1,2,3}
    {
    \draw (120:1.8) -- node (\i) [blue] {$\bullet$} (-180+60*\i:1.8);
    \pgfmathtruncatemacro{\j}{\i-1};
    \draw [->,blue] (\i) -- (\j);
    }
    \end{tikzpicture}
    \caption{Examples of the initial cluster seed.}
\end{figure}

\begin{cor} \label{cor: augmentation variety is really full-rank}$\Aug_u[n_1,\dots, n_k]$ is a really full-rank acyclic cluster variety.
\end{cor}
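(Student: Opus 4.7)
Proof proposal: By Theorem \ref{thm: cluster structure on augmentation variety}, $\Aug_u(\Lambda[n_1,\dots,n_k])$ is a cluster variety whose initial seed is the disjoint union of $k$ linear quivers, one for each block. Since full-rank-ness and acyclicity pass through disjoint unions via Lemma \ref{lem: product of cluster varieties} and the definition of acyclicity, it suffices to verify both properties for a single block. Each block is a linear quiver of the form $v_1\to v_2\to\cdots\to v_{n-2}\to \boxed{v_{n-1}}$ with one frozen vertex at the end; this quiver contains no oriented cycles at all (let alone among mutable vertices), so acyclicity is immediate.

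For really full-rank-ness, I would compute the exchange matrix $\epsilon$ directly and exhibit standard basis vectors as integer combinations of its columns. Restricted to the $(n-2)$ mutable rows of a single block, the column indexed by a mutable vertex $j$ with $2\leq j\leq n-3$ is $e_{j-1}-e_{j+1}$, the column indexed by $j=1$ is $-e_2$, the column indexed by $j=n-2$ is $e_{n-3}$, and the column indexed by the frozen vertex $n-1$ is $e_{n-2}$. The key observation is that the two ``endpoint'' columns $e_{n-3}$ and $e_{n-2}$ (one coming from the frozen vertex) let us bootstrap: starting from $e_{n-2}$ and $e_{n-3}$, the column $e_{n-4}-e_{n-2}$ recovers $e_{n-4}$; the column $e_{n-5}-e_{n-3}$ then recovers $e_{n-5}$; and so on by downward induction on the index, until every $e_i$ for $1\leq i\leq n-2$ lies in the integer span. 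Thus the mutable-row submatrix has $\mathbb{Z}$-span equal to all of $\mathbb{Z}^{n-2}$, proving the block is really full-rank.

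Finally, applying Lemma \ref{lem: product of cluster varieties} to combine the $k$ blocks yields that $\Aug_u(\Lambda[n_1,\dots,n_k])$ is both acyclic and really full-rank. I do not anticipate any genuine obstacle here; the argument is a direct bookkeeping check once the initial seed from Theorem \ref{thm: cluster structure on augmentation variety} is in hand. The only minor subtlety is being careful that the frozen vertex contributes the ``seed'' standard basis vector $e_{n-2}$ that enables the inductive unwinding; without the frozen vertex (i.e.\ for the pure mutable $A_{n-2}$ quiver) the columns would only span an index-two sublattice of $\mathbb{Z}^{n-2}$.
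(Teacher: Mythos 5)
Your proof is correct and follows essentially the same route as the paper: reduce to a single block via Lemma~\ref{lem: product of cluster varieties}, write down the columns of the mutable-row submatrix of the exchange matrix for a linear quiver, and run a downward induction seeded by the two rightmost columns (one of which comes from the frozen vertex). The only inaccuracy is in your final aside: for the pure mutable linear $A_m$ quiver, the skew-symmetric exchange matrix has determinant $1$ when $m$ is even (so the columns span all of $\mathbb{Z}^m$) and determinant $0$ when $m$ is odd (so the columns span a rank-$(m-1)$ sublattice), and it is never an index-two sublattice — but since that remark plays no role in the argument, the proof stands.
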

\begin{proof} The acyclicity follows from the initial cluster seed in Theorem \ref{thm: cluster structure on augmentation variety}. For really full-rank-ness (Definition \ref{defn: full-rank}), it suffices to prove that the cluster seed for each block is really full-rank (Lemma \ref{lem: product of cluster varieties}). Suppose the quiver for a block is of the form $1 \rightarrow 2 \rightarrow \cdots \rightarrow n\rightarrow\boxed{n+1}$ (the boxed vertex is frozen). Then the column vectors in the $n\times (n+1)$ submatrix of interest are $v_1=-e_2$, $v_i=e_{i-1}-e_{i+1}$ for $1<i<n$, $v_n=e_{n-1}$, and $v_{n+1}=-e_n$. We can do a backward induction to show that all $e_i$ with $1\leq i\leq n$ are in the span of these column vectors: for the base case, $e_n$ and $e_{n-1}$ are clearly in the span; but then $e_i=v_{i+1}+e_{i+2}$ would also be in the span of $v_i$'s by induction. Therefore we can conclude that the quiver of each block is really full-rank, and hence $\Aug_u[n_1,\dots, n_k]$ is a really full-rank cluster variety.
\end{proof}

\begin{rmk} Note that we could have used the equations in Corollary \ref{cor: dgahtpy} to obtain a cluster structure on $\Aug_u(\Lambda[n_1,\dots, n_k])$, and the resulting cluster structure will be of the same cluster type and quasi-cluster equivalent to the one we stated in Theorem \ref{thm: cluster structure on augmentation variety} (see \cite{FraserQuasi} and \cite[Appendix]{Casals_weng} for more on quasi-cluster equivalences). We choose the one stated in Theorem \ref{thm: cluster structure on augmentation variety} because of its connection to the ruling stratification, which we will discuss in Section \ref{sec:ruling}.
\end{rmk}

\subsection{Exact Lagrangian Fillings and Cluster Seeds}

In this section, we construct and distinguish (typically non-orientable) exact Lagrangian fillings of $\Lambda[n_1, \ldots, n_k]$ for $n_i\geq 2.$ We begin by first establishing which Reeb chords in $\Lambda[n_1, \ldots, n_k]$ can be pinched. To aid us in this process, we introduce ``dips" near certain crossings to isolate them and ensure that they are proper.
This technique, first introduced by Fuchs in \cite{Fuchs03}, is often used to simplify dg-algebra computations involving particularly complicated Reeb chords; see e.g. \cite{Sabloff_2005, FuchsRutherford}. For a more detailed discussion involving a computation similar to the case we consider here, see \cite[Section 3.2]{Gao_shen_weng_2}.

\begin{lem}\label{lem:contractible}

The Reeb chords in the set $\{a_i~|~1\leq i\leq m_k\}\backslash\{a_{m_i+1}~|~1\leq i\leq k-1\}$ 
of $\Lambda[n_1, \ldots, n_k]$ are contractible and proper for 
$n_i\geq 2$, $1\leq i\leq k$ after adding a dip to the right of each Reeb chord $a_{m_i}$.
\end{lem}
\begin{proof}

Note that we do not consider the Reeb chords $a_{m_i}+1$ for $2\leq i \leq k-1$, as they have nonzero differential and are not contractible nor proper in general. For the remaining Reeb chords, we can use the local model of the pinch move given in \cite[Section 3.1]{Hughes2023}, to see that they are all contractible. The local model described in loc. cit. gives a description of the pinch move in the front projection -- see Figure \ref{fig:D4minus} -- and its Lagrangian resolution,  verifying that any Reeb chord appearing to the right of another in a block can be made arbitrarily small by a Legendrian isotopy. For the case of $a_1,$ there is an additional local model verifying contractibility.

To see that the specified Reeb chords are proper, we can observe that for Reeb chord $a_j$ appearing between the first and last of any block, the only disks with positive corners at $a_j$ are the disks with positive corners $a_j$ and $a_{j\pm 1}$ filling in the bigons formed by the two adjacent crossings. For the case of $a_1,$ the only such bigon has positive corners $a_1$ and $a_2$. For the crossings at the right of each block, $a_{m_i}$ for $1\leq i\leq k$ 
we add a dip to right of $a_{m_i}$; that is, we perform a Reidemeister two move between the two strands crossing at $a_{m_i}$ that introduces two crossings to the right of $a_{m_i}$. This then reduces the crossing $a_{m_i}$ to the case of the middle crossings considered above at the cost of adding two new generators $x_i, y_i$ to the dg-algebra for $1\leq i\leq k$. After pinching at $a_{m_i}$ and removing the dip, this has the effect of modifying the differential of $b_1, b_2,$ and $a_{m_j+1}$ for $i<j\leq k$. As we show in \ref{sub: equations}, none of these Reeb chords appear in the equations cutting out the ungraded augmentation variety; in particular, there exists a dg-algebra homotopy for each crossing $a_{m_i+1}$ for $2\leq i \leq k$.
\end{proof}

\begin{rmk}
For the purposes of this paper, we only apply Lemma~\ref{lem:contractible} when $n_1\geq2, n_2\geq 3, \ldots, n_{k-1}\geq 3,$ and $n_k\geq 2$. If $n_i=2$ for $2\geq i\geq k-1$ and we pinch one of the chords, we will obtain a stabilized Legendrian which does not admit any fillings.
\end{rmk}\label{rem:stabilized}

\begin{defn}\label{defn:admissible}
Let $\Lambda[n_1, \ldots, n_k]$ be in Legendrian rational form. Let $P=(i_1, \ldots, i_l)$ for $l\geq 1$ and $i_1, \ldots, i_l\in\{1, \ldots, m_k\}$ denote a pinching sequence where the Reeb chords $a_{i_1}, \ldots, a_{i_l}$ of $\Lambda[n_1, \ldots, n_k]$ are pinched in the order they are listed. Let $\Lambda_j[n_1, \ldots, n_k]$ denote the Legendrian knot obtained after pinching the Reeb chords $a_{i_1}, \ldots, a_{i_j}$ of $\Lambda[n_1, \ldots, n_k]$. An \emph{admissible pinching sequence} $P=(i_1, \ldots, i_l)$ is a pinching sequence such that for each $j=0, \ldots, l-1$,~$\Lambda_{j}[n_1, \ldots, n_k]$ is in Legendrian rational form, and $a_{i_{j+1}}$, is a contractible and proper Reeb chord of $\Lambda_j[n_1, \ldots, n_k]$. 

Let $l=m_k-2k+2$, and let $P$ be an admissible pinching sequence for $\La[n_1, \ldots, n_k]$. After applying the sequence of saddle cobordisms $L_P$ corresponding to $P$, we obtain a Legendrian $2$-bridge link with base points that is of type $[1,2,2,\dots, 2,1]$, which is Legendrian isotopic to a split union $\Lambda_-$ of two max-tb unknots with base points. Note that in the $k=1$ case, $l=n_1$ and the resulting of pinching all $n_1$ Reeb chords of $\La[n_1]$ is a split union of unknots. Similarly, in the $k=2$ case, any admissible pinching sequence yields $\La[1, 1]$, which is again Legendrian isotopic to a split union of unknots. Let $L_0$ denote the exact Lagrangian filling given by concatenating the trace of the Legendrian isotopy taking $\Lambda[1, 2, \ldots, 2,1]$ to the split union of two Legendrian max-tb unknots with the two minimum cobordisms filling in said unknots. We abuse notation and denote the concatenation of $L_P$ and $L_0$ by $L_P$. 

We say that an exact Lagrangian filling of $\La[n_1, \ldots, n_k]$ is \emph{constructed by an admissible pinching sequence} $P$ if it is the concatenation of $L_P$ and $L_0$. See Figure~\ref{fig:pinchingsequence} for an example of an exact Lagrangian filling of $\Lambda[3,3,3,3,2]$ constructed by an admissible pinching sequence.
\end{defn}

\begin{prop}\label{prop:admissiblefilling} 

Any exact Lagrangian filling of $\Lambda[n_1, \ldots, n_k]$ constructed by an admissible pinching sequence imposes the relations
$t_1=w_1$ and $t_2=w_2$, where $w_1$ and $w_2$ are Laurent monomials in $s_1, \ldots, s_l$. As a result, $\Aug_u(\Lambda_-)$ is isomorphic to $(\bF^\times)^l$ for $l=m_k-2k+2$. Moreover, all such exact Lagrangian fillings are smoothly isotopic.
\end{prop}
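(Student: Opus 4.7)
The plan is to trace the admissible pinching sequence down to a split union of two max-tb unknots, compute its augmentation variety directly from the unknot dg-algebras, and then verify smooth isotopy among all such fillings by examining the underlying handle decompositions.

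First I would verify the combinatorial claim that the admissible pinching sequence $P$ of length $l = m_k - 2k + 2$ terminates at a Legendrian of type $\La[1,2,\ldots,2,1]$. Since admissibility forces every intermediate stage to be in Legendrian rational form, each pinch must reduce a single $n_i$ by one while respecting $n_i \geq 2$ for $1<i<k$ and $n_1,n_k \geq 1$. The terminal stage with each $n_i$ saturated at its minimum is $[1,2,\ldots,2,1]$, and the required total count of pinches is $(n_1 - 1) + \sum_{i=2}^{k-1}(n_i - 2) + (n_k - 1) = l$. Using the Legendrian isotopy $\La[1,2,\ldots,2,1] \to \La_-$ asserted in the construction of $L_P$, I would trace how the base points $t_1, t_2, s_1, \ldots, s_l$ are carried onto $\La_-$. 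By inspecting Figure \ref{fig:2bridge}, $t_1$ and $t_2$ sit on opposite strands on the right-hand side of the diagram, and the splitting isotopy preserves this separation, so that $t_1$ lands on one unknot $U_1$ and $t_2$ on the other $U_2$; each $s_j$ likewise lands on one of the two unknots according to which strand of its pinched crossing becomes part of which component.

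Next I would compute $\Aug_u(\La_-)$ directly. As a split union $U_1 \sqcup U_2$ of max-tb unknots, $\SA(\La_-)$ has exactly two Reeb chords $c_1, c_2$, and in characteristic $2$ the differentials read $\partial(c_i) = 1 + \prod_{b \in U_i} b^{\epsilon_b}$, with exponents $\epsilon_b \in \{\pm 1\}$ determined by the coorientations of the base points on $U_i$. An ungraded augmentation $\e$ therefore imposes exactly two multiplicative relations among the base points, one per unknot. Since $t_1 \in U_1$ and $t_2 \in U_2$ by the previous step, each relation can be solved uniquely for its respective $t_i$, yielding $t_1 = w_1$ and $t_2 = w_2$ where each $w_i$ is a Laurent monomial in the $s_j$'s residing on $U_i$. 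The remaining variables $s_1, \ldots, s_l$ are free nonzero parameters, so $\Aug_u(\La_-) \cong (\bF^\times)^l$.

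Finally, for the smooth isotopy claim, I would observe that every filling $L_P$ admits a canonical handle decomposition with two $0$-handles (the minimum cobordisms) and $l$ $1$-handles (the pinches), attached to the same smoothly unknotted $\La_-$. Two different admissible sequences produce two such decompositions with the same number of handles, differing only by the order in which pinches are performed and by the choice of contractible Reeb chord within each block $\sigma_1^{n_i}$. Both modifications can be realized by smooth isotopies of the embedded surface in the symplectization, via smooth handle slides for the reorderings and smooth isotopies of the Lagrangian projection within each block for the local choices, so all such fillings share the same smooth isotopy class. The main obstacle I anticipate is the bookkeeping in the second step: carefully verifying for every admissible pinching sequence that the splitting isotopy deposits $t_1$ and $t_2$ on different components of $\La_-$, since otherwise one of the two unknot relations would involve both $t_1$ and $t_2$ and the other would be a relation among the $s_j$ alone, which would prevent us from solving for $t_1$ and $t_2$ independently as Laurent monomials in $s_1,\ldots, s_l$.
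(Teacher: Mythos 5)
Your overall approach matches the paper's: reduce to the split union $\Lambda_-$ of two max-tb unknots, compute $\partial$ in $\SA(\Lambda_-)$ to extract the relations on $t_1, t_2$, and argue smooth isotopy from the handle decomposition. However, the ``main obstacle'' you flag at the end -- verifying that $t_1$ and $t_2$ end up on different components of $\Lambda_-$ -- is not something you can wave away by ``inspecting Figure~\ref{fig:2bridge}''; the paper resolves it with a single structural observation that you are missing: every Reeb chord of $\Lambda[n_1,\dots,n_k]$ other than $b_1$ and $b_2$ has its two endpoints on strands that end up on the two \emph{different} components $\Lambda^{(1)}$ and $\Lambda^{(2)}$ of $\Lambda_-$. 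Since $b_1$ and $b_2$ are therefore the only Reeb chords with both endpoints on a single component, each must be the Reeb chord of one unknot component, which places $t_1$ and $t_2$ on opposite components and settles the issue.

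The same observation also exposes a factual error in your intermediate picture: you assert that ``each $s_j$ lands on one of the two unknots,'' but pinching $a_j$ introduces a pair of base points $s_j$ (one on each strand of the $0$-resolution, cf.\ Figure~\ref{fig:pinch}), and precisely because those two strands separate into different components, $s_j$ appears on \emph{both} $\Lambda^{(1)}$ and $\Lambda^{(2)}$. This is visible in the worked example of Figure~\ref{fig:pinchingsequence}, where $w_1 = s_1s_3s_5s_6(s_2s_4)^{-1}$ and $w_2 = s_1s_6(s_2s_3s_4s_5)^{-1}$ each involve every $s_j$. This error does not sink the final conclusion (the relations still solve uniquely for $t_1,t_2$ and leave $(\bF^\times)^l$), but it shows your bookkeeping is incomplete. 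A smaller point: your differential formula $\partial(c_i)=1+\prod b^{\epsilon_b}$ has a spurious constant term; in the paper's conventions the small loop near $b_i$ carries $t_i$, so $\partial(b_i)=t_i+w_i$ with $w_i$ a monomial in the $s_j^{\pm1}$.
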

\begin{proof} 
The Legendrian $\Lambda[1, 2, \ldots, 2,1]$ is Legendrian isotopic to split union of two Legendrian max-tb unknots which we will denote by $\Lambda^{(1)}$ and $\Lambda^{(2)}.$
 Observe that all of the Reeb chords of $\La[n_1, \ldots, n_k]$ except for $b_1$ and $b_2$ begin on $\Lambda_-^{(1)}$ and end on $\Lambda_-^{(2)}$ or vice versa. Thus, each pinch move at any of the Reeb chords $a_i$ that are proper and contractible (see Lemma~\ref{lem:contractible}) contributes one co-oriented base point $s_i$ for $i=1, \ldots, l$ to each link component $\Lambda^{(1)}$ and $\Lambda^{(2)}$. Therefore, $\partial(b_1)=t_1+w_1$ and $\partial(b_2)=t_2+w_2$, where $w_1$ and $w_2$ are products of $s_1^{\pm1}, \ldots, s_l^{\pm1}$. Both unknots are filled with a minimum cobordism, so we then have the relations $\e_L(t_1)=w_1$ and $\e_L(t_2)=w_2.$

Every exact Lagrangian filling of $\La[n_1, \ldots, n_k]$ constructed by an admissible pinching sequence $P$ has the same Euler characteristic and normal Euler number (the latter follows from~\cite[Proposition 1.1]{CCPRSY}). Resolving a crossing is equivalent to attaching a 1-handle. For any two pinching sequences $P_1$ and $P_2$ that only differ in the order of Reeb chords pinched, the exact Lagrangian fillings are clearly smoothly isotopic as the order in which 1-handles are attached does not change the smooth type of the filling. Finally, suppose that both $a_j$ and $a_k$ are crossings contained in one of the blocks $n_i$ of $\La[n_1, \ldots, n_k]$. One can smoothly isotope the $1$-handle attached at a crossing $a_j$ so that it resolves the crossing $a_k$ instead. Thus, all exact Lagrangian fillings of $\La[n_1, \ldots, n_k]$ constructed by an admissible pinching sequence $P$ are smoothly isotopic.
\end{proof}

\begin{figure}[h]
	\centering
	\begin{tikzpicture}[scale=1.5]
		\node[inner sep=0] at (0,0) {\includegraphics[width=15 cm]{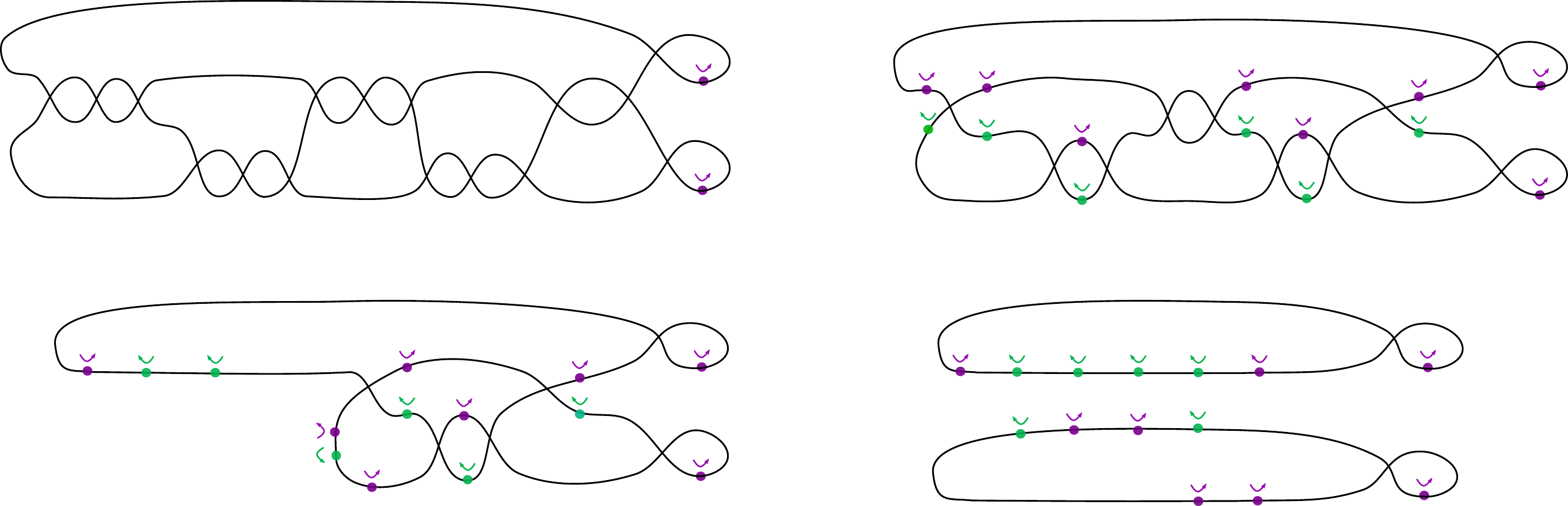}};
        \node at (-0.5,0.9){$t_2$};
		\node at (-0.5,0.2){$t_1$};
        \node at (4.85,0.9){$t_2$};
		\node at (4.85,0.2){$t_1$};
        \node at (4.15,-1.8){$t_1$};
		\node at (4.15,-1){$t_2$};
        \node at (0.7,0.9){$s_1$};
		\node at (0.7,0.7){$s_1$};
        \node at (1.3,1.3){$s_2$};
		\node at (1.35,0.55){$s_2$};
        \node at (2.9,1.3){$s_4$};
		\node at (2.9,0.55){$s_4$};
        \node at (4.05,1.3){$s_6$};
		\node at (4.05,0.55){$s_6$};
        \node at (3.3,1){$s_5$};
		\node at (3.35,0.2){$s_5$};
        \node at (1.9,0.95){$s_3$};
		\node at (1.9,0.2){$s_3$};
        \node at (-4.5,-0.9){$s_1$};
		\node at (-3.15,-1.4){$s_1$};
        \node at (-4.1,-0.9){$s_2$};
		\node at (-3.15,-1.1){$s_2$};
        \node at (-2.4,-0.5){$s_4$};
		\node at (-2.4,-1.2){$s_4$};
        \node at (-1.3,-0.55){$s_6$};
		\node at (-1.3,-1.2){$s_6$};
        \node at (-2.05,-0.805){$s_5$};
		\node at (-2.05,-1.65){$s_5$};
        \node at (-3.6,-0.9){$s_3$};
		\node at (-2.6,-1.65){$s_3$};
        \node at (-0.5,-1.6){$t_1$};
		\node at (-0.5,-0.9){$t_2$};
        \node at (3,-1.8){$s_5$};
        \node at (2.6,-1.8){$s_3$};
        \node at (1.5,-1.3){$s_1$};
        \node at (1.9,-1.3){$s_2$};
        \node at (2.2,-1.3){$s_4$};
        \node at (2.6,-1.3){$s_6$};
        \node at (3,-0.9){$s_6$};
        \node at (2.6,-0.9){$s_5$};
        \node at (2.2,-0.9){$s_4$};
        \node at (1.9,-0.9){$s_3$};
        \node at (1.5,-0.9){$s_2$};
        \node at (1.2,-0.9){$s_1$};
		\node at (0.2,1){$\boldsymbol{\rightarrow}$};
        \node at (0.2,-1){$\boldsymbol{\rightarrow}$};
         \node at (0,0){$\boldsymbol{\swarrow}$};
  \end{tikzpicture}
	\caption{An exact Lagrangian filling constructed with an admissible pinching sequence for $\Lambda[3,3,3,3,2].$ Pinch in order $a_1,a_3,a_6,a_8,a_{11},a_{14}$, then perform a sequence of Reidemeister II moves and fill both unknots with minimum cobordisms from which we obtain the relations $t_1=s_1s_3s_5s_6(s_2s_4)^{-1}$, and $t_2=s_1s_6(s_2s_3s_4s_5)^{-1}.$} 
	\label{fig:pinchingsequence}
\end{figure}

\begin{rmk}Note that for the case of $(2,n)$ torus links, $\Lambda[n]$ has oriented exact Lagrangian fillings so one can match the co-orientation on $t_1, t_2$ to the orientation of the knot and $w_1=w_2$. One then obtains the relation $t_1t_2^{-1}=1.$
\end{rmk}

\begin{thm}\label{thm: cluster torus chart from pinching sequence} The induced morphism $\Phi_{L_P}:\Aug_u(\Lambda_-)\longrightarrow \Aug_u(\Lambda_+)$ is an open embedding of an algebraic torus onto a cluster torus chart on $\Aug_u(\Lambda_+)$ (c.f. Definition \ref{defn: cluster variety}).
\end{thm}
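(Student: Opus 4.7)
The plan is to factor the argument across the blocks of $\Lambda[n_1, \ldots, n_k]$ and then induct on the length of the admissible pinching sequence within each block. By Lemma \ref{lem: product of cluster varieties} and Theorem \ref{thm: cluster structure on augmentation variety}, the cluster structure on $\Aug_u(\Lambda_+)$ decomposes as a product over blocks, and by admissibility together with Lemma \ref{lem:contractible}, the sequence $P$ decomposes into admissible pinching sequences on individual blocks; the cobordism $L_P$ factors correspondingly, so it suffices to prove the theorem one block at a time.

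For a single block, I would identify $\Aug_u(\Lambda_+)$ with the cluster subvariety of $\Gr_{2,n+1}^\circ$ constructed in the proof of Theorem \ref{thm: cluster structure on augmentation variety}, obtained by setting appropriate $p_i$'s equal to $1$. Under this identification, the Reeb chord coordinates $a_j$ coincide with the parameters of the same name from Proposition \ref{prop: affine variety}. The key single-step statement is that, for a pinch at a Reeb chord $a_j$, the dg-algebra pinching map $\Phi_{L_{a_j}}$ from Subsection \ref{sec:cobordism} agrees with the parametrization of the cluster localization $\{a_j\neq 0\}$ given in the proposition immediately following Proposition \ref{prop: cluster localization}. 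By Proposition \ref{prop: cluster localization}, this non-vanishing locus is the cluster localization of $\Aug_u(\Lambda_+)$ at the cluster variable $\Delta_{j-1,j+1}$. The compatibility to check is that $\Phi_{L_{a_j}}^\ast$ and $\phi_j^\ast$ agree under the identification of the two oriented base points $s_j^{\pm 1}$ appearing on $\Lambda_-$ with the pair of invertible parameters $(u,v)$ in the source of $\phi_j$. Both maps fix Reeb chord coordinates away from $a_{j-1},a_j,a_{j+1}$ and modify those three by prescribed rational expressions in the new base point(s) and the adjacent frozen continuant factor; matching these expressions reduces, via the recursive continuant identities from Subsection \ref{sub: continuants}, to Lemma \ref{lem: Braid Matrix Det}.

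With the single-pinch case in hand, I would iterate along $P$. After each pinch, $\Lambda_j[n_1,\dots,n_k]$ remains in Legendrian rational form by the definition of admissibility, and its augmentation variety inherits a cluster structure which, by Theorem \ref{thm: cluster localization} together with the acyclicity of Corollary \ref{cor: augmentation variety is really full-rank}, agrees with the cluster localization of $\Aug_u(\Lambda_j[n_1,\dots,n_k])$ at the variable associated with the pinched Reeb chord. Composing across all $l=m_k-2k+2$ pinches realizes $\Phi_{L_P}^\ast$ as an iterated cluster localization of $\Aug_u(\Lambda_+)$ at $l$ distinct mutable cluster variables, one per mutable vertex of the initial seed of each block. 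Since this freezes every mutable vertex, the resulting cluster algebra collapses to a Laurent polynomial ring in the (now all frozen) cluster variables; by Definition \ref{defn: cluster variety} this is exactly the coordinate ring of a cluster torus chart in $\Aug_u(\Lambda_+)$. The map $\Phi_{L_P}^\ast$ is injective and dominant onto this chart because cluster localization is, and the Proposition~\ref{prop:admissiblefilling} identification $\Aug_u(\Lambda_-)\cong(\F^\times)^l$ matches the torus factor dimensions.

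The main obstacle is the explicit compatibility of $\Phi_{L_{a_j}}^\ast$ with $\phi_j^\ast$. The pinch map is defined via partial $\mathcal{J}$-holomorphic disks with two positive corners (one at $a_j$ and one elsewhere), whose combinatorial unpacking requires the disk counts of Lemma \ref{lem: partial disk recursion}; showing that these sums collapse into the clean rational formulae defining $\phi_j^\ast$ demands careful bookkeeping of continuant identities and a precise tracking of how the pair of oriented base points $s_j^{\pm 1}$ corresponds to the two separate parameters $(u,v)$ in the cluster-theoretic parametrization.
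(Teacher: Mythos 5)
Your overall strategy — identify each single pinch $\Phi_{L_{a_j}}$ with the cluster localization $\phi_j$ from the proposition following Proposition \ref{prop: cluster localization}, then iterate across the $l$ mutable vertices using Theorem \ref{thm: cluster localization} until every vertex is frozen and the variety collapses to a cluster torus — is exactly the argument of the paper. The block decomposition and the final identification of $\Aug_u(\Lambda_-)$ with $(\F^\times)^l$ via Proposition \ref{prop:admissiblefilling} are also as in the paper.

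Where you go astray is in your "main obstacle" paragraph, which misjudges the technical difficulty and misidentifies the tools. First, the disk counts of Lemma \ref{lem: partial disk recursion} are irrelevant to the pinch map: that lemma governs disks with a \emph{single} positive puncture spanning multiple blocks, which feed the differential. The disks contributing to $\Phi_{L_{a_j}}$ have two positive punctures (one at $a_j$, one at an adjacent crossing) and, precisely because admissibility keeps each intermediate link in Legendrian rational form, these are tiny local triangles. The resulting map is just $\Phi_{L_i}(a_{i\pm1})=a_{i\pm1}+s_i^{-1}$, $\Phi_{L_i}(a_i)=s_i$, exactly as in Example~\ref{ex:pinch}, with no continuant bookkeeping and no appeal to Lemma \ref{lem: Braid Matrix Det}. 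Second, the two new base points created by a pinch at $a_j$ are \emph{both} labeled $s_j$; the pair $(u,v)$ in $\phi_j$ is not to be identified with $s_j^{\pm 1}$, but rather with the products of base points from \emph{earlier} pinches already sitting between $a_{j-1}$ and $a_j$ (resp.\ between $a_j$ and $a_{j+1}$), while the new $s_j$ enters through $r_{j-1}=us_jv$. Correcting this substitution makes the matching with $\phi_j^*$ immediate, not a careful bookkeeping exercise, and completes the argument along the lines you intended.
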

\begin{proof} 

Let $\Phi_{L_i}$ denote the dg-algebra morphism induced by the saddle cobordism $L_i$ given by pinching a contractible and proper Reeb chord $a_i$. It suffices to only consider the Reeb chords that appear in the defining equations and inequality in Proposition \ref{prop:dgahtpy}, so we can ignore the Reeb chords $b_1, b_2, a_{m_i+1}$ for $1\leq i<k-1$. Then, analogously to the case shown in Example~\ref{ex:pinch}, $\Phi_{L_i}$ the identity for all Reeb chords except for itself and the adjacent ones: $a_{i-1}$ (if $i>1$), $a_i$, and $a_{i+1}$ (if $i\neq m_j$ for any $1\leq j\leq k$). They are mapped according to 
\[
\Phi_{L_i}(a_{i-1})=a_{i-1}+s_i^{-1}, \quad \Phi_{L_i}(a_i)=s_i, \quad \Phi_{L_i}(a_{i+1})=a_{i+1}+s_i^{-1}.
\]
Note that these assignments coincide with the cluster localization in Proposition \ref{prop: cluster localization}, with $u$ (resp. $v$) being replaced by the product of base points (frozen cluster variables) that are already sandwiched between $a_{i-1}$ and $a_i$ (resp. $a_i$ and $a_{i+1}$) before the pinching, and with $r_{i-1}$ being replaced by $us_iv$. This shows that each pinching induces a cluster localization of the cluster variety $\Aug_u(\Lambda_+)$. After cluster localizing at all the $l$ many mutable vertices, we obtain a cluster torus chart with local coordinates $s_i$'s, which can then be naturally identified with the ungraded augmentation variety $\Aug_u(\Lambda_-)$. 
\end{proof}

\begin{prop}\label{prop: every cluster chart is achieved} Every cluster torus chart in $\Aug_u(\Lambda_+)$ can be obtained as the image of $\Phi_{L_P}$ for some admissible pinching sequence $P$.
\end{prop}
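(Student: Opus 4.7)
The plan is to reduce the statement to the single-block case and invoke Pan's classification together with the polygon-triangulation picture of $A$-type cluster charts. By Theorem \ref{thm: cluster structure on augmentation variety} and Lemma \ref{lem: product of cluster varieties}, the variety $\Aug_u(\Lambda[n_1,\dots,n_k])$ decomposes as a product of type-$A$ cluster varieties indexed by the $k$ blocks, so every cluster torus chart is a product of cluster charts, one per factor. Moreover, from the explicit formulas for $\Phi_{L_i}$ in the proof of Theorem \ref{thm: cluster torus chart from pinching sequence}, pinching a contractible Reeb chord within a block only modifies its two adjacent chords inside the same block; the non-pinchable chords $a_{m_i}$ and $a_{m_i+1}$ act as buffers between successive blocks. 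It follows that any admissible pinching sequence decomposes into independent sub-sequences, one per block, so it suffices to prove surjectivity block-by-block.

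Fix a single block with $n$ crossings, corresponding to a type $A_{n-2}$ (for the first or last block) or $A_{n-3}$ (for a middle block) factor, whose cluster charts are in bijection with the triangulations of an associated convex polygon via the $\Gr_{2,m}^\circ$ picture of Section \ref{sub: Gr2n}. I will proceed by a counting argument. Pan's Theorem \cite[Theorem 1.1]{Pan}, applied in the $k=1$ case (i.e., to a single-block $(2,n)$ torus link), produces exactly the Catalan number of distinct Hamiltonian isotopy classes of admissible-pinching fillings, which matches the number of triangulations of the polygon, and hence the number of cluster charts of the corresponding type-$A$ factor. On the other hand, Theorem \ref{thm: cluster torus chart from pinching sequence} shows that the pinching map lands in cluster charts and that distinct fillings yield distinct charts. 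A pigeonhole argument then forces surjectivity of the sub-pinching-sequence-to-cluster-chart map for each block, and taking products across all $k$ blocks establishes the claim.

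The main obstacle I foresee is making the block-by-block reduction truly rigorous: one must verify that Pan's classification of admissible pinching sequences on a standalone $(2,n)$-torus link carries over verbatim to a single block sitting inside a larger $2$-bridge link, and that no new identifications or degenerations of fillings arise from the global topology. As a more constructive backup, every triangulation $\tau$ can be realized explicitly via an ear-decomposition recipe: find an ``ear'' of $\tau$ (a triangle with two sides on the polygon boundary), pinch the Reeb chord corresponding to its tip (which, by Proposition \ref{prop: cluster localization}, implements cluster localization at the cluster variable associated with the ear's diagonal), and recurse on the Legendrian with that crossing resolved. The inductive bookkeeping of checking that the reduced Legendrian remains in Legendrian rational form and that its cluster chart is precisely the one for $\tau$ with the ear removed is the technical cost of this alternative, which the counting approach happily avoids.
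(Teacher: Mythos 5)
Your primary ``counting'' argument has two genuine gaps, while the alternative you relegate to a backup is in fact essentially the paper's proof and is the approach that actually works.

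First, the injectivity premise is not established. You write that Theorem~\ref{thm: cluster torus chart from pinching sequence} ``shows that distinct fillings yield distinct charts,'' but that theorem only gives the other direction: Hamiltonian isotopic fillings give the \emph{same} chart. Nothing proved up to this point in the paper says that non-isotopic fillings give distinct charts. Without injectivity, having at least $C_{n-1}$ filling classes mapping into a set of $C_{n-1}$ charts does not force surjectivity by pigeonhole; they could all land on the same chart. Second, as you acknowledge, Pan's count of Hamiltonian isotopy classes is proved for $(2,n)$ torus links, not for a single block sitting inside $\Lambda[n_1,\dots,n_k]$. The distinguishing invariant in Pan's proof lives on the augmentation variety of the whole link under consideration, so carrying the count over to a block of a larger $2$-bridge link is a nontrivial claim, not a verbatim transfer. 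In fact, the paper derives the Catalan count of fillings (Theorem~\ref{cor: distinct fillings} and Proposition~\ref{prop: product of Catalan many fillings}) \emph{from} Proposition~\ref{prop: every cluster chart is achieved}, so using the count as an input here would make the logic circular unless you supply an independent proof.

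The constructive ``ear-decomposition'' recipe you sketch as a fallback is the right proof, and it is precisely what the paper does, phrased slightly differently: the paper lays the $n$-gon's vertices on a line, draws diagonals as arcs, orders the diagonals by arc height, and reads off the opposite vertex of each diagonal to produce an admissible pinching sequence. The topmost diagonal in that ordering always cuts off an ear, so iterating the paper's reading is exactly your recursion. The bookkeeping you worry about --- checking that the intermediate Legendrian stays in rational form and that the remaining chart matches the truncated triangulation --- is manageable because each pinch is a cluster localization (Proposition~\ref{prop: cluster localization}) that deletes one vertex from the polygon, and the block-by-block reduction via Lemma~\ref{lem: product of cluster varieties} isolates each polygon. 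You should have led with this construction rather than the counting argument.
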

\begin{proof} Recall from Lemma \ref{lem: product of cluster varieties} that every cluster seed in a product of cluster varieties is a product of cluster seeds. Thus, it suffices to show that every cluster seed in each block of the $2$-bridge Legendrian link can be obtained through an admissible pinching sequence. To do so, we can homotope the $n$-gon described in the proof of Theorem \ref{thm: cluster structure on augmentation variety} so that all its vertices line up in a straight line, with the unique ``frozen'' side represented by a large semicircular arc below the line. More specifically, in the case of the first block, we introduce a blank vertex as the leftmost vertex of the $n_1+1$-gon and the remaining vertices are labeled by $a_1,\ldots, a_{n_1}$. Similarly, the vertices of the $n_i$-gon corresponding to the $i$th block are labeled by $a_{m_{i-1}+1}, \ldots, a_{m_i}$ and the vertices of the $n_k$-gon corresponding to the $k$th block are labeled by $a_{m_{k-1}+1},\ldots, a_{m_k}$. Note that, while the introduction of the blank vertex is necessary to match the continuant polynomial coming from the first block to the equation cutting out $\Gr^\circ_{2, n_1+1}$, as described in the proof of Theorem \ref{thm: cluster structure on augmentation variety}, it is always one of the vertices on either side of the ``frozen" edge and thus never appears as part of a pinching sequence; likewise for $a_{m_{i-1}+1}$, $1< i< k$. Following this homotopy, we represent all of the remaining sides of the $n$-gon by small semicircular arcs below the line connecting vertices $a_j$ and $a_{j+1}$. For any given triangulation $\tau$, we similarly draw the diagonals present in $\tau$ with semicircular arcs. We then go from top to bottom through the diagonals present in $\tau$, and read off the vertex opposite to each of these diagonals. (If there is ambiguity in determining which one among a few diagonals is higher than the others, then any choice of ordering these diagonals will work.) The resulting list gives an admissible pinching sequence for that block. The total admissible pinching sequence can be any shuffling of these pinching sequences for a given block. See Figure \ref{fig: triangulation} for an example for the first block with $n_1=6$ as well as~\cite[Section 2.3.1]{Hughes2023} for an explicit method for relating pinching sequences of $(2, n)$ torus links to triangulations. 
\end{proof}

\begin{figure}[H]
    \centering
    \begin{tikzpicture}[scale=0.8]
        \foreach \i in {1,...,6}
        {
        \draw (-3+\i,0) node [above] {$a_\i$} arc (0:-180:0.5);
        }
        \draw (-3,0) arc (-180:0:3);
        \draw (-2,0) arc (-180:0:1);
        \draw (0,0) arc (-180:0:1);
        \draw (-2,0) arc (-180:0:2);
        \draw (-3,0) arc (-180:0:2.5);
        \node [blue] at (0,-3) [] {$\square$};
    \end{tikzpicture}
    \caption{By following the procedure described in the proof of Proposition \ref{prop: every cluster chart is achieved}, the corresponding cluster seed can be induced by the admissible pinching sequence $(2,4,3,1,5)$ or $(4,2,3,1,5)$.}
    \label{fig: triangulation}
\end{figure}

\begin{exmp} The initial cluster seed described in Theorem \ref{thm: cluster structure on augmentation variety} can be obtained from the admissible pinching sequence $(1,\dots, {m_1-1},{m_1+2},\dots, {m_2-1},\dots, m_{k-2}+2,\dots, m_{k-1}-1,m_{k-1}+2,\dots, m_k)$. We call this the \emph{left-to-right pinching sequence}. However, this is not the only admissible pinching sequence that gives rise to this cluster seed. The phenomenon of different pinching sequences giving rise to the same cluster seed has been observed in previous works, e.g., \cite{Pan, Gao_shen_weng_2, CGGLSS}. 
\end{exmp}

\begin{rmk} The procedure described in the proof of Proposition \ref{prop: every cluster chart is achieved} does not produce all pinching sequences that give rise to the same triangulation. Given an admissible pinching sequence $(i_1,i_2,\dots, i_l)$ of Reeb chords in the same block, we can draw the corresponding triangulation by ``chopping off'' corners of the polygon in the order specified by the pinching sequence; see \cite[Section 2.3.1]{Hughes2023} for a more explicit bijection. As an example, the pinching sequence $(2,4,6,3,1)$ produces the same triangulation shown in Figure \ref{fig: triangulation}. Note that in this example, we consider a different set of Reeb chords but retain our labelings of the vertices of the polygon, so that the only component that changes in this recipe is which corners we are allowed to remove.  
\end{rmk}

In the case of Legendrian $(2, n)$-torus links, Ekholm, Honda, and K\'alm\'an show that either order of pinching at any two contractible Reeb chords $a_i$ and $a_j$ yield Hamiltonian isotopic cobordisms if $|j-i|\geq 2$ \cite[Claim 8.1]{ehk}. They show this by observing that such pairs of Reeb chords are \emph{simultaneously contractible}, i.e. that the lengths of both Reeb chords can simultaneously be made arbitrarily small by some Legendrian isotopy. We adapt their argument to the 2-bridge link setting with the following lemma.
\begin{lem}\label{lem: simultaneously contractible}
 Any two contractible Reeb chords $a_i, a_j$ of $\La[n_1, \dots, n_k]$ are simultaneously contractible if and only if $|j-i|\geq 2$.
\end{lem}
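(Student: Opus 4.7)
The proof of both directions adapts the argument of Ekholm, Honda, and K\'alm\'an from the $(2,n)$-torus link setting \cite[Claim 8.1]{ehk} to $2$-bridge links.

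For the forward implication, suppose $|j-i|\geq 2$. Then the Reeb chords $a_i$ and $a_j$ either lie in distinct blocks of $\Lambda[n_1, \dots, n_k]$ or in the same block with at least one intermediate crossing between them. In either situation, one can choose disjoint open neighborhoods $U_i \ni a_i$ and $U_j \ni a_j$ in the Lagrangian projection $\Pi_{xy}(\Lambda[n_1, \dots, n_k])$. The planar isotopy used to contract each $a_\bullet$ individually in the proof of Lemma~\ref{lem:contractible} (based on \cite[Proposition 2.8, Proposition 4.5]{CasalsNg}) can be arranged to be supported entirely within $U_\bullet$. Because the two isotopies have disjoint supports, they commute and can be composed into a single Legendrian isotopy that simultaneously contracts both $a_i$ and $a_j$.

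For the backward implication, suppose $j = i+1$. The adjacent Reeb chords $a_i, a_{i+1}$ lie in the same positive braid block $\sigma_1^{n_l}$ and share both strands forming the crossings. The key geometric observation from \cite[Claim 8.1]{ehk} is that the heights $h(a_i)$ and $h(a_{i+1})$ are tightly coupled through the symplectic area of the bigon $B_{i,i+1}$ bounded between them in the Lagrangian projection, which is computed via Stokes' theorem applied to the primitive of the Liouville form $y\,dx$. In a positive braid, any planar Legendrian isotopy that reduces one of the two heights must compensate by expanding the other, because shrinking the bigon at one crossing forces the adjacent strand configuration to rearrange in order to preserve the transverse double point structure of the projection. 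Consequently, both heights cannot be made arbitrarily small through a single Legendrian isotopy.

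The main obstacle is making the geometric coupling argument in the backward direction rigorous. I expect the obstruction to be purely local to the block $\sigma_1^{n_l}$: after isolating the block via a cylindrical neighborhood disjoint from the rest of $\Lambda[n_1,\dots,n_k]$, the EHK analysis for $(2, n_l)$-torus links applies, giving the desired obstruction. The forward direction is essentially a locality statement and does not require significant new analytic input beyond the techniques used in Lemma~\ref{lem:contractible}.
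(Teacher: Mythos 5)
Your proof takes a genuinely different route from the paper in both directions, and the backward direction has a gap that you yourself acknowledge.

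For the forward direction, you argue by locality of the Legendrian isotopies: choose disjoint neighborhoods of $a_i$ and $a_j$ and compose the contracting isotopies. The paper instead invokes \cite[Proposition 3.1]{Hughes2023}, using the local model of the generically perturbed $D_4^-$ cobordism (Figure~\ref{fig:D4minus}), which is Hamiltonian isotopic to a pinching cobordism and is supported near an adjacent pair of crossings. Since $|j-i|\geq 2$, two copies of this local model can be placed disjointly, and taking Ng's resolution of the resulting family of fronts produces a single exact Lagrangian cobordism performing both pinches at once. Your locality claim is plausible, but it rests on asserting that the individual contracting isotopies from Lemma~\ref{lem:contractible} (which go through \cite[Proposition~2.8, Proposition~4.5]{CasalsNg}) can be rearranged to have supports inside the chosen $U_i$, $U_j$; that compact-support statement is not obvious from the cited results and you do not prove it. The paper's $D_4^-$ approach avoids this by directly exhibiting the simultaneous cobordism.

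For the backward direction, your bigon-area/Stokes-theorem coupling argument is the real gap, and you flag this yourself. The claim that "shrinking the bigon at one crossing forces the adjacent strand configuration to rearrange in order to preserve the transverse double point structure" is not a rigorous obstruction to the existence of a Legendrian isotopy shrinking both heights, and your proposal to isolate the block $\sigma_1^{n_l}$ and import the EHK analysis is unjustified: the $2$-bridge ambient link could in principle give more room to deform than a standalone $(2,n_l)$-torus link. The paper sidesteps the direct geometric obstruction entirely. It uses the machinery it has already built: by the proof of Proposition~\ref{prop: every cluster chart is achieved} together with \cite[Section~5.3]{Hughes2023}, swapping the order of pinching at adjacent $a_i$, $a_{i+1}$ produces two cluster charts related by a mutation, hence distinct; since simultaneously contractible pairs produce Hamiltonian-isotopic fillings (which would induce the same chart by Theorem~\ref{thm: cluster torus chart from pinching sequence}), the pair cannot be simultaneously contractible. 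This indirect argument is complete and considerably cleaner. If you want to salvage your approach, the backward direction would need an honest symplectic-topology argument ruling out the isotopy; as written, it is a conjecture rather than a proof.
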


\begin{proof}
Let $a_i, a_j$ be two Reeb chords of $\La[n_1, \dots, n_k]$ satisfying $|j-i|\geq 2$ and consider the front projection pictured in Figure \ref{fig:front}. By \cite[Proposition 3.1]{Hughes2023}, the local model pictured in Figure \ref{fig:D4minus} represents a one-handle attachment that is Hamiltonian isotopic to a pinching cobordism. Since $|j-i|\geq 2$, we can apply this local model to the two crossings simultaneously. The local model resolves the rightmost crossing of an adjacent pair so that we can always apply the local model to the chosen contractible Reeb chords $a_i$ and $a_j$. The Ng resolutions of the resulting family of fronts then gives an exact Lagrangian cobordism realizing the simultaneous pinching at $a_i$ and $a_j$, as desired. Note that we make no assumptions on the orientation of the strands before or after the pinch move so as to be able to use this local model for both orientable and non-orientable pinch moves.   

For the converse, note that when $a_i$ and $a_j$ are both contractible and $|j-i|=1$, they belong to the same block. In this case, the proof of Proposition \ref{prop: every cluster chart is achieved} implies that in the case $|j-i|=1$, swapping the order of pinching at $a_i$ or $a_j$ in the admissible pinching sequence induces distinct cluster charts. In particular, it follows from \cite[Section 5.3]{Hughes2023} that the two cluster charts induced by the pair of admissible pinching sequence fillings are related by a mutation.
\end{proof}

\begin{figure}[H]
\centering
\includegraphics[scale=0.2]{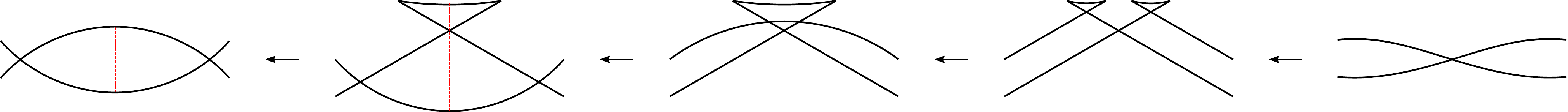}
\caption{A local model of a generic perturbation of the $D_4^-$ cobordism. The cobordism is Hamiltonian isotopic fixing the boundary to a pinching cobordism in the Lagrangian projection. The direction of the arrow indicates a cobordism from the concave end to the convex end.}	\label{fig:D4minus}\end{figure}

In the following proposition, we say that a set of Reeb chords is consecutive if the indices labeling them are consecutive; e.g. $a_1, a_2, a_3$ is consecutive, but $a_1, a_3, a_4$ is not.

\begin{prop}\label{prop: product of Catalan many fillings}
    Let $\mathcal{R}$ be a set of 
    $n_1-1$ consecutive proper and contractible  Reeb chords from the first block, $n_i-2$ consecutive proper and contractible Reeb chords from the $i$th block for $1<i<k$, and all $n_k-1$ proper and contractible Reeb chords from the last block.  
    There are precisely $C_{n_1-1}C_{n_2-2}\dots C_{n_{(k-1)}-2}C_{n_k-1}$ Hamiltonian isotopy classes of exact Lagrangian fillings of $\La[n_1, \dots, n_k]$ constructed by admissible pinching sequences of the Reeb chords in $\mathcal{R}$.
\end{prop}

\begin{proof}
   Fix an arbitrary set $\mathcal{R}$ of
   $n_1-1$ consecutive Reeb chords in the first block (these are all proper and contractible), 
   $n_i-2$ consecutive proper and contractible Reeb chords in the middle blocks, and the rightmost $n_k-1$ Reeb chords in the last block. Then any admissible pinching sequence of the Reeb chords in $\mathcal{R}$
   can be expressed by some ordering of these Reeb chords. Since we have chosen consecutive Reeb chords in each block, we can produce any triangulation of the $n$-gon, allowing us to apply  Proposition \ref{prop: every cluster chart is achieved} to conclude that there are at least $C_{n_1-1}C_{n_2-2}\dots C_{n_{(k-1)}-2}C_{n_k-1}$ distinct Hamiltonian isotopy classes of fillings. In the construction of the proof of Proposition \ref{prop: every cluster chart is achieved}, different pinching sequences inducing the same cluster chart are precisely those that differ by swapping the orders of pinching at pairs of contractible Reeb chords $a_i$ and $a_j$ with $|j-i|\geq 2$. Note that swapping the order of pinching any two simultaneously contractible Reeb chords is an exact Lagrangian isotopy, or equivalently, a Hamiltonian isotopy. Thus, by Lemma \ref{lem: simultaneously contractible}, every admissible pinching sequence of the set $\mathcal{R}$  inducing the same cluster chart is Hamiltonian isotopic.
\end{proof}

\begin{rmk}
In this proof, the fact that we can produce any triangulation of the $n_i$-gon crucially relies on the fact that we choose consecutive Reeb chords. If we choose non-consecutive Reeb chords, say we leave out $a_j$ within the block, then the diagonal connecting $a_{n_{i-1}+1}$ and $a_j$ is fixed in the triangulation, consequently yielding a smaller number of triangulations (which is still a product of Catalan numbers).
\end{rmk}

\begin{exmp}\label{ex:[n,2]} The numbers of orientable vs.\ non-orientable pinch moves are \textbf{not} Legendrian invariants. To see this, consider $2$-bridge knots of the form $\Lambda[n,2]$ (which are topologically twist knots). For all admissible pinching sequences for $\Lambda[n,2]$, the first pinch move is always non-orientable. If we choose to pinch $a_{n+2}$ first, then all remaining pinches are orientable pinch moves; in fact, after pinching $a_{n+2}$, the remaining link is Legendrian isotopic to a $(2,n-1)$ torus link, which has many orientable fillings. In contrast, if we choose to pinch $a_{n+2}$ last, then all pinch moves in the admissible pinching sequence are non-orientable. Moreover, admissible pinching sequences with different numbers of orientable vs. non-orientable pinch moves can correspond to the same cluster seed. For example, the pinching sequences $(1,2,\dots, n-1, n+2)$ and $(n+2,1,2,\dots, n-1)$ correspond to the same cluster seed, although they have different numbers of orientable vs. non-orientable pinch moves. Since the pair of Reeb chords $a_{n+2}$ and $a_i$ are simultaneously contractible for all $1\leq i \leq n-1$, the two pinching sequences yield Hamiltonian isotopic fillings. This Hamiltonian isotopy can be seen as an example of exchanging crosscaps with genera in the exact Lagrangian setting.
\end{exmp}

\begin{exmp}\label{ex:[4,4]}
As stated in Question~\ref{ques:non-orientable} it is not known whether any non-orientable exact Lagrangian filling is Hamiltonian isotopic to an exact Lagrangian filling constructed via a single non-orientable 1-handle attachment.
In a similar vein, one could ask the following far more restrictive question. Let $D$ denote a fixed front of $\Lambda$ and $L$ a decomposable exact non-orientable Lagrangian filling of $\Lambda$ constructed via a pinching sequence on $D$. Does there always then exist a decomposable  exact Lagrangian filling $L'$ Hamiltonian isotopic to $L$, constructed via a pinching sequence on $D$ with only one non-orientable pinch move? We can answer this latter question negatively. Consider the 2-bridge knot $\Lambda[4,4]$. Observe that pinching any of the crossings $a_1,\ldots, a_8$ of $\Lambda[4,4]$ is a non-orientable pinch move. After performing this first non-orientable pinch move we obtain either of $\Lambda[3,4]$ or $\Lambda[4,3]$ which are both the max-tb Legendrian $m(6_2)$ knot with Thurston-Bennequin number $-1$. Since the smooth $4$ ball genus of $m(6_2)$ is $1$, then by~\cite[Theorem 1.3]{chantraine}, the max-tb Legendrian $m(6_2)$ does not admit orientable exact Lagrangian fillings.  
\end{exmp}

\section{Ruling and Anticlique Stratifications of \texorpdfstring{$\Aug_u(\Lambda[n_1, \ldots, n_k]).$}{}}\label{sec:ruling}

In this section we describe two stratifications on $\Aug_u(\Lambda[n_1, \ldots, n_k])$ and show that they coincide. The first stratification arises from normal rulings of $\Lambda[n_1, \ldots, n_k]$, a Legendrian invariant related to augmentations, while the second arises from the viewing $\Aug_u(\Lambda[n_1, \ldots, n_k])$ as a really full-rank acyclic cluster variety. Finally, we use this result and a result of Rutherford~\cite{Rutherford06} relating normal rulings and a term of the Kauffman polynomial to show that the $\F_q$ point count of the cluster variety $\Aug_u(\Lambda[n_1, \ldots, n_k])$ encodes a term of the Kauffman polynomial of the topological link type of $\Lambda[n_1, \ldots, n_k]$. We begin by reviewing the normal ruling stratification on $\Aug_u(\Lambda[n_1, \ldots, n_k]).$

\begin{defn}\label{defn: ruling} Let $\Lambda\subset \R^3_{\st}$ be a Legendrian link and let us fix a front projection $D$ of $\Lambda$. A \emph{normal ruling} $R(D)$ of $\Lambda$ is a set of crossings called \emph{switches} such that resolving these switches yields a split union of unknots $\Lambda_1,\dots, \Lambda_m$, satisfying
\begin{enumerate}
    \item each $\Lambda_i$ for $i=1,\dots, m$ is a Legendrian max-tb unknot with zero crossings and two cusps, bounding an embedded ruling disk $D_i$;
    \item exactly two link components are incident to any switches after resolving;
    \item near each switch, the incident ruling disks $D_i$ bounded by $\Lambda_i$ are either nested or disjoint as shown in the first row of Figure \ref{fig:rul}.
\end{enumerate}
At a crossing with no switches, the crossing is called a \emph{departure} or \emph{return} (see Figure~\ref{fig:rul}).
\end{defn}

\begin{figure}[h]
\centering
\includegraphics[width=.6\textwidth]{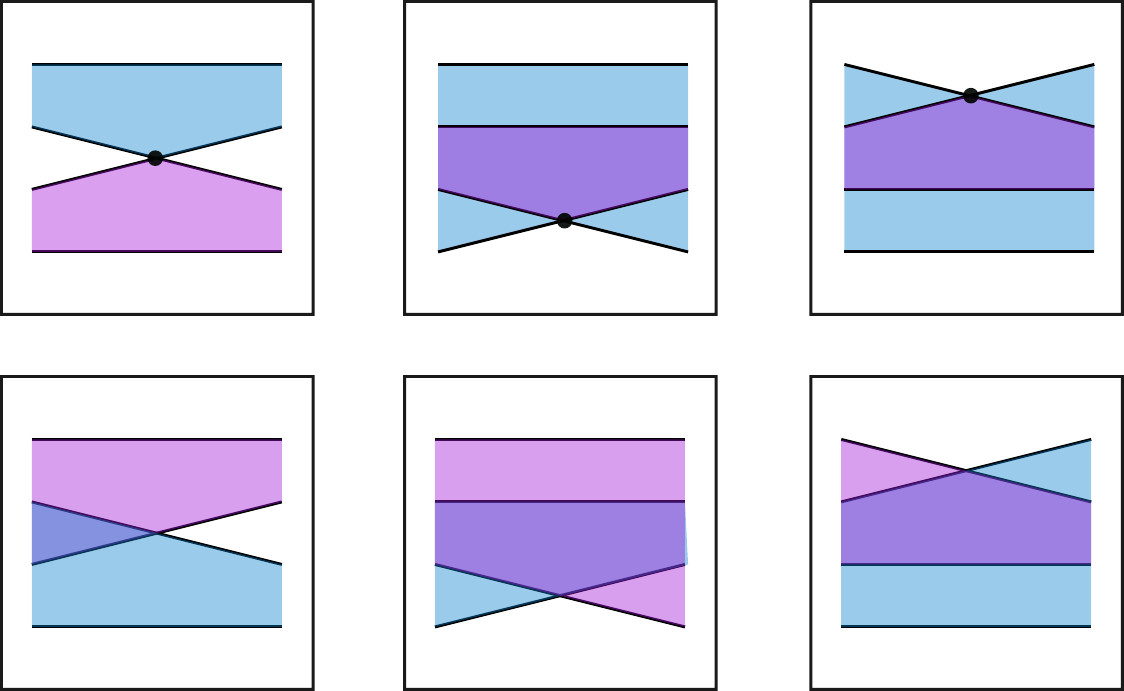}
\caption{The upper row shows all possible switches for a graded normal ruling. The configurations on the bottom row and any of their vertical reflections show the possible configurations of a graded normal ruling at a crossing with no switches. The bottom row shows all possible returns and the vertical reflections of the configurations on the bottom row show all possible departures.}
			\label{fig:rul}\end{figure}

Note that we consider only ungraded normal rulings, so the switches of a ruling $R(D)$  may occur at any crossing of the front $D$. For any Legendrian link $\Lambda \subset \R^3_{\st}$, there exists an ungraded normal ruling $R$ of $\Lambda$ if and only $\Lambda$ has an ungraded augmentation over $\bF$ ~\cite{Fuchs_Ishkhanov_2004,Sabloff_2005, henry_rutherford, Leverson_2016}. We briefly review how Henry and Rutherford obtain this result using Morse complex sequences which can be thought of as combinatorial objects motivated by generating families. A \emph{Morse complex sequence} of a Legendrian $\Lambda\subset \R^3_{\st}$ with a front $D$ is a finite collection of chain complexes and handleslide marks on $D$ which satisfy the same set restrictions 
as Morse complexes in a generic one-parameter family of functions. \emph{Handleslide marks} are line segments on the
front diagram $D$ of $\Lambda$  with an associated parameter $a\in \bF$. See~\cite[Definition 4.1]{henry_rutherford} for a complete definition of Morse complex sequences. Henry showed that there is a set of equivalence moves on Morse complex sequences in~\cite[Proposition 3.8]{Henry_2011}. A Legendrian front is in \emph{plat} position if every left cusp has the same $x$ coordinate, every crossing is transverse, and every right cups has the same $x$ coordinate. A \emph{nearly plat} front is the result of taking a plat front and perturbing it so that every crossing and cusp has a unique $x$ coordinate. Given a Legendrian link with a nearly plat front diagram $D$, any ungraded normal ruling of $D$ is in bijection with an ungraded Morse complex sequence (in SR form)~\cite[Definition 4.4, Lemma 4.5]{henry_rutherford}. This bijection is given as follows. 
Start with an ungraded normal ruling. For every switch, place two handleslide marks with $\mathbb{F}^{\times}$-parameters $u_s^{\pm 1}$ near each switch $s$ in $R$ as shown in Figure~\ref{fig: handle slides}. If the switch is nested, place an additional handleside mark between the top of the two ruling disks with $\alpha u^{-1}$, where $\alpha$ is a $\mathbb{F}^{\times}$ coefficient relating the boundary strands of the two disks prior to the handleslides we are placing. For every return place one handleslide mark with an $\mathbb{F}$-parameter $z_r$ on the left of each return as shown in the middle column of Figure~\ref{fig: handle slides}. If the return is nested, we add an additional handleslide mark with the parameter $\alpha z_r$. Again $\alpha$ is a coefficient relating the boundary strands of the two disks prior to the handleslides we are placing. Finally, for every right cusp, add one handleslide mark with a $\mathbb{F}$ parameter $v_R$.


Now slide the handleslide marks to the right as far as possible using equivalence moves on Morse complex sequences, before they are stopped by a crossing (see~\cite[Section 6]{henry_rutherford} for how to combine pairs of handleslide marks into one, and for other moves). At the end of this process, we obtain a Morse complex sequence with handleslide marks only to the left of each crossing and right cusp parametrized by sums of the parameters $u_s$'s, $\alpha'$s, and $z_r$'s. Note that only $b_2$ will have a handleslide mark parametrized by sums including the parameters $\alpha'$s. Such a Morse complex sequence with handleslide marks to the left of each crossing and right cusp is said to be in $A$-form. There is a bijection between ungraded Morse complex sequences in $A$ form and a family of ungraded augmentations~\cite[Theorem 1.1]{Henry_Rutherford_2015}: the parameters of each handleslide mark to the left of a crossing or right cups are assigned to the Reeb chord corresponding to the crossing or right cusp in Ng's resolution of the front. We define $\widetilde{W}_R$ to be the family of augmentations associated with a normal ruling $R$ through this process.

\begin{figure}
\begin{tikzpicture}[scale=1.5]
		\node[inner sep=0] at (0,0) {\includegraphics[width=0.7\textwidth]{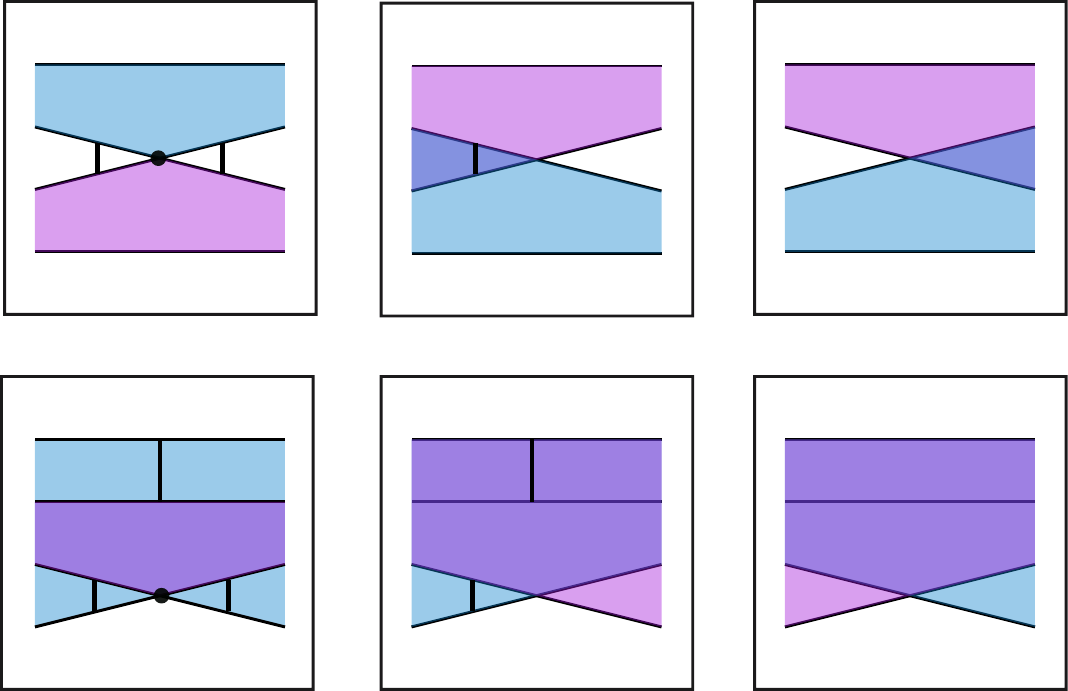}};
        \node at (-3.4,1.35){$u_s$};
        \node at (-1.9,1.35){$-u_s^{-1}$};
        \node at (-2.2,-0.9){$\alpha u_s^{-1}$};
        \node at (-2.1,-2.2){$-u_s^{-1}$};
        \node at (-3.2,-2.2){$u_s$};
        \node at (0.3,-0.9){$\alpha z_r$};
        \node at (-0.45,-2.2){$z_r$};
        \node at (-0.7,1.35){$z_r$};
    \end{tikzpicture}
    \caption{Assignment of handleslide marks near crossings to obtain a Morse complex sequence in SR form from an normal ruling.}
    \label{fig: handle slides}
\end{figure}

\begin{thm}[{\cite[Theorem 3.4]{henry_rutherford}}]\label{thm: ruling stratification} Suppose $D$ is the nearly plat front diagram of a Legendrian link $\Lambda$. Then, 
we can decompose the augmentation variety $\wAug_u(\Lambda)$ (with respect to the front projection $D$) into a disjoint union
$$\wAug_u(\Lambda)=\coprod_R \widetilde{W}_R$$
where the union is over all ungraded normal rulings of $D.$ Each subset $\widetilde{W}_R$ is the image of an injective regular map
$$\varphi_R:(\F^{\times})^{s(R)-\delta(R)+c} \times \F^{r(R)+\delta(R)}\hookrightarrow \wAug_u(\Lambda)$$
where $s(R)$ is the number of switches in $R$, $\delta(R)$ is the number of disks in $R$, $r(R)$ is the number of returns of $R$, and $c$ is the number of base points on $\Lambda.$
\end{thm}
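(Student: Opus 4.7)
The plan is to construct each stratum $\widetilde{W}_R$ explicitly via Morse complex sequences (MCSs), exploiting the Henry--Rutherford correspondence between MCSs in A-form and ungraded augmentations together with the combinatorial bijection between MCSs in SR form and ungraded normal rulings. The decomposition $\wAug_u(\Lambda)=\coprod_R \widetilde{W}_R$ will then arise by assigning to each augmentation the unique ruling read off from its associated SR-form MCS, obtained by propagating handleslides leftward via equivalence moves.

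To construct $\varphi_R$ for a fixed ruling $R$, I would decorate the front $D$ with handleslide marks as in Figure~\ref{fig: handle slides}: invertible parameters $u_s \in \F^\times$ at each of the $s(R)$ switches, free parameters $z_r \in \F$ at each of the $r(R)$ returns, free parameters $v_D \in \F$ at each of the $\delta(R)$ right cusps, and invertible parameters $t_p \in \F^\times$ at each of the $c$ base points. This yields an MCS in SR form, which I would convert to A-form by systematically sliding every handleslide rightward — using the equivalence moves from \cite[Proposition 3.8]{Henry_2011} — until each mark sits immediately to the left of a crossing or right cusp. Reading off the accumulated parameters via the dictionary of \cite[Theorem 1.1]{Henry_Rutherford_2015} produces an augmentation $\varphi_R(u_s, z_r, v_D, t_p) \in \wAug_u(\Lambda)$. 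Regularity is immediate since each handleslide move acts monomially or polynomially on the parameters.

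The subtler point is identifying the domain as $(\F^\times)^{s(R)-\delta(R)+c} \times \F^{r(R)+\delta(R)}$. The naive SR parameter count gives $s(R)+c$ torus factors and $r(R)+\delta(R)$ affine factors, so a discrepancy of $\delta(R)$ torus factors must be accounted for. I expect this to come from the local behavior at each ruling disk $D$: the free right-cusp parameter $v_D$ interacts with the adjacent switch or return handleslides of the bounding disk, producing a one-parameter gauge redundancy (a simultaneous rescaling of some $u_s$ together with a translation of $v_D$ that preserves the final A-form parameters and hence the augmentation). Quotienting by these $\delta(R)$ independent redundancies converts exactly $\delta(R)$ torus factors into affine factors, matching the claim.

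For injectivity of $\varphi_R$ and disjointness of the $\widetilde{W}_R$ across different rulings, I would invoke the reverse algorithm: starting from the A-form MCS of any augmentation $\e$, push handleslides leftward to reach an SR-form MCS, which uniquely records both a ruling $R(\e)$ (from the switch positions) and a canonical choice of parameters in the domain of $\varphi_{R(\e)}$. The main obstacle I anticipate is verifying the precise $\delta(R)$ redundancy in the dimension count: one must show rigorously that the gauge redundancies enumerated above are the only ones, and that the identification of parameters is globally well-defined independently of the order in which handleslides are propagated. This should reduce to a careful local analysis near each ruling disk combined with an inductive argument that sweeps the front diagram from left to right, with consistency ultimately guaranteed by the connectedness of sequences of MCS equivalence moves.
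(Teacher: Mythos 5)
This theorem is not proved in the paper; it is cited verbatim as Theorem~3.4 of Henry--Rutherford~\cite{henry_rutherford}, and the surrounding text only sketches the SR-form to A-form dictionary so the reader can follow the later constructions. Your proposal is an attempt to reconstruct the Henry--Rutherford proof, so the relevant comparison is to their argument rather than to anything in this paper.

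Your overall strategy is indeed the correct one: decorate $D$ with handleslide marks at switches, returns, and right cusps; convert from SR form to A-form by sliding handleslides rightward via the equivalence moves; and invoke the bijection between A-form MCSs and augmentations. The decomposition and the recovery of the ruling from an augmentation by reversing the process are also the right ideas.

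The genuine gap is the dimension count. You correctly observe that the naive SR parametrization has $s(R)+c$ torus factors and $r(R)+\delta(R)$ affine factors, and that the target has $\delta(R)$ fewer torus factors. But the mechanism you offer is asserted, not established: you posit a ``one-parameter gauge redundancy'' per ruling disk mixing a rescaling of some $u_s$ with a translation of $v_D$, yet you give no argument that such a redundancy exists, that there are exactly $\delta(R)$ independent ones, or that they exhaust the non-injectivity of the SR-to-A-form map. Worse, your phrasing is arithmetically inconsistent with the target: quotienting by $\delta(R)$ free $\F^\times$-actions that rescale the $u_s$'s would \emph{remove} $\delta(R)$ torus factors, leaving $(\F^\times)^{s(R)-\delta(R)+c}\times\F^{r(R)+\delta(R)}$, which is what is wanted; it would not ``convert'' torus factors into affine factors, which would instead produce $(\F^\times)^{s(R)-\delta(R)+c}\times\F^{r(R)+2\delta(R)}$. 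Even the unknot sanity check ($s=r=0$, $\delta=c=1$) shows a naive $2$-dimensional SR parameter space collapsing to a $1$-dimensional augmentation space, so the reduction is real and must be pinned down precisely. In Henry--Rutherford this is handled through the normalization built into their definition of an MCS (the chain-complex data at each slice, not just the handleslide marks), and the $-\delta(R)$ comes from the constraints imposed by the right-cusp pairing conditions, not from an a posteriori gauge quotient on an overparametrized family. Without that analysis your proof of the dimension statement, and hence of injectivity of $\varphi_R$, is incomplete; for the purposes of this paper one should simply cite~\cite[Theorem~3.4]{henry_rutherford}.
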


For the Legendrian $2$-bridge links that we are interested in, the fronts shown in Figure~\ref{fig:front} can be easily placed into nearly plat position: for $k\in 2\Z$ perform a Reidemeister II move so that $\Lambda[n_1, \ldots, n_k]$ is now $\Lambda[n_1, \ldots, n_k+1, 1]$; for $k$ odd perturb the front so that each right cusp and crossing has their own $x$-coordinate.

\begin{prop}\label{prop: Reeb chords that must be departures and returns} Let $\Lambda=\Lambda[n_1,\dots, n_k]$ be a Legendrian $2$-bridge link in Legendrian rational form and let $R$ be a normal ruling of $\Lambda$. Then for each $1\leq j<k$, there is a departure at $a_{m_j}$ and a return at $a_{m_j+1}$.
\end{prop}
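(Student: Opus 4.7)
The plan is to perform a local analysis of the front projection of $\La[n_1, \dots, n_k]$ near the transition between consecutive blocks, using the defining conditions of a normal ruling from Definition~\ref{defn: ruling} to constrain the behavior at $a_{m_j}$ and $a_{m_j+1}$. For each $1 \leq j < k$, the crossings $a_{m_j}$ and $a_{m_j+1}$ lie at the boundary between block $j$ and block $j+1$ in Figure~\ref{fig:front}. Because the blocks of the Legendrian rational form alternate between upper and lower regions, the transition forces a particular configuration of adjacent strands: the pair of strands crossing at $a_{m_j}$ shares exactly one strand with the pair crossing at $a_{m_j+1}$, namely the strand that travels between blocks.

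The next step is to show that neither $a_{m_j}$ nor $a_{m_j+1}$ can be a switch of any normal ruling $R$. Suppose $a_{m_j}$ were a switch. Then by Definition~\ref{defn: ruling}, the two strands meeting at $a_{m_j}$ would belong to a common ruling disk $D$, and the resolution there must extend to a split union of standard max-tb unknots. Tracing $\partial D$ into block $j+1$ through the shared transitioning strand, one finds that requiring the nested-or-disjoint condition to hold at all other switches in blocks $j$ and $j+1$ forces the boundary of $D$ either to fail to terminate at a compatible pair of cusps or to intersect another ruling disk in a non-nested, non-disjoint manner, contradicting the ruling axioms. A parallel argument rules out $a_{m_j+1}$ being a switch.

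Once $a_{m_j}$ and $a_{m_j+1}$ are established as non-switches, their type as departure or return is determined by the local slope configuration, as classified in Figure~\ref{fig:rul}. At $a_{m_j}$, the strand destined for block $j+1$ diverges from its partner after the crossing, matching one of the departure templates (the vertical reflections of the bottom row of Figure~\ref{fig:rul}). Dually, at $a_{m_j+1}$, the strand arriving from block $j$ converges with its new partner, matching a return template from the bottom row of Figure~\ref{fig:rul}.

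The main obstacle will be the careful bookkeeping needed to track strand labels and ruling disk assignments across the block transitions, together with a case analysis based on the parity of $j$ reflecting whether block $j$ sits in the upper or lower region of the front. Because each step of the argument is local to the transition and entirely combinatorial, the analysis is tedious but tractable, and no global constraint from the full front is needed beyond the requirement that $R$ resolve to a split union of standard unknots.
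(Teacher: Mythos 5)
There are two genuine errors in your proposed argument. First, at a switch the two strands meeting at the crossing do \emph{not} belong to a common ruling disk: Definition~\ref{defn: ruling} requires that exactly two distinct link components (hence two distinct ruling disks) be incident to each switch after resolution, and condition~(3) concerns the nesting or disjointness of those \emph{two} disks. Your attempt to derive a contradiction by ``tracing $\partial D$ into block $j+1$ through the shared transitioning strand'' starts from this misreading and, beyond that, never produces a concrete obstruction; it remains a promissory note rather than a proof. Second, and more substantively, once you have ruled out a switch you still cannot conclude which of departure or return occurs from the ``local slope configuration.'' The distinction between a departure and a return is not visible from the front geometry alone: it depends on whether the ruling-disk pairing on the two sides of the crossing is nested/disjoint or interleaved, and Figure~\ref{fig:rul} encodes precisely this pairing data, not just slopes. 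Thus your step~3 would leave the proposition unproved even if step~2 were fixed.

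The paper takes a cleaner route that handles both issues at once. It shows that in the crossingless region between $a_{m_j}$ and $a_{m_j+1}$, the ruling pairing of the four strands cannot contain the pair $\{1,2\}$ nor the pair $\{2,3\}$ (each such pairing would be incompatible with the adjacent crossing, since at any crossing the two crossing strands lie on different unknots), so the pairing there is forced to be $\{1\text{-}3, 2\text{-}4\}$. It then observes that a switch \emph{or} a departure at $a_{m_j+1}$ would require a forbidden pairing on its left, and a switch \emph{or} a return at $a_{m_j}$ would require a forbidden pairing on its right; this leaves ``return'' as the only possibility at $a_{m_j+1}$ and ``departure'' as the only possibility at $a_{m_j}$, with no separate appeal to slopes. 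You would do well to adopt this pairing-based analysis: it replaces your two shaky steps with one decisive constraint.
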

\begin{proof} Every block $n_i$ in $\Lambda$ has at least two crossings for $1<i<k$, so in between any of the crossings $a_{m_i}$ and $a_{m_i+1}$, any ruling disk cannot pair strands $1$ and $2$, and also cannot pair strands $2$ and $3.$ A switch or departure at $a_{m_i+1}$ implies that there is a ruling disk with boundary on strands $2$ and $3$ to the left of $a_{m_i+1}$ for $i$ odd while for $i$ even it implies that there is a ruling disk with boundary on strands $1$ and $2$. A switch or return at $a_{m_i}$ implies that there is a ruling disk with boundary on strands 1 and 2 to the right of $a_{m_i}$ for $i$ odd, while for $i$ even it implies that there is a ruling disk with boundary on strands 2 and 3. Thus, $a_{m_i}$ must be a departure and $a_{m_i+1}$ must be a return.
\end{proof}

\begin{cor}\label{cor: descend of ruling stratification} For a Legendrian $2$-bridge link $\Lambda=\Lambda[n_1,\dots, n_k]$ in Legendrian rational form, the stratification $\wAug_u(\Lambda)=\bigsqcup_R \widetilde{W}_R$ descends to a stratification of $\Aug_u(\Lambda)=\bigsqcup_RW_R$. In particular, each stratum 
\[
W_R\cong (\bF^\times)^{s(R)}\times \bF^{r(R)-k+1},
\]
where $s(R)$ and $r(R)$ are the numbers of switches and returns in the normal ruling $R$, respectively.
\end{cor}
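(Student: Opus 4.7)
The proof has two main parts: first, showing that the ruling stratification descends from $\wAug_u(\Lambda)$ to $\Aug_u(\Lambda)$; and second, computing the product structure of each descended stratum $W_R$.

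For the descent, I would argue that the normal ruling $R$ attached to an augmentation $\e$ is independent of the values of $\e$ on the set $B := \{a_{m_1+1},\dots,a_{m_{k-1}+1},b_1,b_2\}$. Indeed, by Proposition \ref{prop: Reeb chords that must be departures and returns}, each $a_{m_j+1}$ is forced to be a return in every normal ruling of $\Lambda[n_1,\dots,n_k]$, and the generators $b_1,b_2$ correspond to right cusps rather than crossings of the front, so they are not classified as switches, returns, or departures. On the other hand, by the proof of Proposition \ref{prop:dgahtpy}, dg-algebra homotopies only modify the augmentation values on $B$. Hence dg-algebra homotopies preserve the ruling stratum, so the disjoint decomposition $\wAug_u(\Lambda)=\bigsqcup_R\widetilde{W}_R$ of Theorem \ref{thm: ruling stratification} descends to a well-defined disjoint decomposition $\Aug_u(\Lambda)=\bigsqcup_R W_R$.

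For the product structure, I would start from Theorem \ref{thm: ruling stratification}. In our setting, $c=2$ since $\Lambda[n_1,\dots,n_k]$ carries the two base points $t_1,t_2$, and $\delta(R)=2$ for every ruling because the front projection in Figure \ref{fig:front} has two left and two right cusps, so resolving the switches of any normal ruling produces exactly two max-tb unknots. Theorem \ref{thm: ruling stratification} then gives
\[
\widetilde{W}_R\cong (\bF^\times)^{s(R)}\times\bF^{r(R)+2}.
\]
I would then trace the Henry–Rutherford construction: within the $\bF^{r(R)+2}$ factor, the $\bF$-valued handleslide parameters $z_r$ associated with the $k-1$ forced returns $a_{m_j+1}$ affinely parametrize the augmentation values $\e(a_{m_j+1})$, and the two $\bF$-valued handleslide parameters $v_R$ at the two right cusps affinely parametrize $\e(b_1),\e(b_2)$. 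These $k+1$ coordinates are precisely the directions along which dg-algebra homotopy acts freely by translation (again by the proof of Proposition \ref{prop:dgahtpy}), and they are independent and all lie in the $\bF$-factor rather than the $(\bF^\times)$-factor. Quotienting therefore gives
\[
W_R\cong (\bF^\times)^{s(R)}\times\bF^{(r(R)+2)-(k+1)}=(\bF^\times)^{s(R)}\times\bF^{r(R)-k+1},
\]
as claimed; note that $r(R)\geq k-1$ by Proposition \ref{prop: Reeb chords that must be departures and returns}, so the exponent is non-negative.

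The main obstacle I anticipate is the second step: verifying that the $k+1$ dg-algebra homotopy directions identified in the proof of Proposition \ref{prop:dgahtpy} are precisely the $\bF$-coordinates $z_{a_{m_j+1}}$ and $v_{R_1},v_{R_2}$ in the Henry–Rutherford parametrization of $\widetilde{W}_R$, and that they act as independent affine translations on the coordinates of $\widetilde{W}_R$. This amounts to unpacking the combinatorics of Morse complex sequences in $A$-form at the forced returns and at the two right cusps for our explicit Lagrangian projection, and checking that sliding the corresponding handleslide marks to the right (as in Henry–Rutherford) only affects the augmentation values at $B$. Once this identification is made, the product structure of $W_R$ and the disjointness of the strata follow immediately from Theorem \ref{thm: ruling stratification} and the descent argument above.
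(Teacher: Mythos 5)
Your proposal is correct and takes essentially the same approach as the paper: both use Proposition \ref{prop:dgahtpy} to identify the $k+1$ directions of dg-algebra homotopy as $b_1,b_2,a_{m_j+1}$, Proposition \ref{prop: Reeb chords that must be departures and returns} to show that these correspond to right cusps and forced returns (hence $\bF$-valued parameters in the Henry--Rutherford parametrization), and then quotient $k+1$ affine factors from the formula in Theorem \ref{thm: ruling stratification} with $c=\delta(R)=2$. Your version is slightly more explicit in spelling out $c$ and $\delta(R)$ and in flagging the identification of the homotopy directions with the $z_r,v_R$ parameters, which the paper treats tersely.
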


\begin{defn} We call the stratification $\Aug_u(\Lambda)=\bigsqcup_RW_R$ the \emph{ruling stratification} of $\Aug_u(\Lambda)$ for a Legendrian link $\Lambda\subset \R^3_{\st}$.
\end{defn}

\begin{proof} Recall from Proposition \ref{prop:dgahtpy} that for a Legendrian $2$-bridge link $\Lambda$, two augmentations are homotopic if and only if they agree on all Reeb chords except at $b_1$, $b_2$, and $a_{m_j+1}$ for $1\leq j<k$. But these are all returns or right cusps in $R$ by Proposition \ref{prop: Reeb chords that must be departures and returns}, and therefore they can take any augmentation values under the parametrization. This shows that homotopic augmentations must belong in the same stratum $\widetilde{W}_R$, and hence the stratification on $\wAug(\Lambda)$ descends to a stratification on $\Aug_u(\Lambda)$.

The claim that $W_R\cong (\bF^\times)^{s(R)}\times \bF^{r(R)-k+1} $ follows from Theorem \ref{thm: ruling stratification} and the fact that we need to quotient $k+1$ many $\bF$ factors corresponding to $b_1$, $b_2$, and $a_{m_j+1}$ for all $1\leq j<k$.
\end{proof}

On the other hand, following~\cite{LS}, each acyclic cluster seed $\seed$ gives rise to a stratification of the cluster variety. We will prove that for the acyclic initial cluster seed described in Theorem \ref{thm: cluster structure on augmentation variety}, this stratification coincides with the ruling stratification on the ungraded augmentation varieties in the case of $2$-bridge links.

\begin{defn} Let $\seed$ be an acyclic cluster seed. Let $Q_\seed$ be the quiver associated with $\seed$ and let $\epsilon$ be the exchange matrix of $Q_\seed$ (c.f. Definition \ref{defn: quiver mutation}). A collection $I$ of mutable vertices in $Q_\seed$ is called an \emph{anticlique} if $\epsilon_{ij}=0$ for any two $i,j\in I$. In particular, we consider the empty set $\emptyset$ as an anticlique as well.
\end{defn}

\begin{defn} Let $\seed$ be an acyclic seed of an acyclic cluster variety $\mathscr{A}$. Given an anticlique $I$ of mutable vertices in $Q_\seed$, we define a subvariety $\mathscr{O}_I\subset \mathscr{A}$ by
\[
\mathscr{O}_I=\left(\bigcap_{i\in I}\{A_i=0\}\right)\cap\left(\bigcap_{i\notin I}\{A_i\neq 0\}\right).
\]
\end{defn}

\begin{thm}[{\cite[Corollary 3.2, Proposition 3.5]{LS}}]\label{thm: anticlique stratification} Let $\seed$ be a really full-rank acyclic seed of an acyclic cluster variety $\mathscr{A}$ and let $\mathcal{I}$ be the set of anticliques for $Q_\seed$. Then $\mathscr{A}=\bigsqcup_{I\in \mathcal{I}}\mathscr{O}_I$ and each stratum $\mathscr{O}_I$ is isomorphic to $\bF^{|I|}\times (\bF^\times)^{\dim \mathscr{A}-2|I|}$.
\end{thm}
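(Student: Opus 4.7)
The plan is to establish the disjoint union and to compute each stratum explicitly, in three main steps.

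First, I would observe that since every point $x \in \mathscr{A}$ has a well-defined set $I(x) := \{i \text{ mutable} : A_i(x) = 0\}$, the decomposition $\mathscr{A} = \bigsqcup_I \mathscr{O}_I$ indexed by \emph{all} subsets of mutable vertices is tautological. The real content of the theorem is that $\mathscr{O}_I$ is empty whenever $I$ is not an anticlique, and that it has the claimed shape when it is.

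Second, I would rule out non-anticlique $I$ using the exchange relations. Suppose $i, j \in I$ with $\epsilon_{ij} > 0$. Mutating at $j$ via Definition \ref{defn: cluster mutation} gives
\[
A_j A_j' = \prod_k A_k^{[\epsilon_{jk}]_+} + \prod_k A_k^{[-\epsilon_{jk}]_+},
\]
an identity of regular functions on $\mathscr{A}$. The LHS vanishes on $\mathscr{O}_I$, while $A_i$ appears with positive exponent in the second monomial on the right (since $\epsilon_{ji} = -\epsilon_{ij} < 0$), so that monomial vanishes on $\mathscr{O}_I$; hence the first monomial must also vanish, forcing some mutable vertex $k$ with $j \to k$ to satisfy $A_k(x) = 0$, i.e.\ $k \in I$. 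Iterating this argument starting at $j$ produces an arbitrarily long directed path in the mutable part of $Q_\seed$ whose vertices all lie in the finite set $I$, contradicting acyclicity.

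Third, for $I$ an anticlique, I would compute $\mathscr{O}_I$ by first passing to the cluster localization $U_I := \bigcap_{j \notin I}\{A_j \neq 0\}$, which by Theorem \ref{thm: cluster localization} inherits a cluster structure obtained from $\seed$ by freezing every mutable vertex outside $I$. Since $I$ is an anticlique, the resulting mutable quiver has no arrows, so each exchange relation $A_i A_i' = M^{(i)}_+ + M^{(i)}_-$ (for $i \in I$) becomes a binomial whose two monomials involve only frozen variables, which are invertible on $U_I$. Intersecting $U_I$ with $\{A_i = 0\}_{i \in I}$ simultaneously forces the $|I|$ equations $M^{(i)}_+ + M^{(i)}_- = 0$; these are codimension-one constraints on the torus of frozen variables and contribute a factor $(\bF^\times)^{\dim \mathscr{A} - 2|I|}$. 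At the same time, each $A_i'$ (regular on $\mathscr{A}$ by the Laurent phenomenon, Theorem \ref{thm: LP}) becomes an unconstrained parameter once $A_i = 0$, contributing the $\bF^{|I|}$ factor.

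The main obstacle is in Step 3: proving that the variables $\{A_i'\}_{i \in I}$ really are free coordinates and that the binomial constraints $M^{(i)}_+ + M^{(i)}_- = 0$ are independent. This is where the really full-rank hypothesis (Definition \ref{defn: full-rank}) enters crucially: it guarantees, via the independence of the column vectors of the mutable submatrix of $\epsilon$, that no hidden algebraic relations collapse dimensions among the $A_i'$ or among the $M_\pm^{(i)}$ equations. Making this last step fully rigorous would require invoking a presentation of the localized upper cluster algebra in terms of generators and exchange relations, which is exactly the content of the cited Lam--Speyer results and the Berenstein--Fomin--Zelevinsky machinery underlying them.
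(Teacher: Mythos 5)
The paper does not prove this theorem; it cites it verbatim from Lam--Speyer \cite[Corollary~3.2, Proposition~3.5]{LS}, so there is no internal proof to compare against, only the imported statement.

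Your outline is sound as far as it goes, and you have correctly identified where the nontrivial work lives. Step~1 is harmless bookkeeping. Step~2 is a complete, self-contained argument and is worth highlighting: since $A_j'$ lies in the upper cluster algebra by the Laurent phenomenon and $\mathscr{A}$ is integral (being covered by tori meeting in dense opens), the exchange relation at $j$ is an identity of regular functions on all of $\mathscr{A}$, so the vanishing of the left side and of the monomial containing $A_i$ on $\mathscr{O}_I$ forces a mutable $k\in I$ with an arrow $j\to k$; iterating produces a directed path of arbitrary length inside the finite set $I$, hence a directed cycle in the mutable quiver, contradicting acyclicity. Note this step uses only acyclicity, not the full-rank hypothesis, which is the right division of labor. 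In Step~3 you correctly reduce to the cluster localization $U_I$ where, because $I$ is an anticlique, the mutable quiver is a disjoint union of isolated vertices and the exchange binomials $M^{(i)}_+ + M^{(i)}_-$ involve only frozen (invertible) variables. You also correctly flag the gap: proving that the $|I|$ binomial constraints cut out a subtorus of codimension exactly $|I|$ and that the $A_i'$ give free $\bF$-coordinates requires a presentation of the localized upper cluster algebra and the really full-rank hypothesis (to guarantee the exponent vectors of the binomials are linearly independent over $\mathbb{Z}$). Without that input the dimension count is heuristic. Since the paper itself defers this to \cite{LS}, your proposal is about as complete as a blind reconstruction could be, but Step~3 is a sketch, not a proof.
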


\begin{cor} The ungraded augmentation variety $\Aug_u(\Lambda[n_1,\dots, n_k])$ admits an anticlique stratification where each stratum $\mathscr{O}_I$ is isomorphic to $\bF^{|I|}\times (\bF^\times)^{\dim \mathscr{A}-2|I|}$.
\end{cor}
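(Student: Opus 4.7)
My plan is to derive this corollary essentially as a direct specialization of Lam and Speyer's anticlique stratification theorem (Theorem~\ref{thm: anticlique stratification}) to the cluster variety structure on $\Aug_u(\Lambda[n_1,\dots,n_k])$ that has already been produced in Theorem~\ref{thm: cluster structure on augmentation variety}. The hypotheses of Theorem~\ref{thm: anticlique stratification} require an acyclic cluster variety equipped with a really full-rank acyclic seed, so the whole task reduces to quoting the correct pieces of the paper in the right order.

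First I would invoke Theorem~\ref{thm: cluster structure on augmentation variety} to realize $\Aug_u(\Lambda[n_1,\dots,n_k])$ as a cluster variety whose initial seed is the disjoint union, over the $k$ blocks, of the linearly oriented type-$A$ quivers with a single frozen vertex at the end. Next I would cite Corollary~\ref{cor: augmentation variety is really full-rank}, which states precisely that this cluster variety is really full-rank and acyclic; in particular the initial seed displayed in Theorem~\ref{thm: cluster structure on augmentation variety} is a really full-rank acyclic seed in the sense of Definition~\ref{defn: full-rank}. These two ingredients put us exactly in the setup of Theorem~\ref{thm: anticlique stratification}.

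Applying Theorem~\ref{thm: anticlique stratification} to this seed then gives a disjoint-union decomposition
\[
\Aug_u(\Lambda[n_1,\dots,n_k])=\bigsqcup_{I\in\mathcal{I}}\mathscr{O}_I,
\]
where $\mathcal{I}$ is the set of anticliques of mutable vertices in the initial quiver and each stratum is cut out by vanishing the initial cluster variables indexed by $I$ while requiring the remaining mutable initial cluster variables to be non-vanishing. The same theorem yields the isomorphism
\[
\mathscr{O}_I\cong \bF^{|I|}\times (\bF^\times)^{\dim \mathscr{A}-2|I|},
\]
which is exactly the statement of the corollary.

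I do not foresee any real obstacle: the content of the corollary is entirely in Theorem~\ref{thm: anticlique stratification}, and the only thing to verify is that the seed produced in Theorem~\ref{thm: cluster structure on augmentation variety} satisfies the acyclicity and really-full-rank hypotheses, which is precisely what Corollary~\ref{cor: augmentation variety is really full-rank} provides. The only minor point worth mentioning in the write-up is that anticliques for the initial quiver are easy to describe explicitly: since the mutable part is a disjoint union of oriented type-$A$ paths, an anticlique is simply a choice, independently in each block, of a subset of mutable vertices no two of which are adjacent; this already hints at the combinatorial correspondence with normal rulings that Theorem~\ref{thm: stratifications} will formalize in the next subsection.
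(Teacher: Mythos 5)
Your proof is correct and takes precisely the same route as the paper: the paper's proof is the one-line citation of Corollary~\ref{cor: augmentation variety is really full-rank} together with Theorem~\ref{thm: anticlique stratification}, which is exactly what you do (with the additional, harmless, explicit appeal to Theorem~\ref{thm: cluster structure on augmentation variety} for the initial seed).
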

\begin{proof} This follows from Corollary \ref{cor: augmentation variety is really full-rank} and Theorem \ref{thm: anticlique stratification}.
\end{proof}

By Proposition \ref{prop: Reeb chords that must be departures and returns}, we know that the crossing types at the crossings that are closest to the adjacent block are fixed for all normal rulings. Thus, each normal ruling of $\Lambda$ can be viewed as a product of ``normal rulings'' at each block, in the sense that we can choose the crossing types for the remaining undetermined crossings in each block, and the choices we make in one block do not affect any other block. 

\begin{defn} We define the following crossings in the front projection of $\Lambda[n_1,\dots, n_k]$ to be \emph{undetermined}:
\begin{itemize}
    \item in the first block, $a_1,\dots, a_{m_1-1}$ ($n_1-1$ many of them);
    \item for the middle blocks with $1<j<k$, $a_{m_{(j-1)}+2},\dots, a_{m_j-1}$ ($n_j-2$ many of them for each $j$);
    \item in the last block, $a_{m_{(k-1)}+2},\dots, a_{m_k}$ ($n_k-1$ many of them).
\end{itemize}
\end{defn}

Moreover, based on Definition \ref{defn: ruling} and the fact that the crossings in each block come as a $2$-stranded braid, we see that departures and returns must appear as adjacent pairs within the remaining crossings in each block, with a departure on the left and a return on the right. Note that the number of adjacent pairs of crossings among the remaining crossings in each block happens to be equal to the number of mutable vertices in the initial quiver $Q_\seed$ ($n_1-2$ for the first block, $n_j-3$ for the middle blocks, and $n_k-2$ for the last block). 

\begin{defn}\label{defn: mutable vertices corresponding to pairs} We associate each mutable quiver vertex in $Q_\seed$ with an adjacent pair of undetermined crossings in the front projection of $\Lambda=\Lambda[n_1,\dots, n_k]$, according to the same order from left to right within each block. Now given an anticlique $I$ in $Q_\seed$, we construct a corresponding normal ruling $R_I$ by assigning a departure-return pair to the pairs of adjacent crossings corresponding to elements in $I$, and assign switches to the remaining undetermined crossings. It is not hard to see that these assignments are compatible and do give rise to a normal ruling $R$ of $\Lambda$. See Figure~\ref{fig:strat} for an example.
\end{defn}

\begin{figure}[H]
	\centering
	\begin{tikzpicture}[scale=1.5]
		\node[inner sep=0] at (0,0) {\includegraphics[width=8 cm]{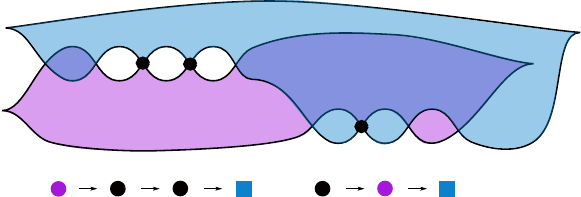}};
  \end{tikzpicture}
	\caption{An ungraded normal ruling of $\La[5,4]$ and the corresponding anticlique in the initial quiver $Q_\seed$ where the blue squares represent frozen cluster variables, the dots represent mutable cluster variables and the purple dots are the cluster variables contained in the anticlique. }
	\label{fig:strat}
\end{figure}

\begin{prop}\label{prop: bijection} The assignment $I\longmapsto R_I$ is a bijection between anticliques in the initial quiver $Q_\seed$ in Theorem \ref{thm: cluster structure on augmentation variety} and normal rulings of $\Lambda=\Lambda[n_1,\dots, n_k]$.
\end{prop}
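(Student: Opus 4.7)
The plan is to reduce the bijection to a block-by-block statement using the product decompositions on both sides, then establish the bijection within a single block via an explicit combinatorial identification.

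First, I would observe that the initial quiver $Q_\seed$ of Theorem \ref{thm: cluster structure on augmentation variety} decomposes as a disjoint union of linear quivers $A_{n_1-2}, A_{n_2-3}, \dots, A_{n_{k-1}-3}, A_{n_k-2}$, one per block, so anticliques of $Q_\seed$ correspond precisely to tuples of anticliques of each component. On the ruling side, Proposition \ref{prop: Reeb chords that must be departures and returns} pins the crossing types at the interior boundary crossings $a_{m_j}$ and $a_{m_j+1}$ for $1 \leq j < k$, so the choices of switch/departure/return among the undetermined crossings of one block do not constrain the choices in neighboring blocks. Hence it suffices to verify the bijection between anticliques of a single linear quiver $1 \to 2 \to \cdots \to m$ and the possible assignments of switches/departures/returns to the $m+1$ undetermined crossings within a single block.

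Second, I would identify the combinatorial data on each side. The anticliques of the linear quiver are precisely the subsets $I \subseteq \{1, \dots, m\}$ containing no two consecutive integers, since adjacent vertices are joined by an arrow. On the ruling side, each block's undetermined crossings $c_1, \dots, c_{m+1}$ form a $2$-strand braid. The main structural claim to establish is that within any normal ruling, each undetermined crossing is either a switch, or else one half of an adjacent departure-return pair $(c_j, c_{j+1})$ with $c_j$ the departure and $c_{j+1}$ the return. I would prove this by tracking the ``disk state'' of the two braid strands (i.e., whether they bound the same ruling disk or lie on different ruling disks) as one moves across the block from left to right: a departure flips this state to the ``different disks'' configuration, which must then be closed by an immediate return at the very next crossing in order to remain consistent with Definition \ref{defn: ruling} and Figure \ref{fig:rul}; otherwise one obtains an incompatible disk configuration within the block. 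This parallels the strand-disk analysis in the proof of Proposition \ref{prop: Reeb chords that must be departures and returns}. Two departure-return pairs at consecutive positions $j$ and $j+1$ are then forbidden, since they would require $c_{j+1}$ to serve as both a return and a departure simultaneously.

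Third, the bijection follows: normal rulings restricted to a single block are parametrized by subsets $J \subseteq \{1, \dots, m\}$ with no two consecutive elements, where $j \in J$ encodes a departure-return pair at $(c_j, c_{j+1})$, and all other undetermined crossings are switches. This matches the anticliques exactly. Under the identification of the $j$-th mutable quiver vertex with the pair $(c_j, c_{j+1})$ from Definition \ref{defn: mutable vertices corresponding to pairs}, the assignment $I \mapsto R_I$ becomes the identity on these subsets, yielding the desired bijection after taking products over blocks. The main obstacle is the structural claim in the second step; it is block-local and conceptually parallel to Proposition \ref{prop: Reeb chords that must be departures and returns}, but requires a careful inspection of how ruling disks can enter and exit a $2$-braid in view of the nested/disjoint switch configurations shown in Figure \ref{fig:rul}.
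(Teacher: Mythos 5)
Your proposal is correct, but it takes a genuinely different route from the paper. The paper's proof is a double-counting argument: it sets $r(n)$ to be the number of rulings of a single block and $i(n)$ the number of anticliques in the corresponding block quiver, then shows by induction that both quantities satisfy the Fibonacci-type recursion $r(n) = r(n-1) + r(n-2)$, $i(n) = i(n-1) + i(n-2)$ with matching base cases. The observation that returns must be immediately preceded by departures appears inside this induction (the ``rightmost undetermined crossing'' case analysis), but the bijectivity is ultimately concluded from the equality of counts together with the already-defined injective map $I \mapsto R_I$. Your proposal instead identifies both sides directly as the set of subsets of $\{1,\dots,m\}$ with no two consecutive elements and shows the map $I \mapsto R_I$ is the identity under this identification, which yields a genuinely bijective argument without passing through cardinality. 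The tradeoff is that you must prove the structural claim (within a block, departures and returns occur only as adjacent departure-return pairs with the departure on the left) as a freestanding lemma rather than having it fall out of the recursion; you sketch this via a disk-state argument parallel to Proposition \ref{prop: Reeb chords that must be departures and returns}, which is the right idea, but note that the one point requiring care there is precisely why a departure in the interior of a block must be followed \emph{immediately} by a return rather than by a switch -- this is where the nested/disjoint condition on switches (Figure \ref{fig:rul}) together with the positions of strands $1$ and $4$ relative to the crossing strands rules out the intermediate switch configuration. Your approach buys a cleaner explicit bijection; the paper's buys a shorter argument that sidesteps the need to state the structural lemma separately.
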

\begin{proof} It suffices to prove this statement block by block. Consider a single block with $n$ undetermined crossings (so the block quiver has $n-1$ mutable vertices). Let $r(n)$ be the number of normal rulings for that block and let $i(n)$ be the number of anticliques in the block quiver. We do a proof by induction to show that $r(n)=i(n)$ for all $n$. For the base case $n=1$, there is one anticlique, namely the empty set $\emptyset$; on the front projection of the block, there is a single undetermined crossing, with a return to its left and a departure to its right\footnote{Similar patterns hold for the first and the last block, so we do not need to separate them into different cases.}, so the undetermined crossing must be a switch, and hence $r(1)=i(1)=1$. For the base case $n=2$, there are two anticliques: the empty set and the singleton set; in the front projection of the block, there are two undetermined crossings sandwiched between a return on the left and a departure on the right, so the undetermined crossings can be assigned in two ways: either both become switches, or become a departure-return pair, proving that $r(2)=n(2)=2$.

Inductively, suppose we are at case $n$. Let us consider the right-most mutable vertex $v$: it is either in an anticlique or not in an anticlique. The anticliques that contain $v$ must not contain its adjacent mutable quiver vertex, and hence there are $i(n-2)$ many such anticliques; the anticliques that do not contain $v$ can contain its adjacent mutable quiver vertex, and hence there are $i(n-1)$ such anticliques. Summing them up we can conclude that $i(n)=i(n-2)+i(n-1)$. Correspondingly, since the right-most undetermined crossing $a$ has a departure on its right, itself can only be either a return or a switch. If $a$ is a return, then the crossing before it must be a departure, which reduces to the case of two fewer crossings with $r(n-2)$ many rulings; if $a$ is a switch, then the crossing before it can still only be a return or a switch, which reduces to the case of one fewer crossings with $r(n-1)$ many rulings. Thus in total, there are $r(n)=r(n-2)+r(n-1)$ many normal rulings. Now by the inductive hypothesis, we can conclude that
\[
r(n)=r(n-2)+r(n-1)=i(n-2)+i(n-1)=i(n),
\]
and we do see that the crossings that are selected to form an anticlique correspond precisely to departure-return pairs.
\end{proof}

\begin{cor} Let $F_n$ denote the $n$th Fibonacci number (under the convention $F_1=1$, $F_2=1$, and $F_n=F_{n-2}+F_{n-1}$). Then there are $F_{n_1}F_{n_2-1}\cdots F_{n_{(k-1)}-1}F_{n_k}$ many strata in the ruling stratification as well as in the anticlique stratification $\Aug_u(\Lambda[n_1,\dots, n_k])$.
\end{cor}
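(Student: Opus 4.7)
The plan is to leverage Proposition \ref{prop: bijection} and the recursive structure extracted in its proof. Since anticliques are in bijection with normal rulings, the two stratifications have the same number of strata, so it suffices to count anticliques in the initial quiver $Q_\seed$ described in Theorem \ref{thm: cluster structure on augmentation variety}.

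First I would observe that $Q_\seed$ is a disjoint union of $k$ subquivers, one per block, so the total count of anticliques factors as a product over blocks. The block subquivers are linear (type $\mathrm{A}$) quivers whose mutable vertices are in bijection with adjacent pairs of undetermined crossings, and the adjacency of mutable vertices in the quiver matches adjacency of those pairs in the front. Tallying undetermined crossings block by block gives $n_1-1$ crossings (hence $n_1-2$ mutable vertices) in the first block, $n_j-2$ crossings (hence $n_j-3$ mutable vertices) in each middle block $1<j<k$, and $n_k-1$ crossings (hence $n_k-2$ mutable vertices) in the last block.

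Next I would revisit the recursion established in the proof of Proposition \ref{prop: bijection}: writing $i(n)$ for the number of anticliques in a linear block quiver whose corresponding front has $n$ undetermined crossings, one has $i(1)=1$, $i(2)=2$, and $i(n)=i(n-1)+i(n-2)$ by casing on whether the rightmost mutable vertex belongs to the anticlique. Comparing with the Fibonacci convention $F_1=F_2=1$, an immediate induction yields $i(n)=F_{n+1}$. Substituting $n=n_1-1$, $n=n_j-2$, and $n=n_k-1$ for the three types of blocks gives contributions $F_{n_1}$, $F_{n_j-1}$, and $F_{n_k}$ respectively.

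Multiplying these contributions across all blocks produces the claimed count
\[
F_{n_1}F_{n_2-1}\cdots F_{n_{(k-1)}-1}F_{n_k},
\]
and by Proposition \ref{prop: bijection} this is simultaneously the number of ruling strata and the number of anticlique strata. There is no genuine obstacle here; the argument is a bookkeeping corollary of results already in hand, and the only subtle point is to correctly align the indexing convention (the off-by-one shift $i(n)=F_{n+1}$) with the differing number of undetermined crossings in boundary versus middle blocks.
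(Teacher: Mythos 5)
Your proposal is correct and follows essentially the same route as the paper: the paper's proof simply invokes the recursion $r(n)=r(n-2)+r(n-1)$ with $r(1)=1$, $r(2)=2$ established in the proof of Proposition \ref{prop: bijection}, and you have merely unpacked that reference, correctly handling the off-by-one shift $i(n)=F_{n+1}$ and the different undetermined-crossing counts for boundary versus interior blocks.
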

\begin{proof} This follows from the recursion $r(n)=r(n-2)+r(n-1)$ with $r(1)=1$ and $r(2)=2$ in the proof of Proposition \ref{prop: bijection}.
\end{proof}

\begin{thm}\label{thm: stratification coincide} $W_{R_I}=\mathscr{O}_I$ as subvarieties in $\Aug_u(\Lambda[n_1,\dots, n_k])$.
\end{thm}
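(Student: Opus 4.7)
The plan is to establish $W_{R_I} \subseteq \mathscr{O}_I$ for every anticlique $I$ via a direct Henry--Rutherford computation on each block, then upgrade these containments to equalities by a partition-uniqueness argument.

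First, I would reduce the claim to a single block. By Proposition \ref{prop:dgahtpy}, $\Aug_u(\La[n_1,\dots,n_k])$ factors as a product of block varieties, and the anticlique stratification is product-compatible by Lemma \ref{lem: product of cluster varieties} (an anticlique in a disjoint union of quivers is a disjoint union of anticliques). The ruling stratification is also product-compatible: Proposition \ref{prop: Reeb chords that must be departures and returns} forces $a_{m_j}$ to be a departure and $a_{m_j+1}$ to be a return in every normal ruling, so the choice of switches, departures, and returns within each block is independent of the other blocks. Hence it suffices to prove $W_{R_I} \subseteq \mathscr{O}_I$ for a single block equipped with its sub-anticlique.

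Second, within a single block with undetermined crossings $a_1,\dots,a_n$, I would use the Henry--Rutherford parameterization of $W_{R_I}$ furnished by Theorem \ref{thm: ruling stratification}: place handleslide marks with parameters $u_s^{\pm 1}$ near each switch, parameters $z_r$ to the left of each return, and additional parameters near each right cusp, then slide these marks rightward until stopped at a crossing. The resulting augmentation values $\epsilon(a_i)$ are explicit Laurent polynomials in these parameters. The computational heart of the proof is the following claim: for each mutable initial cluster variable $K_j(a_1,\dots,a_j)$, which by Lemma \ref{lem: braid matrix prod} equals the $(1,1)$-entry of $B(a_1)\cdots B(a_j)$, one has $\epsilon(K_j) = 0$ if and only if $j \in I$, and the frozen cluster variable is nonzero on $W_{R_I}$. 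I would prove this by induction on $j$ using the recursion $K_j = K_{j-1} a_j - K_{j-2}$, showing that the $(1,1)$-entry vanishes precisely when the pair $(a_j, a_{j+1})$ forms a departure-return pair (the case $j \in I$), while a switch configuration at $a_j$ preserves nonvanishing.

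Third, with $W_{R_I} \subseteq \mathscr{O}_I$ in hand for every anticlique $I$, equality follows by a pigeonhole argument: both $\{W_{R_I}\}$ (Corollary \ref{cor: descend of ruling stratification}) and $\{\mathscr{O}_I\}$ (Theorem \ref{thm: anticlique stratification}) are disjoint decompositions of $\Aug_u(\La[n_1,\dots,n_k])$ indexed by the same bijection of Proposition \ref{prop: bijection}, so the inclusions must be equalities throughout. As a consistency check, direct counting of switches and returns in $R_I$ yields $s(R_I) = \dim \Aug_u(\La[n_1,\dots,n_k]) - 2|I|$ and $r(R_I) = |I| + k - 1$, so $\dim W_{R_I} = s(R_I) + r(R_I) - k + 1 = \dim \Aug_u(\La[n_1,\dots,n_k]) - |I| = \dim \mathscr{O}_I$. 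The main obstacle is Step 2: tracking how Henry--Rutherford handleslide parameters propagate through the block requires careful bookkeeping at each crossing, especially at the transition between switches and departure-return pairs; however, the locality of the sliding rules and the two-term recursion for continuants keep the induction tractable.
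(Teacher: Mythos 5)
Your proposal is correct and follows essentially the same strategy as the paper's proof: reduce block by block using the product structure, then establish by induction on $j$ --- via the continuant recursion $K_j = a_j K_{j-1} + K_{j-2}$ applied to the Henry--Rutherford handleslide parameterization --- that the mutable cluster variable $K_j$ vanishes on $W_{R_I}$ exactly when $j \in I$ (i.e., when $(a_j,a_{j+1})$ is a departure-return pair), concluding $W_{R_I}\subseteq \mathscr{O}_I$ and upgrading to equality because both families partition the variety under the bijection of Proposition \ref{prop: bijection}. The only thing to flag is that in Step 2 the induction must branch on whether $a_{j-1}$ is itself a switch or a return (not just on whether $(a_j,a_{j+1})$ is a departure-return pair), since the handleslide parameter arriving at $a_j$ depends on the type of the preceding crossing --- the paper's case table makes this explicit --- but you do acknowledge this bookkeeping as the main technical point, so the plan is sound.
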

\begin{proof} We prove this statement block by block. Consider a block of $n$ undetermined crossings $a_1, a_2, \dots, a_n$. By following the construction of augmentations from normal rulings, the block factor for the ruling stratum is isomorphic to $(\bF^\times)^\sigma\times \bF^\rho$, where $\sigma$ is the number of switches among these undetermined crossings and $\rho$ is the number of returns among these undetermined crossings, and each point in this stratum uniquely determines an augmentation value $\e(a_i)$ for each of these undetermined crossings $a_i$. On the other hand, let $A_1\rightarrow A_2\rightarrow \cdots \rightarrow A_{n-1}\rightarrow \boxed{A_n}$ be the block quiver and the initial cluster variables in this block. Then the statement follows from the following two claims (note that $a_1$ cannot be a return by construction):
\begin{enumerate}
    \item if $a_i$ is a return, then $A_{i-1}=0$;
    \item if $a_i$ is not a return, then $A_{i-1}=\displaystyle\prod_{j \in S_i} u_j\neq 0$ where $S_i$ is a set consisting of all indices $j<i$ for which $a_j$ is a switch.
\end{enumerate}
In particular, claims (1) and (2) imply that $W_{R_I}\subset \mathscr{O}_I$. Since $\Aug_u(\Lambda)=\bigsqcup_{R_I}W_{R_I}=\bigsqcup_I \mathscr{O}_I$, it follows that $W_{R_I}=\mathscr{O}_I$.

So it remains to prove these two claims, and we do so by an induction on $i$. For the base case $i=2$, if $a_2$ is a return, then $a_1$ must be a departure. As there is no handle slide from the left arriving at $a_1$, $A_1=a_1=0$. If $a_2$ is not a return, then $a_1$ is not a departure; but $a_1$ cannot be a return, either, so $a_1$ must be a switch, which has a handle slide with an invertible parameter $u_1$ on the left. This shows that $A_1=a_1=u_1\neq 0$.

Inductively, suppose these two claims are true up to $a_{i-1}$. For (1), let us suppose that $a_i$ is a return. Then $a_{i-1}$ is a departure, and $a_{i-2}$ is a switch or a return. If $a_{i-2}$ is a switch, then $a_{i-1}$ would have a handle slide coming from the left with an invertible parameter $u_{i-2}^{-1}$. By the inductive hypothesis and the recurrence of continuants (Definition \ref{def of K_n}), we have
\begin{align*}
A_{i-1}=&K_{i-1}(a_1,\dots, a_{i-1})\\
=&K_{i-3}(a_1,\dots, a_{i-3})+a_{i-1}K_{i-2}(a_1,\dots, a_{i-2})\\
=&A_{i-3}+a_{i-1}A_{i-2}\\
=&\prod_{j\in S_i} u_j+u_{i-2}^{-1}\left(\prod_{j\in S_i} u_j\right)u_{i-2}\\
=&0.
\end{align*}
If $a_{i-2}$ is a return, then $a_{i-1}$ would not have any handle slide coming from the left, and hence $a_{i-1}=0$. But by the inductive hypothesis, $A_{i-3}=0$ as well; thus,
\[
A_{i-1}=K_{i-1}(a_1,\dots, a_{i-1})=K_{i-3}(a_1,\dots, a_{i-3})+a_{i-1}K_{i-2}(a_1,\dots, a_{i-2})=A_{i-3}+a_{i-1}A_{i-2}=0+0=0.
\]

For (2), let us suppose that $a_i$ is not a return. The same recurrence relation $A_{i-1}=a_{i-1}A_{i-2}+A_{i-3}$ still holds. Note that the right-hand side does not care whether $a_i$ is a departure or a switch, so we only need to consider the following three possibilities:
\begin{center}
\begin{tabular}{|c|c|c|c|c|} \hline
    $i-1$ & $i-2$ & $a_{i-1}$ & $A_{i-2}$ &  $A_{i-3}$ \\ \hline
     return &  departure & $z_{i-1}$ & $0$ & $\prod_{j\in S_{i-2}} u_j$ \\ \hline
     switch & switch & $u_{i-2}^{-1}+u_{i-1}$ &  $\prod_{j\in S_{i-1}} u_j$ & $\prod_{j\in S_{i-2}}u_j$\\ \hline
    switch & return & $u_{i-1}$ & $\prod_{j\in S_{i-1}} u_j$ & 0 \\ \hline
\end{tabular}
\end{center}
The computation for each of the three rows is as follows:
\[
A_{i-1}=a_{i-1}A_{i-2}+A_{i-3}=z_{i-1}\cdot 0+\prod_{j\in S_{i-2}} u_j=\prod_{j\in S_{i-2}} u_j=\prod_{j\in S_i} u_j,
\]
\[
A_{i-1}=a_{i-1}A_{i-2}+A_{i-3} = \left(u_{i-2}^{-1}+u_{i-1}\right)\prod_{j\in S_{i-1}}u_j+\prod_{j\in S_{i-2}}u_j=u_{i-1}\prod_{j\in S_{i-1}}u_j=\prod_{j\in S_i}u_j,
\]
\[
A_{i-1}=a_{i-1}A_{i-2}+A_{i-3} = u_{i-1}\prod_{j\in S_{i-1}}u_j +0=u_{i-1}\prod_{j\in S_{i-1}}u_j=\prod_{j\in S_i}u_j.
\]
Note that (2) is true for all three cases. This finishes the proof.
\end{proof}

In the last part of this section, we relax the characteristic $2$ assumption and consider a finite field $\bF_q$ of $q$ elements. We would like to prove a result relating the $\bF_q$-point counts of certain type A cluster varieties and a leading coefficient of Kauffman polynomials for $2$-bridge links.

\begin{lem}\label{lem: point count for a single block} The $\bF_q$-point count of a cluster variety\footnote{Since cluster varieties are defined over $\Z$, one can base change them to any characteristic.} with $A_1\rightarrow A_2\rightarrow \cdots \rightarrow A_{n-1}\rightarrow \boxed{A_n}$ as an initial cluster seed is 
\[
f_n(q):=\sum_{k=0}^n (-1)^{n-k} q^k.
\]
\end{lem}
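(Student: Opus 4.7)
The plan is to identify the cluster variety in the lemma with a concrete affine hypersurface and then count its $\mathbb{F}_q$-points by a one-line recursion. Specifically, the proof of Theorem \ref{thm: cluster structure on augmentation variety} shows that the first block with $n_1 = n+1$ has precisely the initial quiver $A_1\to A_2\to\cdots\to A_{n-1}\to\boxed{A_n}$, with initial cluster variables $A_i = K_i(a_1,\dots,a_i)$, and the corresponding cluster variety is cut out inside $\mathbb{A}^{n+1}$ by the single equation
\[
V_n := \{(a_1,\dots,a_{n+1})\in \mathbb{A}^{n+1} \mid K_{n+1}(a_1,\dots,a_{n+1}) = 0\}.
\]
So it suffices to count $|V_n(\mathbb{F}_q)|$.

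Let $g_m(q) := |\{(a_1,\dots,a_m)\in \mathbb{F}_q^m \mid K_m(a_1,\dots,a_m) = 0\}|$, so the quantity sought is $g_{n+1}(q)$. I will derive the recursion $g_m(q) = q^{m-1} - g_{m-1}(q)$ by fibering over the last $m-1$ coordinates and using the defining recursion
\[
K_m(a_1,\dots,a_m) = a_1\,K_{m-1}(a_2,\dots,a_m) - K_{m-2}(a_3,\dots,a_m)
\]
of Definition \ref{def of K_n}. If $K_{m-1}(a_2,\dots,a_m)\ne 0$, then $a_1$ is uniquely determined by $(a_2,\dots,a_m)$, contributing $q^{m-1} - g_{m-1}(q)$ solutions. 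If $K_{m-1}(a_2,\dots,a_m) = 0$, the equation degenerates to $K_{m-2}(a_3,\dots,a_m) = 0$; but applying Lemma \ref{lem: Braid Matrix Det} to the string $(a_2,\dots,a_m)$ gives
\[
K_{m-2}(a_2,\dots,a_{m-1})\,K_{m-2}(a_3,\dots,a_m) - K_{m-1}(a_2,\dots,a_m)\,K_{m-3}(a_3,\dots,a_{m-1}) = 1,
\]
so $K_{m-1}(a_2,\dots,a_m) = 0$ forces $K_{m-2}(a_3,\dots,a_m) \ne 0$ and this case contributes nothing.

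With base case $g_1(q) = 1$ (since $K_1(a_1) = a_1$ has a single root), an immediate induction on $m$ yields $g_m(q) = \sum_{k=0}^{m-1}(-1)^{m-1-k}q^k$. Specializing to $m = n+1$ gives $|V_n(\mathbb{F}_q)| = g_{n+1}(q) = \sum_{k=0}^{n}(-1)^{n-k}q^k = f_n(q)$, as claimed. The only mildly delicate step is invoking Lemma \ref{lem: Braid Matrix Det} with the correct index shift in order to rule out the degenerate case; everything else is either formal or a routine induction, and I do not anticipate any real obstacle.
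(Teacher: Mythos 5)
Your proof is correct, but it takes a genuinely different route from the paper's. The paper invokes the Lam--Speyer anticlique stratification (Theorem \ref{thm: anticlique stratification}) to express the point count as a sum over anticliques, and then derives the Fibonacci-type recursion $f_n(q) = q f_{n-2}(q) + (q-1) f_{n-1}(q)$ by splitting on whether vertex $1$ lies in the anticlique. You instead identify the cluster variety with its concrete model from the proof of Theorem \ref{thm: cluster structure on augmentation variety} as the continuant hypersurface $\{K_{n+1}(a_1,\dots,a_{n+1})=0\}\subset\mathbb{A}^{n+1}$, and derive the cleaner two-term recursion $g_m(q) = q^{m-1} - g_{m-1}(q)$ by fibering over $(a_2,\dots,a_m)$ and using Lemma \ref{lem: Braid Matrix Det} to kill the degenerate case; your index-shift in that lemma is applied correctly. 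The trade-off is that the paper's argument is intrinsic to the quiver (valid for any really full-rank acyclic cluster variety with that seed, which matches the lemma's phrasing ``a cluster variety''), whereas yours is tied to this particular hypersurface realization; since that is the only realization the paper ever uses (notably in Definition \ref{defn: a certain product of A varieties} and Corollary \ref{cor: point count of product of type A cluster varieties}), this does not affect any downstream application. Your argument is also somewhat more elementary in that it bypasses Theorem \ref{thm: anticlique stratification} entirely, at the modest cost of importing the determinant identity of Lemma \ref{lem: Braid Matrix Det}.
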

\begin{proof} Let us do an induction on $n$. For the base case $n=1$, there is only one frozen vertex and hence the cluster variety is isomorphic to $\bF_q^\times$, which has $q-1$ points. Inductively, vertex $1$ may or may not be in the anticlique. If
vertex $1$ is in an anticlique, then vertex $2$ cannot be in the anticlique; this reduces to the case with $n-2$ many vertices times an $\bF$ factor (Theorem \ref{thm: anticlique stratification}), and by the inductive hypothesis we have $qf_{n-2}(q)$ many points. If vertex $1$ is not in an anticlique, then vertex $1$ itself becomes a $\bF_q^\times$ factor, and again by the inductive hypothesis we have $(q-1)f_{n-1}(q)$ many points. In total, we have
\[
qf_{n-2}(q)+(q-1)f_{n-1}(q)=q\sum_{k=0}^{n-2}(-1)^{n-k-2}q^k+(q-1)\sum_{k=0}^{n-1}(-1)^{n-k-1}q^{k}=\sum_{k=0}^n(-1)^{n-k}q^k
\]
many points in the cluster variety. This finishes the induction.
\end{proof}

\begin{defn}\label{defn: a certain product of A varieties} Let $\mathscr{A}[n_1,\dots, n_k]$ be the cluster variety with an initial cluster seed whose quiver is a disjoint union of $k$ many type A quivers as in Lemma \ref{lem: point count for a single block}, with lengths $n_1-1, n_2-2,n_3-2,\dots, n_{k-1}-2, n_k-1$, respectively. Note that the ungraded augmentation variety $\Aug_u(\Lambda[n_1,\dots, n_k])$ is isomorphic to $\mathscr{A}[n_1, \dots, n_k]$ over the chosen algebraically closed field $\bF$ of characteristic $2$.
\end{defn}

\begin{cor}\label{cor: point count of product of type A cluster varieties} Let $\mathscr{A}[n_1,\dots, n_k]$ be defined as in Definition \ref{defn: a certain product of A varieties}. Then its $\bF_q$-point count is
\[
|\mathscr{A}[n_1,\dots, n_k](\bF_q)|=f_{n_1-1}(q)f_{n_2-2}(q)\cdots f_{n_{k-1}-2}(q) f_{n_k-1}(q).
\]
\end{cor}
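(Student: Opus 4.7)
The plan is to deduce this corollary almost immediately from Lemma \ref{lem: point count for a single block} together with the multiplicative behavior of $\bF_q$-point counts under taking products of varieties. By Definition \ref{defn: a certain product of A varieties}, the cluster variety $\mathscr{A}[n_1,\dots,n_k]$ is constructed so that its initial quiver is a disjoint union of $k$ type A chains, of lengths $n_1-1,\ n_2-2,\ n_3-2,\ \dots,\ n_{k-1}-2,\ n_k-1$. By Lemma \ref{lem: product of cluster varieties}, a cluster variety whose initial seed decomposes as a disjoint union of sub-seeds is (as an algebraic variety) the Cartesian product of the corresponding sub-cluster varieties. Hence
\[
\mathscr{A}[n_1,\dots,n_k]\ \cong\ \mathscr{A}_{n_1-1}\times \mathscr{A}_{n_2-2}\times \cdots \times \mathscr{A}_{n_{k-1}-2}\times \mathscr{A}_{n_k-1},
\]
where $\mathscr{A}_{m}$ denotes the single-chain cluster variety with initial quiver $A_1\to A_2\to\cdots\to A_{m-1}\to \boxed{A_m}$.

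Since the $\bF_q$-points of a product of affine varieties over $\bF_q$ are the Cartesian product of the $\bF_q$-points of the factors, the point count is multiplicative:
\[
|\mathscr{A}[n_1,\dots,n_k](\bF_q)|\ =\ \prod_{i=1}^{k}\bigl|\mathscr{A}_{\ell_i}(\bF_q)\bigr|,
\]
where $\ell_1=n_1-1$, $\ell_k=n_k-1$, and $\ell_i=n_i-2$ for $1<i<k$. Applying Lemma \ref{lem: point count for a single block} to each factor to identify $|\mathscr{A}_{\ell_i}(\bF_q)|=f_{\ell_i}(q)$ then yields the stated formula. There is no real obstacle here: the only point worth being careful about is ensuring that Lemma \ref{lem: product of cluster varieties} applies at the level of the underlying schemes (not merely over the algebraically closed $\bF$ of characteristic $2$ used elsewhere), so one should note that the cluster varieties involved are defined integrally (over $\Z$) and hence may be base-changed to $\bF_q$, as indicated in the footnote of Lemma \ref{lem: point count for a single block}.
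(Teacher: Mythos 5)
Your proof is correct and follows the same route as the paper: decompose $\mathscr{A}[n_1,\dots,n_k]$ into a product of single-chain cluster varieties via Lemma \ref{lem: product of cluster varieties}, then apply Lemma \ref{lem: point count for a single block} to each factor and use multiplicativity of $\bF_q$-point counts. Your remark about working over $\Z$ so that base change to $\bF_q$ is legitimate is a sensible precaution, already accounted for in the paper's footnote.
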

\begin{proof} This follows directly from Lemmas \ref{lem: product of cluster varieties} and \ref{lem: point count for a single block}.
\end{proof}

The Kauffman polynomial $F_{\kappa}(a,z)$ is a $2$-variable polynomial invariant associated with a topological link $\kappa$.  When $\Lambda$ is a max-tb Legendrian representative of $\kappa$, we define $B_\Lambda(z)$ to be the coefficient of $a^{-\tb(\Lambda)-1}$ of $F_{\kappa}(a,z)$. Rutherford proved the following result that relates this coefficient with normal rulings of $\Lambda$.

\begin{thm}[{\cite[Theorem 3.1]{Rutherford06}}]\label{thm:ruling_kauff} Fix a front diagram $D$ of $\Lambda$. Then
$B_\Lambda(z)=\sum_R z^{s(R)-\lambda+1}$, where the sum is taken over all normal rulings $R$ of $\Lambda$, $s(R)$ denotes the number of switches in $R$, and $\lambda$ is the number of left cusps in the front diagram $D$.
\end{thm}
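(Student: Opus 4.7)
The plan is to prove Rutherford's theorem via a state-sum reformulation of the Kauffman polynomial that respects the Legendrian structure of the front diagram $D$. I would first recall that the Kauffman polynomial $F_\kappa(a,z)$ of an unoriented link $\kappa$ is uniquely determined by the Kauffman skein relation at a crossing, the framing-change rule $F(L') = a^{\pm 1} F(L)$ at a Reidemeister~I curl, and the normalization on the unknot. For a Legendrian front $D$ the writhe and cusp count determine the Thurston-Bennequin number via $\tb(\Lambda) = w(D) - \frac{1}{2}c(D)$, and this is where the target exponent $a^{-\tb(\Lambda)-1}$ will enter.

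First I would expand $F_\kappa(a,z)$ as a state sum by iteratively applying the Kauffman skein relation at each crossing of $D$. At every crossing this produces three possible local resolutions: the two smoothings $L_0, L_\infty$ weighted by $\pm z$, and a sign contribution from $L_\pm$. Each global state is then a choice of local configuration at every crossing, which I would match up bijectively with the three possibilities appearing in the definition of a normal ruling (switch, return, departure). In parallel, I would track the total $a$-exponent and $z$-exponent contributed by each state by carefully bookkeeping the writhe changes introduced by the smoothings and the number of resulting closed components after full resolution, the latter being controlled by an Euler characteristic argument on the union of ruling disks.

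Next I would isolate the coefficient of $a^{-\tb(\Lambda)-1}$. Using the Bennequin-type inequality that bounds the $a$-degree of $F_\kappa$ in terms of $\tb$, I would show that the extremal $a$-power is realized exactly when the smoothed diagram decomposes into a family of embedded disks whose pairwise behavior at switches is either nested or disjoint --- this is precisely the normality condition. Contributions from non-normal states would be shown to yield strictly higher powers of $a$ (equivalently, strictly lower powers of $a^{-1}$) because the interchange of two disks at a switch forces an additional Reidemeister~I curl in any attempt to reduce the diagram. Summing the surviving normal configurations then produces each ruling $R$ with weight $z^{s(R)}$, and the shift by $-\lambda + 1$ comes from the $\lambda$ ruling disks, each contributing an unknot factor that must be normalized against $F(\text{unknot})=1$.

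The main obstacle will be verifying that the non-normal state configurations do not contribute to the leading $a$-coefficient, which requires a careful transversality/Euler-characteristic argument to show that any such configuration produces a smoothed diagram with extra closed components, bumping up the $a$-exponent. A secondary technical issue is the correct treatment of left and right cusps as degenerate crossings with fixed local states, so that the power of $a^{-1}$ precisely matches $\tb(\Lambda)+1$ and the Reeb-chord/crossing correspondence from Ng's resolution is compatible with the skein expansion. Once both issues are handled, the state-sum reduces to the claimed expression $\sum_R z^{s(R)-\lambda+1}$.
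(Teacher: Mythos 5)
This theorem is not proved in the present paper: it is stated with an explicit citation to Rutherford's 2006 paper and used as a black box (to feed the ruling polynomial into the $\bF_q$-point count in Theorem~\ref{thm: Kauffman polynomial and point count}). So there is no ``paper's proof'' here to compare your argument against; you are being asked to reconstruct Rutherford's original argument, and indeed your sketch is recognizably an outline of his skein-theoretic approach (expanding the Dubrovnik--Kauffman polynomial as a state sum over crossing resolutions, then isolating the extremal $a$-power and matching surviving states with normal rulings).

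That said, the sketch as written has two genuine gaps. First, the state sum is mis-described: the Kauffman skein relation at a positive crossing reads $F(L_+) \pm F(L_-) = z\bigl(F(L_0) \pm F(L_\infty)\bigr)$, so expanding $F(L_+)$ yields the terms $F(L_-)$, $F(L_0)$, $F(L_\infty)$. But $L_-$ is not a ``resolution'' -- it is a crossing change, and the resulting diagram still has a crossing. Consequently, ``a choice of local configuration at every crossing'' does not terminate in a crossing-free diagram, and the claimed bijection between a state choice and the three ruling types (switch/departure/return) is not set up. Rutherford handles this by reducing to a controlled class of diagrams (effectively, iterated resolutions in a nearly plat front, where the $L_-$-type branch can be eliminated or accounted for) rather than by the naive three-way state sum. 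Second, the central analytic step -- that non-normal configurations contribute strictly subleading $a$-powers -- is asserted (``careful transversality/Euler-characteristic argument'') but not carried out; this is precisely where all the content of the theorem lives, because the normality condition on adjacent disks at a switch is exactly what forces the extra component/curl that raises the $a$-degree. Without this lemma the argument only shows that normal rulings \emph{appear} in the state sum with the expected $z$-weight, not that they are the \emph{only} contributors to the coefficient of $a^{-\tb(\Lambda)-1}$. Filling both gaps would essentially reproduce Sections~2--3 of Rutherford's paper, which is a legitimate plan, but the proposal as it stands does not yet constitute a proof.
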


\begin{rmk} The polynomial $\sum_R z^{s(R)-\lambda+1}$ is also known as the \emph{ruling polynomial} of $\Lambda$.  More precisely, the polynomial $B_{\La}(z)$ is a particular term of the \emph{Dubrovnik version} of the Kauffman polynomial, an equivalent formulation discovered by the eponymous mathematician while he was staying in the city of Dubrovnik. See \cite[Section 7]{Kauffman90} for a short anecdote concerning its origins and a simple substitution for obtaining the more commonly known expression of the Kauffman polynomial. We note that the coefficients $B_{\La}(z)$ are all non-negative integers, a fact that does not hold for the other version of the polynomial. This non-negativity is crucial for our purposes of obtaining it as the point count of an algebraic variety. 
\end{rmk}

\begin{thm}\label{thm: Kauffman polynomial and point count} Let $\mathscr{A}[n_1,\dots, n_k]$ be defined as in Definition \ref{defn: a certain product of A varieties}. Then
\[
|\mathscr{A}[n_1,\dots, n_k](\bF_q)|=q^{\frac{m_k}{2}-k+1}(q^{\frac{1}{2}}-q^{-\frac{1}{2}})B_{\Lambda[n_1,\dots, n_k]}(q^{\frac{1}{2}}-q^{-\frac{1}{2}}).
\]
\end{thm}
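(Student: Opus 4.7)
The plan is to count $\bF_q$-points of $\mathscr{A}[n_1,\dots,n_k]$ stratum by stratum using the equivalent anticlique and ruling stratifications provided by Theorem~\ref{thm: stratification coincide}, and then recognize the resulting sum as an evaluation of Rutherford's ruling polynomial from Theorem~\ref{thm:ruling_kauff}.

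First I would combine Theorem~\ref{thm: anticlique stratification} with the bijection $I \mapsto R_I$ of Proposition~\ref{prop: bijection} to write
\[
|\mathscr{A}[n_1,\dots,n_k](\bF_q)| = \sum_I q^{|I|}(q-1)^{N-2|I|},
\]
where $N = \dim \mathscr{A}[n_1,\dots,n_k] = m_k - 2k + 2$ by Corollary~\ref{cor: augmentation variety is really full-rank}. Comparing with the ruling stratum description $W_{R_I} \cong (\bF^\times)^{s(R_I)} \times \bF^{r(R_I)-k+1}$ from Corollary~\ref{cor: descend of ruling stratification} yields the dictionary $s(R_I) = N - 2|I|$ and $r(R_I) = |I| + k - 1$. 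Equivalently, the $k-1$ forced departures at $a_{m_j}$ and $k-1$ forced returns at $a_{m_j+1}$ from Proposition~\ref{prop: Reeb chords that must be departures and returns}, together with the $|I|$ matched departure–return pairs contributed by anticlique vertices per Definition~\ref{defn: mutable vertices corresponding to pairs}, account for all non-switch crossings. This yields the identity $s(R) + 2r(R) = m_k$ for every normal ruling $R$ of $\Lambda[n_1,\dots,n_k]$.

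Eliminating $r(R)$ via $r(R) = \tfrac{1}{2}(m_k - s(R))$ and factoring out $q^{m_k/2 - k + 1}$, the point count rewrites as
\[
|\mathscr{A}[n_1,\dots,n_k](\bF_q)| = q^{\frac{m_k}{2}-k+1} \sum_R \left(\frac{q-1}{q^{1/2}}\right)^{s(R)} = q^{\frac{m_k}{2}-k+1} \sum_R z^{s(R)},
\]
where $z = q^{1/2} - q^{-1/2}$. Pulling one factor of $z$ outside the sum reduces it to $z \cdot \sum_R z^{s(R)-1}$.

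To close, I would invoke Theorem~\ref{thm:ruling_kauff}: the $2$-bridge front projection of Figure~\ref{fig:front} has $\lambda = 2$ left cusps, so $\sum_R z^{s(R)-1}$ is precisely $B_{\Lambda[n_1,\dots,n_k]}(z)$. Substituting back gives the claimed formula. No step here looks genuinely difficult; the only subtle input is the crossing-type identity $s(R) + 2r(R) = m_k$, which is forced by the rigid structure of normal rulings for $2$-bridge links in Legendrian rational form established in Section~\ref{sec:ruling}.
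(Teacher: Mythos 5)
Your proof is correct and follows essentially the same strategy as the paper: stratify, count each stratum as an affine space times a torus, derive the identity $s(R)+2r(R)=m_k$, eliminate $r(R)$, and recognize the resulting sum as a $z=q^{1/2}-q^{-1/2}$ evaluation of Rutherford's ruling polynomial via Theorem~\ref{thm:ruling_kauff} (using $\lambda=2$ left cusps). One small point where your route is a touch cleaner: you take the Lam--Speyer anticlique count (Theorem~\ref{thm: anticlique stratification}), which holds over $\bF_q$ directly since cluster varieties are $\Z$-schemes, and then transport it to the ruling description via the purely combinatorial bijection of Proposition~\ref{prop: bijection} and the dictionary $s(R_I)=N-2|I|$, $r(R_I)=|I|+k-1$. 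The paper instead writes the count in ruling language first and so must explicitly remark that although Theorem~\ref{thm: stratification coincide} was proven over the algebraically closed characteristic-$2$ field $\bF$, the strata are products of affine spaces and tori and thus the count persists over $\bF_q$; your formulation sidesteps that caveat by never needing the characteristic-$2$ coincidence theorem itself, only the combinatorics behind it.
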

\begin{proof} Although we have only shown in Theorem \ref{thm: stratification coincide} that the ruling stratification and the anticlique stratification coincide on $\Aug_u(\Lambda[n_1,\dots, n_k])$, which is an affine variety in characteristic $2$, we see that each stratum is a product of an affine space with an algebraic torus, and hence over any finite field $\bF_q$, we can still match the anticlique stratification strata with the description in terms of switches and returns as in Corollary \ref{cor: descend of ruling stratification}. This implies that 
\[
|\mathscr{A}[n_1,\dots, n_k](\bF_q)|=\sum_R(q-1)^{s(R)}q^{r(R)-k+1}.
\]
On the other hand, we know that $s(R)+r(R)+d(R)=m_k$ (here $d(R)$ stands for the number of departures). Since returns and departures come in pairs, we can deduce that $r(R)=\frac{m_k-s(R)}{2}$. Plugging this into the equation above yields
\begin{align*}
|\mathscr{A}[n_1,\dots, n_k](\bF_q)|=&\sum_R(q-1)^{s(R)}q^{\frac{m_k-s(R)}{2}-k+1}\\
=&\sum_R(q^{\frac{1}{2}}-q^{-\frac{1}{2}})^{s(R)}q^{\frac{m_k}{2}-k+1}\\
=&q^{\frac{m_k}{2}-k+1}(q^{\frac{1}{2}}-q^{-\frac{1}{2}})B_{\Lambda[n_1,\dots, n_k]}(q^{\frac{1}{2}}-q^{-\frac{1}{2}}),
\end{align*}
where the second to last equality follows from Theorem~\ref{thm:ruling_kauff}, which says $B_{\Lambda[n_1,\dots, n_k]}(z)=\sum_Rz^{s(R)-1}$ in the case of Legendrian $2$-bridge links.\end{proof}

\begin{cor} For a Legendrian $2$-bridge link $\Lambda[n_1,\dots, n_k]$, 
\[
B_{\Lambda[n_1,\dots, n_k]}(q^{\frac{1}{2}}-q^{-\frac{1}{2}})=\frac{f_{n_1-1}(q)f_{n_2-2}(q)\cdots f_{n_{k-1}-2}(q)f_{n_k-1}(q)}{q^{\frac{m_k}{2}-k+1}(q^{\frac{1}{2}}-q^{-\frac{1}{2}})},
\]
where $f_n(q)$ is defined in Lemma \ref{lem: point count for a single block} and $m_k:=\sum_i n_i$.
\end{cor}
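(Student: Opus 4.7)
The plan is to obtain this corollary as an immediate consequence of the two preceding results, Theorem \ref{thm: Kauffman polynomial and point count} and Corollary \ref{cor: point count of product of type A cluster varieties}, by algebraic rearrangement. Both results compute $|\mathscr{A}[n_1,\dots,n_k](\bF_q)|$: one expresses it as a product of the polynomials $f_{n_i}(q)$, while the other expresses it as $q^{\frac{m_k}{2}-k+1}(q^{\frac{1}{2}}-q^{-\frac{1}{2}})B_{\Lambda[n_1,\dots,n_k]}(q^{\frac{1}{2}}-q^{-\frac{1}{2}})$. Equating these and solving for the Kauffman polynomial coefficient gives the desired identity.

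Concretely, I would first cite Corollary \ref{cor: point count of product of type A cluster varieties} to rewrite the numerator as the point count $|\mathscr{A}[n_1,\dots,n_k](\bF_q)|$. Then I would invoke Theorem \ref{thm: Kauffman polynomial and point count} to replace this point count by its expression in terms of $B_{\Lambda[n_1,\dots,n_k]}$. The only subtlety worth mentioning is that the denominator $q^{\frac{m_k}{2}-k+1}(q^{\frac{1}{2}}-q^{-\frac{1}{2}})$ is nonzero for generic $q$, so the division is valid as an identity of rational functions in $q^{1/2}$.

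The main (and really only) obstacle is purely cosmetic: one should verify that the identity makes sense as a statement about polynomials in the variable $z = q^{1/2} - q^{-1/2}$ rather than merely as an identity of rational functions. This follows from the fact that $B_{\Lambda[n_1,\dots,n_k]}(z)$ is known to be a polynomial by Theorem \ref{thm:ruling_kauff}, and the right-hand side a priori a rational function in $q^{1/2}$ must agree with it for all prime powers $q$, forcing the necessary divisibilities. In particular, the factor $(q^{\frac{1}{2}}-q^{-\frac{1}{2}})$ in the denominator must divide the numerator $f_{n_1-1}(q) \cdots f_{n_k-1}(q)$; this can alternatively be verified directly by noting that $f_n(1) = 0$ precisely when $n$ is odd, together with a parity check on $\sum_i (n_i - c_i)$ where $c_i \in \{1,2\}$ as appropriate. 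Since no further input is required beyond the two cited results, this corollary is essentially a one-line deduction.
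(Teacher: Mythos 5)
Your proposal is correct and follows exactly the paper's own route: the paper proves this corollary in one line by citing Corollary \ref{cor: point count of product of type A cluster varieties} and Theorem \ref{thm: Kauffman polynomial and point count} and solving for $B_{\Lambda}$. The extra discussion about polynomiality and divisibility is not needed for the deduction (and the paper does not include it), but it is harmless.
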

\begin{proof} It follows from Corollary \ref{cor: point count of product of type A cluster varieties} and Theorem \ref{thm: Kauffman polynomial and point count}.
\end{proof}

\bibliographystyle{alphaurl-a}
\bibliography{main}
\end{document}